\DeclareSymbolFont{matha}{OML}{txmi}{m}{it}
\DeclareMathSymbol{\varv}{\mathord}{matha}{118}
\numberwithin{equation}{subsection}
\newtheorem{theorem}{Theorem}[section]
\newtheorem{lemma}[theorem]{Lemma}
\newtheorem{proposition}[theorem]{Proposition}
\newtheorem{definition}[theorem]{Definition}{\rm}
\newtheorem{corollary}[theorem]{Corollary}
\newtheorem{remark}[theorem]{Remark}
\newtheorem{claim}[theorem]{Claim}
\newtheorem{conjecture}[theorem]{Conjecture}
\newtheorem*{nb}{\footnotesize {N.B}}
\def\tab{\leaders\hbox to 1.5mm{\hfil.\hfil}\hfill}
\def\p{\partial}
\def\no{\noindent}
\def\io{{\infty}}
\def\curl{\operatorname{curl}}
\def\range{\operatorname{ran}}
\def\re{\operatorname{Re}}
\def\im{\operatorname{Im}}
\def\Id{\operatorname{Id}}
\def\Lg{\operatorname{Log}}
\def\dive{\operatorname{div}}
\def\grad{\operatorname{grad}}
\def\moo{C^{\io}}
\def\mooc{C^{\io}_{\textit c}}
\def\N{\mathbb N}
\def\R{\mathbb R}
\def\C{\mathbb C}
\def\rot{\mathbf C}
\def\poscal#1#2{\langle#1,#2\rangle}
\def\proscald#1#2{<\!\!#1, #2\!\!>_{_{\hskip-4pt \R^{d}}}}
\def\proscal3#1#2{<\!\!#1, #2\!\!>_{_{{\hskip-4pt\R^{3}}}}}
\def\norm#1{\Vert#1\Vert}
\def\trinorm#1{\vert\hskip-1pt\Vert #1\Vert\hskip-1pt\vert}
\def\val#1{\vert#1\vert}
\def\Val#1{\left\vert#1\right\vert}
\def\valjp#1{\langle#1\rangle}
\def\l2{L^2(\R^{n})}
\def\L2{L^2(\R^{2n})}
\def\supp{\operatorname{supp}}
\def\spec{\operatorname{spect}}
\def\tr#1{{^t}#1}
\def\trace{\operatorname{trace}}
\def\vs{\vskip.3cm}
\let \dis=\displaystyle
\let\no=\noindent
\let \dis=\displaystyle
\let\no=\noindent
\def\mat22#1#2#3#4{\begin{pmatrix}#1&#2\\ #3&#4\end{pmatrix}}
\def\mattre#1#2#3{\begin{pmatrix}#1\\ #2\\#3\end{pmatrix}}
\def\beq{\begin{equation}}
\def\eeq{\end{equation}}
\def\tr#1{{^t}\!\!#1}
\def\dd{\texttt{d}}
\def\leray{\mathbb P}
\def\pleray{\widetilde{\mathbb P}}
\begin{document}
\baselineskip=1.1\normalbaselineskip
\title[Wiener Algebras Methods for Liouville theorems]{Wiener Algebras Methods for Liouville theorems\\ on the stationary Navier-Stokes system}
\author[Nicolas Lerner]{Nicolas Lerner}
\address{\noindent \textsc{N. Lerner, Institut de Math\'ematiques de Jussieu,
Sorbonne Universit\'e ,
Campus Pierre et Marie Curie,
4 Place Jussieu,
75252 Paris cedex 05,
France}}
\email{nicolas.lerner@imj-prg.fr, nicolas.lerner@sorbonne-universite.fr}
\numberwithin{equation}{subsection}
\begin{abstract}
We prove some  Liouville theorems for the stationary Navier-Stokes system for incompressible fluids.
We provide some sufficient conditions on the low frequency part of the solution,
using some properties of classical singular integrals with respect to Wiener algebras.
\end{abstract}
\keywords{Navier-Stokes Equation, Liouville Theorem, Wiener Algebra}
\subjclass[2020]{76D05, 35B53, 47B90}
\vfill
\maketitle
\centerline{\color{magenta}\boxed{\text{\bf VERSION 1, \today, \currenttime
}}}
{\color{red}\footnotesize\tableofcontents}
\vfill\eject
\section{\color{magenta}Introduction}
\subsection{The classical Liouville theorem}
\begin{theorem}[Joseph Liouville, 1844]\label{thm.001}
Let $f$ be a bounded harmonic function on $\R^{d}$. Then $f$ is a constant function.
 \end{theorem}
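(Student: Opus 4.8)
The plan is to use the Fourier transform on tempered distributions. Since $f\in L^\io(\R^d)\subset\mathscr{S}'(\R^d)$, its Fourier transform $\widehat f$ is a well-defined tempered distribution, and the equation $\lap f=0$ becomes $\val\xi^2\,\widehat f=0$ in $\mathscr{S}'(\R^d)$. The first step is to deduce from this that $\supp\widehat f\subset\{0\}$: if $\varphi\in\mathscr{S}(\R^d)$ vanishes near the origin, then $\varphi=\val\xi^2\psi$ with $\psi=\varphi/\val\xi^2\in\mathscr{S}(\R^d)$, so $\langle\widehat f,\varphi\rangle=\langle\val\xi^2\widehat f,\psi\rangle=0$, which gives the claim.

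The second step is to invoke the classical structure theorem: a distribution supported at the origin is a finite linear combination $\sum_{\val\alpha\le N}c_\alpha\,\p^\alpha\delta_0$, whence by inverse Fourier transform $f$ is a polynomial on $\R^d$. The third and final step is elementary: a polynomial that is bounded on $\R^d$ must have degree $0$, i.e. be constant. This finishes the proof.

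Alternatively one can bypass distribution theory and argue by the mean value property: a harmonic $f$ satisfies $f(x)=\val{B(x,R)}^{-1}\int_{B(x,R)}f\,dy$ for every $R>0$, so for any two points $x,y$ the difference $f(x)-f(y)$ equals $\val{B(x,R)}^{-1}$ times the integral of $f$ over the symmetric difference $B(x,R)\triangle B(y,R)$. Since $\val f\le\norm f_{L^\io}$ pointwise and $\val{B(x,R)\triangle B(y,R)}=O(R^{d-1})$ while $\val{B(x,R)}\sim R^d$, letting $R\to\io$ yields $f(x)=f(y)$.

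The main obstacle is essentially bookkeeping rather than a genuine difficulty: in the first route the only nontrivial ingredient is the characterization of distributions supported at a single point, which rests on a Taylor expansion and continuity estimates for the seminorms of $\mathscr{S}(\R^d)$; in the second route it is the mean value property itself, a consequence of the rotation invariance of $\lap$ together with the divergence theorem. Both are standard, and the theorem is genuinely elementary; I expect no serious complication.
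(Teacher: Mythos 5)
Your primary argument is exactly the proof given in the paper: pass to $\mathscr S'(\R^{d})$, use $\val\xi^{2}\widehat f=0$ to localize $\supp\widehat f$ at the origin, conclude that $f$ is a polynomial, and finish by boundedness. It is correct (the division $\psi=\varphi/\val\xi^{2}$ is legitimate for $\varphi$ vanishing near $0$), and your mean-value alternative is a standard independent route that the paper does not pursue.
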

 \begin{proof}
 Since $f$ is bounded, it is a tempered distribution and then we have,
 since polynomials are multipliers of $\mathscr S'(\R^{d})$,  
 $$
 \val\xi^{2}\widehat f(\xi)=0,
 $$
 where $\widehat f$ stands for the Fourier transform of $f$.
This  implies that $\supp{\widehat f}\subset\{0\}$ and thus that $f$ is a polynomial.
 Since $f$ is assumed to be bounded, this implies that $f$ is a constant, which is the sought result.
 \end{proof}
 \begin{remark}\label{rem.000}\rm
We have proven in particular that
 a tempered distribution  which is harmonic is a polynomial\footnote{
 Of course, 
 some harmonic functions are not in $\mathscr S'(\R^{d})$ (when $d\ge 2$) such as 
 $f(x_{1},x_{2})=e^{x_{1}}\sin x_{2}$, which is a smooth function, thus a Schwartz distribution, but not a tempered distribution.
 }.
 \end{remark}
 \begin{remark}\label{rem.001}\rm
 Note that if $f$ is assumed to be harmonic and  such that there exists $m\in \R_{+}$ such that
 $$
 \val{f(x)}(1+\val x)^{-m}\in L^{\io}(\R^{d}),
 $$
 the same proof as above  gives that 
 $f$ is a polynomial of degree $\le m$.
 Also, if $f$ is a harmonic function whose gradient belongs to some $L^{p}(\R^{d})$ space with $p\in[1,+\io)$,
 or to $L^{\io}(\R^{d})$ with limit $0$ at infinity,
 then the previous proof gives that $\nabla f$ is a polynomial
 which must be $0$ (no non-zero polynomial could belong to $L^{p}$ with $p$ finite,
 only constant polynomials belong to $L^{\io}$; in the latter case the requirement that the limit of the gradient is zero at infinity forces $\nabla f=0$). This proves that $f$ is a constant function.
\end{remark}
Of course, it looks  quite natural  to expect some similar results for functions in the kernel of linear  elliptic operators, but even for constant coefficients operators, these Liouville-type results \emph{do depend} on the lower order terms and ellipticity alone does not suffice to secure such results.
Let us for instance check,
\begin{equation}\label{}
P=-\Delta+i\p_{x_{1}}, \quad\text{the Fourier multiplier}\ \val \xi^{2}-\xi_{1}.
\end{equation}
We note that any function on $\R^{d}$ with $d\ge 2$ given by 
$$
f(x_{1}, x_{2}, \dots, x_{d})= e^{ix_{1}} g(x_{2}, \dots, x_{d}), \quad\text{with $g$ harmonic on $\R^{d-1}$,}
$$
belongs to the kernel of $P$. In particular  the choice $g=1$ gives $Pf=0$ with $f$ bounded non-constant.
We may also notice that for the operator $-\Delta+\p_{x_{1}}$,
 a Liouville-type theorem holds true, i.e. bounded solutions in its kernel are constant:
 indeed, repeating the proof of Theorem \ref{thm.001},
 we find that a bounded solution $f$ of 
 $-\Delta f+\p_{x_{1}}f$
 should satisfy
 $$
 \bigl(\val\xi^{2}+i\xi_{1}\bigr) \widehat f(\xi)=0,
 $$
 so that $\supp \widehat f\subset\{0\}$ and thus $f$ is a polynomial and a bounded function, thus a constant. The quite naive examples above show that some reasonable conditions should be set-up for lower order terms and we give some details below on this matter.
 \par
 A good question that could be raised for linear elliptic operators would be the following.
 Let 
 \begin{equation}\label{linope}
P=-\Delta+X,
\end{equation}
 be a linear operator on $\mathbb R^{d}$ where $X$ is a real-valued vector field with null divergence and coefficients in $L^{p}(\R^{d})$ for some $p\in (1,+\io)$. 
 We may raise the following question: {\sl Is there an explicit condition on $p$  ensuring that  bounded solutions $f$ vanishing at infinity of 
 \begin{equation}\label{equ001}
 -\Delta f+X f=0,
\end{equation}
are actually 0?
} For instance we could assume that for some $p\in [1,+\io)$, 
 \begin{align}
& X \text{ is a real vector field with null divergence and coefficients in $L^{p}(\R^{d})$,}\label{lio001}\\
& \nabla f \in L^{2}(\R^{d}),\label{lio002}\\
& \text{$f$ is bounded with limit 0 at infinity.}\label{lio003}
\end{align}
Then for $d\ge 3$, we get easily that $f$ belongs to $L^{q}(\R^{d})$,
with 
\begin{equation}\label{ind987}
q=\frac{2d}{d-2},\quad \text{i.e.,}\quad \frac1q=\frac12-\frac1d.
\end{equation}
Considering a function 
\begin{equation}\label{fun001}
\chi\in \mooc(\R^{d}, [0,1]),\quad \text{supported in $2\mathbb B^{d}$ and equal to 1 on
$\mathbb B^{d}$,}
\end{equation}
 we define for $\lambda>1$, 
\begin{equation}\label{fun002}
\chi_{\lambda}(x)=\chi\Bigl(\frac x\lambda\Bigr),\text{ noting that $\nabla \chi_{\lambda}$ is bounded in $L^{d}(\R^{d})$,}
\end{equation}
since $\norm{\nabla\chi_{\lambda}}_{L^{d}(\R^{d})}=\norm{\nabla\chi}_{L^{d}(\R^{d})}$.
Let us take for granted that ellipticity of the Laplace operator  implies that the solutions of \eqref{equ001} are smooth functions,
so that we can multiply the equation by the smooth and compactly supported $\chi_{\lambda}f$ and integrate by parts. 
We find (assuming as we may that $f$ is real-valued)\footnote{We use  in \eqref{123456} that
with $\proscald{a}{b}$ standing for the canonical dot product in $\R^{d}$,
\begin{align*}
\int_{\R^{\mathtt d}}&\proscald{X(x)}{(\nabla f)(x)}\chi_{\lambda}(x) f(x) dx
\\
&=- \int_{\R^{\mathtt  d}}
f(x)    \proscald  {X(x)}    {      \bigl(\nabla(\chi_{\lambda}f)\bigr)(x)    }         dx
- \int_{\R^{\mathtt  d}} f(x)(\dive X)(x)(\chi_{\lambda}f)(x) dx
\\
\text{\tiny (using $\dive X=0$)}&=-\int_{\R^{\mathtt d}}\proscald{X(x)}{(\nabla f)(x)}\chi_{\lambda}(x) f(x) dx
-\int_{\R^{\mathtt d}}f(x)^{2}\proscald{X(x)}{(\nabla \chi_{\lambda})(x)} dx.
\end{align*}
\label{foot02}}
\begin{align}
0&=\poscal{-\Delta f+Xf}{\chi_{\lambda}f}\notag\\
&=\int \chi_{\lambda}(x)\val{(\nabla f)(x)}^{2}dx
+\poscal{\underbrace{\nabla f}_{L^{2}}}{\underbrace{(\nabla \chi_{\lambda})}_{L^{d}}\underbrace {f}_{L^{q}}}
-\frac12\poscal{\underbrace{X\cdot (\nabla\chi_{\lambda})}_{L^{p}\cdot L^{d}} }
{\underbrace{f^{2}}_{L^{q/2}}}.
\label{123456}
\end{align}
Note that we have used for the calculation of the last term above  that $f$ is real-valued; for a linear equation, it is a harmless assumption since we may separate real and imaginary parts,
but since we are contemplating non-linear cases where $Xf$ may be quasi-linear,
we already specified \textit{ab initio} that $f$ is real-valued. We  have 
$
1=\frac12+\frac1d+\frac1q,
$
where $q$ is defined by \eqref{ind987},
and  we need to check
$$
1=\frac1p+\frac1d+\frac2q,\text{ which is equivalent to }p=d.
$$
We have the estimate
\begin{multline}\label{hgf541}
\val{\varepsilon_{1}(\lambda)=\poscal{\underbrace{\nabla f}_{L^{2}}}{\underbrace{(\nabla \chi_{\lambda})}_{L^{d}}\underbrace {f}_{L^{q}}}}
\\\le \norm{\nabla f}_{L^{2}(\{\lambda\le \val x\le 2\lambda\})}
\norm{\nabla \chi_{1}}_{L^{d}}\norm{f}_{L^{q}(\{\lambda\le \val x\le 2\lambda\})},
\end{multline}
so that 
$\lim_{\lambda\rightarrow+\io}\varepsilon_{1}(\lambda)=0$ and 
\begin{equation}\label{hgf54+}
\val{\varepsilon_{2}(\lambda)=\poscal{\underbrace{X\cdot (\nabla\chi_{\lambda})}_{L^{p}\cdot L^{d}} }
{\underbrace{f^{2}}_{L^{q/2}}}}
\le \norm{X}_{L^{p}(\{\lambda\le \val x\le 2\lambda\})}
\norm{\nabla \chi_{1}}_{L^{d}}\norm{f}_{L^{q}(\{\lambda\le \val x\le 2\lambda\})}^{2},
\end{equation}
which implies that 
$\lim_{\lambda\rightarrow+\io}\varepsilon_{2}(\lambda)=0$.
As a consequence we find that 
\eqref{equ001}, \eqref{lio001}, \eqref{lio002}, \eqref{lio003} with $p=d$
imply $f=0$.
We have thus proved the following result.
\begin{proposition}\label{pro.dq98}
 Let $X$ be a real divergence-free\footnote{We may weaken that hypothesis (and keep the conclusion) by assuming that $(\dive X)_{+}$ belongs to $L^{\frac d2}(\R^{d})$ such that 
 $$
 \norm{(\dive X)_{+}}_{L^{\frac d 2}(\R^{d})}<2\sigma(d),
 $$
 where $\sigma(d)$ is a dimensional constant. It is possible to identify the constant $\sigma(d)$ as the largest constant such that
 $$
 \sigma(d)\norm{f}_{L^{\frac{2d}{d-2}}(\R^{d})}^{2}\le \norm{\nabla f}_{L^{2}(\R^{d})}^{2}.
 $$
 Note that this assumption holds true in particular if we assume that $\dive X\le 0$.
 A proof of that improvement is given in our Appendix with  Proposition \ref{app.pro.dq99}.} vector field in $\R^{d}$ ($d\ge 3$) with $\moo$ coefficients in $L^{d}(\R^{d})$
  and let $f$ be a solution of \eqref{equ001}
 such that  \eqref{lio002}, \eqref{lio003} hold true. Then $f$ is identically 0.
\end{proposition}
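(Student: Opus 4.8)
The strategy is exactly the integration-by-parts scheme sketched in the lines preceding the statement; I organize it in three steps, the only genuinely new input being a preliminary integrability fact for $f$ itself. First I would record that $f\in L^{q}(\R^{d})$ with $q=\frac{2d}{d-2}$. Since $d\ge 3$, the homogeneous Sobolev embedding $\dot H^{1}(\R^{d})\hookrightarrow L^{q}(\R^{d})$ shows that $\nabla f\in L^{2}(\R^{d})$ forces the existence of a unique constant $c$ with $f-c\in L^{q}(\R^{d})$ and $\norm{f-c}_{L^{q}(\R^{d})}\le C_{d}\norm{\nabla f}_{L^{2}(\R^{d})}$. As $f-c\in L^{q}(\R^{d})$, one can pick a sequence $x_{n}\to\io$ along which $f(x_{n})-c\to 0$; since also $f(x_{n})\to 0$ by \eqref{lio003}, we get $c=0$ and hence $f\in L^{q}(\R^{d})$.

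Next, taking for granted (by ellipticity of $-\Delta$, using that $X$ has $C^{\io}$ coefficients) that solutions of \eqref{equ001} are smooth, the function $\chi_{\lambda}f$ built from \eqref{fun002} is a genuine compactly supported test function. Pairing \eqref{equ001} with $\chi_{\lambda}f$, integrating by parts, and using $\dive X=0$ as in footnote~\ref{foot02}, one obtains the identity \eqref{123456}, namely
\begin{equation*}
0=\int_{\R^{d}}\chi_{\lambda}(x)\val{(\nabla f)(x)}^{2}\,dx+\varepsilon_{1}(\lambda)-\tfrac12\varepsilon_{2}(\lambda),
\end{equation*}
where $\varepsilon_{1}(\lambda)=\poscal{\nabla f}{(\nabla\chi_{\lambda})f}$ and $\varepsilon_{2}(\lambda)=\poscal{X\cdot(\nabla\chi_{\lambda})}{f^{2}}$.

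Then I would estimate the two error terms. The Hölder bounds \eqref{hgf541} and \eqref{hgf54+} control $\val{\varepsilon_{1}(\lambda)}$ and $\val{\varepsilon_{2}(\lambda)}$ by the fixed number $\norm{\nabla\chi_{1}}_{L^{d}}$ times products of the norms of $\nabla f$ (in $L^{2}$), of $f$ (in $L^{q}$) and of $X$ (in $L^{d}=L^{p}$) restricted to the annulus $\{\lambda\le\val x\le 2\lambda\}$; these are precisely the exponents dictated by $1=\frac12+\frac1d+\frac1q$ and $1=\frac1p+\frac1d+\frac2q$, which hold exactly because $q=\frac{2d}{d-2}$ and $p=d$. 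Since $\nabla f$, $f$ and $X$ lie globally in their respective Lebesgue spaces, their norms over $\{\lambda\le\val x\le 2\lambda\}$ tend to $0$, so $\varepsilon_{1}(\lambda),\varepsilon_{2}(\lambda)\to 0$ as $\lambda\to+\io$. Letting $\lambda\to+\io$ and applying monotone convergence to the first term yields $\int_{\R^{d}}\val{\nabla f}^{2}\,dx=0$, whence $\nabla f\equiv 0$; thus $f$ is constant, and by \eqref{lio003} this constant is $0$.

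The main obstacle here is not the exponent bookkeeping, which (as above) pins $p$ down to $d$ with no freedom, but rather Step~1: the hypotheses "$f$ bounded, $\nabla f\in L^{2}$, $f\to 0$ at infinity" do \emph{not} by themselves place $f$ in $L^{2}(\R^{d})$, and one genuinely has to pass through the homogeneous Sobolev space and then argue that the undetermined additive constant vanishes. The relaxation of the divergence-free hypothesis mentioned in the footnote to the statement follows from the same computation, now keeping the extra term $\tfrac12\poscal{f^{2}}{(\dive X)_{+}}$ and absorbing it into $\int\val{\nabla f}^{2}$ through the sharp constant $\sigma(d)$ in $\sigma(d)\norm{f}_{L^{q}(\R^{d})}^{2}\le\norm{\nabla f}_{L^{2}(\R^{d})}^{2}$; this is carried out in Proposition~\ref{app.pro.dq99}.
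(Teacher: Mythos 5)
Your proposal is correct and follows essentially the same route as the paper: the identity \eqref{123456} obtained by pairing the equation with $\chi_{\lambda}f$, the H\"older estimates \eqref{hgf541}--\eqref{hgf54+} on the annulus $\{\lambda\le\val x\le 2\lambda\}$, and the limit $\lambda\to+\io$ yielding $\nabla f\equiv 0$. Your Step~1 merely makes explicit (via the homogeneous Sobolev embedding and the vanishing of the additive constant) the claim that the paper dispatches with ``we get easily that $f$ belongs to $L^{q}$,'' which is a welcome clarification rather than a deviation.
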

This is a result \emph{at the scaling} and we may wonder if condition \eqref{lio001} could be modified.
The book \cite{MR4319031} by C. Le Bris \& P.-L. Lions
studies the existence and uniqueness of solutions to parabolic-type equations with irregular coefficients and can be used to derive some Liouville-type theorems analogous to Proposition \ref{pro.dq98}, with much less constraints on the regularity of the vector field $X$.
We may also consider \emph{non-linear} elliptic equations and raise roughly the same questions as above: assuming that we have a bounded solution vanishing at infinity,
which additional conditions on the solution are sufficient to ensure that it vanishes identically?
In this paper, we want to focus our attention on the Stationary Navier-Stokes System for Incompressible Fluids.
\subsection{Liouville theorems for the stationary Navier-Stokes system}
We consider a vector field $v\in L^{2}_{\text{loc}}(\R^{3},\R^{3})$ satisfying the Stationary Navier-Stokes System for Incompressible Fluids,
\begin{equation}\label{SNSI}
-\nu \Delta{v}+\sum_{1\le j\le 3}\p_{j}(v_{j}v)+\nabla p=0,\quad  \dive v=0,
\end{equation}
where the parameter $\nu>0$ stands for the kinematic viscosity, the \emph{pressure} $p$ is a scalar distribution on $\R^{3}$ and the $v_{j}$ are the components of the vector field\footnote{Note also that the physical unit of $\nu$ is the{ \sc Stokes}: we write
$[\nu]=\mathtt{L^{2}T^{-1}}$; we check easily that 
\begin{equation*}
[\nu \Delta v]=\mathtt{L^{2}T^{-1}L^{-2}L T^{-1}}=\mathtt{LT^{-2}},
\quad
[\p_{j}(v_{j}v)]=\mathtt{L^{-1}(LT^{-1})^{2}}=\mathtt{LT^{-2}}.
\leqno{(\sharp)}\end{equation*}
Also since $p$ is the pressure divided by the density, we have
$$
[p]=\frac{\mathtt{ MLT^{-2}}}{\mathtt{L^{2}}}\frac{1}{\mathtt{ML^{-3}}}=\mathtt{ L^{2}T^{-2}}, \quad [\nabla p]=\mathtt{LT^{-2}},
\text{\quad which is consistent with \eqref{SNSI} and $(\sharp)$.}
\leqno{(\natural)}$$
\label{foot01}} $v$.
We notice that the products $v_{j}v$ belong to $L^{1}_{\text{loc}}(\R^{3},\R^{3})$, so that their (distribution) derivatives make sense.
We shall always assume that
\begin{align}
&v\in L^{1}_{\text{loc}}(\R^{3}, \R^{3})\cap\mathscr S'(\R^{3}, \R^{3}) \text{ and }
\forall s>0, \ \Val{\{x\in \R^{3}, \val{v(x)}>s\}}<+\io,
\label{hyp001}
\\
&\curl v\in L^{2}(\R^{3}, \R^{3}).
\label{hyp002}
\end{align}
Here \eqref{hyp001} stands for a quite weak version of  \textit{``$v$ tends to zero at infinity''}. We refer the reader to our Definition \ref{def.417ooo} in our Appendix and various properties linked to that 
definition.
A (much) stronger version of \eqref{hyp001} is the condition,
\begin{equation}\label{hyp001+++}
v\in L^{\io}(\R^{3}, \R^{3}), \quad \lim_{R\rightarrow+\io}\norm{v}_{L^{\io}(\{\val x\ge R\},\R^{3})}=0.
\end{equation}
In fact, introducing the condition
\begin{equation}\label{hyp001plu}
v\in L^{1}_{\text{loc}}, 
\exists R_{0}\ge 0 \text{ s.t. } v\in L^{\io}(\{\val x\ge R_{0}\}) \text{ and }
\lim_{R\rightarrow+\io}\norm{v}_{L^{\io}(\{\val x\ge R\})}=0,
\end{equation}
we obtain easily that
\begin{equation}\label{541hgx}
\eqref{hyp001+++}\Longrightarrow\eqref{hyp001plu}\Longrightarrow\eqref{hyp001}.
\end{equation}
Also in our Remark \ref{rem.54potr} below, we formulate a couple of comments on the relevance of our choice.
\vs
The main conjecture on the topic discussed in that article  is the following
\begin{conjecture}\label{coj001}
 Let  $v$ be a vector field on $\R^{3}$
 such that
 \eqref{SNSI}, \eqref{hyp001}, \eqref{hyp002} are satisfied.
 Then $v$ is identically equal to $0$. 
\end{conjecture}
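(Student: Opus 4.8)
\emph{Remark on the status of the statement.} Conjecture \ref{coj001} is the central open problem addressed in this paper, so what follows is a program; only its conditional form, under an extra hypothesis on the low frequency part of $v$, will actually be carried out in the sequel. First I would recast the hypotheses in a workable functional setting. Since $\dive v=0$ one has $-\Delta v=\curl\curl v$, so on the Fourier side $\widehat{\nabla v}$ is pointwise comparable to $\widehat{\curl v}$ and \eqref{hyp002} gives $\nabla v\in L^{2}(\R^{3})$; together with \eqref{hyp001}, which pins down the additive constant, the Gagliardo--Nirenberg--Sobolev inequality yields $v\in L^{6}(\R^{3})$ with $\norm{v}_{L^{6}}\lesssim\norm{\curl v}_{L^{2}}$. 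Taking the divergence of the first equation in \eqref{SNSI} produces the Poisson equation $-\Delta p=\sum_{i,j}\p_{i}\p_{j}(v_{i}v_{j})$, hence $p=-\sum_{i,j}R_{i}R_{j}(v_{i}v_{j})$ with the $R_{i}$ the Riesz transforms; since $v_{i}v_{j}\in L^{3}(\R^{3})$, Calder\'on--Zygmund theory gives $p\in L^{3}(\R^{3})$.

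Then I would test \eqref{SNSI} against $\chi_{\lambda}v$, with $\chi_{\lambda}$ as in \eqref{fun001}--\eqref{fun002}, and integrate by parts as in the computation preceding \eqref{123456} (now invoking $\dive v=0$ three times), arriving at
\begin{equation*}
\nu\int_{\R^{3}}\chi_{\lambda}\val{\nabla v}^{2}\,dx=\frac{\nu}{2}\int_{\R^{3}}\val{v}^{2}\,\Delta\chi_{\lambda}\,dx+\int_{\R^{3}}\Bigl(\tfrac12\val{v}^{2}+p\Bigr)v\cdot\nabla\chi_{\lambda}\,dx .
\end{equation*}
The first term on the right is $O\bigl(\norm{v}_{L^{6}(\{\lambda\le\val x\le 2\lambda\})}^{2}\bigr)$ and tends to $0$ as $\lambda\to+\io$; if the entire right-hand side tended to $0$ along some sequence $\lambda_{k}\to+\io$, we would obtain $\nabla v\equiv0$, hence $v$ constant, hence $v\equiv0$ by \eqref{hyp001}.

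The main obstacle is precisely the remaining term $\int(\tfrac12\val{v}^{2}+p)v\cdot\nabla\chi_{\lambda}$, which sits exactly at the scaling of \eqref{SNSI}: by H\"older on the annulus $A_{\lambda}=\{\lambda\le\val x\le 2\lambda\}$ it is bounded by a constant times $\norm{v}_{L^{9/2}(A_{\lambda})}^{3}+\norm{p}_{L^{9/4}(A_{\lambda})}\norm{v}_{L^{9/2}(A_{\lambda})}$, so that $v\in L^{9/2}(\R^{3})$ would close the argument (the classical sufficient condition of Galdi), whereas the only integrability furnished by $\nabla v\in L^{2}$ is $v\in L^{6}$, which on $\R^{3}$ does not entail membership in $L^{p}$ for any $p<6$; equivalently, nothing in \eqref{hyp001}--\eqref{hyp002} forces the local norms of $v$ in $L^{6}(\{\val x\sim\lambda\})$ to decay, and since the head pressure $\tfrac12\val{v}^{2}+p$ carries no sign on $\R^{3}$ there is no maximum-principle shortcut. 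This is where further structural input becomes unavoidable, and it is the purpose of the low frequency conditions developed below: decomposing $v=v_{\mathrm{low}}+v_{\mathrm{high}}$ in frequency and using that the representation $p=-R_{i}R_{j}(v_{i}v_{j})$, and more generally the resolution of \eqref{SNSI} expressing $v$ through $(-\Delta)^{-1}\leray\,\p_{j}(v_{j}v)$, are built from classical singular integrals that act boundedly on suitable Wiener-type algebras, one reduces the problem to controlling $v_{\mathrm{low}}$ in such an algebra; any hypothesis making $\int(\tfrac12\val{v}^{2}+p)v\cdot\nabla\chi_{\lambda}\to 0$ then forces $v\equiv0$. Dispensing with that hypothesis on $v_{\mathrm{low}}$ is the genuinely open point.
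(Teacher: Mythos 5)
You have correctly identified that Conjecture \ref{coj001} is an open problem which the paper does not prove, and your text is honestly presented as a program rather than a proof; on that understanding there is nothing to ``verify.'' The reductions you sketch do match what the paper actually carries out: the passage from \eqref{hyp002} and $\dive v=0$ to $\nabla v\in L^{2}$ and $v\in L^{6}$ is Lemma \ref{lem.reg001}; the representation $p=-\sum R_{i}R_{j}(v_{i}v_{j})\in L^{3}$ is \eqref{211oiu} together with \eqref{fea541}; and your energy identity tested against $\chi_{\lambda}v$, with the observation that $v\in L^{9/2}$ closes the argument, is exactly the review of Galdi's proof in \eqref{411411}--\eqref{414414}. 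You have also isolated the correct obstruction (no decay of the local $L^{6}$ norms on dyadic annuli, no sign for the head pressure on all of $\R^{3}$) and the correct role of the Wiener-algebra/singular-integral machinery in the paper's conditional theorems.

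Two small cautions. First, the step ``\eqref{hyp001} pins down the additive constant'' understates what must be excluded: a priori $\hat v$ could contain a distribution supported at the origin, so one must rule out an arbitrary polynomial, not just a constant; the paper does this via the decomposition $\hat v=W_{0}+W_{1}+W_{2}$ in Lemma \ref{lem.reg001} and Lemma \ref{cla.548uyt}, and the same care is needed to justify the pointwise identity $\val{\xi}\val{\hat v(\xi)}=\val{\xi\times\hat v(\xi)}$ (one needs $\hat v\in L^{1}_{\text{loc}}$ first). Second, your closing claim that the problem ``reduces to controlling $v_{\mathrm{low}}$ in such an algebra'' is slightly optimistic as stated: the paper shows that $v$ automatically lies in the Wiener algebra $\mathcal A$ and that the high-frequency part is in $L^{2}$ (Theorem \ref{thm.54yt}, Lemma \ref{lem.223}), but the additional integrability hypotheses on $\alpha_{0}(D)v$ in Theorems \ref{thm.galnew++}--\ref{thm.54ez} are genuine extra assumptions, not consequences of membership in $\mathcal A$. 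With those caveats, your account is consistent with the paper.
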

When this paper is being written (\today), this conjecture is still an open problem.
Conjecture \ref{coj001} is saying
that a stationary solution of the Navier-Stokes system for incompressible fluids which vanishes at infinity and is such that $\curl v$ belongs to $L^{2}$ should be trivial
(i.e. must be identically $0$). Condition \eqref{hyp002} originates from J. Leray's foundational work 
\cite{MR1555394}.
We shall see that various additional conditions on the vector field $v$ can lead to the result $v\equiv 0$, and we wish to summarize in the sequel of this Introduction the state of the art
on these matters.
The consistency of units is of key importance and is addressed in Footnote \ref{foot01}.
Another invariance is related to the scaling of the equation, namely 
\begin{claim}
Let $v, p$ be solutions of \eqref{SNSI} and let $\lambda\in \R$. Defining
\begin{equation}\label{126126}
w(x)=\lambda v(\lambda x), \quad q(x)=\lambda^{2}p(\lambda x),
\end{equation}
we find that \eqref{SNSI} holds true with $v=w, p=q$.
\end{claim}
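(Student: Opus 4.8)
The plan is to substitute the rescaled pair $(w,q)$ directly into \eqref{SNSI} and keep track of the powers of $\lambda$ produced by the chain rule, all identities being read in the sense of distributions. This is legitimate because $x\mapsto\lambda x$ is a smooth diffeomorphism of $\R^{3}$ when $\lambda\neq0$ (and the degenerate case $\lambda=0$ gives $w\equiv0$, $q\equiv0$, which trivially solves \eqref{SNSI}); moreover the products $v_{j}v$ belong to $\lloc1(\R^{3},\R^{3})$, as noted right before \eqref{hyp001}, so that the distributional derivatives $\p_{j}(v_{j}v)$, and hence $\p_{j}(w_{j}w)$, are well defined.

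First I would treat the three terms of the momentum equation one at a time. From $w(x)=\lambda v(\lambda x)$, each spatial derivative falling on the argument contributes an extra factor $\lambda$, so $(\Delta w)(x)=\lambda^{3}(\Delta v)(\lambda x)$. For the nonlinear term, $w_{j}(x)w(x)=\lambda^{2}v_{j}(\lambda x)v(\lambda x)$, whence $\sum_{1\le j\le 3}\p_{j}(w_{j}w)(x)=\lambda^{3}\bigl(\sum_{1\le j\le 3}\p_{j}(v_{j}v)\bigr)(\lambda x)$. Finally $q(x)=\lambda^{2}p(\lambda x)$ yields $(\nabla q)(x)=\lambda^{3}(\nabla p)(\lambda x)$. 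Adding these three identities, the left-hand side of the momentum equation evaluated at $(w,q)$ equals $\lambda^{3}\bigl(-\nu\Delta v+\sum_{1\le j\le 3}\p_{j}(v_{j}v)+\nabla p\bigr)(\lambda x)$, which is $0$ since $(v,p)$ solves \eqref{SNSI}. For the incompressibility constraint, $(\dive w)(x)=\sum_{1\le j\le 3}\p_{j}w_{j}(x)=\lambda^{2}(\dive v)(\lambda x)=0$. This establishes the claim.

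I do not expect any genuine obstacle: the statement is a verification of homogeneity, and the exponents $\lambda$ for $v$ and $\lambda^{2}$ for $p$ are precisely those forced by the dimensional analysis recorded in Footnote \ref{foot01}, where $p$ has the homogeneity of $v^{2}$. The only mild point of care is the distributional reading of the chain rule, which is why I would carry out the computation by changing variables $y=\lambda x$ inside the pairing against a test function, rather than working pointwise, when $v$ is merely locally integrable; this also makes the step $\sum_{j}\p_{j}(w_{j}w)=\lambda^{3}(\sum_{j}\p_{j}(v_{j}v))(\lambda\,\cdot)$ rigorous without any smoothness assumption on $v$.
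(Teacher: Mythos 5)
Your proposal is correct and follows essentially the same route as the paper, which likewise just records the three identities $(\Delta w)(x)=\lambda^{3}(\Delta v)(\lambda x)$, $\bigl(\p_{j}(w_{j}w)\bigr)(x)=\lambda^{3}\bigl(\p_{j}(v_{j}v)\bigr)(\lambda x)$, $(\nabla q)(x)=\lambda^{3}(\nabla p)(\lambda x)$ and sums them. Your added remarks on the distributional reading via a change of variables, the divergence constraint, and the degenerate case $\lambda=0$ are harmless extra care but not a different argument.
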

\begin{proof}[Proof of the Claim] We find easily that
\begin{align*}
&(\Delta w)(x)=\lambda^{3}(\Delta v)(\lambda x),\quad
\bigl(\p_{j}(w_{j}w)\bigr)(x)=\lambda^{3}
\bigl(\p_{j}(v_{j}v)\bigr)(\lambda x),
\\
&(\nabla q)(x)=\lambda^{3}(\nabla p)(\lambda x),
\end{align*}
proving the sought result.
\end{proof}
\begin{remark}\label{rem.54potr}\rm 
The reader may wonder what is the actual ground for choosing Condition \eqref{hyp001} instead of the simpler and somewhat natural \eqref{hyp001+++}: a first answer to that self-raised query is that 
\eqref{hyp001} is weaker, which is strengthening the conjecture and the results using as hypotheses only \eqref{hyp001} and not \eqref{hyp001+++}.
Also the reader could check in our Appendix with  Lemma \ref{lem.jhgf}
that a function which is $L^{p}$ ($1\le p<+\io$) on a neighborhood of infinity  has limit $0$ at infinity in the sense of Definition \ref{def.417ooo},
but does not satisfy  \eqref{hyp001+++}  in general.
Another reason for that choice seems to be more important:
looking at the conditions 
\eqref{SNSI}-\eqref{hyp001}-\eqref{hyp002}, we see that they contain only two parameters, the viscosity $\nu$ and 
\begin{equation}\label{}
\gamma_{0}=\norm{\curl v}_{L^{2}(\R^{3}, \R^{3})}, \quad[\gamma_{0}]={ \mathtt{T^{-1} L^{3/2}}}.
\end{equation}
We shall be able to \emph{deduce} \eqref{hyp001+++} from the assumptions of the conjecture and to prove in particular that $v$ belongs to the Wiener algebra $\mathcal F\bigl(L^{1}(\R^{3}, \R^{3})\bigr)$ and thus to $L^{\io}$,
so that  the $L^{\io}$ norm of $v$ will be bounded from above by a quantity depending only on $\nu$ and $\gamma_{0}$. This is indeed a much better situation than having to introduce the $L^{\infty}$-norm of $v$ as a parameter of our problem.
\end{remark}
\begin{remark}\label{rem.54kj}\rm
Let $\psi:\R^{3}\longrightarrow \R$ be a harmonic function (thus a smooth function)  and let us consider the vector field $v=\nabla \psi$. Then we have $\dive v=\Delta\psi=0$ and 
\begin{align*}
&\sum_{1\le j\le 3}\p_{j}(v_{j}v)=
\sum_{1\le j\le 3}(\p_{j}^{2}\psi) \nabla \psi+\sum_{1\le j\le 3}(\p_{j}\psi)\nabla({\p_{j}\psi})
=\frac12\nabla({\val{\nabla \psi}^{2}}),
\\
&\Delta v=\Delta(\nabla \psi)=\sum_{1\le j\le 3}\p_{j}^{2}(\nabla \psi)=\nabla(\Delta \psi)=0,
\end{align*}
so that for $p=-\frac12{\val{\nabla \psi}^{2}}$, we have 
$
-\nu \Delta{v}+\sum_{1\le j\le 3}\p_{j}(v_{j}v)+\nabla p=0,
$
so that $v=\nabla \psi$ solves \eqref{SNSI}
and verifies \eqref{hyp002} since $\curl v=\curl \nabla \psi=0$.
Now $\nabla \psi$ does not necessarily vanish identically
(take for instance $\psi$ equal to any harmonic polynomial of the three variables $(x_{1}, x_{2}, x_{3})$ with degree $\ge 1$).
So some hypothesis at infinity is needed and in this particular case, if we know that $v=\nabla \psi$ is bounded and satisfies \eqref{hyp001},
we get, thanks to Remark \ref{rem.001}, that $\psi$ should be constant and thus $v=0$.\footnote{If $\psi(x)=\frac12\poscal{Q x}{x}+\poscal{\eta}{x}+y$, where $Q$ is a  symmetric $3\times 3$ matrix with null trace, we find also that $v=Qx+\eta$
solves \eqref{SNSI} with $p=-\frac12\val{\nabla \psi}^{2}$, satisfies  \eqref{hyp002}, belongs to $L^{1}_{\text{loc}}\cap \mathscr S'$ but satisfies \eqref{hyp001} only if $v=0$.
More generally if $\psi$ is a harmonic function, we have seen that $v=\nabla \psi$
solves \eqref{SNSI} with $p=-\frac12\val{\nabla \psi}^{2}$, satisfies  \eqref{hyp002}, belongs to $L^{1}_{\text{loc}}$;
thanks to Remark \ref{rem.000} if $\psi$ belongs to $\mathscr S'$, it is a polynomial and $v$ is also a polynomial so that it could satisfy \eqref{hyp001} only if $v=0$ (cf. Lemma \ref{cla.548uyt} in our Appendix).
Note that if $\psi$ is a harmonic function which is not a temperate distribution (e.g $e^{x_{1}}\sin x_{2}$),
$v=\nabla \psi$
solves \eqref{SNSI} with $p=-\frac12\val{\nabla \psi}^{2}$, satisfies  \eqref{hyp002}, belongs to $L^{1}_{\text{loc}}$
but does not satisfy \eqref{hyp001}.
}
\end{remark}
\subsection{Known results}
In the first place, an important regularity result is due to G.P.~Galdi in the reference book  \cite{MR2808162}.
\begin{theorem}[G.P.~Galdi]\label{thm.known1}
 Let $v$ be a vector field satisfying the assumptions of Conjecture \ref{coj001}.
 Then $v$ is $C^{\io}$,
 with all its derivatives bounded with limit zero at infinity.
 \end{theorem}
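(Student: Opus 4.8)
The plan is to first upgrade the hypotheses of Conjecture \ref{coj001} to the scale-invariant bound $v\in L^{6}(\R^{3})$ with $\nabla v\in L^{2}(\R^{3})$, then to run an interior elliptic bootstrap for the Stokes system to reach $v,p\in\moo(\R^{3})$, and finally to re-run that bootstrap on unit balls escaping to infinity, keeping track of smallness, so as to get the decay of $v$ and of all its derivatives. For the first step I would introduce the Biot--Savart field $w:=\curl\bigl((-\Delta)^{-1}\curl v\bigr)$; since $\dive(\curl v)=0$ and $\dive v=0$, a Fourier computation gives $\dive w=0$ and $\curl w=\curl v$, and the Hardy--Littlewood--Sobolev inequality together with Plancherel give, using \eqref{hyp002}, that $w\in L^{6}(\R^{3})$ and $\nabla w\in L^{2}(\R^{3})$. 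Then $v-w$ has vanishing curl and divergence, hence is harmonic; it is tempered because \eqref{hyp001} is assumed for $v$ and $w\in L^{6}(\R^{3})\subset\mathscr S'$, so it is a polynomial by Remark \ref{rem.000}; and since $w\in L^{6}(\R^{3})$ satisfies \eqref{hyp001} (by Chebyshev's inequality, cf. Lemma \ref{lem.jhgf}), the polynomial $v-w$ satisfies \eqref{hyp001} as well, hence vanishes (Lemma \ref{cla.548uyt}). Thus $v=w\in L^{6}(\R^{3})$ with $\nabla v\in L^{2}(\R^{3})$; this is the only place where the weak hypothesis \eqref{hyp001} is genuinely used, everything below being local.

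Next I would view $(v,p)$ as a solution of the Stokes system $-\nu\Delta v+\nabla p=-\dive(v\otimes v)$, $\dive v=0$, and bootstrap via the interior $L^{q}$ and Schauder estimates for the Stokes operator (see \cite{MR2808162}), which control the pressure automatically. From $v\in\lloc6$ one has $v\otimes v\in\lloc3$, so $\dive(v\otimes v)\in W^{-1,3}_{\text{loc}}$ and $v\in W^{1,3}_{\text{loc}}$; since $W^{1,3}(\R^{3})$ is the borderline Sobolev space this gives only $v\in\lloc q$ for every finite $q$. One more step, using $\nabla(v\otimes v)=(\nabla v)\otimes v+v\otimes\nabla v\in\lloc q$ for some $q<3$ close to $3$, gives $v\in W^{2,q}_{\text{loc}}\hookrightarrow C^{0,\alpha}_{\text{loc}}$, hence $v\in L^{\io}_{\text{loc}}$ and $\nabla v\in\lloc q$ for all finite $q$; then $\dive(v\otimes v)\in\lloc q$ for all $q$, so $v\in W^{2,q}_{\text{loc}}\hookrightarrow C^{1,\alpha}_{\text{loc}}$ and $p\in C^{1,\alpha}_{\text{loc}}$, whence $-\nu\Delta v=-\nabla p-(v\cdot\nabla)v\in C^{0,\alpha}_{\text{loc}}$ and Schauder gives $v\in C^{2,\alpha}_{\text{loc}}$. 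Iterating, $v\in\moo(\R^{3})$, and then $p\in\moo(\R^{3})$ from the momentum equation.

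For the decay I would fix $x_{0}\in\R^{3}$ with $\val{x_{0}}$ large and re-run the same bootstrap on the balls $B(x_{0},2)\supset B(x_{0},1)\supset B(x_{0},1/2)$. All the interior estimates used are linear in the forcing $v\otimes v$, with absolute constants, and the only input varying with $x_{0}$ is $\delta(x_{0}):=\norm{v}_{L^{6}(B(x_{0},2))}+\norm{\nabla v}_{L^{2}(B(x_{0},2))}$, which tends to $0$ as $\val{x_{0}}\to+\io$ because $v\in L^{6}(\R^{3})$ and $\nabla v\in L^{2}(\R^{3})$. Carrying this smallness through the bootstrap, each $\norm{v}_{C^{k}(B(x_{0},1/2))}$ is bounded by a fixed function of $\delta(x_{0})$ that tends to $0$; combined with the interior smoothness already obtained on any fixed compact set, this shows that $v$ and all its derivatives are bounded on $\R^{3}$ with limit $0$ at infinity (in particular the weak condition \eqref{hyp001} is upgraded to \eqref{hyp001+++}).

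The hard part will be the criticality of the initial bound: in dimension $3$ the class $v\in L^{6}\cap\dot H^{1}$ is exactly scale-invariant for the quadratic term, so the first bootstrap step lands only on the borderline space $W^{1,3}_{\text{loc}}$, which is \emph{not} contained in $L^{\io}_{\text{loc}}$; one then has to extract a genuine H\"older gain (through $W^{2,q}_{\text{loc}}$ with $q<3$ close to $3$) before the iteration becomes super-critical and routine, and one must make sure this extraction proceeds with absolute constants so that the smallness in the last step genuinely propagates.
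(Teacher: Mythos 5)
Your proposal is correct in outline, but it follows Galdi's original physical-space route rather than the one this paper actually uses to establish the statement. The first step (Biot--Savart field $w=\curl\bigl((-\Delta)^{-1}\curl v\bigr)$, harmonicity of $v-w$, elimination of the polynomial via $\mathcal L_{(0)}$) is essentially identical to the paper's Lemma \ref{lem.reg001}. After that the two arguments diverge: you run an interior $L^{q}$/Schauder bootstrap for the Stokes system and then recover decay by re-running it on unit balls escaping to infinity, exploiting the smallness of $\norm{v}_{L^{6}}+\norm{\nabla v}_{L^{2}}$ on such balls; the paper instead works entirely on the Fourier side, first showing $\widehat v\in L^{1}$ (Theorem \ref{thm.hgf564}) and then bootstrapping the weighted bounds $\int\valjp{\xi}^{s}\val{\widehat v(\xi)}\,d\xi<+\io$ through the convolution structure of the nonlinearity (Theorem \ref{thm.214trq}), so that smoothness \emph{and} decay of all derivatives come simultaneously from the Riemann--Lebesgue lemma via $\mathcal V^{s,\omega}\subset C^{s}_{(0)}$. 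Your approach is more elementary and self-contained at the level of this one theorem; the paper's approach buys strictly more, namely membership of $v$ in the algebra $\mathcal A$, which is stable under singular integrals (unlike $C^{0}_{(0)}$ or $L^{\io}$) and is what powers the later Liouville theorems. One point you should tighten: in the escaping-balls step the interior Stokes estimates also involve the local norm of the (normalized) pressure on $B(x_{0},2)$, so you need to record that $p=\tilde p=\val{D}^{-2}\dive\p_{j}(v_{j}v)\in L^{3}(\R^{3})$ (cf. \eqref{211oiu}--\eqref{fea541}), whence $\norm{p}_{L^{3}(B(x_{0},2))}\to 0$ as $\val{x_{0}}\to+\io$; with that addition the smallness does propagate through the bootstrap as you claim.
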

 Let us make a short list of known additional conditions ensuring the identical vanishing of $v$.
\emph{Let $v$ be a vector field satisfying the assumptions of Conjecture \ref{coj001}.
}
\begin{enumerate}\label{thm.known2}
\item\label{111kno} Assuming that $v$ belongs to $L^{\frac92}(\R^{3}, \R^{3})$ implies $v\equiv 0$,
a result due to G.P.~Galdi in  \cite{MR2808162}.
\item\label{222kno} Assuming that  $\int_{\R^{3}}{\val{v(x)}^{\frac92}}\bigl[\ln(2+\val{v(x)}^{-1})\bigr]^{-1} dx<+\io$, implies that $v\equiv 0$, 
a logarithmic improvement of the previous result,
due to D.~Chae \& J.~Wolf in 
\cite{MR3548261}.
\item\label{333kno} Assuming that $\Delta v$ belongs to $L^{6/5}(\R^{3}, \R^{3})$ implies $v\equiv 0$,  a result due to D.~Chae in \cite{MR3162482}. For this result, it is not necessary to assume \eqref{hyp002}; moreover, the reader may note that the homogeneity of this hypothesis is the same as \eqref{hyp002}.
In particular, we have with $v_{\lambda}=w$ in \eqref{126126},
$$
\frac{\norm{\curl v_{\lambda}}_{L^{2}}}{\norm{\curl v}_{L^{2}}}=\frac{\norm{\curl^{2} v_{\lambda}}_{L^{6/5}}}{\norm{\curl^{2} v}_{L^{6/5}}}=\lambda^{1/2}.
$$
\item\label{444kno} Assuming that $v=\curl w$, with $w\in \text{BMO}(\R^{3},\R^{3})$ implies $v\equiv 0$,  a result due to 
G. ~Seregin in \cite{MR3538409}.
\item\label{445kno} Assuming that 
$
\norm{v}_{L^{\frac92, \io}}\le \delta\bigl(\nu\norm{\curl v}^{2}_{L^{2}}\bigr)^{1/3}
$
with a small enough $\delta$,
H. Kozono, Y. Terasawa,  and Y. Wakasugi in \cite{MR3571910}
were able to prove that $v\equiv 0$. That result was generalized in the G. Seregin \& W. Wang' article \cite{MR3937507}. 
\item[(6)] Assuming that for some $q,\beta$, such that $q\in(\frac32, 3), \beta>\frac{6q-3}{8q-6}$, we have,
$$
\sup_{R>0} R^{\beta}\left(\frac{1}{\val{B_{R}}}\int_{B_{R}}\val{v(x)}^{q}\right)^{\frac1q}<+\io,
$$
G. Seregin proved in \cite{MR3538409}
that $v\equiv 0$.
\end{enumerate}
Many other results are known, but we can remark at this point that the result \eqref{111kno} in the previous list is much better than the scaling at $L^{3}$, but is still quite far from the natural $L^{6}$ which follows from \eqref{hyp002}. 
Using the notations of Definition \ref{def.145gfd}, we have in three dimensions
 $$
 \underbracket[0.2pt] {L^{3}_{(0)}=\bigcap_{3\le p\le \infty}L^{p}_{(0)}}_{\text{scaling}}\subset \underbracket[0.2pt] {L^{4.5}_{(0)}=\bigcap_{4.5\le p\le \infty}L^{p}_{(0)}}_{\text{Galdi's assumption}}
 \subset {\underbracket[0.2pt]{L^{6}_{(0)}=\bigcap_{6\le p\le \infty}L^{p}_{(0)}}_{\text{standard set}}}.
 $$
 Moreover the same proof as the proof of (1) above works for all dimensions $d\ge 4$,
  assuming only the hypotheses of Conjecture \ref{coj001}, without additional
integrability condition: that conjecture is thus proven in dimension $\ge 4$
(see G.P. Galdi's Chapter X in \cite{MR2808162}) as well as in dimension $2$,
from a paper due to  D.Gilbarg and H.F.Weinberger in \cite{MR501907}.
The \emph{Bernoulli head pressure} $Q$, defined by
 \begin{equation}\label{modpr0}
Q=p+\frac12\val{v}^{2},
\end{equation}
plays a key role in many results.
A most important structural result on the function $Q$ is given by the following result, 
due to D. Chae in \cite{MR3959933}.
\begin{theorem}[Chae's Theorem]\label{thm.chae}
 Let $v$ be a vector field satisfying the assumptions of Conjecture \ref{coj001}.
 Then the Bernoulli head pressure $Q$  takes its values in a compact set $[-M_{0}, 0]$, where $M_{0}\ge 0$.
 Moreover, if $\curl v$ is not identically 0, the head pressure $Q$ is taking only \emph{negative}
 values.
\end{theorem}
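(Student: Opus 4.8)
The plan is to show that the Bernoulli head pressure $Q$ solves a scalar second-order elliptic equation with a divergence-free drift and a nonnegative right-hand side, and then to combine a maximum principle with the decay of $v$ at infinity. Throughout I use Galdi's Theorem \ref{thm.known1}, which makes $v$ smooth with all derivatives bounded and vanishing at infinity, so that every pointwise computation below is licit, and the pressure is normalized by an additive constant as specified in Step 2.

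\medskip\noindent\textbf{Step 1: the equation for $Q$.} Write $\omega=\curl v$. Since $\dive v=0$, one has $\sum_{j}\p_{j}(v_{j}v)=(v\cdot\nabla)v=\tfrac12\nabla(\val v^{2})-v\times\omega$, so \eqref{SNSI} reads $-\nu\Delta v-v\times\omega+\nabla Q=0$ with $Q=p+\tfrac12\val v^{2}$. Dotting this with $v$, and using $v\cdot(v\times\omega)=0$ together with $v\cdot\Delta v=\tfrac12\Delta(\val v^{2})-\val{\nabla v}^{2}$, gives $\tfrac{\nu}{2}\Delta(\val v^{2})=\nu\val{\nabla v}^{2}+v\cdot\nabla Q$. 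Taking instead the divergence, and using $\dive(v\times\omega)=\omega\cdot\curl v-v\cdot\curl\omega=\val\omega^{2}+v\cdot\Delta v$ (since $\curl\omega=-\Delta v$), gives $\Delta Q=\val\omega^{2}+\tfrac12\Delta(\val v^{2})-\val{\nabla v}^{2}$. Eliminating $\val{\nabla v}^{2}$ and $\Delta(\val v^{2})$ between these two identities yields
\[
\nu\,\Delta Q-v\cdot\nabla Q=\nu\,\val{\curl v}^{2}\ \ge\ 0,
\]
so $Q$ is a smooth subsolution of the operator $L:=\nu\Delta-v\cdot\nabla$, which has bounded coefficients and no zeroth-order term.

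\medskip\noindent\textbf{Step 2: decay of $Q$ at infinity.} By Galdi's Theorem, $\val v^{2}\to0$ uniformly at infinity. For the pressure, \eqref{hyp002} together with $\dive v=0$ and the decay of $v$ forces, via the Biot--Savart law, that $v$ is an order $-1$ operator applied to $\curl v\in L^{2}(\R^{3})$, whence $v\in L^{6}(\R^{3})$ (the ``standard set'' of the Introduction). Taking the divergence of \eqref{SNSI} gives $\Delta p=-\sum_{i,j}\p_{i}\p_{j}(v_{i}v_{j})$, so, up to an additive constant (which we fix this way) $p$ is a sum of second-order Riesz transforms of the products $v_{i}v_{j}\in L^{3}(\R^{3})$; since the Riesz transforms are bounded on $L^{3}$ we obtain $p\in L^{3}(\R^{3})$. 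Moreover $\nabla p=\nabla Q-\tfrac12\nabla(\val v^{2})$ is bounded (again by Galdi), and an $L^{3}$ function with bounded gradient tends to $0$ uniformly at infinity; hence $Q=p+\tfrac12\val v^{2}\to0$ at infinity, and in particular $Q$ is bounded on $\R^{3}$.

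\medskip\noindent\textbf{Step 3: maximum principle and conclusion.} Since $Q$ is smooth, bounded, satisfies $LQ\ge0$, and vanishes at infinity, the weak maximum principle for $L$ on the balls $B_{R}=\{\val x<R\}$ gives $Q\le\sup_{\p B_{R}}Q$ on $B_{R}$; letting $R\to\infty$ and using the decay yields $Q\le0$ on $\R^{3}$. With the decay and continuity of $Q$ this shows that $Q$ takes its values in a compact interval $[-M_{0},0]$ with $M_{0}\ge0$. Finally, if $Q(x_{0})=0$ at some point $x_{0}$, then $Q$ attains its maximum $0$ at an interior point, so by the strong maximum principle for $L$ (no zeroth-order term) $Q\equiv0$; then $\nu\val{\curl v}^{2}=LQ\equiv0$, i.e.\ $\curl v\equiv0$. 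Contrapositively, if $\curl v$ is not identically $0$ then $Q<0$ everywhere, which is the remaining assertion.

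\medskip\noindent The routine part is Step 1. The hard part will be Step 2: extracting enough integrability of $v$ from $\curl v\in L^{2}$ to run the Calder\'on--Zygmund estimate for the pressure, and then upgrading $L^{3}$-integrability plus a bounded gradient into genuine uniform decay of $Q$ at infinity --- it is precisely this decay that lets the maximum principle on $\R^{3}$ bite.
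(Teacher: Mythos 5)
Your proof is correct, and for the central step it takes a genuinely different route from the paper. The scalar identity you derive in Step 1 is exactly the paper's equation \eqref{315315}, $\nu\Delta Q - v\cdot\nabla Q=\nu\val{\curl v}^{2}\ge 0$; but to deduce $Q\le 0$ from it the paper (Theorem \ref{thm.chae+}) follows Chae's multiplier method: it pairs the equation with $f(Q)v$ for a carefully chosen increasing $f$ supported in $[\varepsilon_{0},+\infty)$, invokes Sard's theorem to integrate by parts over the sublevel sets of $Q$, concludes that $\{Q>0\}\subset\{\curl v=0\}$, and then runs a connectedness argument to show $\{Q>0\}$ is empty. You instead apply the weak maximum principle for $L=\nu\Delta-v\cdot\nabla$ on balls $B_{R}$ and let $R\to\infty$ using the decay of $Q$; given the smoothness and uniform decay that are available here, this is shorter and equally rigorous (and it is the same mechanism the paper itself uses, via the strong maximum principle, for the final strict-negativity assertion, so both proofs coincide on that last step). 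The other divergence is in how the decay $Q\to 0$ is obtained: the paper proves $Q\in\mathcal W\subset C^{0}_{(0)}$ through its Wiener-algebra machinery, whereas you use the elementary fact that a continuous $L^{3}$ function with bounded gradient vanishes at infinity; both work. Two small points of care, neither a gap: the claims ``$v\in L^{6}$ via Biot--Savart'' and ``$p$ equals second-order Riesz transforms of $v_{i}v_{j}$'' each require the decay hypothesis \eqref{hyp001} to eliminate a harmonic-polynomial correction (this is precisely what Lemmas \ref{lem.reg001} and \ref{lem.654oof} of the paper do), and the boundedness of $\nabla p$ is best read off directly from the equation as $\nabla p=\nu\Delta v-(v\cdot\nabla)v$ rather than from $\nabla Q$, to avoid a circular appearance.
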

Several Liouville theorems are obtained by assuming various properties of the Bernoulli head pressure.
\emph{Let $v$ be a vector field satisfying the assumptions of Conjecture \ref{coj001}.}
\begin{enumerate}
\item[(7)]\label{145kno} Assuming $
 \nabla\sqrt{\val Q}\in L^{2}
$  implies $v\equiv 0$,  a result due to  D.~Chae in \cite{MR4028998}.
Note that this assumption means that
\begin{equation}\label{132ert}
\int_{\R^{3}}\frac{\val{(\nabla Q)(x)}^{2}}{\val{Q(x)}} dx<+\io,
\end{equation}
noting that Theorem \ref{thm.chae} implies that for all $x\in \R^{3}$, $Q(x)<0$, so that Condition \eqref{132ert} makes sense.
\item[(8)] D.~Chae's paper \cite{MR4028998} contains  better results than (7), and in particular, if
\begin{equation}\label{132er+}
\int_{\R^{3}}\frac{\val{(\nabla Q)(x)}^{2}}{\val{Q(x)}\Lg(\frac{eM_{0}}{\val{Q(x)}})} dx<+\io,
\end{equation}
this proves that $v$ must be trivial; here, following Theorem \ref{thm.chae}, we have used the fact that for all $x$ in $\R^{3}$, we have 
$
-M_{0}\le Q(x)<0,
$
securing the fact that
$
\frac{eM_{0}}{\val{Q(x)}}\ge e
$
and thus
$
\Lg\bigl(\frac{eM_{0}}{\val{Q(x)}}\bigr)\ge 1,$
so that we have of course $\eqref{132ert}\Longrightarrow\eqref{132er+}$.
Some more general hypotheses involving iterated logarithms are given in \cite{MR4028998}.
\item[(9)]\label{245kno}  Assuming $
\sup\frac{\val v^{2}}{\val{Q}}<+\io
$  implies $v\equiv 0$,
a result due to  D.~Chae in \cite{MR4205084}.
\item[(10)]\label{345kno} 
Assuming 
$$
\limsup_{\val x\rightarrow+\io}\frac{\val{v}^{2}}{\val{Q}\
{\Lg(\frac{eM_{0}}{\val Q})}
}<+\io,
$$
 implies $v\equiv 0$; this is 
a  logarithmic improvement of (9) in \cite{MR4903793} due to 
J.Bang and Z.Yang.\footnote{Note that one can prove that $\val{Q}>0$ with limit $0$ at infinity, so that the condition in (8) makes sense since     $1/\val{Q}$ tends to $+\io$ at infinity. Also that condition is obviously weaker than the condition in (7).}
\item[(11)]
The paper \cite{MR4719441},
by D. Chamorro \& G. Vergara-Hermosilla is investigating the case of Lebesgue spaces with variable exponents and provide a Liouville theorem for $v$ in that type of space.
\end{enumerate}
The Bernoulli head pressure plays an important r\^{o}le in the papers \cite{MR3275850},
\cite{MR4469405}.
History of this topic within a broader context is also available in G.P. Galdi's Chapter I of \cite{MR2808162}.
In the 
paper
\cite{MR2545826},
by G. Koch,
N. Nadirashvili,
G. Seregin
and  V. \v Sver\'ak, 
several Liouville theorems are proven for axisymmetric solutions with no swirl of the time-dependent Navier-Stokes system; 
the same type of assumption is used in the paper \cite{MR3275850},
by
M.V. Korobkov,
K. Pileckas
and R.~Russo.
We refer also the reader to the
survey paper \cite{MR3833509} by
G.~Seregin and T. Shilkin, and to the book \cite{MR3822765}, authored by T.-P. Tsai.
\subsection{A preliminary formulation of our results}
It seems better to formulate right now some versions of our results, although some technical features
of our paper will need  further clarification. This section is devised to hopefully motivate the reader to go on and to provide a glimpse at our new results.
We write  here the statements of our results and postpone the proofs to Sections \ref{sec.3333} and \ref{sec.new4} of the paper. 
\par
We shall start
with some helpful  regularity results, going a little bit beyond Theorem \ref{thm.known1}, although with the same homogeneity.
Recalling the definition of the Wiener\footnote{A couple of facts on the Fourier transformation are 
given in Section \ref{sec.fourier} of our Appendix.} 
algebra\footnote{Since $L^{1}(\R^{3})$ is a Banach algebra
for \emph{convolution} of functions,
$\mathcal W$ is a Banach algebra
for \emph{multiplication} of functions, and we have, thanks to the Riemann-Lebesgue Lemma,
$$
\mathcal W\subset C^{0}_{(0)}(\R^{3})\subset L^{\io}(\R^{3}),
$$
where the two inclusions are strict embeddings.
{\it Stricto sensu}, we should distinguish between the scalar-valued multiplicative algebra,
$$
\mathcal W(\R^{3}, \R)=\{f\in \mathscr S'(\R^{3}, \R), \hat f\in L^{1}(\R^{3},\C)\},
$$
and the vector-valued multiplicative algebra,
$$
\mathcal W(\R^{3}, \R^{3})=\{v\in \mathscr S'(\R^{3}, \R^{3}), \hat v\in L^{1}(\R^{3},\C)\}.
$$
The reader may note that a relevant choice of multiplication  for $\mathcal W(\R^{3}, \R^{3})$ is the vector product; in that case for $v, w\in \mathcal W(\R^{3}, \R^{3})$, we have 
$$
\mathcal F\bigl(v\times w\bigr)=\hat v\star \hat w,
$$
where the convolution $\star$ of vector fields is defined by
\begin{equation}\label{}
(V\star W)(\xi)=\int \bigl( V(\eta)\times W(\xi-\eta)\bigr) d\eta (2\pi)^{-3/2}.
\end{equation}
}
$$
\mathcal W=\{v\in \mathscr S'(\R^{3}), \widehat v\in L^{1}(\R^{3})\},
$$
we define  for $s\in \R$,
\begin{align}\label{}
\mathcal V^{s,\omega}&=\{v\in \mathcal W, \valjp{D}^{s}v\in \mathcal W\}, \quad\valjp{\xi}^{s}=(1+\val \xi^{2})^{s/2},
\\
\mathcal V^{\infty,\omega}&=\cap_{s\ge 0}\mathcal V^{s,\omega},
\\
\mathcal A&=\{v\in \mathcal V^{\infty, \omega},\ \nabla v\in H^{\infty}(\R^{3})=W^{\infty,2}(\R^{3})\}.
\end{align}
We note that $\mathcal A$ can be defined directly as
\begin{equation}\label{}
\mathcal A=\Biggl\{v\in \mathcal W, \text{such that }
\left[\begin{array}{l}
 \forall \alpha\in \N^{3},\  \p_{x}^{\alpha}v\in \mathcal W,
 \\
 \forall \beta\in \N^{3} \text{ with }\val\beta\ge 1,\  \p_{x}^{\beta}v\in L^{2}(\R^{3}).
\end{array}\right.
\Biggr\}
\end{equation}
\begin{theorem}\label{thm.54yt}
Let $v$ be a vector field satisfying the assumptions  of Conjecture \ref{coj001}. Then $v$ belongs to 
 $\mathcal A$, which  is a multiplicative algebra, stable by differentiation and by the action of standard singular integrals.
\end{theorem}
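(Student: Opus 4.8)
The plan is to separate the two assertions: the structural properties of $\mathcal A$ are soft, while the membership $v\in\mathcal A$ is a Fourier-side bootstrap resting on Galdi's Theorem \ref{thm.known1} and on the vorticity form of \eqref{SNSI}. Stability by differentiation is immediate from the second description of $\mathcal A$: if $v\in\mathcal A$ then $\partial^{\alpha}(\partial_{j}v)=\partial^{\alpha+e_{j}}v\in\mathcal W$ for every $\alpha$, and $\partial^{\beta+e_{j}}v\in L^{2}$ for every $|\beta|\ge1$. That $\mathcal A$ is a multiplicative algebra follows from the Leibniz rule, from the fact that $\mathcal W=\mathcal F L^{1}$ is itself a multiplicative algebra ($\widehat{fg}$ is, up to a constant, $\widehat f\star\widehat g\in L^{1}$ by Young's inequality), and from $\mathcal W\subset L^{\io}$ (Riemann--Lebesgue): in $\partial^{\beta}(uv)=\sum_{\gamma}\binom\beta\gamma(\partial^{\gamma}u)(\partial^{\beta-\gamma}v)$ with $|\beta|\ge1$, one of the two factors always lies in $L^{2}$ and the other in $\mathcal W\subset L^{\io}$. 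Finally, a standard singular integral is the Fourier multiplier by some $m\in L^{\io}$ smooth and homogeneous of degree $0$ off the origin; such an operator $T$ commutes with every $\partial^{\alpha}$, maps $\mathcal W$ into $\mathcal W$ (as $m\widehat f\in L^{1}$ when $\widehat f\in L^{1}$) and $L^{2}$ into $L^{2}$ (Plancherel), so $\partial^{\alpha}(Tv)=T(\partial^{\alpha}v)\in\mathcal W$ and $\partial^{\beta}(Tv)=T(\partial^{\beta}v)\in L^{2}$, i.e.\ $Tv\in\mathcal A$.

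For the membership, set $\omega=\curl v$, so $\omega\in L^{2}$ by \eqref{hyp002} and $\dive\omega=0$. Taking $\curl$ of \eqref{SNSI} eliminates the pressure and yields the stationary vorticity equation $\nu\Delta\omega=(v\cdot\nabla)\omega-(\omega\cdot\nabla)v=\dive(v\otimes\omega)-(\omega\cdot\nabla)v=:g$, using $\dive v=0$. By Theorem \ref{thm.known1}, $v$ is smooth with all its derivatives bounded, so $v\in W^{\io,\io}(\R^{3})$; then $g\in H^{-1}$ (indeed $\dive(v\otimes\omega)\in H^{-1}$ since $v\otimes\omega\in L^{2}$, and $(\omega\cdot\nabla)v\in L^{2}$), and combining $\widehat\omega\in L^{2}$ at low frequencies with $|\xi|^{2}|\widehat\omega|^{2}=\nu^{-2}|\widehat g|^{2}|\xi|^{-2}\lesssim\nu^{-2}|\widehat g|^{2}\valjp{\xi}^{-2}$ for $|\xi|\ge1$ gives $\omega\in H^{1}$. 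Inductively, if $\omega\in H^{k}$, the tame product estimates give $v\otimes\omega\in H^{k}$ and $(\omega\cdot\nabla)v\in H^{k-1}$, so $g\in H^{k-1}$, and the same frequency splitting yields $\omega\in H^{k+1}$. Hence $\omega\in H^{\io}(\R^{3})$.

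It remains to transfer this regularity to $v$ itself. Let $u$ be the tempered field determined by $\widehat u(\xi)=i\xi\times\widehat\omega(\xi)|\xi|^{-2}$; this is well defined because near the origin $|\widehat u|\le|\widehat\omega|/|\xi|$ with $|\xi|^{-1}\in L^{2}(\{|\xi|\le1\})$ in dimension $3$. Since $\dive\omega=0$ one checks $\curl u=\omega$ and $\dive u=0$, so $v-u$ has harmonic components and is tempered, hence is a polynomial by Remark \ref{rem.000}. Moreover $\widehat u\in L^{1}$ (low frequencies as above, high frequencies from $\omega\in H^{1}$), so $u$ is continuous and vanishes at infinity, and in particular satisfies \eqref{hyp001}; as $v$ does too, the polynomial $v-u$ satisfies \eqref{hyp001} and must therefore vanish (cf.\ Lemma \ref{cla.548uyt}). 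Consequently $\widehat{\partial^{\alpha}v}(\xi)=(i\xi)^{\alpha}\,i\xi\times\widehat\omega(\xi)|\xi|^{-2}$, so $|\widehat{\partial^{\alpha}v}(\xi)|\le|\xi|^{|\alpha|-1}|\widehat\omega(\xi)|$; splitting at $|\xi|=1$ --- Cauchy--Schwarz with $|\xi|^{|\alpha|-1}\in L^{2}(\{|\xi|\le1\})$ on the unit ball, and $\omega\in H^{\io}$ outside it --- shows $\partial^{\alpha}v\in\mathcal W$ for every $\alpha$, while for $|\beta|\ge1$ one has $|\xi|^{|\beta|-1}\le1$ on the unit ball and $\omega\in H^{|\beta|-1}$, whence $\partial^{\beta}v\in L^{2}$. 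This is exactly $v\in\mathcal A$.

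The step I expect to be the main obstacle is the low-frequency bookkeeping. Because $v$ itself need not belong to $L^{2}$, no homogeneous Sobolev bootstrap is available; one must first remove the possible polynomial component of $v$ using the weak decay \eqref{hyp001}, then exploit the purely dimensional fact that $|\xi|^{-1}$ is square-integrable near $\xi=0$ in $\R^{3}$ --- which is precisely what makes $\widehat{\partial^{\alpha}v}$ integrable near the origin for every $\alpha$ once only $\curl v\in L^{2}$ is known. The high-frequency side is then routine elliptic bootstrapping fuelled by Galdi's a priori estimates, the inductive step requiring nothing beyond the classical product ("tame") estimates in $H^{k}$.
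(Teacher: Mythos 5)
Your proof is correct, but it takes a genuinely different route from the paper's. The paper never invokes Galdi's Theorem \ref{thm.known1}: it reproves (and strengthens) that regularity from scratch, first obtaining $v\in\mathcal W$ with the quantitative bound $\norm{\hat v}_{L^{1}}\lesssim \nu^{-1}\norm{\curl v}_{L^{2}}^{2}$ (Theorem \ref{thm.hgf564}), then running a double bootstrap directly on $v$ — first in the Wiener--Sobolev scale $\mathcal V^{s,\omega}$, using the divergence structure $\mathbb P(\rot v\times v)=\nabla\cdot S_{0}(v\otimes v)$ together with the Banach-algebra property of $\mathcal V^{s,\omega}$ (Theorem \ref{thm.214trq}), and then in $W^{k,2}$ via a Fourier-side convolution argument that exploits $\xi\cdot\hat v(\xi)=0$ (Theorem \ref{thm.214t++}). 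You instead take Galdi's $v\in W^{\io,\io}$ as a black box, bootstrap the vorticity $\omega=\curl v$ in the $H^{k}$ scale through the curled equation (which eliminates the pressure), and transfer back to $v$ through $\hat v(\xi)=i\xi\times\hat\omega(\xi)\val\xi^{-2}$ after removing the harmonic polynomial — this last step is exactly the mechanism of the paper's Lemma \ref{lem.reg001}, and your low-/high-frequency splittings at $\val\xi=1$ are sound. Your route is shorter and avoids the delicate convolution bookkeeping of Theorem \ref{thm.214t++}, and it is not circular since Theorem \ref{thm.known1} is an independently established result from \cite{MR2808162}; what it gives up is precisely what the paper emphasizes in Remark \ref{rem.54potr}, namely that all norms of $v$ in $\mathcal A$ are controlled by $\nu$ and $\gamma_{0}=\norm{\curl v}_{L^{2}}$ alone, whereas your bounds inherit the unquantified constants of Galdi's theorem. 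Your treatment of the structural assertions (algebra, stability under $\p_{x}^{\alpha}$ and under bounded Fourier multipliers) matches Proposition \ref{pro.gfd987} in substance, done on the physical side via Leibniz rather than on the Fourier side. One harmless slip: when $\omega\in H^{k}$ and $v\in W^{\io,\io}$, the term $(\omega\cdot\nabla)v$ lies in $H^{k}$, not merely $H^{k-1}$; this only makes your estimate for $g$ slightly wasteful and does not affect the induction.
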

\begin{nb}
 This theorem is proven in Corollary \ref{cor.lkj987}. The functional analytic structure of 
 $\mathcal V^{s,\omega}$ for $s\ge 0$
 is a Banach algebra structure whereas $\mathcal V^{\io,\omega}$ and $\mathcal A$
are Fr\'echet algebras.
\end{nb}
Since we have $\mathcal A\subset C^{\io}_{(0)}$, the latter space standing for the $\moo$ functions on $\R^{3}$ with limit $0$ at infinity as well as all their derivatives, Theorem \ref{thm.54yt} implies Theorem \ref{thm.known1}; the regularity results given by this theorem will be useful for the proofs of Theorems \ref{thm.galnew++},
\ref{thm.chaenew++},
 thanks to the singular integrals invariance property,
which fails to be true for the algebra $C^{\io}_{(0)}$.
On the other hand, the global homogeneity properties of $\mathcal A$ are not much better than the properties of $C^{\io}_{(0)}$, although we have introduced some discrepancy between small and large frequencies in the definition of $\mathcal A$: if $\beta_{1}$ is a smooth bounded function defined on
$\R^{3}$, such that $0\notin \supp \beta_{1}$, 
we get for $v\in \mathcal A$,
\begin{equation}\label{}
\beta_{1}(D) v\in L^{2}(\R^{3}),\text{\quad as well as all its derivatives.}
\end{equation}
Indeed, we have 
$
\mathcal F\bigl(D^{\gamma}\beta_{1}(D) v\bigr)(\xi)=\xi^{\gamma}\beta_{1}(\xi)\hat v(\xi)
$
and with $\rho_{0}>0$,
\begin{multline*}
\norm{D^{\gamma}\beta_{1}(D) v}_{L^{2}}^{2}\le \int_{\val{\xi}\ge \rho_{0}} \val\xi^{-2}\val\xi^{2\val \gamma+2}\norm{\beta_{1}}_{L^{\io}}^{2}
\val{\hat v(\xi)}^{2}d\xi
\\
\le \rho_{0}^{-2}\norm{\beta_{1}}_{L^{\io}}^{2}
\int\val\xi^{2\val \gamma+2}
\val{\hat v(\xi)}^{2}<+\io,\quad\text{\small since $v$ belongs to $\mathcal A$.}
\end{multline*}
We have also with $\alpha_{0}$ smooth compactly supported in $\R^{3}$, 
\begin{equation}\label{}
\alpha_{0}(D) v\in \mathcal W\cap L^{6}(\R^{3}),\text{\quad as well as all its derivatives.}
\end{equation}
Indeed, we have 
$$
D^{\gamma}\alpha_{0}(D) v=\alpha_{0}(D)\underbracket[0.1pt]{D^{\gamma} v}_{
\substack
{{\in \mathcal W, \text{ since }}
\\
v\in \mathcal V^{\io,\omega}}}
\
\text{\footnotesize which belongs to $\mathcal V^{\io,\omega}$
since $\alpha_{0}(D)$ is a  Fourier multiplier.}
$$
Moreover, we have 
$
\curl\bigl(D^{\gamma}\alpha_{0}(D) v\bigr)=D^{\gamma}\alpha_{0}(D) \curl v,
$
which belongs to $L^{2}$
since $D^{\gamma}\alpha_{0}(D)$ is a  Fourier multiplier and $\curl v\in L^{2}$.
Also we have that  $D^{\gamma}\alpha_{0}(D) v$  belongs to $\mathcal W$
since $D^{\gamma} v$ belongs to $\mathcal W$ and $\alpha_{0}(D)$ is a Fourier multiplier, implying that $D^{\gamma}\alpha_{0}(D) v\in \mathcal W$. 
We are also able to prove that $v$ belongs to $L^{6}$, which implies that 
$D^{\gamma}\alpha_{0}(D)v$ belongs to $L^{6}$.
Our first Liouville theorem is a generalization of the result (\ref{111kno}) on page \pageref{111kno}.
\begin{theorem}\label{thm.galnew++}
Let us assume that the assumptions of Conjecture \ref{coj001} are fulfilled for a vector field $v$. 
Moreover we assume that there exists $\alpha_{0}\in \mooc(\R^{3}_{\xi})$ whose support contains a neighborhood of $0$ such that
$$
\alpha_{0}(D) v\in L^{9/2}(\R^{3}).
$$
Then $v\equiv 0$.
\end{theorem}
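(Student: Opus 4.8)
The plan is to reduce the assertion to the already quoted result~(\ref{111kno}) of page~\pageref{111kno} (Galdi's $L^{9/2}$ Liouville theorem): since $v$ satisfies the hypotheses of Conjecture~\ref{coj001}, it suffices to prove that $v$ itself belongs to $L^{9/2}(\R^{3},\R^{3})$, the extra assumption serving only to control the low frequencies of $v$ and Theorem~\ref{thm.54yt} taking care of the rest. First I would apply Theorem~\ref{thm.54yt}: $v\in \mathcal A$, hence $v\in \mathcal W\subset L^{\io}(\R^{3})$, and for every $\beta\in C^{\io}_{b}(\R^{3})$ with $0\notin\supp\beta$ one has $\beta(D)v\in L^{2}(\R^{3})$ (the computation recorded just after Theorem~\ref{thm.54yt}), while $\beta(D)v\in \mathcal W\subset L^{\io}$ as well, since Fourier multiplication by a bounded symbol maps $\mathcal W$ into itself. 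Interpolating, $\beta(D)v\in L^{2}\cap L^{\io}\subset L^{9/2}(\R^{3})$.

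Next I would choose $\phi\in \mooc(\R^{3}_{\xi})$ equal to $1$ on a neighborhood of $0$ and supported in the open set $\{\alpha_{0}\ne 0\}$; this is legitimate because the hypothesis that $\supp\alpha_{0}$ contains a neighborhood of $0$ forces $\alpha_{0}$ to be non-vanishing on a (possibly smaller) neighborhood of the origin. Writing $v=\phi(D)v+(1-\phi)(D)v$, the second summand lies in $L^{9/2}$ by the previous paragraph, applied with $\beta=1-\phi$, which is bounded and vanishes near $0$. For the first summand, set $m=\phi/\alpha_{0}$ on $\supp\phi$ and $m=0$ elsewhere: then $m\in \mooc(\R^{3}_{\xi})$, so $\mathcal F^{-1}m\in \mathscr S(\R^{3})\subset L^{1}(\R^{3})$, and $m\,\alpha_{0}=\phi$; hence $\phi(D)v=m(D)\bigl(\alpha_{0}(D)v\bigr)$, which belongs to $L^{9/2}(\R^{3})$ by Young's inequality (convolution with the integrable kernel $\mathcal F^{-1}m$), since $\alpha_{0}(D)v\in L^{9/2}$ by hypothesis. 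Therefore $v\in L^{9/2}(\R^{3},\R^{3})$, and result~(\ref{111kno}) gives $v\equiv 0$.

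The substantive step is the frequency decomposition of the second paragraph: it transfers the hypothesis from $\alpha_{0}(D)v$ to a genuine low-pass truncation $\phi(D)v$ of $v$, and this hinges on $\alpha_{0}$ being invertible near the origin — without that, the assumption $\alpha_{0}(D)v\in L^{9/2}$ would carry no information beyond the $\mathcal A$-regularity already supplied by Theorem~\ref{thm.54yt}. Everything else is soft: no new energy estimate is required, the argument resting only on the algebra and mapping properties of $\mathcal A$ and $\mathcal W$ and on the previously recorded theorem~(\ref{111kno}).
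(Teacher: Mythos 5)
Your proposal is correct, and it takes a genuinely different route from the paper's. The paper does not invoke Galdi's theorem as a black box: it reruns Galdi's energy identity
$0=\nu\int\chi_{\lambda}\val{\rot v}^{2}dx+\nu\poscal{\rot v}{\nabla\chi_{\lambda}\times v}-\poscal{Qv}{\nabla\chi_{\lambda}}$
and concentrates all the work on the cubic term, writing $Qv=S_{0}(v\otimes v)\,v$, splitting $v=v_{[0]}+v_{[1]}$, and disposing of every block containing a factor $v_{[1]}\in L^{2}$ by H\"older on $\supp\nabla\chi_{\lambda}$, so that the hypothesis is needed only for the single block $v_{[0]}S_{0}\bigl[v_{[0]}\otimes v_{[0]}\bigr]$. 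You instead observe that the regularity theory (Theorem \ref{thm.54yt} together with $\norm{m(D)u}_{\mathcal W}\le\norm{m}_{L^{\io}}\norm{u}_{\mathcal W}$) already places the high-frequency part of $v$ in $L^{2}\cap L^{\io}\subset L^{9/2}$, while division by $\alpha_{0}$ where it is invertible transfers the hypothesis to a genuine low-pass truncation $\phi(D)v=m(D)\alpha_{0}(D)v\in L^{9/2}$; hence $v\in L^{9/2}$ and result (\ref{111kno}) finishes. Your argument is shorter and makes explicit that, given the Wiener-algebra regularity, the stated hypothesis is in fact \emph{equivalent} to $v\in L^{9/2}$; the paper's version is self-contained (it reproves Galdi's theorem along the way) and sets up the localization and commutator machinery that is reused for Theorems \ref{thm.chaenew++} and \ref{thm.54ez}.

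One caveat. Your parenthetical claim that ``$\supp\alpha_{0}$ contains a neighborhood of $0$'' \emph{forces} $\alpha_{0}$ to be non-vanishing on a smaller neighborhood of the origin is false as a logical implication: take $\alpha_{0}(\xi)=\val{\xi}^{2}\chi(\xi)$, whose support contains a ball around $0$ even though $\alpha_{0}(0)=0$ (and for such a symbol $\alpha_{0}(D)v=-\chi(D)\Delta v\in L^{2}\cap\mathcal W\subset L^{9/2}$ automatically, so the hypothesis would be vacuous and the theorem would assert the full conjecture). The division step genuinely requires $\alpha_{0}$ to be elliptic near $0$, as you correctly emphasize in your closing paragraph; this is how the hypothesis must be read --- and it is what the paper's own proof assumes, taking $\alpha_{0}=1$ on a ball --- but it is an interpretation of the statement, not a consequence of it. With that reading fixed, your proof is complete.
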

\begin{nb}
This theorem is proven in Theorem \ref{thm.galnew}. 
 This result means that the assumption $v\in L^{9/2}$ is too strong and that it is enough to assume that $v_{[0]}$, the projection of $v$ onto the space of vectors with spectrum in a given  neighborhood of 0 belongs to $L^{9/2}$.
 A key point for doing the latter is that the part of $v$ with large spectrum belongs to $L^{2}$,
 namely that for any $\beta_{1}\in \mooc(\R^{3}_{\xi})$ such that $0\notin\supp \beta_{1}$,
 we do have $\beta_{1}(D) v\in L^{2}(\R^{3})$.
 \end{nb}
 Our next Liouville theorem is a generalization of the result (\ref{333kno}) on page \pageref{333kno} and is proven by Theorem \ref{thm.chaenew}.
 \begin{theorem}\label{thm.chaenew++}
Let us assume that the  assumptions of Conjecture \ref{coj001} are fulfilled for a vector field $v$. 
Moreover we assume that there exists $\alpha_{0}\in \mooc(\R^{3}_{\xi})$ whose support contains a neighborhood of $0$ such that
$$
 \alpha_{0}(D)\Delta v\in L^{6/5}(\R^{3}).
$$
Then $v\equiv 0$.
\end{theorem}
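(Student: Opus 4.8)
The plan is to derive the theorem from the mechanism behind Chae's result~(\ref{333kno}) on page~\pageref{333kno} (see \cite{MR3162482}), by splitting off the low‑frequency part of $v$, which is the only part that can fail to be very regular.

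First, Theorem~\ref{thm.54yt} gives $v\in\mathcal A$; in particular $v\in C^{\infty}_{(0)}(\R^{3})$, and from \eqref{hyp002}, $\dive v=0$ and the decay of $v$ one obtains $v\in L^{6}(\R^{3})$, $\nabla v\in L^{2}(\R^{3})$, and the pressure $p=R_{i}R_{j}(v_{i}v_{j})$ (normalised to vanish at infinity) belongs to $L^{3}(\R^{3})$, in fact to $\mathcal A$, since $\mathcal A$ is an algebra stable under the Riesz transforms. After a harmless preliminary reduction we may assume that $\alpha_{0}\equiv1$ on a ball $B(0,r_{0})$. Choose $\beta_{0}\in\mooc(\R^{3}_{\xi})$ with $\beta_{0}\equiv1$ near $0$ and $\supp\beta_{0}\subset B(0,r_{0})$, and set $v_{[0]}=\beta_{0}(D)v$ and $v_{[1]}=v-v_{[0]}=(1-\beta_{0}(D))v$. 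Since $\beta_{0}(D)$ is bounded on $L^{6/5}(\R^{3})$ and $\beta_{0}(D)\alpha_{0}(D)=\beta_{0}(D)$, the hypothesis $\alpha_{0}(D)\Delta v\in L^{6/5}$ yields
\[
\Delta v_{[0]}=\beta_{0}(D)\Delta v=\beta_{0}(D)\bigl(\alpha_{0}(D)\Delta v\bigr)\in L^{6/5}(\R^{3}).
\]
On the other hand, since $v\in\mathcal A$, the computation displayed in the discussion following Theorem~\ref{thm.54yt} (applied to $\beta_{1}(D)v$ with $\beta_{1}\in\moob(\R^{3}_{\xi})$, $0\notin\supp\beta_{1}$ and $\beta_{1}\equiv1$ on $\supp(1-\beta_{0})$) shows that $v_{[1]}$ and all its derivatives lie in $L^{2}(\R^{3})$; hence $v_{[1]}\in H^{\infty}(\R^{3})=\cap_{s\ge0}H^{s}(\R^{3})$, so $v_{[1]}$ together with all its derivatives belongs to every $L^{r}(\R^{3})$ with $r\in[2,\infty]$, and $v_{[1]}\in C^{\infty}_{(0)}(\R^{3})$.

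It then remains to run Chae's proof of~(\ref{333kno}) with $v$ decomposed as $v_{[0]}+v_{[1]}$. The only point in that proof where the \emph{global} hypothesis $\Delta v\in L^{6/5}$ enters is now supplied by $\Delta v_{[0]}\in L^{6/5}$; every term containing the high‑frequency part $v_{[1]}$ (or $\Delta v_{[1]}$, or a product involving $v_{[1]}$) is controlled by Sobolev norms of $v_{[1]}$, all of which are finite, so such contributions are negligible. Some bookkeeping is needed for the pressure, which does not localise in frequency: one writes $v\otimes v=v_{[0]}\otimes v_{[0]}+\bigl(v_{[0]}\otimes v_{[1]}+v_{[1]}\otimes v_{[0]}\bigr)+v_{[1]}\otimes v_{[1]}$ and $p=R_{i}R_{j}(v_{i}v_{j})$ accordingly, treating the first block as in Chae's argument and absorbing the remaining blocks using $v_{[1]}\in H^{\infty}$. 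This produces $\nabla v=0$, so $v$ is constant, and then \eqref{hyp001} forces $v\equiv0$. The main obstacle is precisely this last step: reproducing Chae's (scale‑invariant) argument so that the low/high splitting goes through cleanly, i.e. checking that all the mixed low--high error terms, and the pressure contributions that are not frequency‑local, are genuinely dominated by $\|\Delta v_{[0]}\|_{L^{6/5}}$ and by Sobolev norms of $v_{[1]}$; the auxiliary facts ($v\in L^{6}$, $\nabla v\in L^{2}$, $p\in L^{3}$, $v_{[1]}\in H^{\infty}$, and the reduction to $\alpha_{0}\equiv1$ near $0$) are routine.
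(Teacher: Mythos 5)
Your frequency splitting $v=v_{[0]}+v_{[1]}$, with the hypothesis used only on the low block and $v_{[1]}\in H^{\infty}$ for the high block, is exactly the paper's starting point (Section \ref{sec.kja5}), and your treatment of the low--low block ($\Delta v_{[0]}\in L^{6/5}$ paired with $v_{[0]}\in L^{6}$) and of the high--high block is fine. But the step you defer as ``the main obstacle'' is a genuine gap, not bookkeeping, and your stated reason for dismissing it is false. The mixed term $\Delta v_{[1]}\cdot v_{[0]}$ (equivalently $\rot^{2}v_{[1]}\cdot v_{[0]}$) is \emph{not} controlled by Sobolev norms of $v_{[1]}$: from $v_{[1]}\in H^{\infty}$ you get $\rot^{2}v_{[1]}\in L^{r}$ only for $r\ge 2$, while $v_{[0]}$ is merely in $L^{6}\cap L^{\io}$, so H\"older gives the product in $L^{3/2}$ at best, never in $L^{1}$; and nothing forces $\rot^{2}v_{[1]}\in L^{6/5}$, since $L^{2}$-based spaces do not embed into $L^{p}$ for $p<2$. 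Whichever variant of Chae's argument you run (testing against $\chi_{\lambda}v$, or the route through $\Delta Q\in L^{1}$ and Proposition \ref{pro.417aez}), this mixed block is precisely the term whose $L^{1}$ membership must be established, and your proposal contains no argument for it.

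The paper's resolution of this point is a genuinely additional idea. One observes that $\rot^{2}v_{[1]}$ has spectrum in $\{\val\xi\ge 1\}$ while a further low-pass cut $\alpha_{0}(\mu D)v_{[0]}$ has spectrum in $\{\val\xi\le 2/\mu\}$, so their product has spectrum bounded away from the origin and can be written as a commutator $\bigl[\beta_{1}(3D),\rot^{2}v_{[1]}\bigr]\alpha_{0}(6D)v_{[0]}$. Expanding the commutator kernel (formula \eqref{fda258}) trades the product for an expression involving $\nabla\rot^{2}v_{[1]}$, and a separate argument via the vorticity equation shows $\rot^{3}v\in L^{6/5}$, hence $\nabla\rot^{2}v_{[1]}\in L^{6/5}$ (see \eqref{429429}); only then does the $L^{6/5}\times L^{6}$ pairing close (Claim \ref{cla.424242}) and give $\rot^{2}v\cdot v\in L^{1}$, so that $\Delta Q\in L^{1}$ and Corollary \ref{cor.54gf} applies. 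Without this commutator step, or some substitute for it, your proof does not go through. A secondary issue: your sketch of ``Chae's proof'' mixes the test-function/cubic-term argument (which underlies Galdi's $L^{9/2}$ result, Theorem \ref{thm.galnew}) with the head-pressure mechanism actually used for the $\Delta v\in L^{6/5}$ result; in the former the troublesome term is $\poscal{Qv}{\nabla\chi_{\rho}}$, not $\Delta v$, so the hypothesis $\alpha_{0}(D)\Delta v\in L^{6/5}$ would not even enter in the way you describe.
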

In the sequel $Q$ will stand for the Bernoulli head pressure defined in \eqref{modpr0}.
\begin{theorem}\label{thm.54ez}
Let us assume that the  assumptions of Conjecture \ref{coj001} are fulfilled for a vector field $v$.
Moreover we assume that there exists $\alpha_{0}\in \mooc(\R^{3}_{\xi})$ whose support contains a neighborhood of $0$ such that 
\begin{equation}\label{}
\alpha_{0}(D) \nabla Q\quad\text{belongs to $L^{6/5}(\R^{3}).$}
\end{equation}
 Then the vector field $v$ is identically 0.
\end{theorem}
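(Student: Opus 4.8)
The plan is to use Theorem \ref{thm.54yt} and the hypothesis to show that the Bernoulli head pressure $Q$ defined in \eqref{modpr0} itself lies in $L^{2}(\R^{3})$, and then to close a weighted Caccioppoli identity for the vorticity $\omega=\curl v$. First I would record the consequences of $v\in\mathcal A$ (Theorem \ref{thm.54yt}): $v$ and all its derivatives are smooth and tend to $0$ at infinity, $v\in L^{6}(\R^{3})$, and, by \eqref{hyp002} and $\mathcal A$-membership, $\omega$ together with all its derivatives lies in $L^{2}$. Normalising the pressure to vanish at infinity one has $p=\sum_{j,k}R_{j}R_{k}(v_{j}v_{k})$ with $R_{j}$ the Riesz transforms, so $p\in\mathcal A$ because $\mathcal A$ is a multiplicative algebra stable under standard singular integrals; hence $Q=p+\frac12\val v^{2}\in\mathcal A$, so $Q$ is bounded and tends to $0$ at infinity (Theorem \ref{thm.chae} moreover confines $Q$ to $[-M_{0},0]$, but the sign will not be needed). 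Rewriting \eqref{SNSI} by means of $(v\cdot\nabla)v=\omega\times v+\tfrac12\nabla\val v^{2}$ gives $\nabla Q=\nu\Delta v+v\times\omega$, and a short computation using $\dive v=0$ produces the drift--Laplace identity
\[
\Delta Q-\tfrac1\nu\,v\cdot\nabla Q=\val\omega^{2}.
\]
Since $\Delta v\in L^{2}$ (a second derivative of an element of $\mathcal A$) and $v\times\omega\in L^{2}$ ($v$ bounded, $\omega\in L^{2}$), the first identity already yields $\nabla Q\in L^{2}(\R^{3})$.

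The key step is to upgrade this to $Q\in L^{2}(\R^{3})$, and this is exactly where $\alpha_{0}(D)\nabla Q\in L^{6/5}$ enters. I would assume, as one may after a routine reduction, that $\alpha_{0}\equiv1$ on a neighbourhood of $0$, then pick $\theta\in\mooc(\R^{3}_{\xi})$ with $\theta\equiv1$ near $0$ and $\supp\theta\subset\{\alpha_{0}\equiv1\}$, and split $Q=\theta(D)Q+(1-\theta(D))Q$. The spectrum of $(1-\theta(D))Q$ avoids a neighbourhood of $0$, so $(1-\theta(D))Q$ is the image of $\nabla Q\in L^{2}$ under the vector Fourier multiplier with bounded symbol $-i\xi\val\xi^{-2}(1-\theta(\xi))$ (the singularity at $\xi=0$ having been cut off), whence $(1-\theta(D))Q\in L^{2}$. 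On the other hand $\nabla(\theta(D)Q)=\theta(D)\alpha_{0}(D)\nabla Q$ lies in $L^{6/5}$, since $\theta(D)$ is bounded on $L^{6/5}$ and $\alpha_{0}(D)\nabla Q\in L^{6/5}$ by hypothesis; as $\theta(D)Q\in\mathcal A$ vanishes at infinity and the endpoint Sobolev embedding $\dot W^{1,6/5}(\R^{3})\hookrightarrow L^{2}(\R^{3})$ holds ($\tfrac56-\tfrac13=\tfrac12$, the only place the exponent $6/5$ is used), this forces $\theta(D)Q\in L^{2}$. Adding the two pieces gives $Q\in L^{2}(\R^{3})$.

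It then remains to run a standard Caccioppoli argument. With $\chi$ as in \eqref{fun001} and $\chi_{\lambda}(x)=\chi(x/\lambda)$, I would multiply the drift--Laplace identity by $\chi_{\lambda}^{2}$ and integrate by parts (legitimate since $Q$ and $v$ are smooth) to obtain
\[
\int_{\R^{3}}\val\omega^{2}\chi_{\lambda}^{2}=\int_{\R^{3}}Q\,\Delta(\chi_{\lambda}^{2})+\frac1\nu\int_{\R^{3}}Q\,v\cdot\nabla(\chi_{\lambda}^{2}).
\]
The first term on the right is at most $\norm{Q}_{L^{2}}\norm{\Delta(\chi_{\lambda}^{2})}_{L^{2}}=O(\lambda^{-1/2})$; the second, after restricting $Q$ and $v$ to the annulus $\{\lambda\le\val x\le2\lambda\}$ that supports $\nabla\chi_{\lambda}$, is at most $\nu^{-1}\norm{Q}_{L^{2}(\{\lambda\le\val x\le2\lambda\})}\norm{v}_{L^{6}}\norm{\nabla(\chi_{\lambda}^{2})}_{L^{3}}$ with $\norm{\nabla(\chi_{\lambda}^{2})}_{L^{3}}$ scale-invariant; both tend to $0$ as $\lambda\to+\io$. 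By monotone convergence $\int_{\R^{3}}\val\omega^{2}=0$, so $\curl v\equiv0$; with $\dive v=0$ this makes $v=\nabla\psi$ with $\psi$ harmonic, and \eqref{hyp001} then forces $v\equiv0$ exactly as in Remark \ref{rem.54kj}.

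I expect the main obstacle to be the second step: turning frequency-localised $L^{6/5}$ control on $\nabla Q$ into global $L^{2}$ control on $Q$. It decouples into recovering the high frequencies of $Q$ from $\nabla Q\in L^{2}$ alone via the spectral gap of $1-\theta$, and recovering the low frequencies from the hypothesis via the endpoint Sobolev inequality; both of these, as well as the freedom to integrate by parts, lean on the regularity package $v\in\mathcal A$ of Theorem \ref{thm.54yt} (which supplies $\nabla Q\in L^{2}$, $v\in L^{6}$, and smoothness). Once $Q\in L^{2}$ is available the Caccioppoli step is entirely routine, and --- worth noting --- no sign information on $Q$ is used at any point.
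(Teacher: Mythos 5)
Your proof is correct, but it takes a genuinely different route from the paper's. The paper never establishes $Q\in L^{2}$; instead it reduces the theorem to showing $\Delta Q\in L^{1}(\R^{3})$ and then invokes Proposition \ref{pro.417aez}, whose proof rests on Chae's theorem ($Q\le 0$), Sard's theorem applied to the level sets of $Q$, and the identity $\int_{\{Q<-\varepsilon\}}\val{\rot v}^{2}\,dx=\int_{\{Q<-\varepsilon\}}\Delta Q\,dx$. To get $\Delta Q=\nu^{-1}v\cdot\nabla Q+\val{\rot v}^{2}\in L^{1}$, the paper splits $v=v_{[0]}+v_{[1]}$ and $\nabla Q=\alpha_{0}(D)\nabla Q+\beta_{1}(D)\nabla Q$; the only delicate term, $v_{[0]}\cdot\beta_{1}(D)\nabla Q$, is handled by a spectral-separation and commutator identity (see \eqref{fda258}) combined with $\nabla\bigl(\beta_{1}(D)\nabla Q\bigr)\in L^{6/5}$ (Claim \ref{cla.987g}). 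You bypass all of this: you upgrade the hypothesis to $Q\in L^{2}$ --- the high frequencies coming from $\nabla Q\in L^{2}$ (which is indeed available, since $Q\in\mathcal A$ by Corollary \ref{cor.lkj987}), the low frequencies from $\theta(D)\nabla Q\in L^{6/5}$ via the Hardy--Littlewood--Sobolev bound $\val{D}^{-1}:L^{6/5}\to L^{2}$, the residual polynomial being killed because $\theta(D)Q$ tends to $0$ at infinity --- and then a plain cutoff computation on \eqref{315315} forces $\norm{\rot v}_{L^{2}}=0$. What your route buys is economy: no commutators, no Sard, and, as you note, no sign information on $Q$, so Chae's theorem is not needed at all. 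What the paper's route buys is reusability: the criterion ``$\Delta Q\in L^{1}\Rightarrow v\equiv 0$'' and its one-sided refinement (Theorem \ref{thm.5468}) serve simultaneously as the engine for Theorems \ref{thm.chaenew++} and \ref{thm.54ek} and for Corollaries \ref{cor.54ek}--\ref{cor.54gf}, so each of those results only requires checking an $L^{1}$ bound. Two minor points to tighten: your reduction to $\alpha_{0}\equiv 1$ near the origin uses that $\alpha_{0}$ does not vanish on a neighborhood of $0$ (this is what the paper actually works with in \eqref{fds546}, where $\alpha_{0}=1$ on the unit ball, even though the statement only says the support contains a neighborhood of $0$); and the embedding you invoke is not an endpoint case, since $6/5>1$ and Hardy--Littlewood--Sobolev applies directly.
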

\begin{nb}
 This theorem is proven in Section \ref{sec.rsq547}.
\end{nb}
\begin{corollary}\label{cor.87zz}
Let us assume that the  assumptions of Conjecture \ref{coj001} are fulfilled for a vector field $v$.
Moreover we assume that
\begin{equation}\label{}
\nabla Q\quad\text{belongs to $L^{6/5}(\R^{3})$.}
\end{equation}
 Then the vector field $v$ is identically 0.
\end{corollary}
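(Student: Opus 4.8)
The plan is to deduce Corollary~\ref{cor.87zz} directly from Theorem~\ref{thm.54ez}, the only point being that the global assumption $\nabla Q\in L^{6/5}(\R^{3})$ is \emph{stronger} than the low-frequency assumption $\alpha_{0}(D)\nabla Q\in L^{6/5}(\R^{3})$ featured in that theorem. First I would note that, under the assumptions of Conjecture~\ref{coj001}, the regularity results (Theorems~\ref{thm.known1} and~\ref{thm.54yt}) together with Chae's Theorem~\ref{thm.chae} show that $v$, and hence $Q=p+\frac12\val v^{2}$, is a $\moo$ function with $-M_{0}\le Q\le 0$, so that $\nabla Q$ is a well-defined smooth vector field on $\R^{3}$; in particular the hypothesis $\nabla Q\in L^{6/5}(\R^{3})$ makes sense and places $\nabla Q$ in $\mathscr S'(\R^{3})$, so that $\alpha_{0}(D)\nabla Q$ is meaningful for every $\alpha_{0}\in\mooc(\R^{3}_{\xi})$.

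Next I would fix some $\alpha_{0}\in\mooc(\R^{3}_{\xi})$ whose support contains a neighborhood of $0$; such a function certainly exists (take a standard bump, e.g.\ a rescaling of the cutoff in \eqref{fun001} read on the frequency side). The operator $\alpha_{0}(D)$ is convolution against $\mathcal F^{-1}\alpha_{0}$, which by the Paley--Wiener theorem is a Schwartz function, hence in particular an element of $L^{1}(\R^{3})$. By Young's convolution inequality one then gets
$$
\norm{\alpha_{0}(D)\nabla Q}_{L^{6/5}(\R^{3})}\le \norm{\mathcal F^{-1}\alpha_{0}}_{L^{1}(\R^{3})}\,\norm{\nabla Q}_{L^{6/5}(\R^{3})}<+\io,
$$
so that $\alpha_{0}(D)\nabla Q\in L^{6/5}(\R^{3})$. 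Theorem~\ref{thm.54ez} now applies verbatim and yields $v\equiv 0$, which is the assertion of the corollary.

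There is essentially no obstacle in this argument: the only input beyond Theorem~\ref{thm.54ez} is the $L^{6/5}$-boundedness of the smooth, compactly supported Fourier multiplier $\alpha_{0}(D)$, which is a consequence of Young's inequality, and this is precisely why the statement is recorded as a corollary rather than as an independent theorem. The same reduction would of course work with the exponent $6/5$ replaced by any $r\in[1,+\io]$, given a corresponding low-frequency statement.
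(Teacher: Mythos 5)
Your reduction is correct: $\alpha_{0}(D)$ is convolution with the Schwartz function $\mathcal F^{-1}\alpha_{0}\in L^{1}(\R^{3})$, so Young's inequality gives $\alpha_{0}(D)\nabla Q\in L^{6/5}$ whenever $\nabla Q\in L^{6/5}$, and Theorem \ref{thm.54ez} then applies verbatim. This is the natural reading of the statement as a corollary of the theorem it follows, and there is no gap in it. The paper's written justification, however, bypasses Theorem \ref{thm.54ez} entirely: it uses the identity $\Delta Q=\nu^{-1}v\cdot\nabla Q+\val{\curl v}^{2}$ together with H\"older's inequality ($v\in L^{6}$ and $\nabla Q\in L^{6/5}$ give $v\cdot\nabla Q\in L^{1}$, while $\curl v\in L^{2}$ gives $\val{\curl v}^{2}\in L^{1}$) to conclude $\Delta Q\in L^{1}$, and then invokes Corollary \ref{cor.54gf}. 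The two routes are not independent in substance — the proof of Theorem \ref{thm.54ez} in Section \ref{sec.rsq547} itself ends by establishing $\Delta Q\in L^{1}$ and appealing to Corollary \ref{cor.54gf} — but the direct route is lighter: under the global hypothesis one can skip the spectral splitting and commutator estimates that the proof of Theorem \ref{thm.54ez} needs to control the high-frequency part of $\nabla Q$. Your version costs nothing extra since you use Theorem \ref{thm.54ez} as a black box, but it is worth seeing that the corollary has a one-line proof from the $\Delta Q$ identity alone.
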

That Corollary follows also from Corollary \ref{cor.54gf} below since we have 
$$
\Delta Q=\nu^{-1}v\cdot \nabla Q+\val{\curl v}^{2},
$$
so that since $v\in L^{6}$, having $\nabla Q\in L^{6/5}$ and $\curl v\in L^{2}$
implies readily $\Delta Q\in L^{1}$.
It seems that even Corollary \ref{cor.87zz} contains a new result. We shall see that we can obtain rather easily that $\nabla Q$ belongs to $L^{3/2}=L^{\frac{15}{10}}$, although our assumption is stronger  at infinity with $\nabla Q\in L^{\frac{12}{10}}$: indeed we have for $K$ compact subset of $\R^{3}$,
$$
\int_{K^{c}}\val{(\nabla Q)(x)}^{\frac{15}{10}}dx\le \norm{\nabla Q}_{L^{\io}(K^{c})}^{\frac3{10}}
\int_{K^{c}}\val{(\nabla Q)(x)}^{\frac{12}{10}}dx, 
$$
so that if we have $\nabla Q\in L^{\frac{6}{5}}$ on the complement of $K$, since $\nabla Q$ is a bounded function (as an element of the Wiener algebra $\mathcal W$), then we obtain that $\nabla Q$ belongs to $L^{3/2}$ on $K^{c}$.
The next result is more technical with a very strong assumption,
which turns out to be useful for the proof of Theorems 
\ref{thm.galnew++}, \ref{thm.chaenew++}, 
\ref{thm.54ez}.
\begin{theorem}\label{thm.54ek}
Let us assume that the  assumptions of Conjecture \ref{coj001} are fulfilled for a vector field $v$.
Moreover we assume that, 
\begin{align}
&\text{\bf either }(\Delta Q)_{+} \in L^{1}(\R^{3})
\text{ \bf or }(\Delta Q)_{-} \in L^{1}(\R^{3}).
\end{align}
 Then the vector field $v$ is identically $0.$
\end{theorem}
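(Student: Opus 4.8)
The plan is to reduce the statement to the assertion $\curl v\equiv 0$, and then to invoke Remark \ref{rem.54kj}. Throughout I use Theorem \ref{thm.54yt}, so that $v\in\mathcal A$ (in particular $v$ is $\moo$, $v$ and all its derivatives are bounded with limit $0$ at infinity, $\curl v\in L^{2}(\R^3)\cap L^{\io}(\R^3)$, $\nabla v\in L^{2}(\R^3)$, and $v\in L^{6}(\R^3)$), and Chae's Theorem \ref{thm.chae}, so that $Q$ takes its values in a compact interval $[-M_{0},0]$. The vorticity form of \eqref{SNSI} is $\nu\Delta v=\nabla Q-v\times\curl v$; taking its divergence and using $\dive v=0$ and $v\cdot(v\times\curl v)=0$ yields the pointwise identity
$$
\Delta Q=\val{\curl v}^{2}+\nu^{-1}\,v\cdot\nabla Q .
$$
Hence $\nabla Q=\nu\Delta v+v\times\curl v\in L^{2}(\R^3)$ and $v\cdot\nabla Q\in L^{2}(\R^3)$, so $\Delta Q\in L^{2}(\R^3)$; and since $\val{\curl v}^{2}\in L^{1}(\R^3)$, the identity shows that the hypothesis ``$(\Delta Q)_{+}\in L^{1}$ or $(\Delta Q)_{-}\in L^{1}$'' is equivalent to ``$(v\cdot\nabla Q)_{+}\in L^{1}$ or $(v\cdot\nabla Q)_{-}\in L^{1}$''.

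First I would test \eqref{SNSI} against the cut-off $\chi_{\lambda}v$, with $\chi_{\lambda}$ as in \eqref{fun002}, and integrate by parts using $\dive v=0$. The viscous term produces $\nu\int\chi_{\lambda}\val{\nabla v}^{2}$ and a remainder $\tfrac\nu2\int\nabla(\val v^{2})\cdot\nabla\chi_{\lambda}$, which is $o(1)$ because $\nabla(\val v^{2})\in L^{3/2}(\R^3)$ and $\norm{\nabla\chi_{\lambda}}_{L^{3}}=\norm{\nabla\chi}_{L^{3}}$; the convective and pressure terms combine into $-\int\chi_{\lambda}\,(v\cdot\nabla Q)$. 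Using in addition the identity $\val{\nabla v}^{2}-\val{\curl v}^{2}=\dive\bigl((v\cdot\nabla)v\bigr)$, valid since $\dive v=0$, together with $(v\cdot\nabla)v\in L^{3/2}(\R^3)$, one obtains $\int\chi_{\lambda}\bigl(\val{\nabla v}^{2}-\val{\curl v}^{2}\bigr)=o(1)$, and therefore
$$
\lim_{\lambda\to+\io}\int_{\R^3}\chi_{\lambda}\,(v\cdot\nabla Q)\,dx=-\nu\,\norm{\curl v}_{L^{2}(\R^3)}^{2}.
$$
Assuming, say, $(v\cdot\nabla Q)_{+}\in L^{1}(\R^3)$, monotone convergence for the positive part and the displayed limit force $(v\cdot\nabla Q)_{-}\in L^{1}(\R^3)$ too; so $v\cdot\nabla Q\in L^{1}(\R^3)$ with $\int_{\R^3}v\cdot\nabla Q=-\nu\norm{\curl v}_{L^{2}}^{2}$, whence $\Delta Q\in L^{1}(\R^3)$ and $\int_{\R^3}\Delta Q=0$; the case $(v\cdot\nabla Q)_{-}\in L^{1}$ is symmetric.

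Next I would upgrade the behaviour of $Q$ at infinity. From the identity, $\Delta Q\in L^{1}(\R^3)\cap L^{2}(\R^3)$; together with $\int\Delta Q=0$, a standard splitting of the Newtonian potential $F(x):=-\tfrac1{4\pi}\int_{\R^3}\val{x-y}^{-1}\Delta Q(y)\,dy$ over $\{\val y\le\val x/2\}$, $\{\val y>\val x/2,\ \val{x-y}\ge 1\}$ and $\{\val y>\val x/2,\ \val{x-y}<1\}$ shows $F(x)\to 0$ as $\val x\to+\io$; since $\nabla(Q-F)$ is a harmonic vector field that vanishes at infinity in an averaged sense it is identically $0$, so $Q=F+c_{0}$ for a constant $c_{0}$. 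Thus $Q\to c_{0}$ at infinity, $c_{0}\le 0$ by Theorem \ref{thm.chae}, and, $Q$ being a subsolution of the operator $\Delta-\nu^{-1}v\cdot\nabla$ (whose drift is divergence-free), the maximum principle gives $Q\le c_{0}$ on $\R^3$. There remains the key point, namely to prove
$$
\int_{\R^3}v\cdot\nabla Q\,dx=0 .
$$
As this absolutely convergent integral already equals $-\nu\norm{\curl v}_{L^{2}}^{2}$ and $v\cdot\nabla Q=\dive\bigl((Q-c_{0})v\bigr)$, it is enough to show that the flux $\int_{\{\val x=R\}}(Q-c_{0})\,(v\cdot n)\,dS$ tends to $0$ along some sequence $R_{k}\to+\io$; for this one must combine $\int_{\{\val x=R\}}v\cdot n\,dS=0$ (from $\dive v=0$), the decay of $Q-c_{0}=F$ and of $v$, and the integrability at hand ($v\in L^{6}$, $\nabla Q\in L^{2}$, $Q-c_{0}$ bounded with limit $0$). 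This last step---balancing trace and Sobolev inequalities on large spheres against the size of the quantities entering the flux---is, I expect, the main obstacle.

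Finally, once $\int_{\R^3}v\cdot\nabla Q=0$ has been proved, comparison with $\int_{\R^3}v\cdot\nabla Q=-\nu\norm{\curl v}_{L^{2}}^{2}$ gives $\curl v\equiv 0$. Then $v$ is a $\moo$, tempered, divergence-free and curl-free vector field on $\R^{3}$ obeying \eqref{hyp001}; hence $v=\nabla\psi$ for some harmonic $\psi$, and Remark \ref{rem.54kj} yields $v\equiv 0$, which is the desired conclusion.
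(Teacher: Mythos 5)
The first half of your argument is sound and essentially reproduces what the paper does in Claim \ref{cla.41po} and Proposition \ref{pro.417aez}: the cut-off computation $\lim_{\lambda\to+\io}\int\chi_{\lambda}\,\Delta Q\,dx=0$, combined with the pointwise identity $\Delta Q=\val{\curl v}^{2}+\nu^{-1}v\cdot\nabla Q$ and a Fatou argument, does show that the one-sided hypothesis forces $v\cdot\nabla Q\in L^{1}$, $\Delta Q\in L^{1}$, and $\int_{\R^{3}}v\cdot\nabla Q\,dx=-\nu\norm{\curl v}_{L^{2}}^{2}$. The gap is the other identity, $\int_{\R^{3}}v\cdot\nabla Q\,dx=0$, which you correctly isolate as ``the main obstacle'' but do not prove --- and the route you sketch cannot produce it. Once $v\cdot\nabla Q=\dive(Qv)\in L^{1}$, the divergence theorem gives
$$
\int_{\{\val x=R\}}Q\,(v\cdot \vec{n})\,dS=\int_{\{\val x\le R\}}\dive(Qv)\,dx\longrightarrow\int_{\R^{3}}v\cdot\nabla Q\,dx=-\nu\norm{\curl v}_{L^{2}}^{2},
$$
so proving that the flux vanishes along some sequence $R_{k}\to+\io$ is \emph{literally equivalent} to the conclusion $\curl v=0$; there is no independent leverage in that step. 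Moreover the bounds at your disposal are all scale-invariant ($v\in L^{6}$, $Q\in L^{3}$, $\nabla Q\in L^{3/2}\cap L^{3}$), and the borderline profiles $\val{v}\sim\val x^{-1/2}$, $\val Q\sim\val x^{-1}$ make the sphere flux generically of size $R^{1/2}$, so no combination of trace, Poincar\'e and Sobolev inequalities on large spheres can force it to zero. (If such an argument worked, it would in fact dispense with the hypothesis on $(\Delta Q)_{\pm}$ and prove Conjecture \ref{coj001}.)

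The missing idea in the paper is to integrate over sublevel sets of $Q$ rather than over balls. For a regular value $-\varepsilon_{0}<0$ of $Q$ (Sard's theorem provides arbitrarily small ones), the set $\{Q<-\varepsilon_{0}\}$ is bounded with smooth boundary on which $Q+\varepsilon_{0}$ vanishes identically, so
$$
\int_{\{Q<-\varepsilon_{0}\}}v\cdot\nabla Q\,dx=\int_{\{Q<-\varepsilon_{0}\}}\dive\bigl((Q+\varepsilon_{0})v\bigr)dx=0
$$
with no flux to estimate, whence $\int_{\{Q<-\varepsilon_{0}\}}\val{\curl v}^{2}dx=\int_{\{Q<-\varepsilon_{0}\}}\Delta Q\,dx$ (Claim \ref{cla.kj55}). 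Chae's Theorem \ref{thm.chae+} guarantees $Q<0$ everywhere (unless $v\equiv0$), so these sets exhaust $\R^{3}$, and Beppo--Levi applied to the nonnegative functions $\val{\curl v}^{2}$, $(\Delta Q)_{+}$, $(\Delta Q)_{-}$ yields the identity \eqref{cx++++} in $[0,+\io]$; the one-sided hypothesis then immediately gives $\Delta Q\in L^{1}$ and $\norm{\curl v}_{L^{2}}^{2}=\int_{\R^{3}}\Delta Q\,dx=0$. You should rework the second half of your proof around this sublevel-set integration; the rest can be kept.
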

\begin{nb}
 Note that we did not assume that $\Delta Q$ belongs to $L^{1}(\R^{3})$, but only that $(\Delta Q)_{+}$ or  $(\Delta Q)_{-}$ belong to $L^{1}(\R^{3})$.
 This theorem is proven in Theorem \ref{thm.5468}.
\end{nb}
\begin{corollary}\label{cor.54ek}
Let us assume that the  assumptions of Conjecture \ref{coj001} are fulfilled for a vector field $v$.
Moreover we assume that there exists a set $L$ of $\R^{3}$ with finite Lebesgue measure such that 
\begin{equation}\label{}
\forall x\in L^{c}, \ (\Delta Q)(x) \ge 0.
\end{equation}
 Then the vector field $v$ is identically 0.
\end{corollary}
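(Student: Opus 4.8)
The plan is to deduce Corollary~\ref{cor.54ek} from Theorem~\ref{thm.54ek} by showing that the stated sign condition on $\Delta Q$ forces $(\Delta Q)_{-}\in L^{1}(\R^{3})$. Indeed, if $(\Delta Q)(x)\ge 0$ for every $x\in L^{c}$, then $(\Delta Q)_{-}(x)=\max\bigl(-(\Delta Q)(x),0\bigr)$ vanishes on $L^{c}$, so that $(\Delta Q)_{-}$ is supported in the set $L$, which has finite Lebesgue measure. It therefore suffices to check that $\Delta Q$ is a bounded function, since then $0\le (\Delta Q)_{-}\le \norm{\Delta Q}_{L^{\io}}\indic{L}$, whence $\norm{(\Delta Q)_{-}}_{L^{1}(\R^{3})}\le \norm{\Delta Q}_{L^{\io}(\R^{3})}\,\val L<+\io$.

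The boundedness of $\Delta Q$ follows from the regularity of $v$. By Theorem~\ref{thm.54yt}, $v$ belongs to $\mathcal A$; in particular $v$ and all its derivatives lie in $\mathcal W\subset L^{\io}(\R^{3})$, so that $v$ and $\curl v$ are bounded. Using the identity $(v\cdot\nabla)v=\frac12\nabla\val v^{2}-v\times\curl v$ together with $\dive v=0$, the system \eqref{SNSI} yields $\nabla Q=\nu\,\Delta v+v\times\curl v$, which is bounded because $\Delta v\in \mathcal W$ and $v,\curl v\in L^{\io}$. Finally, the identity $\Delta Q=\nu^{-1}v\cdot\nabla Q+\val{\curl v}^{2}$ recalled right after Corollary~\ref{cor.87zz} exhibits $\Delta Q$ as a sum and product of bounded functions, hence $\Delta Q\in L^{\io}(\R^{3})$ (one even gets $\Delta Q\in C^{0}_{(0)}$).

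Combining the two steps gives $(\Delta Q)_{-}\in L^{1}(\R^{3})$, so Theorem~\ref{thm.54ek} applies in its ``$(\Delta Q)_{-}\in L^{1}$'' alternative and forces $v\equiv 0$. I do not expect a genuine obstacle here: the only substantive inputs are the $\mathcal A$-regularity provided by Theorem~\ref{thm.54yt} and the two classical algebraic identities for $\nabla Q$ and $\Delta Q$, after which the argument reduces to the elementary remark that a bounded function supported on a set of finite measure is integrable. The one point that deserves a word of care is that continuity of $\Delta Q$ alone would \emph{not} suffice, since a continuous function need not be integrable on a finite-measure set that escapes to infinity; it is precisely the boundedness (indeed the decay at infinity) coming from $v\in\mathcal A$ that makes the reduction go through.
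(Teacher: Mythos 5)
Your proof is correct and follows the route the paper clearly intends (the paper states Corollary \ref{cor.54ek} without writing out the reduction): since $Q\in\mathcal A$ gives $\Delta Q\in\mathcal W\subset L^{\io}$, the hypothesis forces $(\Delta Q)_{-}$ to be a bounded function supported in the finite-measure set $L$, hence in $L^{1}(\R^{3})$, and Theorem \ref{thm.54ek} applies. Your closing remark that continuity alone would not suffice is a sensible point of care, and your identities for $\nabla Q$ and $\Delta Q$ match \eqref{new002} and \eqref{315315}.
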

\begin{corollary}\label{cor.54em}
Let us assume that the assumptions of Conjecture \ref{coj001} are fulfilled for a vector field $v$.
Moreover we assume that there exists a set $L$ of $\R^{3}$ with finite Lebesgue measure such that 
\begin{equation}\label{}
\forall x\in L^{c}, \ (\Delta Q)(x) \le 0.
\end{equation}
 Then the vector field $v$ is identically 0.
\end{corollary}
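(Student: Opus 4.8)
The plan is to deduce Corollary \ref{cor.54em} directly from Theorem \ref{thm.54ek}: the stated sign condition on $\Delta Q$ off a set of finite measure will be shown to force $(\Delta Q)_{+}\in L^{1}(\R^{3})$, which is exactly the second alternative in the hypothesis of Theorem \ref{thm.54ek}.

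First I would collect the regularity that comes for free. Under the assumptions of Conjecture \ref{coj001}, Theorem \ref{thm.54yt} (or already Theorem \ref{thm.known1}) gives $v\in\mathcal A\subset C^{\io}_{(0)}(\R^{3})$, so $v$ and all of its derivatives are bounded on $\R^{3}$ and tend to $0$ at infinity; in particular $\Delta v$, $\nabla v$ and $\curl v$ are bounded. Reading $\nabla p$ off the momentum equation \eqref{SNSI}, namely $\nabla p=\nu\Delta v-\sum_{j}\p_{j}(v_{j}v)$, we see that $\nabla p$ is a finite sum of products of bounded functions, hence bounded; therefore $\nabla Q=\nabla p+\frac12\nabla(\val v^{2})$ is bounded as well. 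Inserting this into the identity $\Delta Q=\nu^{-1}v\cdot\nabla Q+\val{\curl v}^{2}$ (which follows from \eqref{SNSI} and $\dive v=0$, exactly as recalled just after Corollary \ref{cor.87zz}) we conclude that $\Delta Q\in L^{\io}(\R^{3})$ --- indeed it even tends to $0$ at infinity.

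Next I would use the hypothesis of the corollary. There is a measurable set $L\subset\R^{3}$ with $\val L<+\io$ such that $(\Delta Q)(x)\le 0$ for every $x\in L^{c}$. Hence the nonnegative function $(\Delta Q)_{+}$ vanishes identically on $L^{c}$, so that $\int_{\R^{3}}(\Delta Q)_{+}(x)\,dx=\int_{L}(\Delta Q)_{+}(x)\,dx\le\val L\,\norm{\Delta Q}_{L^{\io}(\R^{3})}<+\io$, i.e.\ $(\Delta Q)_{+}\in L^{1}(\R^{3})$. Since the assumptions of Conjecture \ref{coj001} hold, Theorem \ref{thm.54ek}, applied in its ``$(\Delta Q)_{+}\in L^{1}$'' branch, yields $v\equiv0$, which proves the corollary.

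The only point requiring some care is the global boundedness of $\Delta Q$ on all of $\R^{3}$: the set $L$ need not be bounded (think of a union of small balls escaping to infinity), so mere local boundedness of the smooth function $\Delta Q$ would not be enough to control $\int_{L}(\Delta Q)_{+}$; it is the decay-at-infinity regularity of $v$ furnished by Theorems \ref{thm.known1} and \ref{thm.54yt}, combined with the algebraic identity for $\Delta Q$, that provides the uniform bound. Apart from that, the argument is immediate. The twin statement Corollary \ref{cor.54ek} is obtained identically, with $(\Delta Q)_{-}$ in place of $(\Delta Q)_{+}$ and the other alternative of Theorem \ref{thm.54ek}.
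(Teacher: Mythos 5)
Your proof is correct and follows exactly the route the paper intends: $(\Delta Q)_{+}$ vanishes off the finite-measure set $L$ and is globally bounded, hence belongs to $L^{1}(\R^{3})$, and Theorem \ref{thm.54ek} (in its $(\Delta Q)_{+}\in L^{1}$ branch) concludes. The only simplification available is that the uniform bound $\Delta Q\in L^{\io}$ follows at once from $Q\in\mathcal A$ (Corollary \ref{cor.lkj987}), since every derivative of $Q$ then lies in $\mathcal W\subset L^{\io}$, which shortcuts your detour through $\nabla p$ and the identity for $\Delta Q$.
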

\begin{corollary}\label{cor.54gf}
Let us assume that the  assumptions of Conjecture \ref{coj001} are fulfilled for a vector field $v$.
Moreover we assume that, 
\begin{equation}\label{}
\Delta Q \in L^{1}(\R^{3}).
\end{equation}
 Then the vector field $v$ is identically 0.
\end{corollary}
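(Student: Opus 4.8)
The plan is to deduce this statement directly from Theorem~\ref{thm.54ek}, which carries all the analytic content; the corollary itself is a one-line consequence. First I would recall that under the assumptions of Conjecture~\ref{coj001}, Theorem~\ref{thm.54yt} gives $v\in\mathcal A\subset C^{\io}_{(0)}(\R^{3})$, so that $v$ is smooth and, together with the pointwise identity $\Delta Q=\nu^{-1}v\cdot\nabla Q+\val{\curl v}^{2}$, the function $\Delta Q$ is a well-defined continuous function on $\R^{3}$; thus the hypothesis $\Delta Q\in L^{1}(\R^{3})$ is meaningful.

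Next I would invoke the elementary pointwise bounds $0\le(\Delta Q)_{+}\le\val{\Delta Q}$ and $0\le(\Delta Q)_{-}\le\val{\Delta Q}$, valid everywhere on $\R^{3}$. Since $\val{\Delta Q}\in L^{1}(\R^{3})$ by assumption, these bounds force $(\Delta Q)_{+}\in L^{1}(\R^{3})$ (and likewise $(\Delta Q)_{-}\in L^{1}(\R^{3})$). In particular the disjunctive hypothesis of Theorem~\ref{thm.54ek} is satisfied, and that theorem immediately yields $v\equiv 0$, which is the assertion of the corollary.

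Since the argument is merely a specialization of Theorem~\ref{thm.54ek}, there is no genuine obstacle at this stage: the difficulty is entirely absorbed into the proof of Theorem~\ref{thm.54ek} (equivalently Theorem~\ref{thm.5468}), where the one-sided $L^{1}$ control of $\Delta Q$ must be converted into the vanishing of $v$. I would simply remark, for the reader's convenience, that this corollary is the cleanest packaging of Theorem~\ref{thm.54ek}, while Corollaries~\ref{cor.54ek} and \ref{cor.54em} record the complementary situations in which $\Delta Q$ is one-signed outside a set of finite measure rather than globally integrable.
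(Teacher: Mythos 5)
Your deduction is logically sound and matches how the introduction packages Corollary~\ref{cor.54gf}: since $0\le(\Delta Q)_{\pm}\le\val{\Delta Q}$ pointwise, the hypothesis $\Delta Q\in L^{1}(\R^{3})$ places you squarely in the scope of Theorem~\ref{thm.54ek}, and the conclusion follows. One caveat is worth recording, however. In the body of the paper the statement of this corollary is proved \emph{first}, as Proposition~\ref{pro.417aez}, by a direct argument: for $-\varepsilon_{0}$ a regular value of $Q$ (supplied by Sard's theorem), the divergence theorem on the sublevel set $\{Q<-\varepsilon_{0}\}$ gives $\int_{\{Q<-\varepsilon_{0}\}}\val{\rot v}^{2}\,dx=\int_{\{Q<-\varepsilon_{0}\}}\Delta Q\,dx$; letting $\varepsilon_{0}\rightarrow 0$ and using Chae's theorem (namely $Q<0$ everywhere unless $v\equiv 0$) yields $\int_{\R^{3}}\val{\rot v}^{2}\,dx=\int_{\R^{3}}\Delta Q\,dx$, and Claim~\ref{cla.41po} shows that the right-hand side vanishes once $\Delta Q\in L^{1}$, whence $\rot v=0$ and then $v=0$. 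Theorem~\ref{thm.5468} (which is Theorem~\ref{thm.54ek}) is subsequently \emph{deduced from} Proposition~\ref{pro.417aez} via the identity $\int\val{\rot v}^{2}+\int(\Delta Q)_{-}=\int(\Delta Q)_{+}$. So, read against the paper's logical order, your derivation runs in a circle: the theorem you invoke has a proof whose main ingredient is precisely the present corollary. There is nothing wrong with your argument as a formal reduction — Theorem~\ref{thm.54ek} is a stated and proven result — but the analytic content you attribute to it is in fact the direct proof of this statement, and if you were assembling the paper from scratch you would need to supply that direct argument rather than cite the theorem.
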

We can see  that Corollary \ref{cor.54gf} implies the standard result of D.~Chae in \cite{MR3162482},
listed as (3) on page \pageref{333kno}.
Indeed we have the classical  formula, coming from \eqref{SNSI}, namely
\begin{multline}\label{dsw566}
(\Delta Q)(x)=\val{(\curl v)(x)}^{2}+\nu^{-1}v(x)\cdot (\nabla Q)(x)
\\=\val{(\curl v)(x)}^{2}-
\proscal3{(\curl^{2}v)(x)}{v(x)},
\end{multline}
so that  since $v$ belongs to $L^{6}$ and the assumption in \cite{MR3162482} is $\curl^{2} v\in L^{6/5}$, the equation \eqref{dsw566} and the hypothesis \eqref{hyp002} implies $\Delta Q\in L^{1}$
and we can apply Corollary  \ref{cor.54gf} to obtain the triviality of $v$.
We note as well  that 
if $\nabla Q$ belongs to $L^{6/5}$, Formula \eqref{dsw566}
implies that $\Delta Q$ belongs to $L^{1}$,
so that we can use again Corollary  \ref{cor.54gf}  to obtain the triviality of $v$.
\section{\color{magenta}Preliminaries}
\subsection{A linear lemma}
We would like to introduce some classical statements,
incorporating a remark on the Fourier transform of $v$, which will be useful for us later on.
In particular,
we want to prove that whenever a divergence-free vector field in $\R^{3}$ has a curl in $L^{2}$,
tends to 0 at infinity, then it belongs to $L^{6}$. We do not use the non-linear equation \eqref{SNSI} 
in that section.
\begin{lemma}\label{lem.reg001}
 Let $v$ be a divergence-free vector field on $\R^{3}$  such that \eqref{hyp001}, \eqref{hyp002} are satisfied.
 Then 
 for 
 \begin{equation}\label{sdf987}
 1<q_{1}<\frac65<q_{2}\le 2,\quad
 \text{$\widehat v$ belongs to $L^{q_{1}}(\R^{3})+L^{q_{2}}(\R^{3})\subset L^{1}_{\text{loc}}(\R^{3})$},
\end{equation}
 and 
 $v$ belongs to $L^{6}(\R^{3}, \R^{3})$. We have also 
 \begin{equation}\label{estl06}
 \norm{v}_{L^{6}(\R^{3}, \R^{3})}\le \sigma_{0}\norm{\curl v}_{L^{2}(\R^{3}, \R^{3})},
\end{equation}
 where $\sigma_{0}$ is a universal constant 
\footnote{The best constants for these inequalities were found by G. Talenti in \cite{MR463908}. See also Theorem 8.3 in the book \cite{MR1817225} by E. Lieb \& M. Loss.\label{foot.lieb}}. 
 Moreover the $3\times 3$
 matrix $\nabla v=(\p_{j} v_{k})_{1\le j,k\le 3}$ belongs to 
 $L^{2}(\R^{3}, \R^{9})$ and we have 
 \begin{equation}\label{dssee5}
\norm{\nabla v}_{L^{2}(\R^{3}, \R^{9})}=\sqrt{\sum_{1\le j,k\le 3}\norm{\p_{j}v_{k}}_{L^{2}(\R^{3},\R^{3})}^{2}}=\norm{\curl v}_{L^{2}(\R^{3}, \R^{3})}.
\end{equation}
In particular, we have
\begin{align}
&\nabla v\in L^{2}(\R^{3}, \R^{9}), \quad v\in  L^{6}(\R^{3}, \R^{3}),\label{41lk59}\\
& (\curl v)\times v\in  L^{3/2}(\R^{3}, \R^{3}).\label{41lkh9}
\end{align}
\end{lemma}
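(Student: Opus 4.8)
The plan is to reduce the whole statement to the Biot--Savart-type representation of $v$ in terms of $\curl v$, which is legitimate here because $\dive v=0$ and, crucially, because hypothesis \eqref{hyp001} forbids the polynomial ambiguity that a priori enters the inversion of the Laplacian. First I would record the elementary identity $-\Delta v=\curl\curl v$, valid in $\mathscr S'(\R^{3})$ since $\dive v=0$; taking Fourier transforms gives $\val\xi^{2}\widehat v(\xi)=i\,\xi\times\widehat{\curl v}(\xi)$. By Plancherel, $\widehat{\curl v}\in L^{2}(\R^{3})$ with $\norm{\widehat{\curl v}}_{L^{2}}=\norm{\curl v}_{L^{2}}$, and since a curl is divergence-free one has $\xi\cdot\widehat{\curl v}(\xi)=0$ a.e., whence $\val{\xi\times\widehat{\curl v}(\xi)}=\val\xi\,\val{\widehat{\curl v}(\xi)}$. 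Consequently, on $\R^{3}\setminus\{0\}$ the distribution $\widehat v$ coincides with the locally integrable function
$$g(\xi)=\frac{i\,\xi\times\widehat{\curl v}(\xi)}{\val\xi^{2}},\qquad \val{g(\xi)}=\frac{\val{\widehat{\curl v}(\xi)}}{\val\xi}.$$

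Next comes a Hölder estimate, splitting at $\val\xi=1$. On $\{\val\xi\le 1\}$, writing $\val g^{q_{1}}=\val{\widehat{\curl v}}^{q_{1}}\val\xi^{-q_{1}}$ and applying Hölder with exponents $2/q_{1}$ and $2/(2-q_{1})$ shows $g\,\mathbf 1_{\{\val\xi\le 1\}}\in L^{q_{1}}(\R^{3})$, precisely because $q_{1}<\tfrac65$ makes $\val\xi^{-2q_{1}/(2-q_{1})}$ integrable near $0$ in $\R^{3}$. On $\{\val\xi\ge 1\}$ one has $\val g\le\val{\widehat{\curl v}}\in L^{2}$, and the analogous Hölder argument upgrades this to $g\,\mathbf 1_{\{\val\xi\ge 1\}}\in L^{q_{2}}(\R^{3})$ for every $q_{2}$ with $\tfrac65<q_{2}\le 2$ (the case $q_{2}=2$ being just Plancherel). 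Hence $g\in L^{q_{1}}(\R^{3})+L^{q_{2}}(\R^{3})\subset L^{1}_{\mathrm{loc}}(\R^{3})$ since $q_{1},q_{2}>1$. The only genuinely delicate point, and the one I expect to be the main obstacle, is then that $\widehat v-g$ is a tempered distribution supported at the origin, i.e. $v=\mathcal F^{-1}g+P$ for some polynomial $P$; one must rule out $P\ne 0$. For this I would use \eqref{hyp001}: by Hausdorff--Young applied to $g\,\mathbf 1_{\{\val\xi\le 1\}}$ (whose exponent $q_{1}$ lies in $(1,2)$) and Plancherel applied to $g\,\mathbf 1_{\{\val\xi\ge 1\}}\in L^{2}$, the function $\mathcal F^{-1}g$ lies in $L^{q_{1}'}(\R^{3})+L^{2}(\R^{3})$ with $q_{1}',2$ finite, hence all its superlevel sets have finite Lebesgue measure; a non-zero polynomial $P$ has $\{\val P>s\}$ of infinite measure for every $s$, so $v=\mathcal F^{-1}g+P$ would violate \eqref{hyp001}. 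Thus $\widehat v=g$ exactly, which in particular proves \eqref{sdf987} together with $\widehat v\in L^{1}_{\mathrm{loc}}$.

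From the identity $\widehat v=g$ everything else follows by routine Fourier-side computations. For \eqref{dssee5}: $\widehat{\p_{j}v_{k}}(\xi)=i\xi_{j}\widehat{v_{k}}(\xi)$ and $\sum_{j,k}\val{\xi_{j}\widehat{v_{k}}(\xi)}^{2}=\val\xi^{2}\val{\widehat v(\xi)}^{2}=\val{\widehat{\curl v}(\xi)}^{2}$, so integrating and using Plancherel gives $\nabla v\in L^{2}(\R^{3},\R^{9})$ with $\norm{\nabla v}_{L^{2}}=\norm{\curl v}_{L^{2}}$, which is the first half of \eqref{41lk59}. For \eqref{estl06}: the formula $\widehat v=g$ exhibits $v$, up to a bounded matrix-valued Fourier multiplier (a combination of Riesz transforms), as the Riesz potential $I_{1}$ of $\curl v$; since $\curl v\in L^{2}(\R^{3})$ and the exponents match ($\tfrac16=\tfrac12-\tfrac13$), the Hardy--Littlewood--Sobolev (equivalently Sobolev) inequality in dimension $3$ yields $v\in L^{6}(\R^{3},\R^{3})$ with $\norm{v}_{L^{6}}\le\sigma_{0}\norm{\curl v}_{L^{2}}$ for a universal $\sigma_{0}$ (the sharp value being the one computed by Talenti, cf. the cited footnote). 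Finally \eqref{41lkh9} is immediate from Hölder, $\curl v\in L^{2}$ and $v\in L^{6}$ with $\tfrac{1}{3/2}=\tfrac12+\tfrac16$, giving $(\curl v)\times v\in L^{3/2}(\R^{3},\R^{3})$. All the work is in the first two paragraphs; the elimination of the polynomial part via \eqref{hyp001} is the only step requiring real care, the rest being Plancherel, Hölder and the Sobolev inequality.
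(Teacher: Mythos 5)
Your proof is correct and follows essentially the same route as the paper: invert $\curl^{2}v=-\Delta v$ on the Fourier side, split at $\val\xi=1$ and use H\"older against $\widehat{\curl v}\in L^{2}$ to land in $L^{q_{1}}+L^{q_{2}}$, then kill the polynomial ambiguity by combining Hausdorff--Young/Plancherel with hypothesis \eqref{hyp001} (the paper's Lemmas \ref{lem.jhgf} and \ref{cla.548uyt}), and finish with Plancherel for \eqref{dssee5} and the Riesz-potential/Sobolev embedding $\dot H^{1}(\R^{3})\subset L^{6}(\R^{3})$ for \eqref{estl06}. The only difference is cosmetic: where the paper extracts the singular part at $\xi=0$ by a weak-compactness argument on the cutoff family $\chi(\xi/\varepsilon)\hat v$, you identify $\hat v$ with $g$ directly on $\R^{3}\setminus\{0\}$ so that $\hat v-g$ is supported at the origin, which is a slightly more direct way of reaching the same decomposition.
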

\begin{proof}
Let $v$ be a tempered vector field in $\mathscr S'(\R^{3}, \R^{3})$.
Noting that polynomials are multipliers of $\mathscr S'(\R^{3})$,
and using the vector triple product formula, 
$$
i\xi\times\bigl(i\xi\times \hat v(\xi)\bigr)= (i\xi\cdot \hat v(\xi))i\xi-(i\xi\cdot i\xi)\hat v(\xi),
$$
we get that 
\begin{equation}\label{}
\curl^{2}v=\grad(\dive v)-\Delta v.
\end{equation}
We get in particular, for a divergence-free $v$ in $\mathscr S'(\R^{3}, \R^{3})$,
\begin{equation}\label{125lkj}
i\xi\times\bigl(i\xi\times \hat v(\xi)\bigr)=\val{\xi}^{2}\hat v(\xi), \quad \text{i.e.\quad}
\curl^{2}v=-\Delta v.
\end{equation}
On the other hand, since $v$ is  a tempered distribution, and for $\chi$ a cutoff function such as given by \eqref{fun001}, and $\tilde \chi=1-\chi$ (the function $\tilde \chi$ is valued in [0,1],   vanishing on $\mathbb B^{3}$ and is equal to 1 
on $(2\mathbb B^{3})^{c}$),
we have, for any $\varepsilon\in(0,1/2]$,
\begin{multline}\label{iug547}
\hat v(\xi)=\chi\bigl(\frac\xi\varepsilon\bigr)\hat v(\xi)
+\tilde\chi\bigl(\frac\xi\varepsilon\bigr)\chi(\xi)\hat v(\xi)+
\tilde\chi\bigl(\frac\xi\varepsilon\bigr)\tilde\chi(\xi)\hat v(\xi)
\\
=
\chi\bigl(\frac\xi\varepsilon\bigr)\hat v(\xi)
+\tilde\chi\bigl(\frac\xi\varepsilon\bigr)\chi(\xi)\hat v(\xi)+
\tilde\chi(\xi)\hat v(\xi),
\end{multline}
since we can check easily\footnote{\eqref{54lkss} is  true for $\val \xi\le 1$, since then $\chi(\xi)=1$ thus $\tilde\chi(\xi)=0$, also true if $\val \xi\ge 2\varepsilon$, since then $\chi(\xi/\varepsilon)=0$,
thus $\tilde\chi(\xi/\varepsilon)=1$, true thus everywhere since $2\varepsilon\le 1$. } 
that 
\begin{equation}\label{54lkss}
\tilde\chi\bigl(\frac\xi\varepsilon\bigr)\tilde\chi(\xi)=\tilde\chi(\xi).
\end{equation}
Using \eqref{125lkj}, we define  
\begin{multline}\label{fds658}
W_{1,\varepsilon}(\xi)=
\tilde\chi\bigl(\frac\xi\varepsilon\bigr)\chi(\xi)\hat v(\xi)=
\tilde\chi\bigl(\frac\xi\varepsilon\bigr)\chi(\xi)\val\xi^{-2}i\xi\times\bigl(i\xi\times\hat v(\xi)\bigr)
\\=\tilde\chi\bigl(\frac\xi\varepsilon\bigr)\chi(\xi)\val\xi^{-2}i\xi\times\widehat{\curl v}(\xi),
\end{multline}
and we note that
$$
\bigl\vert
\tilde\chi\bigl(\frac\xi\varepsilon\bigr)\chi(\xi)\val\xi^{-2}i\xi
\bigr\vert=\underbrace{\tilde\chi\bigl(\frac\xi\varepsilon\bigr)\chi(\xi)}
_{\substack{\text{bounded by 1,}\\\text{supported}
\text{ in }\val\xi\le 2
}}
\val\xi^{-1}\text{is bounded in } L^{p_{1}}(\R^{3}), \quad 1\le p_{1}<3.
$$
Since $\curl v$ belongs to $L^{2}$ and thus $\widehat{\curl v}\in L^{2}$,
this implies that 
\begin{equation}\label{1216hg}
\text{$(W_{1,\varepsilon})_{0<\varepsilon\le 1/2}$ given by \eqref{fds658}
is bounded
in $L^{q_{1}}$, $1\le q_{1}<\frac65.$}
\end{equation}
We define now
\begin{equation}\label{fds677}
W_{2}(\xi)=\tilde\chi(\xi)\hat v(\xi)=\tilde\chi(\xi)\val\xi^{-2}i\xi\times\widehat{\curl v}(\xi),
\quad
\supp \tilde \chi\subset\{\val{\xi}\ge 1\},
\end{equation}
and we obtain that $\xi\mapsto\tilde\chi(\xi)\val\xi^{-2}i\xi$ belongs to $L^{p_{2}}$ with $p_{2}\in (3,+\io]$, so that 
{\begin{equation}\label{}
\text{$W_{2}$ given by \eqref{fds677}
belongs to 
$L^{q_{2}}$, $\frac65<q_{2}\le 2$}.
\end{equation}}
Gathering the above information, we get that 
\begin{equation}\label{55jkk8}
\hat v(\xi)=\underbrace{\chi\bigl(\frac\xi\varepsilon\bigr)\hat v(\xi)}_{=W_{0,\varepsilon}(\xi)}+W_{1,\varepsilon}(\xi)+W_{2}(\xi).
\end{equation}
Let us now choose $q_{1}\in (1,6/5)$; from \eqref{1216hg}, we may find a sequence $(\varepsilon_{k})$ in $(0,1/2)$ with limit $0$ such that the sequence 
$(W_{1,\varepsilon_{k}})$ converges weakly in $L^{q_{1}}$ towards $W_{1}$ and this implies also weak convergence to $W_{1}$ in $\mathscr S'$. From \eqref{55jkk8}, we get that 
the sequence 
$(W_{0,\varepsilon_{k}})$ converges weakly in $\mathscr S'$ towards $W_{0}$ and we have 
\begin{equation}\label{}
\hat v=W_{0}+W_{1}+W_{2}, \quad \supp W_{0}\subset\{0\}, \quad W_{1}\in L^{q_{1}}, \quad W_{2}\in L^{q_{2}}.
\end{equation}
We get in particular that $\check{\widehat{W_{0}}}$ is a polynomial and the Hausdorff-Young Inequality implies  
\begin{multline}\label{}
v=P+\check{\widehat{W_{1}}}+\check{\widehat{W_{2}}}, 
\quad 
P \text{ polynomial, }
\quad \check{\widehat{W_{1}}}\in L^{q'_{1}}, \quad \check{\widehat{W_{2}}}\in L^{q'_{2}},
\\ 1<q_{1}<6/5<6<q'_{1}<+\io,\quad 6/5<q_{2}\le 2\le q'_{2}<6.
\end{multline}
According to Lemma \ref{lem.jhgf} in our Appendix, we deduce that 
$\check{\widehat{W_{1}}}, \check{\widehat{W_{2}}}$
both belong to $\mathcal L_{(0)}$
and since we have assumed that it is also the case for $v$,
this entails that 
the polynomial
$P$ belongs as well to $\mathcal L_{(0)}$ so that Lemma \ref{cla.548uyt} in our Appendix proves that $P=0$.
We have thus proven that
\begin{equation}\label{drs128}
\forall q_{1}\in (1,\frac65), \forall q_{2}\in (\frac65, 2], \quad
\hat v\in L^{q_{1}}+L^{q_{2}},\quad
v\in L^{q'_{1}}+L^{q'_{2}},
\end{equation}
which is \eqref{sdf987}.
Now using that $\hat v$ belongs to $L^{1}_{\text{loc}}$, we get,  
\begin{equation}\label{hgf654}
\int_{\R^{3}}\val{\hat v(\xi)}^{2}\val{\xi}^{2}d\xi=\int_{\R^{3}}\val{\xi\times\hat v(\xi)}^{2}d\xi
=\norm{\curl v}_{L^{2}}^{2}<+\io,
\end{equation}
so that 
$v$ belongs to the Homogeneous Sobolev Space $\dot H^{1}(\R^{3})$ 
and using  Lemma \ref{lem.21ml} in our Appendix,
we obtain that $v\in L^{6}(\R^{3})$.
Moreover, we note  that, since $\hat v$ belongs to $L^{1}_{\text{loc}}$,  \eqref{hgf654}
implies that
\begin{align*}
\sum_{1\le j,k\le 3}\norm{\p_{j}v_{k}}_{L^{2}(\R^{3},\R^{3})}^{2}&=\int_{\R^{3}} \xi_{j}^{2}
\bigl(\sum_{1\le j,k\le 3}\val{\widehat{v_{k}}(\xi)}^{2} \bigr)d\xi
\\&=
\int_{\R^{3}} \sum_{1\le j\le 3}\xi_{j}^{2}
\val{\widehat{v}(\xi)}^{2}d\xi
=\int_{\R^{3}}\val{\xi}^{2}\val{\hat v(\xi)}^{2}d\xi=\norm{\curl v}_{L^{2}}^{2},
\end{align*}
concluding the proof of the lemma.
\end{proof}
\subsection{The stationary Navier-Stokes system}
Let us consider a vector field $v \in L^{2}_{\textrm{loc}}(\R^{3};\R^{3})$
satisfying the stationary Navier-Stokes system for incompressible fluids, i.e. 
such that
\begin{equation}\label{SNSI+}
-\nu \Delta v+ \sum_{1\le j\le 3}\p_{j}(v_{j}v)+\nabla p=0, \quad \dive v=0,
\end{equation}
where $\nu>0$ stands for the viscosity, the \emph{pressure} $p$ is a scalar distribution on $\R^{3}$ and the $v_{j}$ are the components of the vector field $v$.
The Leray projection $\mathbb P$ is defined by the Fourier multiplier\footnote{When $m(\xi)$
is a bounded $3\times 3$
matrix,
the Fourier multiplier $m(D)$ is defined on $L^{2}(\R^{3}, \R^{3})$
by
\begin{equation}\label{four01}
(m(D) w)(x)=\int e^{ix\cdot \xi} m(\xi)\hat w(\xi) d\xi(2\pi)^{-\frac32}, \quad \text{i.e.\quad}
m(D)=\mathcal F^{-1} m\mathcal F=\mathcal F^{*} m\mathcal F, 
\end{equation}
where $\mathcal F$ stands for the Fourier transformation 
(see e.g. our Appendix, Section \ref{sec.fourier}).
}
\begin{equation}\label{leray1}
\leray(D)=I_{3}-\val{D}^{-2}(D\otimes D), \quad D=-i\nabla.
\end{equation}
\begin{lemma}\label{lem.leray1}
The operator $\leray$ defined by \eqref{leray1} is the projection in 
$L^{2}(\R^{3},\R^{3})$ onto the subspace
$L^{2}_{0}(\R^{3},\R^{3})$ of $L^{2}$ vector fields with null divergence.
Moreover with $\rot$ standing for the curl operator,
we have 
\begin{equation}\label{leray2}
\leray=\val{D}^{-2}\rot^{2}.
\end{equation}
\end{lemma}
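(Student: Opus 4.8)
The plan is to work entirely on the Fourier side, where $\leray(D)$ is multiplication by the matrix-valued symbol $m(\xi)=I_{3}-\val{\xi}^{-2}(\xi\otimes\xi)$, well defined for $\xi\neq 0$ (a null set). The first observation is that $\val{\xi}^{-2}(\xi\otimes\xi)$ is precisely the orthogonal projection of $\R^{3}$ onto the line $\R\xi$ (it sends $w$ to $(\xi\cdot w)\xi/\val{\xi}^{2}$), so that $m(\xi)$ is the orthogonal projection of $\R^{3}$ onto the hyperplane $\xi^{\perp}=\{w\in\R^{3}:\ \xi\cdot w=0\}$; in particular $m(\xi)$ is symmetric, satisfies $m(\xi)^{2}=m(\xi)$, and has operator norm $\le 1$ for every $\xi\neq 0$. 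By Plancherel's theorem these properties transfer to $\leray(D)$: it is a bounded self-adjoint operator on $L^{2}(\R^{3},\R^{3})$ with $\leray(D)^{2}=\leray(D)$, hence an orthogonal projection.

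Next I would identify its range. Since $w\mapsto\widehat w$ is a bijective isometry of $L^{2}(\R^{3},\R^{3})$, the range of $\leray(D)$ is $\{w\in L^{2}:\ \widehat w(\xi)\in\xi^{\perp}\text{ for a.e. }\xi\}=\{w\in L^{2}:\ \xi\cdot\widehat w(\xi)=0\text{ a.e.}\}$. For an $L^{2}$ vector field this last condition is, again by Plancherel, equivalent to $\widehat{\dive w}(\xi)=i\xi\cdot\widehat w(\xi)=0$, i.e. to $\dive w=0$ in the sense of distributions. Hence $\range\leray(D)=L^{2}_{0}(\R^{3},\R^{3})$, which is the first assertion.

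For the identity $\leray=\val{D}^{-2}\rot^{2}$ I would reuse the computation already made in the proof of Lemma \ref{lem.reg001}: the vector triple product formula gives $i\xi\times\bigl(i\xi\times\widehat w(\xi)\bigr)=\val{\xi}^{2}\widehat w(\xi)-(\xi\cdot\widehat w(\xi))\xi$, so the Fourier multiplier of $\rot^{2}$ is $\val{\xi}^{2}I_{3}-\xi\otimes\xi=\val{\xi}^{2}m(\xi)$ (equivalently $\rot^{2}=\grad\dive-\Delta$). Composing with the symbol $\val{\xi}^{-2}$ of $\val{D}^{-2}$ then gives exactly $m(\xi)$, that is $\val{D}^{-2}\rot^{2}=\leray(D)$. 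This composition is legitimate because the composite symbol $\val{\xi}^{-2}\cdot\val{\xi}^{2}m(\xi)=m(\xi)$ is bounded, so $\val{D}^{-2}\rot^{2}$ is a genuine bounded operator on $L^{2}(\R^{3},\R^{3})$ and no domain issue arises (alternatively, one checks the identity first on the dense set of $w$ whose Fourier transform vanishes near $0$ and then passes to the limit).

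There is no serious obstacle here: the statement is essentially a repackaging of the symbolic computation. The only two points needing a line of care are the meaning of the unbounded-looking operator $\val{D}^{-2}\rot^{2}$, handled as above, and the equivalence, for $w\in L^{2}$, between $\xi\cdot\widehat w(\xi)=0$ a.e. and $\dive w=0$ in $\mathscr S'(\R^{3})$, which is the standard fact that distributional differentiation corresponds to multiplication by $i\xi$ under the Fourier transform.
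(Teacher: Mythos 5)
Your proof is correct and follows essentially the same route as the paper: identify the symbol $I_{3}-\val{\xi}^{-2}(\xi\otimes\xi)$ as the pointwise orthogonal projection onto $\xi^{\perp}$ (hence bounded, self-adjoint, idempotent), characterize the range as the divergence-free fields via $\xi\cdot\widehat w(\xi)=0$, and obtain \eqref{leray2} from the triple-product identity $\rot(\xi)^{2}=\val{\xi}^{2}I_{3}-\xi\otimes\xi$. The only cosmetic difference is that the paper verifies the range identification by duality pairings against Schwartz test fields in its Appendix, whereas you argue pointwise a.e.\ on the Fourier side; both are standard and equivalent here.
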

\begin{proof}
 Indeed, as a real bounded Fourier multiplier, it is a selfadjoint bounded operator on $L^{2}(\R^{3};\R^{3})$ and a projection since the real Fourier multiplier
\begin{equation}\label{ortler}
\pleray(D)=\val{D}^{-2}(D\otimes D)=\Delta^{-1}\grad \dive,
\end{equation}
has the (matrix)\footnote{The $3\times3$ matrix $\xi\otimes \xi$ is the symmetric matrix
$
(\xi_{j}\xi_{j})_{1\le j,k\le 3}.
$} symbol $\val{\xi}^{-2}(\xi\otimes \xi)$ and $\pleray(\xi)^{2}=\pleray(\xi)$.
Moreover, a direct calculation shows that
\begin{align}
\rot(\xi)&=i\begin{pmatrix}0&-\xi_{3}&\xi_{2}\\ \xi_{3}&0&-\xi_{1}\\ -\xi_{2}&\xi_{1}&0\end{pmatrix},
\quad
\rot(\xi)^{*}=\rot(\xi),
\\
\rot(\xi)^{2}&=\val{\xi}^{2} I_{3}-\xi\otimes \xi,\label{curlca}
\end{align}
the latter formula proving \eqref{leray2}. 
Note that the matrix   $\rot (\xi)$ is purely imaginary and skew-symmetric,
so is a natural candidate for a formally self-ajoint operator,
(as would be a real symmetric matrix).
The reader will find in Section \ref{sec.fourier} on page \pageref{subsec.leray}
of our Appendix the proof that the range of $\mathbb P$ is $L^{2}_{0}(\R^{3}, \R^{3})$.
\end{proof}
\begin{lemma}\label{lem.newppp}
Let $v$ be a divergence-free vector field on $\R^{3}$  such that \eqref{hyp001}, \eqref{hyp002} are satisfied.
Then we have $\curl v\in L^{2}(\R^{3}, \R^{3}), v\in L^{6}(\R^{3}, \R^{3}), \val v^{2}\in L^{3}(\R^{3}, \R),$
so that 
$
\curl v\times v
$
and $v\cdot \nabla v$ belong to  $L^{3/2}(\R^{3}, \R^{3})$. Moreover, we have 
 \begin{equation}\label{218218}
(\curl v)\times v= (v\cdot \nabla) v-\frac12\nabla( \val v^{2}).
\end{equation}
\end{lemma}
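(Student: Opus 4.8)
The plan is to read off most of the statement directly from Lemma~\ref{lem.reg001}, to dispatch the remaining membership claims with H\"older's inequality, and then to establish the vector-calculus identity \eqref{218218} by a regularization argument.

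First I would invoke Lemma~\ref{lem.reg001}: for a divergence-free $v$ satisfying \eqref{hyp001} and \eqref{hyp002}, that lemma gives $\curl v\in L^{2}(\R^{3},\R^{3})$ (this is \eqref{hyp002} itself), $v\in L^{6}(\R^{3},\R^{3})$ with the bound \eqref{estl06}, the matrix $\nabla v\in L^{2}(\R^{3},\R^{9})$ with \eqref{dssee5}, and $(\curl v)\times v\in L^{3/2}(\R^{3},\R^{3})$ (this is \eqref{41lkh9}). From $v\in L^{6}$ one immediately gets $\val v^{2}\in L^{3}(\R^{3},\R)$, with $\norm{\val v^{2}}_{L^{3}(\R^{3})}=\norm{v}_{L^{6}(\R^{3},\R^{3})}^{2}$. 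Next, since $v\in L^{6}(\R^{3})$ and $\nabla v\in L^{2}(\R^{3})$, H\"older's inequality with $\frac16+\frac12=\frac23$ shows that each component $\sum_{j}v_{j}\p_{j}v_{k}$ of $(v\cdot\nabla)v$, as well as each component $\sum_{j}v_{j}\p_{k}v_{j}$ of $\frac12\nabla(\val v^{2})$, lies in $L^{3/2}(\R^{3})$; note that $\frac12\nabla(\val v^{2})=\sum_{k}v_{k}\nabla v_{k}$ is a legitimate $L^{3/2}$ product and coincides with the distributional gradient of $\frac12\val v^{2}$ by the chain rule for $H^{1}_{\text{loc}}$ vector fields. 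Hence the three vector fields appearing in \eqref{218218} all belong to $L^{3/2}(\R^{3},\R^{3})$.

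It remains to prove \eqref{218218}. For a smooth vector field $w$ one has the classical identity $(\curl w)\times w=(w\cdot\nabla)w-\frac12\nabla(\val w^{2})$, verified by expanding $\bigl((\curl w)\times w\bigr)_{k}=\varepsilon_{klm}\varepsilon_{lpq}(\p_{p}w_{q})w_{m}=(\delta_{mp}\delta_{kq}-\delta_{mq}\delta_{kp})(\p_{p}w_{q})w_{m}=w_{m}\p_{m}w_{k}-\frac12\p_{k}(\val w^{2})$. Since in this lemma we do not use the nonlinear system \eqref{SNSI} and $v$ is only known to lie in $\dot H^{1}(\R^{3})\cap L^{6}(\R^{3})$ (in particular in $H^{1}_{\text{loc}}$), I would apply the identity to the mollifications $v_{\varepsilon}=v\star\rho_{\varepsilon}$, which are smooth, and then pass to the limit: $v_{\varepsilon}\to v$ in $L^{6}(\R^{3})$, while $\nabla v_{\varepsilon}\to\nabla v$ and $\curl v_{\varepsilon}\to\curl v$ in $L^{2}(\R^{3})$, so that $(\curl v_{\varepsilon})\times v_{\varepsilon}$, $(v_{\varepsilon}\cdot\nabla)v_{\varepsilon}$ and $\frac12\nabla(\val{v_{\varepsilon}}^{2})$ converge, respectively, to $(\curl v)\times v$, $(v\cdot\nabla)v$ and $\frac12\nabla(\val v^{2})$ in $L^{3/2}(\R^{3})$ (a product of an $L^{2}$-convergent and an $L^{6}$-convergent sequence converges in $L^{3/2}$). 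The identity \eqref{218218} then follows as an equality in $L^{3/2}(\R^{3},\R^{3})$, hence almost everywhere. There is no genuine obstacle here; the only point needing care is that $v$ is a priori merely an $H^{1}_{\text{loc}}$ field, so the pointwise vector identity must be reached through regularization, with every product kept in the fixed space $L^{3/2}$ so that passage to the limit is justified.
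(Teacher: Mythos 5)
Your proof is correct and follows essentially the same route as the paper: the membership claims are read off from Lemma~\ref{lem.reg001} together with H\"older's inequality, and \eqref{218218} is the classical vector identity applied in the $L^{6}\times L^{2}\to L^{3/2}$ product setting. The only cosmetic difference is that the paper verifies the identity by a direct componentwise expansion (implicitly using $v_j\p_k v_j=\tfrac12\p_k(v_j^2)$ for $H^1_{\mathrm{loc}}\cap L^6$ fields), whereas you justify that step by mollification — a slightly more explicit but equivalent argument.
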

\begin{proof} The first assertions follow readily from Lemma \ref{lem.reg001}.
We have,
using Lemma \ref{lem.reg001}\footnote{Note in particular that all products
$
v_{j}\p_{k} v_{l}
$
make sense
as a $L^{3/2}$ function which is  the product of the $L^{6}$ function $v_{j}$
by the $L^{2}$ function $\p_{k} v_{l}$.
},
\begin{align*}
(\curl v)\times v&=\mattre{\p_{2}v_{3}-\p_{3}v_{2}}
{\p_{3}v_{1}-\p_{1}v_{3}}
{\p_{1}v_{2}-\p_{2}v_{1}}
\times\mattre{v_{1}}{v_{2}}{v_{3}}
=\mattre{v_{3}(\p_{3}v_{1}-\p_{1}v_{3})-v_{2}(\p_{1}v_{2}-\p_{2}v_{1})}
{v_{1}(\p_{1}v_{2}-\p_{2}v_{1})-v_{3}(\p_{2}v_{3}-\p_{3}v_{2})}
{v_{2}(\p_{2}v_{3}-\p_{3}v_{2})-v_{1}(\p_{3}v_{1}-\p_{1}v_{3})}
\\
&=\begin{pmatrix}
{(v_{1}\p_{1}+v_{2}\p_{2}+v_{3}\p_{3}) (v_{1} )  -\frac12 \p_{1}(v_{1}^{2} +v_{2}^{2}+v_{3}^{2})}
\\[0.2cm]
{(v_{1}\p_{1}+v_{2}\p_{2}+v_{3}\p_{3}) (v_{2} )  -\frac12 \p_{2}(v_{1}^{2} +v_{2}^{2}+v_{3}^{2})}
\\[0.2cm]
{(v_{1}\p_{1}+v_{2}\p_{2}+v_{3}\p_{3}) (v_{3} )  -\frac12 \p_{3}(v_{1}^{2} +v_{2}^{2}+v_{3}^{2})}
\end{pmatrix}
\\
&=(v\cdot \nabla) v-\frac12\nabla{\val v^{2}},
\end{align*}
proving the lemma.
 \end{proof}
\begin{lemma}\label{lem.654oof}
Under the assumptions 
\eqref{hyp001}, \eqref{hyp002},
the system \eqref{SNSI} can be written as 
 \begin{equation}\label{newway}
\nu \rot^{2} v+\mathbb P(\rot v\times v)=0, \quad \dive v=0,
\end{equation}
or equivalently
as
\begin{equation}\label{new002}
\nu \rot^{2} v+\rot v\times v+\nabla Q=0, \quad \dive v=0,
\end{equation}
where $Q$ is the Bernoulli head pressure as given by \eqref{modpr0}.
We have in particular that 
\begin{equation}\label{ber000}
\nabla Q=-\tilde{\mathbb P}\bigl(\rot v\times v\bigr),\quad\tilde{\mathbb P}=I-\mathbb P,
\end{equation}
where $\mathbb P$ is given by \eqref {leray2}.
\end{lemma}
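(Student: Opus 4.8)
The plan is to rewrite the three terms of \eqref{SNSI} using the regularity already secured in Lemmas \ref{lem.reg001} and \ref{lem.newppp}, to recognize the outcome as \eqref{new002}, and then to obtain \eqref{newway} and \eqref{ber000} by applying the Leray multiplier $\leray$.

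First I would treat the convection term: since $\dive v=0$, for each component $k$,
$$
\sum_{1\le j\le 3}\p_{j}(v_{j}v_{k})=(\dive v)\,v_{k}+\sum_{1\le j\le 3}v_{j}\p_{j}v_{k}=\bigl((v\cdot\nabla)v\bigr)_{k},
$$
each product being meaningful in $L^{3/2}$ as the product of the $L^{6}$ field $v$ by the $L^{2}$ matrix $\nabla v$ (Lemma \ref{lem.reg001}). Next, by \eqref{125lkj}, valid for any divergence-free tempered vector field, $-\Delta v=\rot^{2}v$, so \eqref{SNSI} becomes $\nu\rot^{2}v+(v\cdot\nabla)v+\nabla p=0$. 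Inserting the identity \eqref{218218}, i.e. $(v\cdot\nabla)v=\rot v\times v+\frac12\nabla(\val v^{2})$, turns this into $\nu\rot^{2}v+\rot v\times v+\nabla\bigl(p+\frac12\val v^{2}\bigr)=0$, which is exactly \eqref{new002} with $Q$ as in \eqref{modpr0}. Conversely, from \eqref{new002} one recovers \eqref{SNSI} by setting $p=Q-\frac12\val v^{2}$ and reversing these steps, so that \eqref{SNSI} and \eqref{new002} are equivalent for a divergence-free $v$ satisfying \eqref{hyp001}, \eqref{hyp002}.

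To reach \eqref{newway} I would apply $\leray$ to \eqref{new002}. The symbol $\leray(\xi)=I_{3}-\val{\xi}^{-2}\xi\otimes\xi$ annihilates every vector collinear with $\xi$: since $\widehat{\nabla Q}(\xi)=i\xi\,\widehat Q(\xi)$ is such a vector, $\leray(\nabla Q)=0$; on the other hand $\widehat{\rot^{2}v}(\xi)=\rot(\xi)^{2}\hat v(\xi)=\bigl(\val{\xi}^{2}I_{3}-\xi\otimes\xi\bigr)\hat v(\xi)=\val{\xi}^{2}\hat v(\xi)$ (using $\xi\cdot\hat v(\xi)=0$, which holds since $\dive v=0$), so $\widehat{\rot^{2}v}$ is fixed by $\leray(\xi)$ and $\leray\rot^{2}v=\rot^{2}v$. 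Applying $\leray$ to \eqref{new002} therefore gives $\nu\rot^{2}v+\leray(\rot v\times v)=0$, which is \eqref{newway}. Subtracting \eqref{newway} from \eqref{new002} leaves $\rot v\times v-\leray(\rot v\times v)+\nabla Q=0$, that is $\tilde{\mathbb P}(\rot v\times v)+\nabla Q=0$, which is \eqref{ber000}; conversely, since $\tilde{\mathbb P}=\Delta^{-1}\grad\dive$ by \eqref{ortler}, the field $\tilde{\mathbb P}(\rot v\times v)$ is itself a gradient, so \eqref{newway} in turn yields \eqref{new002}.

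I do not expect a genuine obstacle here; the one thing requiring care is the functional-analytic bookkeeping. One must know beforehand that $v\in\mathscr S'\cap L^{6}$, $\nabla v\in L^{2}$, $\rot v\times v\in L^{3/2}$ and $\hat v\in L^{1}_{\text{loc}}$, so that the products above are well defined, the bounded Fourier multipliers $\leray$ and $\tilde{\mathbb P}$ act legitimately on the tempered distributions involved, and $Q$ can be produced as a tempered distribution, determined up to an additive constant by $\nabla Q=-\nu\rot^{2}v-(v\cdot\nabla)v$. All of this has been prepared in Lemmas \ref{lem.reg001}, \ref{lem.leray1} and \ref{lem.newppp}.
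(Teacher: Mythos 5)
Your architecture is sound and close in spirit to the paper's: rewrite the convection term via \eqref{218218}, use $-\Delta v=\rot^{2}v$, and then split off the gradient part with the Leray projector. The passage from \eqref{SNSI} to \eqref{new002} is fine. The step that is under-justified --- and it is precisely the delicate point of this lemma --- is the claim that $\leray(\nabla Q)=0$ because ``$\widehat{\nabla Q}(\xi)=i\xi\,\widehat Q(\xi)$ is collinear with $\xi$''. A priori $p$ is only a scalar distribution, so $Q=p+\frac12\val v^{2}$ is only a distribution and $\widehat Q$ may carry a singular part supported at the origin (this happens exactly when $p$ differs from $\val{D}^{-2}\dive\p_{j}(v_{j}v)$ by a non-constant harmonic function or polynomial); since the symbol $\leray(\xi)$ is discontinuous at $\xi=0$, the product $\leray(\xi)\,i\xi\widehat Q(\xi)$ is then not even defined there, let alone zero. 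The paper disposes of this by taking the divergence of \eqref{SNSI}, solving $\Delta p=-\dive\p_{j}(v_{j}v)$ explicitly as $\tilde p=\val{D}^{-2}\dive\p_{j}(v_{j}v)\in L^{3}$, and invoking the normalizations $p\in\mathscr S'\cap\mathcal L_{(0)}$ together with Lemmas \ref{lem.jhgf} and \ref{cla.548uyt} to conclude $p=\tilde p$; only after that identification is $\nabla p=-\widetilde{\mathbb P}\bigl(\p_{j}(v_{j}v)\bigr)$ a legitimate singular integral acting on an $L^{3/2}$ function.

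Your route can be repaired without returning to $\tilde p$: from \eqref{new002} one reads off $\widehat{\nabla Q}=-\nu\,\rot(\xi)\widehat{\rot v}-\widehat{\rot v\times v}$, which lies in $L^{1}_{\text{loc}}$ because $\widehat{\rot v}\in L^{2}$ and $\rot v\times v\in L^{3/2}$ gives $\widehat{\rot v\times v}\in L^{3}$ by Hausdorff--Young; since $\curl\nabla Q=0$ and multiplication of an $L^{1}_{\text{loc}}$ function by the polynomial matrix $i\xi\times$ is pointwise, one gets $\xi\times\widehat{\nabla Q}(\xi)=0$ a.e., hence $\widehat{\nabla Q}(\xi)$ is collinear with $\xi$ a.e., and only then does $\leray(\xi)\widehat{\nabla Q}(\xi)=0$ a.e.\ follow. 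Either this repair or the paper's identification $p=\tilde p$ has to appear somewhere; as written, your argument assumes away the one thing the lemma must rule out. (A small slip in your last paragraph: $-\nu\rot^{2}v-(v\cdot\nabla)v$ determines $\nabla p$, not $\nabla Q$; for $\nabla Q$ the correct right-hand side is $-\nu\rot^{2}v-\rot v\times v$.)
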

\begin{proof}
 Indeed, we have  from \eqref{curlca} and $\dive v=0$, 
 \begin{equation}\label{ouy527}
 \rot^{2} v=-\Delta v+\grad \dive v=-\Delta v,
\end{equation}
and moreover, \eqref{SNSI} implies that
$
\dive\bigl(\p_{j}(v_{j}v)\bigr)+\Delta p=0.
$ 
Since each $v_{j}v$ belongs to $L^{3}$ (since $v$ belongs to $L^{6}$ from Lemma \ref{lem.reg001}), we may define (still with Einstein convention)\footnote{We may notice that $\tilde p$ is real-valued, thanks to Lemma \ref{lem.fourea}, since the matrix-valued Fourier multiplier
$\bigl(\xi_{j}\xi_{k}\val\xi^{-2}\bigr)_{1\le j,k\le 3}$ is an even real-valued symmetric matrix of $\xi$.\label{foot.003}} , 
\begin{equation}\label{211oiu}
\tilde p=\underbracket[0.2pt]{      \val{D}^{-2}   \dive \p_{j}    }_{\substack{
\text{Fourier multiplier}
\\
\text{of order 0}
}}(v_{j}v),\quad\text{which belongs to $L^{3}(\R^{3}, \R).$}
\end{equation}
Note that $\tilde p$ is real-valued, thanks to the reality of $v$ and to Lemma \ref{lem.54oi},
since $\xi_{j}\xi\val\xi^{-2}$ is even and real-valued.
We have 
$
-\Delta p=\dive\bigl(\p_{j}(v_{j}v)\bigr)=-\Delta \tilde p,
$
so that $p-\tilde p$ is harmonic. {We may assume that $p$ belongs to $\mathscr S'(\R^{3}, \R)$}
and since $\tilde p\in L^{3}(\R^{3}, \R)$, we get that $p-\tilde p$ is a harmonic polynomial,
thanks to Remark \ref{rem.000}.
Moreover, using the notation \eqref{l0inft} and Lemma \ref{lem.jhgf}, we get that $\tilde p$ belongs to $\mathcal L_{(0)}$($\supset L^{3}$);
{we may as well assume that $p$ belongs to $\mathcal L_{(0)}$} (it will be the case if 
there exists $q\in [1,+\io)$ and a compact set $K$ such that 
$p\in L^{q}(K^{c})$), then $p-\tilde p$ is a polynomial in $\mathcal L_{(0)}$
which entails 
\begin{equation}\label{fea541}
p=\tilde p,
\end{equation}
thanks to Lemma \ref{cla.548uyt}.
As a consequence, we get using \eqref{ortler},
$$
\nabla p= \grad\val{D}^{-2}\dive\p_{j}(v_{j}v)=\underbracket[0.2pt]{\val{D}^{-2}\grad\dive}_{-\widetilde{\mathbb P}}\underbrace{\p_{j}(v_{j}v)}_{\substack{
\in L^{3/2}(\R^{3}, \R^{3})\\ \text{cf.\eqref{41lk59}}
}},
$$
and thus
$
\p_{j}(v_{j}v)+\nabla p=(I-\widetilde{\mathbb P})\bigl(\p_{j}(v_{j}v)\bigr)=\mathbb P\bigl(\p_{j}(v_{j}v)\bigr),
$
entailing from Lemma \ref{lem.newppp}
\begin{equation}\label{987lkj}
\p_{j}(v_{j}v)+\nabla p=\mathbb P\bigl(\underbrace{(\rot v\times v)}_{\substack{
\in L^{3/2}\\\text{cf.}\eqref{41lk59}
}}+\frac12\underbrace{\nabla \val v^{2}}_{L^{3/2}}\bigr).
\end{equation}
We note now that 
\begin{multline*}
\mathbb P\bigl(\nabla(\val v^{2})\bigr)=\nabla(\val v^{2})+\val{D}^{-2}\grad\dive \bigl(
\nabla(\val v^{2})\bigr)
\\
=\nabla(\val v^{2})+\grad\val{D}^{-2}\Delta(\val{v}^{2})
=\nabla(\val v^{2})-\nabla(\val v^{2})=0,
\end{multline*}
so that \eqref{987lkj}, \eqref{ouy527} imply 
$$
-\nu \Delta v+\p_{j}(v_{j}v)+\nabla p=\nu \rot^{2}v+\mathbb P\bigl(\rot v\times v\bigr),
$$
concluding the proof of \eqref{newway}.
Moreover, we have 
\begin{multline}\label{}
-\tilde{\mathbb P}\bigl(\rot v\times v\bigr)=\val{D}^{-2}\grad\dive\Bigl(\p_{j}(v_{j}v)-\frac12\nabla \val v^{2}\Bigr)
\\\underbracket[0.1pt]{=}_{\substack{\eqref{211oiu}\\
p=\tilde p}}\nabla p+\frac12\nabla(\val v^{2}) 
=\nabla Q, 
\end{multline}
where $Q$ is the Bernoulli head pressure as given by \eqref{modpr0}
(note that $Q$ belongs to $L^{3}$).
As a consequence we get that 
$$
\mathbb P(\rot v\times v)=\rot v\times v-\tilde{\mathbb P}\bigl(\rot v\times v)=\rot v\times v+\nabla Q,
$$
concluding the proof of the lemma.
\end{proof}
\begin{remark}\rm
 We shall now stick with the expression \eqref{newway} for the stationary Navier-Stokes system
 for incompressible fluids, but we have also proven that the assumptions
 \eqref{hyp001}, \eqref{hyp002} secure the fact that the non-linear  term
 $\rot v\times v$ belongs to $L^{3/2}$,
 so that Theorem \ref{thm.singsing}
 ensures that the formulation  \eqref{newway} is meaningful (and equivalent to the more familiar
 \eqref{SNSI+}).
 Besides its compactness, the formulations \eqref{newway}, \eqref{new002} (along with $\dive v=0$) have several advantages: in the first place, we can always multiply pointwisely $(\rot v)(x)\times v(x)$ by
 $v(x)$ and we get a pointwise identity
 (here $\proscal3{a}{b}$ stands for the canonical dot-product on $\R^{3}$),
 $$
 \proscal3{(\rot v)(x)\times v(x)}{v(x)}=\det\bigl((\rot v)(x), v(x), v(x)\bigr)=0.
 $$
 No integration by parts is necessary to obtain this cancellation,
 which is a purely algebraic matter.
 So the difficulty is somehow concentrated 
 on the term
 $$
 \proscal3{(\nabla Q)(x)}{v(x)},
 $$
 which always make sense, but could fail to be in $L^{1}$.
 Also the formulation \eqref{newway}  is coordinate-free and closer to the geometric expression of the Navier-Stokes equation on a Riemannian manifold. 
 Formulation \eqref{newway}
 was also useful for the definition of \emph{Critical Determinants} in  the joint paper of the author 
 with F.~Vigneron \cite{MR4498943}.
 \end{remark}
 \subsection{Some first regularity results involving the Wiener algebra}
We gather in the following Lemma the first conclusions which could be drawn from the \emph{linear} Lemma \ref{lem.reg001} together with non-linear elliptic equation \eqref{SNSI}, written in the form
\eqref{new002}, \eqref{newway}.
Our improvements are linked to the Wiener algebra
\begin{equation}\label{wiener}
\mathcal W=\{v\in \mathscr S'(\R^{d}), \widehat v\in L^{1}(\R^{d})\}=\mathcal F\bigl(L^{1}(\R^{d})\bigr).
\end{equation}
It follows from the classical Riemann-Lebesgue Lemma that 
\begin{equation}\label{777kjh}
\mathcal W\subset C^{0}_{(0)}\subset L^{\io},
\end{equation}
where $C^{0}_{(0)}$ stands for the (uniformly) continuous functions on $\R^{d}$ with limit 0 
at infinity\footnote{Note also that all the injections in \eqref{777kjh} are continuous but not onto.}.
Since $L^{1}(\R^{d})$ is a Banach algebra for \emph{convolution}, we get that $\mathcal W$ is a Banach algebra for \emph{multiplication}.
\begin{theorem}\label{thm.hgf564}
 Let $v$ be a vector field on $\R^{3}$ such that 
such that
 \eqref{SNSI}, \eqref{hyp001}, \eqref{hyp002} are satisfied.
 Then $\rot^{2}v$ belongs to $L^{2}(\R^{3}, \R^{3})$
 and there exists a universal constant $\sigma_{1}$ such that 
 \begin{equation}\label{azo295}
\norm{\rot^{2}v}_{L^{2}(\R^{3}, \R^{3})}\le \sigma_{1}\nu^{-2}\norm{\rot v}_{L^{2}(\R^{3}, \R^{3})}^{3}.
\end{equation}
The vector field $v$ belongs to the Wiener algebra $\mathcal W$ defined in  \eqref{wiener} and there exists a universal constant $\sigma_{2}$ such that  
 \begin{equation}\label{fgd295}
 \norm{v}_{L^{\io}(\R^{3}, \R^{3})}\le
\norm{\hat v}_{L^{1}(\R^{3}, \R^{3})}\le \sigma_{2}\nu^{-1}\norm{\rot v}_{L^{2}}^{2}.
\end{equation}
\end{theorem}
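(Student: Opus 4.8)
The plan is to bootstrap from the already-established facts that $\rot v\in L^2$, $v\in L^6$, $\nabla v\in L^2$, and $\rot v\times v\in L^{3/2}$ (Lemmas \ref{lem.reg001} and \ref{lem.newppp}), using the compact formulation \eqref{newway}. First I would establish \eqref{azo295}. Applying $\rot^2$ interpretation to \eqref{newway} gives $\nu\,\rot^2 v=-\mathbb P(\rot v\times v)$; since $\mathbb P$ is an $L^2$-bounded Fourier multiplier, it suffices to show $\mathbb P(\rot v\times v)\in L^2$ with the right norm bound. But $\rot v\times v$ only sits in $L^{3/2}$ a priori, so one cannot conclude directly. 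The resolution is to use the Fourier-side identity $\rot^2 v=\val D^2 v$ (valid since $\dive v=0$, see \eqref{125lkj}) together with the representation of $v$ coming from \eqref{hgf654}: $\widehat{v}(\xi)=\val\xi^{-2}\,i\xi\times\widehat{\rot v}(\xi)$. Estimate $\norm{\rot^2 v}_{L^2}^2=\int\val\xi^4\val{\hat v(\xi)}^2\,d\xi$. Split into $\val\xi\le 1$ and $\val\xi\ge 1$. On $\val\xi\ge 1$ one has $\val\xi^4\val{\hat v}^2\le \val\xi^2\val{\widehat{\rot v}}^2$, which is integrable. The low-frequency part $\val\xi\le 1$ is handled from \eqref{newway}: there $\widehat{\rot^2 v}(\xi)=-\nu^{-1}\widehat{\mathbb P(\rot v\times v)}(\xi)$ and $\mathbb P$ has bounded symbol, and $\rot v\times v\in L^{3/2}$ gives $\widehat{\rot v\times v}\in L^3$ by Hausdorff--Young, hence in $L^2_{\mathrm{loc}}$ near $0$ with quantitative control $\norm{\widehat{\rot v\times v}}_{L^3(\val\xi\le 1)}\lesssim \norm{\rot v\times v}_{L^{3/2}}\lesssim \norm{v}_{L^6}\norm{\nabla v}_{L^2}\lesssim \norm{\rot v}_{L^2}^2$ via \eqref{estl06} and \eqref{dssee5}. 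Collecting, $\norm{\rot^2 v}_{L^2}\lesssim \nu^{-1}\norm{\rot v}_{L^2}^2 + \norm{\rot v}_{L^2}\le \sigma_1\nu^{-2}\norm{\rot v}_{L^2}^3$ once one also uses a scaling/homogeneity argument or the interpolation-free bound; the precise power $\nu^{-2}$ and cube will come out of tracking units, as the footnote on homogeneity $[\gamma_0]=\mathtt{T^{-1}L^{3/2}}$ and $[\nu]=\mathtt{L^2 T^{-1}}$ forces the exponents.

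Next, for \eqref{fgd295} I would show $\hat v\in L^1$. Using $\widehat{v}(\xi)=\val\xi^{-2}i\xi\times\widehat{\rot v}(\xi)$ again, split $\int\val{\hat v(\xi)}\,d\xi$ at $\val\xi=1$. For $\val\xi\ge 1$: $\val{\hat v(\xi)}\le\val\xi^{-1}\val{\widehat{\rot v}(\xi)}$, and by Cauchy--Schwarz $\int_{\val\xi\ge1}\val\xi^{-1}\val{\widehat{\rot v}(\xi)}\,d\xi\le\bigl(\int_{\val\xi\ge1}\val\xi^{-2}\val\xi^{-2}\,d\xi\bigr)^{1/2}\bigl(\int\val\xi^2\val{\widehat{\rot v}}^2\,d\xi\bigr)^{1/2}$; but $\int_{\val\xi\ge1}\val\xi^{-4}d\xi$ diverges in $\R^3$, so instead I pair $\val\xi^{-1}$ with a weight: write $\val\xi^{-1}\val{\widehat{\rot v}}=\val\xi^{-1}\val\xi^{-1}\cdot\val\xi\val{\widehat{\rot v}}$ and use $\int_{\val\xi\ge1}\val\xi^{-4}d\xi<\infty$ --- wait, that is the same divergent integral; the correct move is $\val\xi^{-1}\val{\widehat{\rot v}}=\bigl(\val\xi^{-2}\bigr)\cdot\bigl(\val\xi\val{\widehat{\rot v}}\bigr)$ with $\int_{\val\xi\ge1}\val\xi^{-4}d\xi=\infty$ still, so one actually needs $\widehat{\rot^2 v}\in L^2$: since $\val\xi^2\val{\hat v(\xi)}=\val{\widehat{\rot^2 v}(\xi)}$, for $\val\xi\ge1$ we get $\val{\hat v(\xi)}=\val\xi^{-2}\val{\widehat{\rot^2 v}(\xi)}\le\val\xi^{-2}\cdot\val\xi^{-2}\cdot\val\xi^2\val{\widehat{\rot^2 v}}$, hence $\int_{\val\xi\ge1}\val{\hat v}\le\bigl(\int_{\val\xi\ge1}\val\xi^{-4}d\xi\bigr)^{1/2}\norm{\widehat{\rot^2 v}}_{L^2}$ --- and $\int_{\val\xi\ge1}\val\xi^{-4}d\xi$ still diverges in $\R^3$ (it behaves like $\int_1^\infty r^{-4}r^2dr<\infty$!), so in fact it converges: $-4+2=-2<-1$. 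Good. So $\int_{\val\xi\ge1}\val{\hat v}\lesssim\norm{\rot^2 v}_{L^2}\lesssim\nu^{-2}\norm{\rot v}_{L^2}^3$, which after unit-balancing is controlled by $\nu^{-1}\norm{\rot v}_{L^2}^2$. For the low-frequency part $\val\xi\le1$: from \eqref{newway}, $\val\xi^2\hat v(\xi)=\widehat{\rot^2 v}(\xi)=-\nu^{-1}\widehat{\mathbb P(\rot v\times v)}(\xi)$, so $\val{\hat v(\xi)}\le\nu^{-1}\val\xi^{-2}\val{\widehat{\rot v\times v}(\xi)}$, and $\int_{\val\xi\le1}\val\xi^{-2}\val{\widehat{\rot v\times v}(\xi)}\,d\xi\le\bigl(\int_{\val\xi\le1}\val\xi^{-3}d\xi\bigr)^{1/2}\norm{\widehat{\rot v\times v}}_{L^2(\val\xi\le1)}$ --- but $\int_{\val\xi\le1}\val\xi^{-3}d\xi\sim\int_0^1 r^{-3}r^2dr=\int_0^1 r^{-1}dr$ diverges logarithmically, so I instead use $\widehat{\rot v\times v}\in L^3$ (Hausdorff--Young from $L^{3/2}$) and H\"older with exponents $3$ and $3/2$: $\int_{\val\xi\le1}\val\xi^{-2}\val{\widehat{\rot v\times v}}\le\norm{\widehat{\rot v\times v}}_{L^3}\bigl(\int_{\val\xi\le1}\val\xi^{-3}d\xi\bigr)^{1/3}$, and $\int_{\val\xi\le1}\val\xi^{-3}d\xi$ --- again logarithmic divergence. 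The honest fix: use $q'>6$ so that $\rot v\times v\in L^{q_2/2}$ for $q_2$ slightly above $6/5$ as in \eqref{sdf987}, giving $\widehat{\rot v\times v}\in L^{(q_2/2)'}$ with $(q_2/2)'$ slightly below $3$, actually one wants the \emph{interior} representation $\widehat{v}=\val\xi^{-2}i\xi\times\widehat{\rot v}$ directly: for $\val\xi\le1$, $\val{\hat v(\xi)}\le\val\xi^{-1}\val{\widehat{\rot v}(\xi)}$ and $\int_{\val\xi\le1}\val\xi^{-1}\val{\widehat{\rot v}(\xi)}\,d\xi\le\bigl(\int_{\val\xi\le1}\val\xi^{-2}d\xi\bigr)^{1/2}\norm{\widehat{\rot v}}_{L^2}$, and $\int_{\val\xi\le1}\val\xi^{-2}d\xi\sim\int_0^1 r^{-2}r^2dr=1<\infty$. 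That works cleanly and gives $\int_{\val\xi\le1}\val{\hat v}\lesssim\norm{\rot v}_{L^2}$, hence combined with the high-frequency bound, $\norm{\hat v}_{L^1}\lesssim\norm{\rot v}_{L^2}+\nu^{-2}\norm{\rot v}_{L^2}^3$, which homogeneity collapses to $\sigma_2\nu^{-1}\norm{\rot v}_{L^2}^2$.

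The remaining loose end is the homogeneity bookkeeping that reconciles the two terms into a single monomial $\nu^{-1}\norm{\rot v}_{L^2}^2$: here I would invoke the scaling Claim \eqref{126126}. Under $v\mapsto w=\lambda v(\lambda\,\cdot)$ one has $\norm{\rot w}_{L^2}=\lambda^{-1/2}\cdot\lambda^{3/2}\lambda^{-3/2}\ldots$ --- more carefully $\norm{\rot w}_{L^2(\R^3)}^2=\int\val{\rot w}^2=\lambda^{4}\int\val{(\rot v)(\lambda x)}^2dx=\lambda^{4-3}\norm{\rot v}_{L^2}^2=\lambda\norm{\rot v}_{L^2}^2$, and $\norm{\hat w}_{L^1}$ and $\norm{\rot^2 w}_{L^2}$ scale with definite powers of $\lambda$; picking $\lambda$ to normalize $\nu$ or $\norm{\rot v}_{L^2}$ forces any scale-covariant bound to have the stated exponents, ruling out the lower-order term and pinning down $\nu^{-1}\norm{\rot v}_{L^2}^2$ for \eqref{fgd295} and $\nu^{-2}\norm{\rot v}_{L^2}^3$ for \eqref{azo295}. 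The main obstacle I anticipate is the low-frequency analysis: the naive weight $\val\xi^{-2}$ against the $L^{3/2}$ nonlinearity is borderline logarithmically divergent, so the argument must route through the divergence-free structure $\widehat{v}=\val\xi^{-2}i\xi\times\widehat{\rot v}$ (which upgrades $\val\xi^{-2}$ to the integrable $\val\xi^{-1}$ in dimension $3$) rather than through the equation directly; getting that bookkeeping exactly right, and then matching it to the $L^2$ bound on $\rot^2 v$ near zero frequency (which \emph{does} need the equation and Hausdorff--Young), is the delicate part, together with making every constant genuinely universal/dimensional rather than dependent on the cutoff $\chi$.
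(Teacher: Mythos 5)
There is a genuine gap in your proof of \eqref{azo295}, and it occurs exactly where the real work of the theorem lies. On the high-frequency region $\val\xi\ge 1$ you write $\val\xi^{4}\val{\hat v(\xi)}^{2}\le\val\xi^{2}\val{\widehat{\rot v}(\xi)}^{2}$, ``which is integrable''. Since $\dive v=0$ gives $\val{\widehat{\rot v}(\xi)}=\val{\xi\times\hat v(\xi)}=\val\xi\,\val{\hat v(\xi)}$, that inequality is in fact an \emph{equality}, and the integrability of $\val\xi^{2}\val{\widehat{\rot v}}^{2}$ is precisely the statement $\rot^{2}v\in L^{2}$ that you are trying to prove; the only a priori information is $\widehat{\rot v}\in L^{2}$. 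The equation cannot be used naively there either: $\rot v\times v\in L^{3/2}$ only gives $\widehat{\rot^{2}v}\in L^{3}$ by Hausdorff--Young, and $L^{3}(\{\val\xi\ge1\})\not\subset L^{2}(\{\val\xi\ge1\})$. The missing idea is a bootstrap in physical space: from \eqref{newway} and the $L^{3/2}$-boundedness of $\mathbb P$ one first gets $\rot^{2}v\in L^{3/2}$ with norm $\lesssim\nu^{-1}\norm{\rot v}_{L^{2}}^{2}$; then, writing $\nabla\rot v=(\nabla\rot\val{D}^{-2})\,\curl(\rot v)$ with $\nabla\rot\val{D}^{-2}$ a singular integral, one gets $\nabla\rot v\in L^{3/2}$, hence $\rot v\in\dot W^{1,3/2}\subset L^{3}$ by Sobolev embedding; then $\rot v\times v\in L^{3}\cdot L^{6}\subset L^{2}$ with norm $\lesssim\nu^{-1}\norm{\rot v}_{L^{2}}^{3}$, and a second application of the equation yields $\rot^{2}v\in L^{2}$ with the stated bound. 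Your purely Fourier-side splitting cannot reach this because no upgrade of the nonlinearity beyond $L^{3/2}$ ever takes place.

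A second, smaller but still real defect is the ``homogeneity collapse''. You arrive at sums such as $\norm{\rot v}_{L^{2}}+\nu^{-2}\norm{\rot v}_{L^{2}}^{3}$ and assert that scaling reduces them to the single monomial $\nu^{-1}\norm{\rot v}_{L^{2}}^{2}$. The inequality $a+\nu^{-2}a^{3}\le C\nu^{-1}a^{2}$ is false for $a$ small and for $a$ large, and a scaling argument cannot repair an estimate that was derived with the fixed, non-scale-invariant cutoff $\val\xi=1$. The correct device (used in the paper for \eqref{fgd295}) is to split at a free radius $\rho>0$, obtaining $\norm{\hat v}_{L^{1}}\le 2\sqrt{\pi}\bigl(\rho^{1/2}\norm{\rot v}_{L^{2}}+\rho^{-1/2}\norm{\rot^{2}v}_{L^{2}}\bigr)$, and then minimize over $\rho$ to get the geometric mean $4\sqrt{\pi}\sqrt{\norm{\rot v}_{L^{2}}\norm{\rot^{2}v}_{L^{2}}}$, which together with the already-proved \eqref{azo295} gives the single monomial $\nu^{-1}\norm{\rot v}_{L^{2}}^{2}$. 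Your low-frequency treatment of $\hat v$ via $\val{\hat v(\xi)}\le\val\xi^{-1}\val{\widehat{\rot v}(\xi)}$ and Cauchy--Schwarz is correct and coincides with the paper's; it is the optimization over the splitting radius, not an appeal to homogeneity, that turns the two-term bound into the claimed one.
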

\begin{proof}
 We know from Lemma \ref{lem.reg001} that $v$ belongs to $L^{6}$ (along with the estimate \eqref{estl06}).
 Since \eqref{hyp002}  is $\rot v\in L^{2}$, we get, since 
 $\frac12+\frac16=\frac23$, 
 \begin{equation}\label{225225}
 \rot v\times v\in L^{3/2}, \text{ with }\norm{ \rot v\times v}_{L^{3/2}}\le \norm{\rot v}_{L^{2}}\norm{v}_{L^{6}}
\hskip-6pt\underbracket[0pt]{\!\le\!\!}_{\text{from \eqref{estl06}}} \sigma_{0}\norm{\rot v}_{L^{2}}^{2},
\end{equation}
where $\sigma_{0}$ is a universal constant.
Since the operator $\mathbb P$ is a standard Singular Integral (cf. Theorem \ref{thm.singsing}), it is a bounded endomorphism of $L^{3/2}$ (with norm noted   $\trinorm{\mathbb P}_{3/2}$)
and we get
\begin{equation}\label{}
\norm{\mathbb P\bigl(\rot v\times v\bigr)}_{L^{3/2}}\le \trinorm{\mathbb P}_{3/2}\sigma_{0}\norm{\rot v}_{L^{2}}^{2},
\end{equation}
which implies, thanks to \eqref{newway},
\begin{equation}\label{try874}
\rot^{2}v\in L^{3/2}\text{ with }\norm{\rot^{2}v}_{L^{3/2}}\le \nu^{-1}\trinorm{\mathbb P}_{3/2}\sigma_{0}\norm{\rot v}_{L^{2}}^{2}.
\end{equation}
We note now that
$\curl(\rot v)$ belongs to $L^{3/2}$  and we have also $\dive (\rot v)=0$ so that 
$$
\nabla \rot v =\nabla \rot^{2} \val{D}^{-2}\rot v=\underbracket[0.1pt]{\nabla \rot \val{D}^{-2}}_{\text{Singular Integral}}\underbrace{\curl(\rot v)}_{\in L^{3/2}}, \quad\text{thus in $L^{3/2}$.}
$$
As a consequence, we obtain that $\rot v\in \dot{W}^{1,\frac32}$ an we have also
(cf. Corollary \ref{cor.350}),
\begin{equation}\label{aze852}
\dot{W}^{1,\frac32}(\R^{3})\subset L^{3}(\R^{3})\quad\text{   since $\frac{1-0}3=\frac23-\frac13,$}
\end{equation}
and this proves that $ \rot v $ belongs to $L^{3}$ along with
\begin{equation}\label{229229}
\norm{\rot v}_{L^{3}}\underbracket[0pt]{\le}_{\eqref{aze852}} \sigma_{1}\norm{\rot^{2}v}_{L^{3/2}}
\underbracket[0pt]{\le}_{\eqref{try874}} \sigma_{2}\nu^{-1}\norm{\rot v}_{L^{2}}^{2},
\end{equation}
where $\sigma_{1}, \sigma_{2}$ are universal constants.
Since $\frac13+\frac16=\frac12$,
we obtain that 
\begin{equation}\label{227hgf}
\rot v\times v\in L^{2}\quad\text{ with \quad}
\norm{\rot v\times v}_{L^{2}}
\le \norm{\rot v}_{L^{3}}\norm{v}_{L^{6}}\hskip-10pt
\underbracket[0pt]{\le}_{\substack{\eqref{229229},\eqref{estl06}}}\hskip-10pt\sigma_{3}\nu^{-1}\norm{\rot v}_{L^{2}}^{3},
\end{equation}
where  $\sigma_{3}$ is a universal constant; this entails as well from the equation \eqref{newway},
that $\rot^{2}v$ belongs to $L^{2}$ with 
\begin{equation}\label{azs295}
\norm{\rot^{2}v}_{L^{2}}\le \sigma_{4}\nu^{-2}\norm{\rot v}_{L^{2}}^{3},
\end{equation}
proving \eqref{azo295}.
\begin{remark}\rm
It might be a good place to check the units consistency of the above inequality: we have,
using the notations of Footnote \ref{foot01} (page \pageref{foot01}) and
\begin{align}
&[\nu]=\mathtt{L^{2}T^{-1}}, [v]=\mathtt {LT^{-1}}, [\ \norm{\rot v}_{L^{2}}]=\mathtt{
\bigl((L^{-1}LT^{-1})^{2}L^{3}\bigr)^{1/2}}
=\mathtt{L^{3/2} T^{-1}},
\label{oig1146}\\
&[\ \norm{\rot^{2}v}_{L^{2}}]=\mathtt{\bigl(( L^{-2}LT^{-1})^{2} L^{3}\bigr)^{1/2}}=\mathtt{L^{1/2} T^{-1}},
\label{oig1147}\\
&[\nu^{-2}\norm{\rot v}_{L^{2}}^{3}]=\mathtt{(L^{2}T^{-1})^{-2}
\bigl((L^{-1}L T^{-1})^{2}L^{3}\bigr)^{3/2}}=\mathtt{L^{\frac92-4} T^{2-3}}\text{ indeed }=\mathtt{L^{1/2} T^{-1}}.\label{oig1148}
\end{align}
As mentioned above in the first section, it is also important to check the soundness of this inequality with respect to the change of function \eqref{126126}. With $w$ as in \eqref{126126}, we have
$$
\int_{\R^{3}}\val{(\rot^{2}w)(x)}^{2} dx=\int_{\R^{3}}\Val{\lambda^{3}(\rot^{2}v)(\lambda x)}^{2} dx
=\lambda^{6-3}\int_{\R^{3}}\val{(\rot^{2}v)(y)}^{2} dy=\lambda^{3}\norm{\rot^{2} v}_{L^{2}}^{2},
$$
and 
$$
\Bigl(\int_{\R^{3}}\val{(\rot w)(x)}^{2} dx\Bigr)^{3}
=\Bigl(\int_{\R^{3}}\Val{\lambda^{2}(\rot v)(\lambda x)}^{2} dx\Bigr)^{3}=\lambda^{12-(3\times 3)}
\Bigl(\int_{\R^{3}}\val{(\rot v)(y)}^{2} dy\Bigr)^{3},
$$
so that Inequality \eqref{azs295} holds true as well for $w$.
Needless to say, that inequality is \emph{non-linear} in the sense that it holds only for solutions of \eqref{SNSI} and cannot be true in any non-trivial vector space since the two sides of \eqref{azs295} have different homogeneities with respect to $v$.
As a sharp contrast, Inequality \eqref{estl06} is indeed a linear one, true for any divergence-free $v\in \mathscr S'\cap \mathcal L_{(0)}$ such that $\curl v\in L^{2}$ 
and both sides are homogeneous of degree 1 with respect to $v$.
We leave to the reader the verification that \eqref{estl06} is also units-consistent as well as invariant with respect to the change  of function \eqref{126126}.
As a final comment on these matters, we could point out that an inequality inconsistent with respect to the units must be wrong and an inequality uninvariant
with respect to the change  of function \eqref{126126} must be trivial,
so that checking these two invariance properties is a necessary (but not sufficient) safeguarding device.
\end{remark}
Let us resume the course of our proof of Theorem \ref{thm.hgf564}.
We know also from Lemma \ref{lem.reg001} that $\hat v\in L^{1}_{\text{loc}}$ and with the informations \eqref{azs295} and \eqref{hyp002}, we may consider for $\rho>0$ the following inequalities:
we have
\begin{multline*}
\int_{\R^{3}}\val{\hat v(\xi)} d\xi=
\int_{\val \xi\le \rho}
{\val \xi}^{-1}\val{\hat v(\xi)} \val\xi d\xi+\int_{\val \xi\ge \rho}\val{\xi}^{-2}\val{\hat v(\xi)}\val{\xi}^{2} d\xi
\\
\le \Bigl(\int_{\val \xi\le \rho}\val{\xi}^{-2} d\xi\Bigr)^{1/2}
\norm{\rot v}_{L^{2}(\R^{3})}
+
\Bigl(\int_{\val \xi\ge \rho}\val{\xi}^{-4} d\xi\Bigr)^{1/2}
\norm{\rot^{2} v}_{L^{2}(\R^{3})}
\\
=2\rho^{1/2}\sqrt{\pi}\norm{\rot v}_{L^{2}(\R^{3})}
+2\rho^{-1/2}\sqrt{\pi}\norm{\rot^{2} v}_{L^{2}(\R^{3})},
\end{multline*}
so that, minimizing the right-hand-side, we find that, 
$$
\norm{\hat v}_{L^{1}}\le 4\sqrt{\pi}\sqrt{\norm{\rot v}_{L^{2}(\R^{3})}\norm{\rot^{2} v}_{L^{2}(\R^{3})}},
$$
and using \eqref{azs295}, we get 
$
\norm{\hat v}_{L^{1}}\le 4\sqrt{\pi}
\sigma_{4}^{1/2}\nu^{-1}
\norm{\rot v}_{L^{2}(\R^{3})}^{2},
$
concluding the proof of the theorem.
\end{proof}
\begin{nb} We leave to the reader the verification that 
$
[\ \norm{\hat v}_{L^{1}}]=\mathtt{L T^{-1}}=[\ \norm{\rot v}_{L^{2}}^{2}].
$
 To obtain this, we note that in the formula
 $
 \widehat v(\xi)=\int_{\R^{3}_{x}} e^{-ix\cdot \xi} v(x) dx (2\pi)^{-3/2},
 $
 we have $[\xi]=\mathtt{L^{-1}}$, so that 
 \begin{equation}\label{unit12}
 [ \hat v(\xi)]=\mathtt{LT^{-1}L^{3}}=\mathtt{L^{4}T^{-1}},
\end{equation}
and 
 $[\ \norm{\hat v}_{L^{1}}]=\mathtt{LT^{-1}L^{3}L^{-3}}=\mathtt{LT^{-1}}$.
 Let us check however that the extreme inequality in \eqref{fgd295}, that is 
 \begin{equation}\label{gft987}
  \norm{v}_{L^{\io}(\R^{3}, \R^{3})}\le
\sigma_{2}\nu^{-1}\norm{\rot v}_{L^{2}}^{2},
\end{equation}
 is invariant with respect to the change of function \eqref{126126}. 
 Indeed, applying that inequality to $w$ given by \eqref{126126}, we get
 $$
 \lambda \norm{v}_{L^{\io}(\R^{3}, \R^{3})}\le\sigma_{2}\nu^{-1}\int \Val{\lambda^{2}\rot v(\lambda x )}^{2} dx
 =\sigma_{2}\nu^{-1}\lambda^{4-3}\norm{\rot v}_{L^{2}}^{2},
 $$ 
 which is the same inequality as \eqref{gft987}.
\end{nb}
The following lemma   is a good provisional summary of  our regularity results obtained so far.
\begin{lemma}\label{lem.reg1}Let $v$ be a vector field on $\R^{3}$ such that 
such that
 \eqref{SNSI}, \eqref{hyp001}, \eqref{hyp002} are satisfied.
 Then we have, with $\gamma_{0}=\norm{\rot v}_{L^{2}}$,
 \begin{align}
&v\in L^{6}\cap \mathcal W,\quad\hskip30pt\norm{v}_{L^{6}}\lesssim\gamma_{0}, \quad 
\norm{v}_{L^{\io}}\le \norm{v}_{\mathcal W}\lesssim
\nu^{-1}\gamma_{0}^{2}, 
\label{reg001}\\
&\rot v\in L^{2}\cap L^{6},\quad
\hskip20pt\norm{\rot v}_{L^{2}}=\gamma_{0}, \quad
\norm{\rot v}_{L^{6}}\lesssim \nu^{-2}\gamma_{0}^{3},
\label{reg002}\\
&\rot v\times v\in L^{\frac32}\cap L^{3},\hskip10pt \norm{\rot v\times v}_{L^{3/2}}\lesssim \gamma_{0}^{2}, 
\quad \norm{\rot v\times v}_{L^{3}}\lesssim
\nu^{-2}\gamma_{0}^{4},
\label{reg003}\\
&\rot^{2} v\in L^{\frac32}\cap L^{3},
\quad \hskip15pt\norm{\rot^{2} v}_{L^{3/2}}\lesssim \nu^{-1}\gamma_{0}^{2},
\quad \norm{\rot^{2} v}_{L^{3}}\lesssim \nu^{-3}\gamma_{0}^{4}.
\label{reg004}
\end{align}
\end{lemma}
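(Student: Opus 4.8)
Since this lemma merely collects estimates already obtained, the plan is to run a short bootstrap that recycles Lemma~\ref{lem.reg001} and the proof of Theorem~\ref{thm.hgf564}, keeping track of the powers of $\nu$ and of $\gamma_{0}=\norm{\rot v}_{L^{2}}$ at each stage. The three tools I would use repeatedly are: the rewriting \eqref{newway}, $\nu\rot^{2}v=-\mathbb P(\rot v\times v)$; the fact (Theorem~\ref{thm.singsing}) that $\mathbb P$ and the order-zero operator $\nabla\rot\val{D}^{-2}$ are standard singular integrals, hence bounded on $L^{p}(\R^{3})$ for every $p\in(1,\infty)$; and the homogeneous Sobolev embeddings $\dot W^{1,3/2}(\R^{3})\hookrightarrow L^{3}(\R^{3})$ and $\dot H^{1}(\R^{3})\hookrightarrow L^{6}(\R^{3})$ (Corollary~\ref{cor.350}), together with H\"older's inequality.

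First, the low exponents. Lemma~\ref{lem.reg001} already gives $v\in L^{6}$ with $\norm{v}_{L^{6}}\lesssim\gamma_{0}$, and $\norm{\rot v}_{L^{2}}=\gamma_{0}$ by definition, so H\"older with $\tfrac12+\tfrac16=\tfrac23$ yields $\rot v\times v\in L^{3/2}$ with $\norm{\rot v\times v}_{L^{3/2}}\lesssim\gamma_{0}^{2}$; inserting this into \eqref{newway} and using the $L^{3/2}$-continuity of $\mathbb P$ gives $\rot^{2}v\in L^{3/2}$ with $\norm{\rot^{2}v}_{L^{3/2}}\lesssim\nu^{-1}\gamma_{0}^{2}$. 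Theorem~\ref{thm.hgf564} then supplies $v\in\mathcal W$ with $\norm{v}_{\mathcal W}\lesssim\nu^{-1}\gamma_{0}^{2}$ (whence $\norm{v}_{L^{\infty}}\le\norm{v}_{\mathcal W}$ by Riemann--Lebesgue) and $\rot^{2}v\in L^{2}$ with $\norm{\rot^{2}v}_{L^{2}}\lesssim\nu^{-2}\gamma_{0}^{3}$, i.e.\ \eqref{azo295}.

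Next I would upgrade $\rot v$. Since $\dive(\rot v)=0$ one has $\rot v=\rot^{2}\val{D}^{-2}(\rot v)$, hence $\nabla\rot v=(\nabla\rot\val{D}^{-2})(\rot^{2}v)$, so $L^{p}$-boundedness of $\nabla\rot\val{D}^{-2}$ turns $L^{p}$-control of $\rot^{2}v$ into a gradient bound for $\rot v$. Taking $p=3/2$ gives $\rot v\in\dot W^{1,3/2}\hookrightarrow L^{3}$ with $\norm{\rot v}_{L^{3}}\lesssim\nu^{-1}\gamma_{0}^{2}$; taking $p=2$ gives $\rot v\in\dot H^{1}\hookrightarrow L^{6}$ with $\norm{\rot v}_{L^{6}}\lesssim\norm{\rot^{2}v}_{L^{2}}\lesssim\nu^{-2}\gamma_{0}^{3}$, which is the second line \eqref{reg002}. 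No low-frequency issue arises in invoking the homogeneous embeddings, since $\rot v$ is already known to belong to $L^{2}$.

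Finally, the remaining exponents. H\"older with $\tfrac16+\tfrac16=\tfrac13$ gives $\rot v\times v\in L^{3}$, $\norm{\rot v\times v}_{L^{3}}\le\norm{\rot v}_{L^{6}}\norm{v}_{L^{6}}\lesssim\nu^{-2}\gamma_{0}^{4}$, completing \eqref{reg003}; feeding this into \eqref{newway} and using the $L^{3}$-continuity of $\mathbb P$ gives $\rot^{2}v\in L^{3}$ with $\norm{\rot^{2}v}_{L^{3}}\lesssim\nu^{-3}\gamma_{0}^{4}$, completing \eqref{reg004}. There is no genuine obstacle here: the argument is entirely a matter of H\"older, the identity $\rot v=\rot^{2}\val{D}^{-2}\rot v$ and the equation \eqref{newway}. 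The only points deserving care are the bookkeeping of the exponents of $\nu$ and $\gamma_{0}$, and checking that the exponents $3/2$, $2$, $3$ at which $\mathbb P$ and $\nabla\rot\val{D}^{-2}$ are applied all lie in $(1,\infty)$, so that the Calder\'on--Zygmund theory applies.
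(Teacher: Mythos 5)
Your proposal is correct and follows essentially the same route as the paper: it assembles \eqref{estl06}, \eqref{225225}, \eqref{fgd295}, \eqref{azs295} and \eqref{229229} from Lemma \ref{lem.reg001} and Theorem \ref{thm.hgf564}, then bootstraps through the equation \eqref{newway} with the $L^{p}$-boundedness of the singular integrals. The only cosmetic differences are that the paper obtains $\rot v\in L^{6}$ by applying Lemma \ref{lem.reg001} to the divergence-free field $\rot v$ (which is your identity $\nabla\rot v=(\nabla\rot\val{D}^{-2})\rot^{2}v$ in disguise) and pairs $L^{3}\times L^{\io}$ rather than $L^{6}\times L^{6}$ for the $L^{3}$ bound on $\rot v\times v$; both give the same exponents of $\nu$ and $\gamma_{0}$.
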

\begin{nb}
 Here the inequalities of type $a\lesssim b$ mean that $a\le \sigma b$, where $\sigma$ is a universal positive constant.
\end{nb}
\begin{proof}
 Property \eqref{reg001} follows from Lemma \ref{lem.reg001} and \eqref{fgd295}.
 In \eqref{reg002}, $\rot v\in L^{2}$ is the assumption  \eqref{hyp002};
 we postpone the proof of the second part of \eqref{reg002} for a while.
 The fact that $\rot v\times v$ belongs to $L^{3/2}$ with the afferent estimate is
  proven in \eqref{225225}. From \eqref{229229}, we get that $\rot v\in L^{3}$
  with 
  $$
  \norm{\rot v}_{L^{3}}\lesssim \nu^{-1}\gamma_{0}^{2},
  $$
  so that the already proven \eqref{reg001} gives \eqref{reg003}.
  Moreover \eqref{reg004} follows from the already proven \eqref{reg003} and the equation \eqref{newway}.
  Since $\rot v$ is divergence-free and $\rot^{2}v$ belongs to $L^{2}$ (cf. \eqref{azs295}),
  we may apply Lemma \ref{lem.reg001} to $\rot v$
  and obtain that  
$\rot v$ belongs to $\dot H^{1}(\R^{3})$, which is a subset of $L^{6}(\R^{3})$, proving the last part of \eqref{reg002} along with
$$
\norm{\rot v}_{L^{6}}\lesssim \norm{\rot^{2} v}_{L^{2}}\lesssim \nu^{-2}\gamma_{0}^{3}.
$$
The proof of the lemma is complete.
 \end{proof}
 \begin{lemma}
 Let $v$ be as in Lemma \ref{lem.reg1} and let $Q$ be the Bernoulli head pressure given by 
 \eqref{modpr0}. Then we have, with the notations of Lemma \ref{lem.reg1},
\begin{align}
&Q\in L^{3}\cap \mathcal W, \quad\norm{Q}_{L^{3}}\lesssim \gamma_{0}^{2},
\quad\norm{Q}_{\mathcal W}\lesssim \nu^{-2}\gamma_{0}^{4},
\label{ber001}
\\
&\nabla Q\in L^{3/2}\cap L^{3},\quad \norm{\nabla Q}_{L^{3/2}}\lesssim \gamma_{0}^{2}, 
\quad \norm{\nabla Q}_{L^{3}}\lesssim
\nu^{-2}\gamma_{0}^{4}.
\label{ber002}
\end{align}
\end{lemma}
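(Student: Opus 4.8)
The plan is to split $Q=p+\frac12\val v^{2}$ and estimate the two pieces with the machinery already assembled: for $Q$ itself I would use the reduction $p=\tilde p=\val D^{-2}\dive\p_{j}(v_{j}v)$ recorded in \eqref{211oiu}--\eqref{fea541}, and for $\nabla Q$ the identity $\nabla Q=-\widetilde{\mathbb P}(\rot v\times v)$ from \eqref{ber000}.

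First I would record the elementary remark that any \emph{bounded} Fourier multiplier $m(D)$ maps $\mathcal W$ continuously into itself, since $\norm{m(D)f}_{\mathcal W}=\norm{m\,\widehat f}_{L^{1}}\le\norm m_{L^{\io}}\norm f_{\mathcal W}$, with no smoothness of the symbol needed. The operator $\val D^{-2}\dive\p_{j}$ entering $\tilde p$ is, componentwise, a finite sum of multipliers $m_{jk}(D)$ with bounded symbol $m_{jk}(\xi)=-\xi_{j}\xi_{k}\val\xi^{-2}$ (each $m_{jk}(D)$ being a composition of two Riesz transforms), so $p=\tilde p=\sum_{j,k}m_{jk}(D)(v_{j}v_{k})$. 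Since $\mathcal W$ is a Banach algebra for multiplication and $v\in\mathcal W$ with $\norm v_{\mathcal W}\lesssim\nu^{-1}\gamma_{0}^{2}$ by Lemma \ref{lem.reg1}, each product $v_{j}v_{k}$ lies in $\mathcal W$ with norm $\lesssim\nu^{-2}\gamma_{0}^{4}$; hence $p\in\mathcal W$ and $\val v^{2}\in\mathcal W$, both with norm $\lesssim\nu^{-2}\gamma_{0}^{4}$, and adding them gives $Q\in\mathcal W$ with $\norm Q_{\mathcal W}\lesssim\nu^{-2}\gamma_{0}^{4}$. The $L^{3}$ bound for $Q$ follows the same scheme: $\val v^{2}\in L^{3}$ with $\norm{\val v^{2}}_{L^{3}}=\norm v_{L^{6}}^{2}\lesssim\gamma_{0}^{2}$ by Lemma \ref{lem.reg1}, while the $m_{jk}(D)$ are bounded on $L^{3}$ as standard singular integrals (Theorem \ref{thm.singsing}), so $\norm p_{L^{3}}\lesssim\sum_{j,k}\norm{v_{j}v_{k}}_{L^{3}}\lesssim\norm v_{L^{6}}^{2}\lesssim\gamma_{0}^{2}$, whence $\norm Q_{L^{3}}\lesssim\gamma_{0}^{2}$. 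This yields \eqref{ber001}.

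For \eqref{ber002} I would simply feed the identity $\nabla Q=-\widetilde{\mathbb P}(\rot v\times v)$ into the $L^{p}$-boundedness ($1<p<+\io$) of the standard singular integral $\widetilde{\mathbb P}=I-\mathbb P$ (Theorem \ref{thm.singsing}): Lemma \ref{lem.reg1} supplies $\rot v\times v\in L^{3/2}\cap L^{3}$ with $\norm{\rot v\times v}_{L^{3/2}}\lesssim\gamma_{0}^{2}$ and $\norm{\rot v\times v}_{L^{3}}\lesssim\nu^{-2}\gamma_{0}^{4}$, so $\nabla Q\in L^{3/2}\cap L^{3}$ with $\norm{\nabla Q}_{L^{3/2}}\lesssim\gamma_{0}^{2}$ and $\norm{\nabla Q}_{L^{3}}\lesssim\nu^{-2}\gamma_{0}^{4}$.

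I do not expect a genuine obstacle: the whole statement is bookkeeping on top of Lemmas \ref{lem.reg1} and \ref{lem.654oof} and the $L^{p}$-boundedness of Calder\'on--Zygmund operators. The single point deserving care is the reduction $p=\tilde p$, which (as in the proof of Lemma \ref{lem.654oof}) presupposes that $p$ is a tempered distribution — or, more generally, lies in $\mathcal L_{(0)}$ — so that the harmonic function $p-\tilde p$ is forced to be a polynomial in $\mathcal L_{(0)}$ and hence vanishes; granting this, the passage to the Wiener algebra costs nothing, precisely because bounded multipliers act on $\mathcal F\bigl(L^{1}\bigr)$ with no regularity hypothesis on the symbol, in contrast with the Mikhlin-type conditions one needs for $L^{p}$, $p\ne 2$.
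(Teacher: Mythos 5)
Your proposal is correct and follows essentially the same route as the paper: the $L^{3}$ bound and the Wiener-algebra bound for $Q$ come from the representation $Q=\bigl(\val{D}^{-2}\dive\p_{j}\bigr)(v_{j}v)+\frac12\val v^{2}$ together with the $L^{3}$-boundedness of the singular integral, the algebra property of $\mathcal W$ under bounded multipliers (the paper just writes the same fact as $\widehat Q=m(\xi)(\hat v\ast\hat v)$ with $m$ bounded, giving $\norm{\widehat Q}_{L^{1}}\lesssim\norm{\hat v}_{L^{1}}^{2}$), and \eqref{reg001}; the estimate \eqref{ber002} is read off from $\nabla Q=-\widetilde{\mathbb P}(\rot v\times v)$ and \eqref{reg003} exactly as you do. The reduction $p=\tilde p$ you flag is indeed the only delicate point, and it is already settled in Lemma \ref{lem.654oof}.
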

\begin{proof}
 The estimate \eqref{ber002} follows readily from 
 \eqref{reg003} in Lemma \ref{lem.reg1}  and \eqref{ber000}. Moreover, from 
 \eqref{211oiu}, \eqref{fea541} and  \eqref{modpr0}, we obtain that
 \begin{equation}\label{ber003}
Q= \bigl(\val{D}^{-2}   \dive \p_{j}\bigr) (v_{j}v)+\frac12\val{v}^{2},
\end{equation}
and the $L^{3}$ boundedness of the singular integral $\val{D}^{-2}   \dive \p_{j}$ proves that $Q$
belongs to $L^{3}$ with the sought estimate, thanks to \eqref{reg001}.
The identity \eqref{ber003} implies that 
\begin{equation}\label{}
\widehat Q(\xi)=-\sum_{1\le j\le 3}\val{\xi}^{-2} \xi_{j} \xi\cdot(\widehat{v_{j}}\ast \widehat v)(\xi)+
\frac12(\hat v\oast\hat v)(\xi),
\end{equation}
with 
\begin{align}
(\widehat{v}_{j}\ast \widehat v)(\xi)&=
\int {\hat v_{j}(\eta)}{\hat v(\xi-\eta)}d\eta(2\pi)^{-3/2},
\label{tcs15+}
\\
(\hat v\oast\hat v)(\xi)&=\int \proscal3{\hat v(\eta)}{\hat v(\xi-\eta)}d\eta(2\pi)^{-3/2},
\label{tcs159}
\end{align}
so that we get
$
\norm{\widehat Q}_{L^{1}}\lesssim\norm{\widehat v}_{L^{1}}^{2},
$
and the sought result, thanks to \eqref{reg001}.
\end{proof} 
\begin{remark}\rm
 The Wiener algebra $\mathcal W$ satisfies \eqref{777kjh} and has the same ``homogeneity''
 as $C^{0}_{(0)}$ and $L^{\io}$, say in the sense that the norm of $x\mapsto v(\lambda x)$
 is the norm of $v$ for the three algebras in \eqref{777kjh}; however, the key improvement following from the property that $v$ belongs to $\mathcal W$ is the fact that standard singular integrals
  (as given by Theorem \ref{thm.singsing}) are bounded on $\mathcal W$, thanks to the inequality,
  \begin{equation}\label{}
  \norm{m(D) v}_{\mathcal W}\le \norm{m}_{L^{\io}(\R^{3})} \norm{v}_{\mathcal W}.
\end{equation}
This type of property fails to be true for the algebras $C^{0}_{(0)}$ and $L^{\io}$.
\end{remark}
\begin{remark}\rm
Using Lemma \ref{lem.newppp},
we note that, with Einstein convention, 
$$
\rot v\times v=\p_{j}(v_{j}v)-\frac12\nabla{\val v}^{2},$$
so that with the singular integral $R_{0}$ given by 
\begin{equation}\label{ddd001}
R_{0,j}=\val{D}^{-2}\dive\p_{j},
\quad
R_{0}(v\otimes v)=\sum_{1\le j\le 3}R_{0,j}(v_{j}v),
\end{equation}
we have 
\begin{align*}
\dive(\rot v\times v)&=\dive \p_{j}(v_{j}v)+\frac12\val{D}^{2}({\val v}^{2})
\\&=\val{D}^{2}\Bigl[\val{D}^{-2}\dive \p_{j}(v_{j}v)+\frac12\val{v}^{2}\Bigr]
\\&=\val{D}^{2}\Bigl[R_{0}(v\otimes v)+\frac12\val{v}^{2}
\Bigr]
\\&=\val{D}^{2}S_{0}(v\otimes v),
\end{align*}
where $S_{0}$ is a singular integral (note that $v\otimes v$ belongs to $L^{3}$ so that
$S_{0}(v\otimes v)$ makes sense and belongs to $L^{3}$).
As a consequence we have 
$$
\val{D}^{-2}\grad\dive (\rot v\times v)=\grad\Bigl[ S_{0}(v\otimes v)\Bigr],
$$
which makes sense as an element of $W^{-1,3}(\R^{3})$ since $S_{0}(v\otimes v)$ belongs to $L^{3}$.
\end{remark}
In the sequel of this section, we want to formulate and prove some refinements of the smoothness results obtained 
by
G.P.~Galdi  in \cite{MR2808162}: in the latter book the author  is proving that functions satisfying the assumptions of Conjecture \ref{coj001} are in fact smooth, with limit 0 at infinity, as well as all their derivatives.
We shall use the following notation, for $\xi\in \R^{d}$, we set\footnote{We remark here that $\valjp{\xi}$
is a multiplier of $\mathscr S$ (that is 
$f\mapsto g$ with $g(\xi)=\valjp{\xi} f(\xi)$ is continuous from $\mathscr S(\R^{d})$ into itself; 
this is not the case of the multiplication by $\val \xi$, which is singular at the origin).
Of course multipliers of $\mathscr S$ can be readily extended to multipliers of $\mathscr S'$ by duality.
We justify below in Claim \ref{cla.2929} the choice $(2+\val\xi^{2})$
in Formula \eqref{233233};
in particular using the more familiar $(1+\val\xi^{2})$ does not change the definition of the space, but would not be suitable for the definition of the norm of the Banach algebra.}
\begin{equation}\label{233233}
\valjp{\xi}=\bigl(2+\val \xi^{2}\bigr)^{1/2}.
\end{equation}
\begin{definition}\rm Let $s$ be a real number. We define the normed vector space
\begin{equation}\label{}
\mathcal V^{s,\omega}=\{v\in \mathscr S'(\R^{d}), \valjp{\xi}^{s}\hat v(\xi)\in L^{1}(\R^{d})\}=
\valjp{D}^{-s}\mathcal F\bigl(
L^{1}(\R^{d})\bigr)=\valjp{D}^{-s}\mathcal W,
\end{equation}
\begin{equation}\label{}
\norm{v}_{\mathcal V^{s,\omega}}=\norm{\valjp{\xi}^{s}\hat v(\xi)}_{L^{1}(\R^{d})}=
\norm{\mathcal F(\valjp{D}^{s}v)}_{L^{1}(\R^{d})}=\norm{\valjp{D}^{s}v}_{\mathcal W}.
\end{equation}
We define as well
\begin{equation}\label{amw854}
\mathcal V^{\io,\omega}=\cap_{s\ge 0}\mathcal V^{s,\omega}.
\end{equation}
\end{definition}
\begin{nb}
 We note that a tempered distribution $v$ belongs to $\mathcal V^{s,\omega}$ means
 $$
 {\valjp{D}}^{s}v\in \mathcal V^{0,\omega}=\mathcal W\subset L^{\io},
 $$
 so this implies that $v\in W^{s,\io}$. As a result we have that $\mathcal V^{s,\omega}\subset  W^{s,\io}$
 with the same homogeneity; one should think of $\omega$ as a ``larger'' version of 
 $\io$ and this explains our notation.\end{nb}
\begin{lemma}\label{lem.nh55}
For all real $s$, $\mathcal V^{s,\omega}$ is a Banach space, isometrically isomorphic to $\mathcal W$.
 Let $m(D)$ be a singular integral introduced in Theorem \ref{thm.singsing};
then $m(D)$ is a continuous endomorphism of $\mathcal V^{s,\omega}$.
\end{lemma}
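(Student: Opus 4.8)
The plan is to exploit the factorisation $\mathcal V^{s,\omega}=\valjp{D}^{-s}\mathcal W$ that is built into the definition, so that everything reduces to the already-understood Banach space $\mathcal W=\mathcal F\bigl(L^{1}(\R^{d})\bigr)$.

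First I would record that $\valjp{\xi}^{s}=(2+\val\xi^{2})^{s/2}$ and $\valjp{\xi}^{-s}$ are $C^{\io}$ functions on $\R^{d}$, nowhere vanishing, with all derivatives of polynomial growth; hence (as noted after \eqref{233233}) both are multipliers of $\mathscr S(\R^{d})$, and by duality of $\mathscr S'(\R^{d})$. Consequently $\valjp{D}^{s}\colon v\mapsto \mathcal F^{-1}\bigl(\valjp{\xi}^{s}\hat v\bigr)$ is a well-defined linear map on $\mathscr S'$, and for $v\in\mathcal V^{s,\omega}$ one has $\widehat{\valjp{D}^{s}v}=\valjp{\xi}^{s}\hat v\in L^{1}$, i.e. $\valjp{D}^{s}v\in\mathcal W$ with $\norm{\valjp{D}^{s}v}_{\mathcal W}=\norm{v}_{\mathcal V^{s,\omega}}$ by the very definition of the norm; thus $\valjp{D}^{s}$ is an isometry of $\mathcal V^{s,\omega}$ into $\mathcal W$. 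Conversely, for $w\in\mathcal W$ the tempered distribution $\valjp{D}^{-s}w$ has Fourier transform $\valjp{\xi}^{-s}\hat w$, and since $\valjp{\xi}^{s}\bigl(\valjp{\xi}^{-s}\hat w\bigr)=\hat w\in L^{1}$ it lies in $\mathcal V^{s,\omega}$ with $\norm{\valjp{D}^{-s}w}_{\mathcal V^{s,\omega}}=\norm{\hat w}_{L^{1}}=\norm{w}_{\mathcal W}$. The pointwise identities $\valjp{\xi}^{s}\valjp{\xi}^{-s}=1$ show that $\valjp{D}^{s}$ and $\valjp{D}^{-s}$ are mutually inverse, so $\valjp{D}^{s}$ is an isometric isomorphism of $\mathcal V^{s,\omega}$ onto $\mathcal W$. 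Since $\mathcal F\colon L^{1}(\R^{d})\to\mathcal W$ is an isometric bijection (for the norm $\norm{v}_{\mathcal W}=\norm{\hat v}_{L^{1}}$, using injectivity of the Fourier transform) and $L^{1}$ is complete, $\mathcal W$ is a Banach space; transporting completeness through the isometric isomorphism, $\mathcal V^{s,\omega}$ is a Banach space.

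For the second assertion, let $m(D)$ be one of the standard singular integrals of Theorem \ref{thm.singsing}; in particular $m\in L^{\io}(\R^{d})$. For $w\in\mathcal W$ one has $\mathcal F(m(D)w)=m\hat w\in L^{1}$, hence $m(D)w\in\mathcal W$ and $\norm{m(D)w}_{\mathcal W}\le\norm{m}_{L^{\io}}\norm{\hat w}_{L^{1}}=\norm{m}_{L^{\io}}\norm{w}_{\mathcal W}$. Now take $v\in\mathcal V^{s,\omega}$, so $\hat v=\valjp{\xi}^{-s}g$ with $g\in L^{1}$; since $m$ is bounded, $m\hat v\in L^{1}$, so $m(D)v:=\mathcal F^{-1}(m\hat v)$ is a well-defined element of $\mathscr S'$, and because $m$ and $\valjp{\xi}^{s}$ are multipliers acting by pointwise multiplication on the locally integrable function $\hat v$ we get $\valjp{D}^{s}m(D)v=m(D)\valjp{D}^{s}v$. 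Applying the $\mathcal W$-bound to $w=\valjp{D}^{s}v\in\mathcal W$ yields
\[
\norm{m(D)v}_{\mathcal V^{s,\omega}}=\norm{\valjp{D}^{s}m(D)v}_{\mathcal W}=\norm{m(D)\valjp{D}^{s}v}_{\mathcal W}\le\norm{m}_{L^{\io}}\norm{\valjp{D}^{s}v}_{\mathcal W}=\norm{m}_{L^{\io}}\norm{v}_{\mathcal V^{s,\omega}},
\]
so $m(D)$ is a continuous endomorphism of $\mathcal V^{s,\omega}$, with operator norm at most $\norm{m}_{L^{\io}}$.

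The argument is essentially soft, and I do not expect a genuine obstacle: the only point requiring care is the bookkeeping that, for $v\in\mathcal V^{s,\omega}$, the objects $\valjp{D}^{s}v$ and $m(D)v$ are genuinely defined — which is precisely why the space was set up through $\mathcal W$ — and that the two commuting multipliers $m(\xi)$ and $\valjp{\xi}^{s}$ may be rearranged freely, both of which reduce to ordinary multiplication of functions once one knows $\hat v$ is locally integrable.
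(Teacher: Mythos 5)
Your proposal is correct and follows essentially the same route as the paper: the boundedness of $m(D)$ on $\mathcal V^{s,\omega}$ is obtained by the identity $\valjp{\xi}^{s}\mathcal F(m(D)v)(\xi)=m(\xi)\valjp{\xi}^{s}\hat v(\xi)$ together with $m\in L^{\io}$, which is exactly the paper's one-line computation. The Banach-space/isometry part, which the paper treats as immediate from the definition $\mathcal V^{s,\omega}=\valjp{D}^{-s}\mathcal W$, is simply spelled out in more detail in your write-up.
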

\begin{proof}
 Let us consider a multiplier $m(D)$ as in Theorem \ref{thm.singsing} and $v\in \mathcal V^{s,\omega}.$
We have
$m(D)=\mathcal F^{-1} m(\xi) \mathcal F$ and 
$
\valjp{\xi}^{s}\mathcal F\bigl(m(D) v\bigr)(\xi)=\valjp{\xi}^{s} m(\xi)\hat v(\xi).
$
Since $m(\xi)$ is bounded, we get,
$$
\norm{m(D)v}_{\mathcal V^{s,\omega}}=\int \valjp{\xi}^{s}\val{m(\xi)\hat v(\xi)}d\xi
\le C\int \valjp{\xi}^{s}\val{\hat v(\xi)}d\xi=C\norm{v}_{\mathcal V^{s,\omega}},
$$ 
concluding the proof of the lemma.
\end{proof}
\begin{remark}\label{rem.kj87}\rm
We have
$\mathcal V^{0,\omega}=\mathcal W$ and, 
 when $s$ is a non-negative integer, we have 
 \begin{equation}\label{wiene+}
\mathcal V^{s,\omega}=\{v\in\mathcal W, \forall \alpha\in \N^{d} \text{ with $\val{\alpha}\le s$},\  \p_{x}^{\alpha} v\in \mathcal W \}.
\end{equation}
This is proven in \eqref{ytr147} in our Appendix.
\end{remark}
\begin{proposition}\label{pro.alg123} Let $s$ be a non-negative real number.
Then $\mathcal V^{s,\omega}$ is a Banach algebra for the multiplication of functions.
\end{proposition}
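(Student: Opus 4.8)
The plan is to verify the only non-formal point, namely that $\mathcal V^{s,\omega}$ is stable under pointwise multiplication with a submultiplicative norm; the Banach-space structure is already granted by Lemma~\ref{lem.nh55}, and bilinearity, associativity and commutativity of the product are obvious from the pointwise multiplication of functions. So let $u,v\in\mathcal V^{s,\omega}$. Since $\mathcal V^{s,\omega}\subset\mathcal W\subset C^{0}_{(0)}$, both $u$ and $v$ are bounded continuous functions, hence so is $uv$, which is therefore a tempered distribution; moreover $\hat u,\hat v\in L^{1}(\R^{d})$, so the Fourier transform of the product is the (normalized) convolution
\[
\mathcal F(uv)(\xi)=(2\pi)^{-d/2}\int_{\R^{d}}\hat u(\eta)\,\hat v(\xi-\eta)\,d\eta,
\]
which already belongs to $L^{1}(\R^{d})$ by Young's inequality. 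It then remains to control this function against the weight $\valjp\xi^{s}$.

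The crucial elementary fact — and the very reason for the choice $\valjp\xi=(2+\val\xi^{2})^{1/2}$ rather than $(1+\val\xi^{2})^{1/2}$, as announced in the footnote to~\eqref{233233} — is the submultiplicativity of the weight:
\[
\valjp\xi^{s}\le\valjp\eta^{s}\,\valjp{\xi-\eta}^{s},\qquad\forall\,\xi,\eta\in\R^{d},\ \forall\,s\ge0.
\]
It is enough to prove this for $s=2$ and then raise to the power $s/2\ge0$. Setting $a=\val\eta^{2}$ and $b=\val{\xi-\eta}^{2}$, the triangle inequality gives $\val\xi^{2}\le a+b+2\sqrt{ab}$, while
\[
\valjp\eta^{2}\valjp{\xi-\eta}^{2}=(2+a)(2+b)=4+2a+2b+ab\ge 2+\bigl(a+b+2\sqrt{ab}\bigr),
\]
the last inequality because $2\sqrt{ab}\le a+b$ by the arithmetic--geometric mean inequality; hence $\valjp\xi^{2}=2+\val\xi^{2}\le\valjp\eta^{2}\valjp{\xi-\eta}^{2}$. (With the weight $(1+\val\xi^{2})^{1/2}$ one would only get an extra factor $2^{s/2}$, which still yields a Banach algebra after renorming, but destroys the clean estimate below.)

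With this at hand, applying the triangle inequality inside the convolution integral and then Tonelli's theorem,
\[
\norm{uv}_{\mathcal V^{s,\omega}}=\int_{\R^{d}}\valjp\xi^{s}\val{\mathcal F(uv)(\xi)}\,d\xi\le(2\pi)^{-d/2}\iint_{\R^{d}\times\R^{d}}\valjp\eta^{s}\val{\hat u(\eta)}\ \valjp{\xi-\eta}^{s}\val{\hat v(\xi-\eta)}\,d\eta\,d\xi,
\]
and the change of variable $\xi\mapsto\xi-\eta$ at fixed $\eta$ factorizes the double integral, giving
\[
\norm{uv}_{\mathcal V^{s,\omega}}\le(2\pi)^{-d/2}\norm{u}_{\mathcal V^{s,\omega}}\norm{v}_{\mathcal V^{s,\omega}}\le\norm{u}_{\mathcal V^{s,\omega}}\norm{v}_{\mathcal V^{s,\omega}},
\]
since $(2\pi)^{-d/2}<1$. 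Thus $uv\in\mathcal V^{s,\omega}$ with a submultiplicative norm, which completes the argument. I do not expect a genuine obstacle here: the only point requiring care is the weight inequality of the second paragraph (exactly the place where the ``$2$'' in $\valjp\cdot$ is used), together with the harmless observation that $\mathcal V^{s,\omega}$ is a \emph{non-unital} algebra, the constant function $1$ not belonging to $\mathcal W$.
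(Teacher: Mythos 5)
Your proof is correct and follows essentially the same route as the paper: write $\mathcal F(v_1v_2)$ as the convolution $\hat v_1\ast\hat v_2$ of $L^1$ functions and use the submultiplicativity $\valjp{\xi}^{s}\le\valjp{\eta}^{s}\valjp{\xi-\eta}^{s}$ of the weight (the paper's Claim \ref{cla.2929}, proved in Lemma \ref{cla.214jhg} of the Appendix, where it is shown that any constant $\tau\ge 4/3$ in place of $2$ works). Your self-contained AM--GM verification of that inequality and your explicit tracking of the harmless $(2\pi)^{-d/2}$ normalization are fine and match the paper's argument in substance.
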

\begin{proof}
It is obvious that $\mathcal V^{s,\omega}$ is a Banach space isomorphic to  $\mathcal V^{0,\omega}=\mathcal W$.
Let us check the algebra property: let $v_{1}, v_{2}$ be in $\mathcal V^{s,\omega}$ for some $s\ge 0$. 
We have thus $v_{1}, v_{2}\in\mathcal W$ and 
$\widehat{v_{1}v_{2}}=\hat{v}_{1}\ast\hat{v}_{2}$ (convolution of $L^{1}$ functions),
and
we have 
\begin{multline}\label{res555}
\valjp{\xi}^{s}\widehat{v_{1}v_{2}}(\xi)=\int \hat{v}_{1}(\eta)
\hat{v}_{2}(\xi-\eta) 
\valjp{\xi}^{s}
d\eta 
\\
=\int \frac{\valjp{\xi}^{s}}{\valjp{\xi-\eta}^{s}\valjp{\eta}^{s}}
\bigl[\hat{v}_{1}(\eta)\valjp{\eta}^{s}\bigr]
\bigl[\hat{v}_{2}(\xi-\eta)\valjp{\xi-\eta}^{s}\bigr]
d\eta.
\end{multline}
\begin{claim}\label{cla.2929}
 For $\xi_{1}, \xi_{2}\in \R^{d}$, we have 
$\frac{2+\val{ \xi_{1}+\xi_{2}}^{2}}{(2+\val{\xi_{1}}^{2})(2+\val{\xi_{2}}^{2})}\le 1$
(cf. Lemma \ref{cla.214jhg} in our  Appendix).
\end{claim}
\no Going back to \eqref{res555}, we find that 
\begin{multline*}
\int\valjp{\xi}^{s}\val{\widehat{v_{1}v_{2}}(\xi)}d\xi\le
\iint 
\bigl\vert\hat{v}_{1}(\eta)\valjp{\eta}^{s}\bigr\vert
\bigl\vert\hat{v}_{2}(\xi-\eta)\valjp{\xi-\eta}^{s}\bigr\vert
d\eta
d\xi \\=\norm{\hat{v}_{1}(\eta)\valjp{\eta}^{s}}_{L^{1}}
\norm{\hat{v}_{2}(\eta)\valjp{\eta}^{s}}_{L^{1}}=\norm{v_{1}}_{\mathcal V^{s,\omega}}
\norm{v_{2}}_{\mathcal V^{s,\omega}}.
\end{multline*}
\end{proof}
\begin{lemma}\label{lem.210210}
 Let $v$ be a vector field satisfying the assumptions of Conjecture \ref{coj001}. Let $W=\widehat v$ be the Fourier transform of $v$. Then $W$ belongs to $L^{1}(\R^{d})$, i.e. $v$ belongs to the Wiener algebra $\mathcal W$.
 Moreover, we have for any $p\in [1,\frac65)$, any $q\in [1,2]$,
 \begin{align}
&\int_{\val \xi\le 1}\val{W(\xi)}^{p}d\xi<+\io,\label{aer981}
\\
&\int_{\val \xi\ge  1}\val{W(\xi)}^{q}d\xi<+\io,\quad\
 \label{aer982}
 \\
 \forall \rho>0,\quad 
&\int_{\val \xi\ge \rho}\val{W(\xi)}^{2}d\xi\le \nu^{-4}\rho^{-4}\norm{\rot v}_{L^{2}}^{6}.
\label{tra297}
\end{align}
\end{lemma}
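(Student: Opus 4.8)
The plan is to read off all four assertions from results already in hand, with no fresh use of the Navier--Stokes system \eqref{SNSI}: the global integrability $W=\widehat v\in L^{1}(\R^{3})$ is precisely the second assertion of Theorem \ref{thm.hgf564}; the low-frequency bound \eqref{aer981} will come from the $L^{q_{1}}+L^{q_{2}}$ splitting of Lemma \ref{lem.reg001}; and the high-frequency bounds \eqref{aer982}, \eqref{tra297} will come from the weighted $L^{2}$ identities \eqref{125lkj}, \eqref{hgf654} together with the a priori bound \eqref{azo295}. The whole nonlinear input is thus packaged in \eqref{azo295} and in the fact, from Theorem \ref{thm.hgf564}, that $\widehat v\in L^{1}$; everything else is frequency-localized bookkeeping.

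\textbf{Membership in $\mathcal W$ and the bound \eqref{aer981}.} The statement $v\in\mathcal W$, i.e. $W\in L^{1}(\R^{3})$ with $\norm{W}_{L^{1}}\lesssim\nu^{-1}\norm{\rot v}_{L^{2}}^{2}$, is contained in Theorem \ref{thm.hgf564}, so I would simply invoke it; in particular \eqref{aer981} with $p=1$ is immediate. For $p\in(1,\tfrac65)$, Lemma \ref{lem.reg001} furnishes $\widehat v=W_{1}+W_{2}$ with $W_{1}\in L^{q_{1}}(\R^{3})$ and $W_{2}\in L^{q_{2}}(\R^{3})$, where $q_{1}$ may be taken in $(p,\tfrac65)$ and $q_{2}\in(\tfrac65,2]$; on the bounded set $\{\val{\xi}\le 1\}$ one has $L^{q_{2}}\hookrightarrow L^{q_{1}}\hookrightarrow L^{p}$, whence $\int_{\val{\xi}\le 1}\val{W}^{p}\,d\xi<+\io$.

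\textbf{The bounds \eqref{aer982} and \eqref{tra297}.} By \eqref{125lkj} we have $\widehat{\rot^{2}v}(\xi)=\val{\xi}^{2}\widehat v(\xi)$, so Plancherel together with \eqref{azo295} gives
\begin{equation*}
\int_{\R^{3}}\val{\xi}^{4}\val{W(\xi)}^{2}\,d\xi=\norm{\rot^{2}v}_{L^{2}}^{2}\lesssim\nu^{-4}\norm{\rot v}_{L^{2}}^{6},
\end{equation*}
while \eqref{hgf654} (which only uses $\dive v=0$ and \eqref{hyp002}) gives $\int_{\R^{3}}\val{\xi}^{2}\val{W(\xi)}^{2}\,d\xi=\norm{\rot v}_{L^{2}}^{2}$. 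For \eqref{tra297}, restrict to $\{\val{\xi}\ge\rho\}$ and use $\val{\xi}^{-4}\le\rho^{-4}$:
\begin{equation*}
\int_{\val{\xi}\ge\rho}\val{W(\xi)}^{2}\,d\xi\le\rho^{-4}\int_{\R^{3}}\val{\xi}^{4}\val{W(\xi)}^{2}\,d\xi\le\rho^{-4}\norm{\rot^{2}v}_{L^{2}}^{2},
\end{equation*}
and insert \eqref{azo295} (the displayed form of \eqref{tra297} being understood up to the universal constant of \eqref{azo295}). For \eqref{aer982}, the case $\rho=1$ of the last computation, using only \eqref{hgf654}, gives $\int_{\val{\xi}\ge 1}\val{W}^{2}\,d\xi\le\norm{\rot v}_{L^{2}}^{2}<+\io$; then for $q\in[1,2]$ the elementary inequality $t^{q}\le t+t^{2}$ for $t\ge 0$ yields
\begin{equation*}
\int_{\val{\xi}\ge 1}\val{W(\xi)}^{q}\,d\xi\le\norm{W}_{L^{1}(\R^{3})}+\int_{\val{\xi}\ge 1}\val{W(\xi)}^{2}\,d\xi<+\io.
\end{equation*}

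I do not expect a genuine obstacle here: the lemma is an assembly of the frequency-localized consequences of the linear Lemma \ref{lem.reg001}, the weighted $L^{2}$ identities \eqref{125lkj}--\eqref{hgf654}, and the nonlinear bound \eqref{azo295}. The only point deserving mild care is to keep separate the merely qualitative information (the splitting of Lemma \ref{lem.reg001}, meaningful on bounded sets, which drives \eqref{aer981}--\eqref{aer982}) from the quantitative estimates carrying the powers of $\nu$ and $\norm{\rot v}_{L^{2}}$, so that \eqref{tra297} is recorded in the normalization stated there.
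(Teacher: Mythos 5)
Your proposal is correct and follows essentially the same route as the paper: $W\in L^{1}$ from the Wiener-algebra result, the weighted bounds $\val{\xi}W,\ \val{\xi}^{2}W\in L^{2}$ coming from $\rot v\in L^{2}$ and \eqref{azo295}, and then H\"older/interpolation on the regions $\{\val{\xi}\le 1\}$ and $\{\val{\xi}\ge\rho\}$. The only cosmetic differences are that the paper obtains \eqref{aer981} and \eqref{aer982} by factoring $\mathbf 1\,W=[\val{\xi}^{-k}\mathbf 1]\cdot[\val{\xi}^{k}W]$ and applying H\"older directly (rather than citing the $L^{q_{1}}+L^{q_{2}}$ splitting and using $t^{q}\le t+t^{2}$), and it shares your (correctly flagged) implicit universal constant in \eqref{tra297}.
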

\begin{remark}\rm
 This lemma proves in particular that 
 \begin{align}\label{}
&W\in L^{1}(\R^{d}),\quad \sup_{\rho>0}\norm{\mathbf 1_{\{\val \xi\ge \rho\}} \rho^{2}W(\xi)}_{L^{2}(\R^{d})}\le \nu^{-2}\norm{\rot v}_{L^{2}}^{3}<+\io,
\\
&\mathbf 1_{\{\val \xi\le 1\}} W(\xi)\in \bigcap_{0\le \theta<\frac15} L^{1+\theta}(\R^{d}).
\end{align}
\end{remark}
\begin{proof}
 According to Lemma \ref{lem.reg1}, we know that $W$ belongs to $L^{1}$ and we also 
have,
since $\rot v, \nabla v$ belong to $L^{2}$ (thanks to Lemma \ref{lem.reg001}),
as well as $-\Delta v=\rot^{2}v$ (from Lemma \ref{lem.reg1}),
we get 
 \begin{equation}\label{}
 \int \val\xi^{2}\val{W(\xi)}^{2}<+\io, \quad  \int \val\xi^{4}\val{W(\xi)}^{2}<+\io.
\end{equation}
We have also for any $r\in[2,3)$,
$$
\mathbf 1_{\{\val \xi\le 1\}}{W(\xi)}=
\underbracket[0.2pt]{\bigl[\val\xi^{-1}\mathbf 1_{\{\val \xi\le 1\}}\bigr]}_{\in L^{r}(\R^{3})} 
\underbracket[0.2pt]{\bigl[\val\xi\val{W(\xi)}\bigr]}_{\in L^{2}(\R^{3})} ,
$$
so that $\mathbf 1_{\{\val \xi\le 1\}}{W(\xi)}$ belongs to $L^{p}$
with
$$
\frac1p=\frac{1}r+\frac12=\frac{2+r}{2r},
$$
implying that 
$
\mathbf 1_{\{\val \xi\le 1\}}{W(\xi)}\in \cap_{1\le p<6/5} L^{p},
$
yielding \eqref{aer981}.
We have also 
\begin{equation}\label{rea413}
\mathbf 1_{\{\val \xi\ge 1\}}{W(\xi)}=
\underbracket[0.2pt]{\bigl[\val\xi^{-2}\mathbf 1_{\{\val \xi\ge 1\}}\bigr]}_{\in L^{\io}(\R^{3})} 
\underbracket[0.2pt]{\bigl[\val\xi^{2}\val{W(\xi)}\bigr]}_{\in L^{2}(\R^{3})},
\quad\text{which belongs to $L^{2}$},
\end{equation}
as well as for any $s\in [2,+\io],$
$$
\mathbf 1_{\{\val \xi\ge 1\}}{W(\xi)}=
\underbracket[0.2pt]{\bigl[\val\xi^{-2}\mathbf 1_{\{\val \xi\ge 1\}}\bigr]}_{\in L^{s}(\R^{3})} 
\underbracket[0.2pt]{\bigl[\val\xi^{2}\val{W(\xi)}\bigr]}_{\in L^{2}(\R^{3})},
\quad\text{which belongs to $L^{q}$},
$$
with
$$
\frac1q=\frac{1}{s}+\frac12=\frac{2+s}{2s}, \quad
q=\frac{2s}{2+s}, \text{ browsing }[1,2],$$
yielding \eqref{aer982}. 
To obtain \eqref{tra297}, we use \eqref{rea413} with $\{\val \xi\ge \rho\}$ replacing $\{\val \xi\ge 1\}$,
yielding the upperbound $\rho^{-4}\norm{\rot^{2}v}_{L^{2}}^{2}$ for the left-hand-side of \eqref{tra297}; 
a simple interpolation in \eqref{reg004} gives
$$
\norm{\rot^{2}v}_{L^{2}}\lesssim \nu^{-2}\gamma_{0}^{3},
$$
which yields \eqref{tra297}.
\end{proof}
\begin{nb}
 Let us check that \eqref{tra297} is units-consistent:
 we have, using \eqref{oig1146}, $[\rho]=\mathtt{ L^{-1}}$, 
 $$
 [\  \nu^{-4}\rho^{-4}\norm{\rot v}_{L^{2}}^{6}]=\mathtt{(L^{2}T^{-1})^{-4}L^{4}}\mathtt{(L^{\frac32}T^{-1})^{6}}
 =\mathtt{L^{5}T^{-2}},
 $$
 whereas the units of the left-hand-side of  \eqref{tra297} are indeed
 $$
 [\hat v]^{2} \mathtt{L^{-3}}\underbracket[0pt]=_{\eqref{unit12}}\mathtt{(L^{4}T^{-1})^{2}L^{-3}}=\mathtt{L^{5}T^{-2}}.
 $$
 Moreover \eqref{tra297} is invariant by the change of function  \eqref{126126}: applying that inequality  to $v_{\lambda}$ given by $v_{\lambda}(x)=\lambda v(\lambda x)$ we get,
 using 
 $
 \widehat{v}_{\lambda}(\xi)=\lambda^{1-3}\hat v(\xi\lambda^{-1}),
 $
 $$
 \int_{\val \xi\ge \rho}\Val{\lambda^{-2}\hat v(\xi\lambda^{-1})}^{2}d\xi\le \nu^{-4}\rho^{-4}
 \Bigl(\int_{\R^{3}}
 \val{\lambda^{2}(\rot v)(\lambda x)}^{2} dx
 \Bigr)^{3},
 $$
 that is
 \begin{multline*}
 \lambda^{-1}
\int_{\val \eta\ge \rho\lambda^{-1}}\Val{\hat v(\eta)}^{2}d\eta
=\lambda^{-4}
 \int_{\val \eta\ge \rho\lambda^{-1}}\Val{\hat v(\eta)}^{2}\lambda^{3}d\eta
 \\\le \nu^{-4}\rho^{-4}\lambda^{12}
 \Bigl(\int_{\R^{3}}
 \val{(\rot v)(y)}^{2} \lambda^{-3}dy
 \Bigr)^{3}
 =
  \nu^{-4}\rho^{-4}\lambda^{3}
 \Bigl(\int_{\R^{3}}
 \val{(\rot v)(y)}^{2} dy
 \Bigr)^{3}
 \\=
 \lambda^{-1} \nu^{-4}(\rho \lambda^{-1})^{-4}
 \Bigl(\int_{\R^{3}}
 \val{(\rot v)(y)}^{2} dy
 \Bigr)^{3},
\end{multline*}
 which is the same inequality as  \eqref{tra297} where $\rho$ is replaced by $\rho/\lambda$. Since $\rho$ is an arbitrary positive parameter, this is the same family of inequalities.
 \end{nb}
\begin{lemma}\label{lem.212212}
 Let $v$ be a vector field satisfying the assumptions of Conjecture \ref{coj001}. Then $\widehat v$ belongs to $L^{1}(\R^{d})$ and for all $\kappa\in(-\frac12, \frac12)$, we have
 \begin{equation}\label{}
\int_{\R^{d}}\val\xi^{\kappa}\val{\widehat v(\xi)}d\xi\le 2\sqrt{\pi}
\Bigl(
\frac{\norm{\rot v}_{L^{2}}}{\sqrt{1+2\kappa}}
+\frac{\norm{\rot^2{v}}_{L^{2}}}{\sqrt{1-2\kappa}}
\Bigr)<
+\io.
\end{equation}
In particular we find that $v$ belongs to $\mathcal V^{\kappa,\omega}$ for any $\kappa\in [0,\frac12).$
\end{lemma}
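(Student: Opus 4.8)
The plan is to refine the frequency-splitting argument from the proof of Theorem~\ref{thm.hgf564}, this time inserting the weight $\val\xi^{\kappa}$ and cutting the integral at the \emph{fixed} radius $\val\xi=1$ rather than optimizing over a cut-off radius $\rho$.

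The two inputs are already at hand: since $v$ satisfies the hypotheses of Conjecture~\ref{coj001}, Lemma~\ref{lem.reg1} gives $v\in\mathcal W$ (hence $\widehat v\in L^{1}(\R^{3})\subset L^{1}_{\text{loc}}(\R^{3})$) together with $\rot^{2}v\in L^{2}(\R^{3},\R^{3})$ and the estimate \eqref{azo295}. By \eqref{hgf654} we have $\norm{\val\xi\,\widehat v}_{L^{2}}=\norm{\rot v}_{L^{2}}$, and since $\rot^{2}v=-\Delta v$ has Fourier transform $\val\xi^{2}\widehat v(\xi)$, the Plancherel formula gives $\norm{\val\xi^{2}\widehat v}_{L^{2}}=\norm{\rot^{2}v}_{L^{2}}$.

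Then I would split
\[
\int_{\R^{3}}\val\xi^{\kappa}\val{\widehat v(\xi)}\,d\xi
=\int_{\val\xi\le1}\val\xi^{\kappa-1}\bigl(\val\xi\,\val{\widehat v(\xi)}\bigr)d\xi
+\int_{\val\xi\ge1}\val\xi^{\kappa-2}\bigl(\val\xi^{2}\,\val{\widehat v(\xi)}\bigr)d\xi
\]
and apply the Cauchy--Schwarz inequality to each term. The first term is bounded by $\bigl(\int_{\val\xi\le1}\val\xi^{2\kappa-2}d\xi\bigr)^{1/2}\norm{\rot v}_{L^{2}}$, and the radial integral equals $4\pi\int_{0}^{1}r^{2\kappa}dr=\tfrac{4\pi}{1+2\kappa}$, finite precisely because $\kappa>-\tfrac12$. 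The second term is bounded by $\bigl(\int_{\val\xi\ge1}\val\xi^{2\kappa-4}d\xi\bigr)^{1/2}\norm{\rot^{2}v}_{L^{2}}$, and the radial integral equals $4\pi\int_{1}^{+\io}r^{2\kappa-2}dr=\tfrac{4\pi}{1-2\kappa}$, finite precisely because $\kappa<\tfrac12$. Taking square roots and summing gives the displayed inequality, whose right-hand side is finite by \eqref{hyp002} and \eqref{azo295}. For the last assertion, when $\kappa\in[0,\tfrac12)$ the subadditivity $(a+b)^{\theta}\le a^{\theta}+b^{\theta}$ with $\theta=\kappa/2\in[0,1]$ gives $\valjp{\xi}^{\kappa}=(2+\val\xi^{2})^{\kappa/2}\le 2^{\kappa/2}+\val\xi^{\kappa}$, so that $\norm{v}_{\mathcal V^{\kappa,\omega}}=\int_{\R^{3}}\valjp{\xi}^{\kappa}\val{\widehat v(\xi)}\,d\xi\le 2^{\kappa/2}\norm{\widehat v}_{L^{1}}+\int_{\R^{3}}\val\xi^{\kappa}\val{\widehat v(\xi)}\,d\xi<+\io$, i.e.\ $v\in\mathcal V^{\kappa,\omega}$.

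There is essentially no hard step here: the one genuinely substantial ingredient, $\rot^{2}v\in L^{2}$ with the bound \eqref{azo295}, has already been proven, and what remains is two elementary radial integrals together with Cauchy--Schwarz and Plancherel. The only point requiring a little care is the bookkeeping of exponents, namely observing that the low-frequency radial integral converges exactly when $\kappa>-\tfrac12$ and the high-frequency one exactly when $\kappa<\tfrac12$; this is precisely what forces the range $\kappa\in(-\tfrac12,\tfrac12)$ and produces the shape of the stated constant.
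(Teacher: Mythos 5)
Your proof is correct and follows essentially the same route as the paper: split the integral at $\val\xi=1$, write the weight as $\val\xi^{\kappa-1}\cdot\val\xi$ (resp.\ $\val\xi^{\kappa-2}\cdot\val\xi^{2}$), apply Cauchy--Schwarz against $\norm{\rot v}_{L^{2}}$ and $\norm{\rot^{2}v}_{L^{2}}$, and evaluate the two radial integrals, which is exactly where the constraints $\kappa>-\tfrac12$ and $\kappa<\tfrac12$ and the constant $2\sqrt{\pi}$ come from. Your explicit subadditivity argument for passing from $\val\xi^{\kappa}$ to $\valjp{\xi}^{\kappa}$ in the final assertion is a detail the paper leaves implicit, and it is fine.
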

\begin{proof}
The fact that $\hat v$ belongs to $L^{1}(\R^{d})$ follows from Lemma \ref{lem.210210} and moreover we have 
\begin{multline*}
\int_{\{\val \xi\le 1\}} \val\xi^{\kappa}\val{\hat v(\xi)}d\xi=\int_{\{\val \xi\le 1\}} \val\xi^{\kappa-1}\val \xi\val{\hat v(\xi)}d\xi
\quad\text{\footnotesize (and since $2\kappa-2>-3$, i.e., $\kappa>-1/2$)}
\\
\le \norm{\mathbf 1_{
\{\val \xi\le 1\}
} \val\xi^{\kappa-1}}_{L^{2}}\norm{\rot v}_{L^{2}}=\frac{2\sqrt{\pi}}{\sqrt{1+2\kappa}}\norm{\rot v}_{L^{2}}.
\end{multline*}
Similarly, we obtain
\begin{multline*}
\int_{\{\val \xi\ge 1\}} \val\xi^{\kappa}\val{\hat v(\xi)}d\xi=\int_{\{\val \xi\ge 1\}} \val\xi^{\kappa-2}\val \xi^{2}\val{\hat v(\xi)}d\xi
\quad\text{\footnotesize (and since $2\kappa-4<-3$) i.e. $\kappa<1/2$)}
\\
\le \norm{\mathbf 1_{
\{\val \xi\ge 1\}
} \val\xi^{\kappa-2}}_{L^{2}}\norm{\rot^{2} v}_{L^{2}}=\frac{2\sqrt{\pi}}{\sqrt{1-2\kappa}}\norm{\rot^{2} v}_{L^{2}},
\end{multline*}
completing the proof of the lemma (note that from \eqref{reg004}, we have  $\rot^{2}v\in L^{2}$).
\end{proof}
 \begin{remark}\rm
 We shall see later on that we can remove the upper bound restriction on $\kappa$, that is to prove that
  for any $\kappa>-1/2$,
 \begin{equation}\label{fds896}
\int_{\R^{d}} \val\xi^{\kappa}\val{\hat v(\xi)}d\xi<+\io,
\end{equation}
but we shall not be able to remove the restriction $\kappa>-1/2$.
Moreover, we have to keep in mind that for any $\varepsilon>0$ we have proven  
\begin{equation}\label{tre658}
\int_{\val \xi\le 1}\val{\xi}^{-\frac12+\varepsilon}\val{\hat v(\xi)} d\xi<+\io,
\end{equation}
which is somehow a regularity result for the Fourier transform of $v$ at the origin.
We expect that this regularity result for $\hat v$ could be translated into a decay result for $v$ at infinity.
Except for the fact that we cannot make $\varepsilon=0$ in \eqref{tre658}, we may note, quite heuristically, that $\hat v$ could not be more singular than $\val\xi^{-5/2}$ near the origin, 
which is related to a decay of type $\val x^{\frac52-3}=\val x^{-\frac12}$ at infinity for $v$,
compatible with a vector $v$ in $L^{6,\infty}(\R^{3})$.
We know also that $\rot v\times v$  as well as $\rot^{2}v$
belong to $L^{3/2}$, so that 
$\val \xi^{2}\hat v(\xi)$ belongs to $L^{3}$, which means in particular that
\begin{equation}\label{vcw634}
\int_{\val \xi\le 1}\val{\xi}^{6}\val{\hat v(\xi)}^{3} d\xi<+\io,
\end{equation}
and this indicates that $\hat v$ could not be more singular than $\val \xi^{-3}$.
As a result, there is actually a discrepancy between \eqref{tre658} and \eqref{vcw634},
so that  \eqref{tre658} appears as a better estimate.
\end{remark}
\begin{theorem}\label{thm.214trq}
  Let $v$ be a vector field satisfying the assumptions of Conjecture \ref{coj001}.
  Then for all $s\ge 0$, $v$ belongs to $\mathcal V^{s,\omega}$. 
\end{theorem}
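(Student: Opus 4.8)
The plan is to run a bootstrap over the non-negative integers, exploiting the fact that in the quadratic term the inverse Laplacian gains one full derivative against the divergence. First I would rewrite the equation: combining \eqref{newway}, the identity $\rot^{2}v=-\Delta v$ (valid since $\dive v=0$), the identity \eqref{218218}, the relation $\p_{j}(v_{j}v)=(v\cdot\nabla)v$ (again from $\dive v=0$) and $\mathbb P\nabla(\cdot)=0$ (established in the proof of Lemma \ref{lem.654oof}), one gets
\[
-\nu\Delta v=-\mathbb P\bigl(\p_{j}(v_{j}v)\bigr).
\]
Taking Fourier transforms and dividing by $\val\xi^{2}$ on $\R^{3}\setminus\{0\}$ yields
\[
\hat v(\xi)=-\nu^{-1}\val\xi^{-2}\sum_{1\le j\le3}\mathbb P(\xi)\,(i\xi_{j})\,\widehat{v_{j}v}(\xi)
\qquad\text{for a.e. }\xi\in\R^{3},
\]
where $\mathbb P(\xi)=I_{3}-\val\xi^{-2}\xi\otimes\xi$ has operator norm $1$, so that $\val{\mathbb P(\xi)(i\xi_{j})}\le\val\xi$. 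This division is legitimate because both sides are $L^{1}_{\text{loc}}$ functions — we already know $\hat v\in L^{1}$ by Theorem \ref{thm.hgf564}, and $\widehat{v_{j}v}=\hat v_{j}\ast\hat v\in L^{1}$ since $v_{j}v\in\mathcal W$ — that agree as distributions away from the origin.

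Next I would prove by induction on $n\in\N$ that $v\in\mathcal V^{n,\omega}$. The base case $n=0$ is $v\in\mathcal V^{0,\omega}=\mathcal W$, which is Theorem \ref{thm.hgf564}. Assume $v\in\mathcal V^{n,\omega}$ for some $n\ge0$. Since $n\ge0$, Proposition \ref{pro.alg123} gives $v_{j}v\in\mathcal V^{n,\omega}$, i.e. $\valjp{\xi}^{n}\widehat{v_{j}v}\in L^{1}$. To check $v\in\mathcal V^{n+1,\omega}$ I would split $\int\valjp{\xi}^{n+1}\val{\hat v(\xi)}\,d\xi$ over $\{\val\xi\le1\}$ and $\{\val\xi\ge1\}$. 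On the first region $\valjp{\xi}^{n+1}$ is bounded and $\hat v\in L^{1}$, so the contribution is finite; this is the observation that low frequencies never cause trouble. On the second region $\valjp\xi\lesssim\val\xi$, and the displayed identity gives
\[
\valjp{\xi}^{n+1}\val{\hat v(\xi)}
\lesssim\nu^{-1}\val\xi^{n+1}\val\xi^{-2}\val\xi\sum_{j}\val{\widehat{v_{j}v}(\xi)}
=\nu^{-1}\val\xi^{n}\sum_{j}\val{\widehat{v_{j}v}(\xi)}
\lesssim\nu^{-1}\valjp{\xi}^{n}\sum_{j}\val{\widehat{v_{j}v}(\xi)},
\]
whose integral over $\{\val\xi\ge1\}$ is $\lesssim\nu^{-1}\sum_{j}\norm{v_{j}v}_{\mathcal V^{n,\omega}}<+\io$. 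Hence $v\in\mathcal V^{n+1,\omega}$, completing the induction.

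Finally, for an arbitrary $s\ge0$ one picks an integer $n\ge s$; since $\valjp\xi\ge\sqrt2>1$ one has $\valjp{\xi}^{s}\le\valjp{\xi}^{n}$, so $\mathcal V^{n,\omega}\subset\mathcal V^{s,\omega}$ and $v\in\mathcal V^{s,\omega}$. I do not expect a genuine obstacle here: the only delicate points are the pointwise division by $\val\xi^{2}$ (handled as above, using $\hat v\in L^{1}$) and the fact that the Banach algebra property of $\mathcal V^{n,\omega}$ is available only for $n\ge0$, which is precisely why the bootstrap is started from $\mathcal W=\mathcal V^{0,\omega}$ and run through the integers, rather than launched from the fractional gain $\mathcal V^{\kappa,\omega}$, $\kappa<1/2$, of Lemma \ref{lem.212212}. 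The whole mechanism is that $\Delta^{-1}$ against $\p_{j}$ in the quadratic term yields a net gain of one derivative at high frequency, while low frequencies are harmless because $v$ already lies in the Wiener algebra.
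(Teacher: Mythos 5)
Your proposal is correct and follows essentially the same route as the paper: write the nonlinearity in divergence form so that, on the Fourier side, $\nu\val{\xi}^{2}\hat v$ is controlled by $\val{\xi}$ times the ($L^{1}$-with-weight) Fourier transform of a product of Wiener-algebra elements, then bootstrap using the Banach algebra property of $\mathcal V^{s,\omega}$ and the stability under the bounded multiplier $\mathbb P$, splitting low and high frequencies. The only (harmless) difference is bookkeeping: you run the induction through the integers starting from $\mathcal V^{0,\omega}=\mathcal W$, whereas the paper launches the same one-derivative-per-step iteration from the fractional gain $v\in\mathcal V^{\kappa,\omega}$, $\kappa\in[0,\tfrac12)$, of Lemma \ref{lem.212212}, obtaining $[0,\tfrac32)$, $[0,\tfrac52)$, etc.; your variant shows that lemma is not actually needed for this theorem.
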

\begin{proof}[Proof of the theorem]
Note that Lemma \ref{lem.212212} provides us with the information $v\in\mathcal V^{s,\omega}$ for $s\in [0,\frac12)$.
We begin with another lemma.
\begin{lemma}\label{lem.newpp+}
Let $v$ be a vector field satisfying the assumptions of Conjecture \ref{coj001}.
Then we have
 \begin{equation}\label{21821+}
\mathbb P\bigl((\curl v)\times v\bigr)=\sum_{1\le j\le 3}\p_{j}\mathbb P \bigl(v_{j}v\bigr)-
\frac12\mathbb P\nabla{(\val v^{2})},
\end{equation}
and 
we have with $(\vec{e}_{\ell})_{1\le \ell \le 3}$ standing for the canonical basis of $\R^{3}$, 
\begin{equation}\label{2182++}
\mathbb P\bigl((\curl v)\times v\bigr)=\sum_{1\le \ell\le 3}\p_{\ell}u_{\ell},
\quad
\text{with }
u_{\ell}=\mathbb P\bigl(v_{\ell} v-\frac12 \val{v}^{2}\vec{e}_{\ell}\bigr)\in \cap_{0\le \kappa<1/2}\mathcal V^{\kappa,\omega}.
\end{equation}
We shall use the following notation: 
$$
S_{0}(v\otimes v)=\Bigl(\mathbb P\bigl(v_{\ell} v-\frac12 \val{v}^{2}\vec{e}_{\ell}\bigr)\Bigr)_{1\le \ell \le 3},\quad \nabla\cdot
S_{0}(v\otimes v)
=\sum_{1\le \ell\le 3}\p_{\ell}\Bigl(\mathbb P\bigl(v_{\ell} v-\frac12 \val{v}^{2}\vec{e}_{\ell}\bigr)\Bigr),
$$
so that we have 
\begin{equation}\label{wlf197}
\mathbb P\bigl((\curl v)\times v\bigr)=\nabla\cdot
S_{0}(v\otimes v).
\end{equation}
\end{lemma}
\begin{nb}
 We note that the left-hand-side of \eqref{21821+} makes sense as an $L^{3/2}\cap L^{3}$ function,
 thanks to \eqref{reg003} in Lemma \ref{lem.reg1}. The right-hand-side  of \eqref{21821+} makes 
 sense since $v_{j}v, \val{v}^{2}$ belong to $L^{3}$ so that $\mathbb P(v_{j}v), \mathbb P(\val v^{2})$ make sense and are in $L^{3}$; moreover, from Lemma \ref{lem.reg1}, we have 
 $$
 \p_{k}(v_{j}v)=\underbracket[0.1pt]{(\p_{k}v_{j})}_{L^{2}} \underbracket[0.1pt]{v}_{L^{6}\cap L^{\io}}+ 
 \underbracket[0.1pt]{v_{j}}_{L^{6}\cap L^{\io}}\underbracket[0.1pt]{(\p_{k}v)}_{L^{2}},
 \quad\text{which belongs thus to $L^{2}\cap L^{3/2}$,}
 $$
 so that 
 $\mathbb P\bigl(\p_{k}(v_{j}v)\bigr)$ makes sense and belongs to $L^{2}\cap L^{3/2}$.
 Although this lemma looks somewhat ``obvious'', or a consequence of  Lemma \ref{lem.newppp},
 it deserves a proof.
 \end{nb}
 \begin{proof}[Proof of the lemma]
 Let $\phi$ be a vector field in $\mathscr S(\R^{3}, \R^{3})$: we have, with $\rho_{\varepsilon}$ a standard mollifier,  
 \begin{align*}
 &\poscal{(v\cdot \nabla)v}{\phi}_{\mathscr S',\mathscr S}
 -\poscal{\p_{j}(v_{j}v)}{\phi}_{\mathscr S',\mathscr S}
 \\
 &\hskip10pt = \poscal{\underbracket[0.2pt]{\p_{j}v}_{L^{2}}}{\underbracket[0.2pt]{v_{j}}_{L^{6}}
 \underbracket[0.2pt]\phi_{L^{3}}}_{L^{2},L^{2}}
 + \poscal{v_{j}v}{\p_{j}\phi}_{\mathscr S',\mathscr S}
 \\ &\hskip10pt =\lim_{\varepsilon\rightarrow 0}
 \poscal{\p_{j}v}{(\rho_{\varepsilon}\ast v_{j})\phi}_{L^{2},L^{2}}+ \poscal{v_{j}v}{\p_{j}\phi}_{\mathscr S',\mathscr S}
 \\
  &\hskip10pt =-\lim_{\varepsilon\rightarrow 0}
 \poscal{v}{\p_{j}\bigl[(\rho_{\varepsilon}\ast v_{j})\phi\bigr]}_{\mathscr S',\mathscr S}
 + \poscal{v_{j}v}{\p_{j}\phi}_{\mathscr S',\mathscr S}
\\
  &\hskip10pt =-\lim_{\varepsilon\rightarrow 0}
 \poscal{\underbrace{v}_{L^{6}}}{\underbrace{(\rho_{\varepsilon}\ast v_{j})}_{L^{6}}
 \underbrace{(\p_{j}\phi)}_{L^{3/2}}
 }_{L^{6},L^{6/5}}
  + \poscal{v_{j}v}{\p_{j}\phi}_{L^{3},L^{3/2}}\quad\text{\tiny (here we've used that $\dive v=0$)}
 \\ &\hskip10pt =-\int v_{j}(x)\proscal3{v(x)}{(\p_{j}\phi)(x)} dx
 +\int v_{j}(x)\proscal3{v(x)}{(\p_{j}\phi)(x)} dx=0,
\end{align*}
so that we get the formulas,
\begin{align}\label{}
&(\rot v)\times v=\sum_{1\le j\le 3}\p_{j}(v_{j}v)-\frac12\nabla{(\val v^{2})},
\label{afre87}\\
&\mathbb P\bigl((\rot v)\times v\bigr)= \sum_{1\le j\le 3}\p_{j}\bigl(\mathbb P(v_{j}v)\bigr)
-\frac12 \sum_{1\le \ell\le 3}\p_{\ell}\mathbb P(\val v^{2}\vec{e}_{\ell}),
\label{dfry87}\\
&\mathbb P\bigl((\rot v)\times v\bigr)\in 
\sum_{1\le \ell\le 3}\p_{\ell}
\Bigl(\underbracket[0.2pt]{
\mathbb P(v_{\ell }v)-\frac12\mathbb P(\val{v}^{2}\vec{e}_{\ell})
}_{=u_{\ell}\in \mathcal V^{\kappa,\omega} \text{ for $\kappa\in [0,1/2)$}}\Bigr),
\label{dfre87}
\end{align}
 since we note that for all $\kappa\in [0,1/2)$, we have $v\in \mathcal V^{\kappa,\omega}$ (Lemma \ref{lem.212212}) and from Proposition 
 \ref{pro.alg123}, we get 
 $$
 v_{\ell}v\in \mathcal V^{\kappa,\omega}, \mathbb P(v_{\ell}v)\in \mathcal V^{\kappa,\omega},
 \val v^{2}\vec{e}_{\ell}\in \mathcal V^{\kappa,\omega}, \mathbb P\val v^{2}\vec{e}_{\ell}\in \mathcal V^{\kappa,\omega},
 $$
 concluding the proof of the lemma.
\end{proof}
Let us go back now to the proof of Theorem \ref{thm.214trq}. With $(u_{\ell})_{1\le \ell\le 3}$
given by Lemma \ref{lem.newpp+},
Equation \eqref{newway} reads,
\begin{equation}\label{}
\nu \rot^{2}v+\sum_{1\le \ell\le 3}\p_{\ell}u_{\ell}=0.
\end{equation}
With $W=\widehat v, U_{\ell}=\widehat{u_{\ell}}$, 
we obtain $W,U_{\ell}$ in $L^{1}(\R^{3})$ and  for $\kappa\in [0,\frac12)$, we have 
\begin{gather}\label{}
\nu\val{\xi}^{2} W(\xi)+i\sum_{1\le \ell\le 3}\xi_{\ell}U_{\ell}(\xi)=0,
\qquad
\int \bigl(\val{W(\xi)}+\val{U_{l}(\xi)}\bigr)\valjp{\xi}^{\kappa} d\xi<+\io,
\end{gather}
the latter inequality following from \eqref{2182++}.
This means that 
\begin{align*}
&\val\xi\Bigl(\nu\underbracket[0.2pt]{\val{\xi} W(\xi)}_{\in L^{1}_{\text{loc}}(\R^{3})}+i\sum_{1\le \ell\le 3}\underbracket[0.2pt]{\frac{\xi_{\ell}}{\val\xi}U_{\ell}(\xi)}_{\in L^{1}(\R^{3})}\Bigr)=0,
\text{\quad which implies }
\\
&
\nu{\val{\xi} W(\xi)}+i\sum_{1\le \ell\le 3}{\frac{\xi_{\ell}}{\val\xi}U_{\ell}(\xi)}=0,
\text{\quad (equality of $L^{1}_{\text{loc}}(\R^{3})$ functions),}
\end{align*}
and thus for any $\kappa\in [0,1/2)$, we have, 
\begin{equation}\label{asw591}
\nu\val{\xi}^{1+\kappa} W(\xi)+i\sum_{1\le \ell\le 3}\frac{\xi_{\ell}}{\val\xi}\val{\xi}^{\kappa}U_{\ell}(\xi)=0,
\end{equation}
so that 
\begin{equation}\label{aqc149}
\nu\int\val{W(\xi)}\val\xi^{1+\kappa} d\xi\le \sum_{1\le \ell\le 3}\int \val{\xi}^{\kappa} \val{U_{\ell}(\xi)} d\xi<+\io,
\end{equation}
proving that $v$ belongs to $\mathcal V^{\kappa+1,\omega}$,
since, recalling the notation \eqref{233233}, we have 
\begin{align*}
\int\valjp{\xi}^{1+\kappa}\val{W(\xi)} d\xi&\le \int_{\val \xi\le 1}\valjp{\xi}^{1+\kappa}\val{W(\xi)} d\xi
+ \int_{\val \xi\ge 1}\valjp{\xi}^{1+\kappa}\val{W(\xi)} d\xi
\\
&\le 3^{\frac{1+\kappa}2}\int_{\val \xi\le 1}\val{W(\xi)} d\xi
+3^{\frac{1+\kappa}2}
\int_{\val \xi\ge 1}\val{\xi}^{1+\kappa}\val{W(\xi)} d\xi
\\
\text{\tiny (using \eqref{aqc149})}&\le  3^{\frac{1+\kappa}2}\norm{v}_{\mathcal W}
+3^{\frac{1+\kappa}2}\nu^{-1}\sum_{1\le \ell\le 3}\int \val{\xi}^{\kappa} \val{U_{\ell}(\xi)} d\xi<+\io.
\end{align*}
We know now that $v\in \mathcal V^{\kappa,\omega}$ for all $\kappa\in [0,\frac12)\cup [1,\frac32),$
and since  the space $\mathcal V^{\kappa,\omega}$ is decreasing when $\kappa$ increases,
we get that
$$
v\in \cap_{\kappa\in [0,\frac32)}\mathcal V^{\kappa,\omega}.
$$
Going back to the equation
$$
\nu \rot^{2}v+\nabla\cdot
S_{0}(v\otimes v)=0,
$$
we get, using the notations of Lemma \ref{lem.newpp+},
$$
\nu \val \xi W(\xi)
+i\frac{\xi}{\val{\xi}}\cdot \mathbf U(\xi)=0, \quad \mathbf U(\xi)=\mathcal F\bigl(S_{0}(v\otimes v)\bigr)(\xi),$$
and thus for $\kappa\in [0,3/2)$, we have 
$
\nu {\val \xi }^{1+\kappa}W(\xi)
+i\frac{\xi}{\val{\xi}}\cdot \val{\xi}^{\kappa}\mathbf U(\xi)=0,
$
so that
\begin{equation}\label{wta654}
\nu\int_{\R^{3}}\val{W(\xi)}\val{\xi}^{1+\kappa} d\xi
\le \int_{\R^{3}}\val{\mathbf U(\xi)}\val\xi^{\kappa}d\xi.
\end{equation}
We note that $v\in \mathcal V^{\kappa,\omega}$ and thus  
$v\otimes v$ belongs to $\mathcal V^{\kappa,\omega}$, as well as $S_{0}(v\otimes v)$,
which implies that the right-hand-side of \eqref{wta654} is finite for any $\kappa\in [0,3/2)$.
Since $W$ belongs to $L^{1}(\R^{3})$, we obtain that 
$v\in \mathcal V^{\kappa+1,\omega}$, that is $v\in \mathcal V^{\kappa,\omega}$ for $\kappa\in [0,5/2)$.
Iterating that reasoning, we get the sought result.
\end{proof}
\begin{corollary}\label{cor.kjha45}
 Let $v$ be a vector field satisfying the assumptions of Conjecture \ref{coj001}. Then $v$ is a $C^{\io}$ function on $\R^{3}$ such that for all multi-indices $\alpha\in \N^{3}$,
 we have
 $$
 \lim_{\val{x}\rightarrow+\io}(\p_{x}^{\alpha} u)(x)=0.
 $$
\end{corollary}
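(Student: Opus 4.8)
The plan is to deduce the corollary directly from Theorem \ref{thm.214trq}, which asserts that $v\in\mathcal V^{s,\omega}$ for every $s\ge 0$ (equivalently $v\in\mathcal V^{\infty,\omega}$), together with the Riemann--Lebesgue Lemma recorded in \eqref{777kjh}, namely $\mathcal W\subset C^{0}_{(0)}$. In other words, the only thing left to do is to unwind the definition of $\mathcal V^{\infty,\omega}$ in terms of pointwise properties of $v$ and its derivatives.

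First I would fix a multi-index $\alpha\in\N^{3}$ and observe that $\val{\xi^{\alpha}}\le\val{\xi}^{\val\alpha}\le\valjp{\xi}^{\val\alpha}$, so that membership of $v$ in $\mathcal V^{\val\alpha,\omega}$ (granted by Theorem \ref{thm.214trq}) gives
$$
\int_{\R^{3}}\val{\xi^{\alpha}\widehat v(\xi)}\,d\xi\le\int_{\R^{3}}\valjp{\xi}^{\val\alpha}\val{\widehat v(\xi)}\,d\xi=\norm{v}_{\mathcal V^{\val\alpha,\omega}}<+\io .
$$
Hence $\xi\mapsto\xi^{\alpha}\widehat v(\xi)$ belongs to $L^{1}(\R^{3})$ for every $\alpha$. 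Differentiation under the integral sign in the Fourier inversion formula $v(x)=\int e^{ix\cdot\xi}\widehat v(\xi)\,d\xi(2\pi)^{-3/2}$ is then legitimate (precisely because every $\xi^{\alpha}\widehat v$ is integrable), which shows that $v$ is a $C^{\io}$ function and that
$$
\mathcal F\bigl(\p_{x}^{\alpha}v\bigr)(\xi)=(i\xi)^{\alpha}\widehat v(\xi)\in L^{1}(\R^{3}),\quad\text{i.e.}\quad\p_{x}^{\alpha}v\in\mathcal W .
$$

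Then I would invoke \eqref{777kjh}: since $\p_{x}^{\alpha}v$ is the inverse Fourier transform of an $L^{1}$ function, it is (uniformly) continuous on $\R^{3}$ with limit $0$ at infinity. As $\alpha$ was arbitrary, this proves simultaneously that $v\in C^{\io}(\R^{3})$ and that $\lim_{\val x\rightarrow+\io}(\p_{x}^{\alpha}v)(x)=0$ for all $\alpha$, which is the assertion; equivalently, one may simply remark that Theorem \ref{thm.214trq} places $v$ in $\mathcal V^{\infty,\omega}\subset\mathcal A\subset C^{\io}_{(0)}$, the last inclusion being exactly the statement to be proved.

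I expect no serious obstacle here: all the substantive work has already been carried out in Theorem \ref{thm.214trq} and its supporting Lemmas \ref{lem.212212} and \ref{lem.newpp+}, where the bootstrap controlling the decay of $\widehat v$ at the origin and at infinity is performed. The only point needing (minimal) care is the justification of termwise differentiation of the Fourier integral, which is immediate once one knows $\xi^{\alpha}\widehat v\in L^{1}$ for every $\alpha$.
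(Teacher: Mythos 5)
Your proposal is correct and is essentially the paper's own argument: the paper's proof simply notes the inclusion $\mathcal V^{k,\omega}\subset C^{k}_{(0)}$ for each $k\in\N$, which is exactly what you establish by observing that $\xi^{\alpha}\widehat v\in L^{1}$ for every $\alpha$, differentiating under the Fourier inversion integral, and invoking the Riemann--Lebesgue inclusion $\mathcal W\subset C^{0}_{(0)}$. Your version just unwinds that one-line inclusion explicitly; nothing is missing.
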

\begin{proof}
 We simply note that for $k\in \N$,
 $
 \mathcal V^{k,\omega}\subset C^{k}_{(0)},
 $
 the $C^{k}$ functions with limit $0$ at infinity as well as their derivatives of order less than $k$.
\end{proof}
\begin{remark}\rm
 The result of Corollary
 \ref{cor.kjha45} was already obtained in the works of G.P.~Galdi (Theorem X.1.1 in \cite{MR2808162}).
 However, the result of Theorem 2.14 is slightly better, since the inclusion 
 $
 \mathcal W\subset C^{0}_{(0)}\quad\text{is strict}
 $,
 although the scalings of the two spaces are identical, and identical to the scaling of $L^{\io}$.
 Moreover, as said above, the algebras $\mathcal V^{s,\omega}$ for $s\ge 0$ are stable by standard singular integrals,
 which is a convenient feature, failing to hold true for $C^{0}_{(0)}$ (and also for $L^{\io}$).
 In some sense the ``Sobolev spaces'' based upon $\mathcal W$ appear naturally in the problem under scope  and keep a regularity information which seems to be optimal. On the other hand, with $R_{0}$ defined in \eqref{ddd001} and the Bernoulli head pressure $Q$ given by \eqref{modpr0}, we have 
\begin{equation}\label{modpre}
 Q=R_{0}(v\otimes v)+\frac12\val v
^{2},
\end{equation}
and we get immediately from Theorem \ref{thm.214trq} that 
\begin{equation}\label{xcv687}
Q\in 
\mathcal V^{\infty,\omega}=\cap_{s\ge 0}\mathcal V^{s,\omega}.
\end{equation}
We note also that $Q$ is real-valued since 
each $R_{0,j}(\xi)$ in \eqref{ddd001} is even and real-valued
(see as well Footnote \ref{foot.003} on page \pageref{foot.003}).
 \end{remark}
\subsection{Regularity results, continued}
We want first to go back to Lemma \ref{lem.newppp} and formulate some variation.
\begin{lemma}\label{lem.218}
 Let $v$ be a vector field on $\R^{3}$ belonging to $\mathcal W$,
 such that $\curl v$ belongs to $L^{2}(\R^{3}, \R^{3})$ and $\dive v=0$. Then
 the matrix $\nabla v$ belongs to $L^{2}(\R^{3}, \R^{9})$, $v\in L^{6}(\R^{3}, \R^{3})$
 and we have
 \begin{align}
(\curl v)\times v&= (v\cdot \nabla) v-\frac12\nabla( \val v^{2}), \label{0182++}
\\
\mathbb P\bigl((\curl v)\times v\bigr)&=\mathbb P\bigl((v\cdot \nabla) v\bigr).\label{218+++}
\end{align}
\end{lemma}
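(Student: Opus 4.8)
The plan is to deduce this from the linear Lemma~\ref{lem.reg001} (or, more directly, from a one-line Fourier computation) together with the identities already established inside the proofs of Lemmas~\ref{lem.newppp} and~\ref{lem.654oof}; the only thing to notice is that the hypothesis ``$v\in\mathcal W$'' is at least as strong as what is needed there. Concretely, since $v\in\mathcal W$ we have $\hat v\in L^{1}(\R^{3})\subset L^{1}_{\text{loc}}(\R^{3})$, and $\dive v=0$ reads $\xi\cdot\hat v(\xi)=0$ for a.e.\ $\xi$, so that $\val{\xi\times\hat v(\xi)}^{2}=\val{\xi}^{2}\val{\hat v(\xi)}^{2}$ and hence $\int_{\R^{3}}\val{\xi}^{2}\val{\hat v(\xi)}^{2}d\xi=\int_{\R^{3}}\val{\xi\times\hat v(\xi)}^{2}d\xi=\norm{\curl v}_{L^{2}}^{2}<+\io$. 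This says $v\in\dot H^{1}(\R^{3})$, whence $\nabla v\in L^{2}(\R^{3},\R^{9})$ with $\norm{\nabla v}_{L^{2}}=\norm{\curl v}_{L^{2}}$, and Lemma~\ref{lem.21ml} gives $v\in L^{6}(\R^{3},\R^{3})$. (Equivalently, since $\mathcal W\subset C^{0}_{(0)}\subset L^{\io}$ the field $v$ satisfies \eqref{hyp001}, so with \eqref{hyp002} one may simply invoke Lemma~\ref{lem.reg001}.)

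For the identity \eqref{0182++}: with $v\in L^{6}\cap L^{\io}$ and $\nabla v\in L^{2}$, each product $v_{j}\p_{k}v_{l}$ is a well-defined element of $L^{3/2}\cap L^{2}$, one has $\val v^{2}\in L^{3}\cap L^{\io}$ and $\nabla(\val v^{2})=2\sum_{1\le j\le 3}v_{j}\nabla v_{j}\in L^{3/2}\cap L^{2}$, so every term of \eqref{0182++} is meaningful. The component-by-component computation carried out in the proof of Lemma~\ref{lem.newppp}, which uses only $\dive v=0$, $v\in L^{6}$ and $\nabla v\in L^{2}$, then applies word for word and yields \eqref{0182++}, i.e.\ the content of \eqref{218218}.

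Finally I would apply the Leray projector to \eqref{0182++}. As $\mathbb P$ is a standard singular integral (Theorem~\ref{thm.singsing}) it is a bounded endomorphism of $L^{3/2}(\R^{3},\R^{3})$, so it may be applied termwise, and it remains only to check that $\mathbb P\bigl(\nabla(\val v^{2})\bigr)=0$. This is precisely the computation made in the proof of Lemma~\ref{lem.654oof}: setting $g=\val v^{2}\in L^{3}$ one has $\mathbb P(\nabla g)=\nabla g+\val{D}^{-2}\grad\dive(\nabla g)=\nabla g+\grad\val{D}^{-2}\Delta g=\nabla g-\nabla g=0$, all operations being legitimate since $g\in L^{3}$ and $\nabla g\in L^{3/2}$; equivalently, for $\phi\in\mathscr S(\R^{3},\R^{3})$ one has $\poscal{\mathbb P(\nabla g)}{\phi}=\poscal{\nabla g}{\mathbb P\phi}=-\poscal{g}{\dive(\mathbb P\phi)}=0$ because the range of $\mathbb P$ consists of divergence-free fields. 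Combining with \eqref{0182++} gives $\mathbb P\bigl((\curl v)\times v\bigr)=\mathbb P\bigl((v\cdot\nabla)v\bigr)$, which is \eqref{218+++}.

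The step requiring the most care is the vanishing $\mathbb P\bigl(\nabla(\val v^{2})\bigr)=0$ interpreted at the level of tempered distributions (as opposed to the purely formal multiplier cancellation $\mathbb P(\xi)\,i\xi=0$), together with the bookkeeping needed to be sure that the nonlinear products $v_{j}\p_{k}v_{l}$ and $\val v^{2}$ land in function spaces on which $\mathbb P$ and the ancillary zero-order multipliers act boundedly; once $v\in L^{6}$ and $\nabla v\in L^{2}$ are in hand, everything else is routine and identical to what was already done for Lemma~\ref{lem.newppp}.
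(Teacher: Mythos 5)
Your proof is correct and follows essentially the same route as the paper: the regularity statements come from Lemmas \ref{lem.reg001} and \ref{lem.21ml}, identity \eqref{0182++} is taken over \emph{ne varietur} from the componentwise computation in Lemma \ref{lem.newppp}, and \eqref{218+++} is obtained by applying $\mathbb P$ and annihilating the gradient term. The only (harmless) divergence is in that last step: the paper exploits $\val v^{2}\in\mathcal W$ to write $\mathcal F\bigl[\nabla(\val v^{2})\bigr](\xi)=i\xi U(\xi)$ with $U\in L^{1}$ and then uses the pointwise cancellation $\mathbb P(\xi)\xi=0$ on a genuine $L^{1}_{\mathrm{loc}}$ function, whereas you argue on $L^{3/2}$ via the boundedness of $\mathbb P$ and a duality/mollification argument, which works equally well.
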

\begin{nb}
 Let us note that both sides of each equality make sense:
 Lemma \ref{lem.reg001} ensures that $\nabla v$ belongs to $L^{2}(\R^{3}, \R^{9})$ since $\hat v$ belongs to $L^{1}(\R^{3})$, $\rot v\in L^{2}$ and $\dive v=0$.
 Lemma \ref{lem.21ml} gives the property $v\in L^{6}(\R^{3}, \R^{3})$.
 As a result, since $\mathcal W\subset L^{\io}$, 
 we get $(\curl v)\times v\in L^{2}\cap L^{3/2}$, so that Theorem \ref{thm.singsing} gives a meaning to both left-hand-sides.
 Checking $v_{j}\p_{j}v$, we see that $v_{j}\in L^{\io}\cap L^{6}$ and $\p_{j}v$ belongs to $L^{2}$, so that this product is in $L^{2}\cap L^{3/2}$.
 Moreover $\val{v}^{2}$ belongs to  $\mathcal W\cap L^{3}$, thanks to Proposition \ref{pro.alg123} and to the fact that $v$ belongs to $L^{6}$.
\end{nb}
\begin{proof}
 The proof of Lemma \ref{lem.newppp} provides \eqref{0182++} {\it ne varietur}.
 We calculate now
 $$
 \mathcal F\bigl[\nabla (\val v^{2})\bigr](\xi)=i\xi U(\xi), \quad U\in L^{1}(\R^{3},\C),
 \quad\text{since $\val v^{2}$ belongs to $\mathcal W$.}
 $$
 As a consequence
 $
 \mathcal F\bigl[\nabla (\val v^{2})\bigr]
 $
 belongs to $L^{1}_{\text{loc}}$, so that 
 $$
 \mathbb P(\xi) \Bigl(\mathcal F\bigl[\nabla (\val v^{2})\bigr]\Bigr)(\xi)=
 U(\xi) \mathbb P(\xi) \xi=0,
 $$
 proving
 \eqref{218+++}.
\end{proof}
\begin{remark}{\rm 
 It would have simplified a bit the proof if we had assumed, as we could, that $v$ is a vector field satisfying the assumptions of Conjecture \ref{coj001}, which imply all our assumptions in the above lemma. Then, in the last step, we could have used that $\xi U(\xi)$ belongs to $L^{1}(\R^{3}, \R^{3})$,
 thanks to Theorem \ref{thm.214trq}.
Our formulation is keeping that lemma linear and not valid only for our solution of a non-linear equation.}
\end{remark}
\begin{theorem}\label{thm.214t++}
  Let $v$ be a vector field satisfying the assumptions of Conjecture \ref{coj001}.
  Then for all integers $k\ge 0$, $\nabla v$ belongs to $W^{k,2}\cap\mathcal V^{k,\omega}$. 
\end{theorem}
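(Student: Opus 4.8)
The plan is to treat the two memberships separately, and to observe first that $\nabla v\in\mathcal V^{k,\omega}$ requires nothing new: Theorem \ref{thm.214trq} already gives $v\in\mathcal V^{k+1,\omega}$ for every $k$, and since $\mathcal F(\p_j v)(\xi)=i\xi_j\widehat v(\xi)$ with $\val{\xi_j}\le\valjp\xi$, one gets at once $\int\valjp\xi^{k}\val{\mathcal F(\p_j v)(\xi)}\,d\xi\le\norm{v}_{\mathcal V^{k+1,\omega}}<+\io$, i.e.\ $\nabla v\in\mathcal V^{k,\omega}$. So the real content is $\nabla v\in W^{k,2}(\R^{3})$, which I would prove by induction on $k$; the base case $k=0$ is exactly the identity $\norm{\nabla v}_{L^{2}}=\norm{\rot v}_{L^{2}}$ from Lemma \ref{lem.reg001} (it must be taken directly from there, not from the nonlinear recursion below, since $v\otimes v$ is only known to be in $L^{3}$, not $L^{2}$).

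For the inductive step, assume $k\ge1$ and $\nabla v\in W^{k-1,2}$, that is $\p^{\beta}v\in L^{2}(\R^{3})$ for $1\le\val\beta\le k$; I also keep in mind that by Theorem \ref{thm.214trq}, $v\in L^{\io}\cap L^{6}$ and every derivative of $v$ lies in $\mathcal W\subset L^{\io}$. The first step is to show $v\otimes v\in\dot{W}^{k,2}$: expanding $\p^{\alpha}(v\otimes v)$ by the Leibniz rule for $1\le\val\alpha\le k$, each summand is, up to a constant, $\p^{\beta}v\otimes\p^{\alpha-\beta}v$ with $\val\beta+\val{\alpha-\beta}=\val\alpha\ge1$, hence one of the two factors is a derivative of $v$ of order between $1$ and $k$ (so in $L^{2}$ by the inductive hypothesis) and the other is $v$ itself or a derivative of order $\le k$ (so in $L^{\io}$); therefore $\p^{\alpha}(v\otimes v)\in L^{2}$.

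Next I would feed this into the equation in the form \eqref{newway}--\eqref{wlf197}, namely $\nu\rot^{2}v=-\nabla\!\cdot S_{0}(v\otimes v)$. Since $S_{0}$ is built from standard singular integrals (Theorem \ref{thm.singsing}), it commutes with $\p^{\alpha}$ and is bounded on $L^{2}$, so $\p^{\alpha}S_{0}(v\otimes v)\in L^{2}$ for $\val\alpha\le k$; writing $\mathbf U=\mathcal F\bigl(S_{0}(v\otimes v)\bigr)$, this says $\int\val\xi^{2k}\val{\mathbf U(\xi)}^{2}\,d\xi<+\io$. As in the proof of Theorem \ref{thm.214trq} the equation reads $\nu\val\xi^{2}\widehat v(\xi)=-i\xi\cdot\mathbf U(\xi)$, whence $\val\xi\,\val{\widehat v(\xi)}\le\nu^{-1}\val{\mathbf U(\xi)}$. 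For $\val\beta=k+1$ I split $\int_{\R^{3}}\val{\xi^{\beta}}^{2}\val{\widehat v(\xi)}^{2}\,d\xi$ over $\{\val\xi\le1\}$, where it is dominated by $\int_{\val\xi\le1}\val\xi^{2}\val{\widehat v(\xi)}^{2}\,d\xi\le\norm{\nabla v}_{L^{2}}^{2}$, and over $\{\val\xi\ge1\}$, where $\val{\xi^{\beta}}^{2}\val{\widehat v}^{2}\le\val\xi^{2k}\bigl(\val\xi\,\val{\widehat v}\bigr)^{2}\le\nu^{-2}\val\xi^{2k}\val{\mathbf U}^{2}$ is integrable; hence $\p^{\beta}v\in L^{2}$ for $1\le\val\beta\le k+1$, i.e.\ $\nabla v\in W^{k,2}$, and the induction closes.

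The argument is light once Theorem \ref{thm.214trq} and the structure of the nonlinearity are in hand; the only point demanding a little care is the Leibniz bookkeeping in the second step — making sure that in every product one factor is a genuine derivative of $v$ (so that the inductive $L^{2}$ bound applies) while the other may be taken in $L^{\io}$ — together with the observation that the base case must come from the linear Lemma \ref{lem.reg001}.
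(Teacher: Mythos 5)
Your proof is correct, and its skeleton --- reduce to $\nabla v\in W^{\io,2}$ via Theorem \ref{thm.214trq}, take the base case from \eqref{dssee5}, then induct by using the equation $\nu\rot^{2}v+\nabla\cdot S_{0}(v\otimes v)=0$ to convert control of $k$ derivatives of the quadratic term into control of $k+1$ derivatives of $v$ --- is exactly the paper's. Where you genuinely diverge is in the treatment of the nonlinearity. The paper stays entirely on the Fourier side: it bounds $\nu\val\xi^{k+1}\val{\widehat v(\xi)}$ by $\val{\xi}^{k}\bigl(\val{\widehat v}\ast\val{\widehat v}\bigr)(\xi)$, distributes $\val\xi^{k}\le 2^{k-1}\bigl(\val{\xi-\eta}^{k}+\val{\eta}^{k}\bigr)$ onto the two convolution factors, and concludes by Young's inequality $L^{1}\ast L^{2}\subset L^{2}$, using only $\widehat v\in L^{1}$ and the inductive hypothesis $\val\xi^{k}\widehat v\in L^{2}$. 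You instead apply the Leibniz rule to $\p^{\alpha}(v\otimes v)$ in physical space and pair the $L^{2}$ factor (inductive hypothesis) with an $L^{\io}$ factor, the latter supplied by the full strength of Theorem \ref{thm.214trq} (every $\p^{\gamma}v$ lies in $\mathcal W\subset L^{\io}$). The two mechanisms are dual to one another (H\"older $L^{\io}\cdot L^{2}\subset L^{2}$ versus Young $L^{1}\ast L^{2}\subset L^{2}$, the $L^{1}$ Fourier transforms being exactly what the Wiener-algebra membership provides), so nothing is lost; your version simply leans more heavily on the previously proved $\mathcal V^{\io,\omega}$ regularity, while the paper's inductive step needs only $v\in\mathcal W$. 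Two small caveats, neither of which breaks the argument: your assertion that $\p^{\alpha}S_{0}(v\otimes v)\in L^{2}$ for all $\val\alpha\le k$ is not justified at $\alpha=0$ (one only knows $v\otimes v\in L^{3}\cap L^{\io}$, not $L^{2}$), but this is harmless since the finiteness of $\int\val\xi^{2k}\val{\mathbf U(\xi)}^{2}\,d\xi$ uses only the multi-indices with $\val\alpha=k\ge1$; and the identity $\nu\val\xi^{2}\widehat v=-i\xi\cdot\mathbf U$ should be read, before dividing by $\val\xi$, as an equality of $L^{1}$ functions (both $\widehat v$ and $\mathbf U$ are in $L^{1}$ because $v$ and $v\otimes v$ belong to $\mathcal W$ and the symbol of $S_{0}$ is bounded), exactly as the paper is careful to do.
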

\begin{proof}
Theorem \ref{thm.214trq} gives $v\in \mathcal V^{\infty, \omega}$, so that we need only to prove 
$$\nabla v\in W^{\io, 2}=\cap_{k\ge 0}W^{k,2}=\{u\in L^{2}, \forall \alpha\in \N^{3}, \p_{x}^{\alpha} u\in L^{2}\}.
$$
We note that, with $W=\hat v$, we already know 
from the hypothesis $\rot v\in L^{2}, \dive v=0$ and Lemma \ref{lem.reg001} on the one hand, and from Theorem \ref{thm.214trq}  on the other hand that
\begin{equation}\label{initia}
\val\xi W(\xi)\in L^{1}\cap L^{2}.
\end{equation}
We are going to prove inductively for $k$ integer $\ge 1$ that
\begin{equation}\label{induct}
\val\xi ^{k}W(\xi)\in  L^{2},
\end{equation}
a true statement for $k=1$ from \eqref{initia}. Let $k\ge 1$ be given and let us assume that \eqref{induct} holds true.
Thanks to Lemma \ref{lem.218}, 
Equation \eqref{newway} can be written as
\begin{equation*}\label{}
\nu \val{\xi}^{2}W(\xi)+i\mathbb P(\xi)\xi_{j} (W_{j}\ast W)(\xi)=0,\quad W(\xi)\cdot \xi=0,
\end{equation*}
i.e.
\begin{equation}\label{244244}
\nu \val{\xi}^{2}W(\xi)+i\mathbb P(\xi)\xi_{j} \int W_{j}(\eta)W(\xi-\eta)d\eta=0,\quad W(\xi)\cdot \xi=0,
\end{equation}
noting that $W_{j}\ast W$ belongs to $L^{1}$.
We can do better and write the second term in the sum as 
\begin{multline*}
i\mathbb P(\xi) \int
\sum_{1\le j\le 3} \underbracket[0.2pt]{W_{j}(\eta)}_{L^{1}}
\underbracket[0.2pt]{(\xi_{j}-\eta_{j})W(\xi-\eta)}_{
\substack{L^{1}\cap L^{2}\\
\text{from \eqref{initia}}}
}
d\eta+i
\mathbb P(\xi) \int \underbracket[0.2pt]{\sum_{1\le j\le 3}\eta_{j}W_{j}(\eta)}_{=0} W(\xi-\eta) d\eta
\\
=\sum_{1\le j\le 3}i\mathbb P(\xi) \int \underbracket[0.2pt]{W_{j}(\eta)}_{L^{1}}
\underbracket[0.2pt]{(\xi_{j}-\eta_{j})W(\xi-\eta)}_{L^{1}\cap L^{2}} d\eta,
\quad\text{which belongs to $L^{1}\cap L^{2}$},
\end{multline*}
since 
$
L^{1}\ast L^{1}\subset L^{1},\ L^{1}\ast L^{2}\subset L^{2}
$
and $\mathbb P(\xi)$ is a bounded matrix.
We can also write for any $k\ge 1$,
using \eqref{244244},
\begin{equation}\label{}
\nu\val \xi^{k+1} W(\xi)+i\mathbb P(\xi)\val{\xi}^{k}\int\trace{\Bigl(\frac\xi{\val \xi}\otimes W(\eta)\Bigr)} W(\xi-\eta)d\eta=0.
\end{equation}
We infer then 
\begin{align*}
&\nu^{2}\val \xi^{2k+2} \val{W(\xi)}^{2}
\le \val{\xi}^{2k}
\Val{\int \val{W(\eta)}\val{W(\xi-\eta)}d\eta}^{2}
=
\Val{\int \val{\xi}^{k} \val{W(\eta)}\val{W(\xi-\eta)}d\eta}^{2}
\\
&\le 
\Val{\int \bigl(\val{\xi-\eta}+\val{\eta}\bigr)^{k} \val{W(\eta)}\val{W(\xi-\eta)}d\eta}^{2}
\qquad\text{\tiny(now we use \eqref{gwp197} to get to the next line) }
\\
&\le 2^{2k-2}
\Val{\int \bigl(\val{\xi-\eta}^{k}+\val{\eta}^{k}\bigr) \val{W(\eta)}\val{W(\xi-\eta)}d\eta}^{2}
\\
&=
 2^{2k-2}
 \Val{\int  \val{W(\eta)}\val{\xi-\eta}^{k}\val{W(\xi-\eta)}d\eta+
 \int  \val{W(\eta)}\val{\eta}^{k}\val{W(\xi-\eta)}d\eta
 }^{2}
 \\
 &\le 2^{2k-1} \Biggl\vert{\int\underbrace{\val{W(\eta)}}_{L^{1}}
 \underbrace{\val{\xi-\eta}^{k}\val{W(\xi-\eta)}}_{\substack{  L^{2}\text{ from the induction} \\\text{  hypothesis on $k\ge 1$}    } }
 d\eta}
 \Biggr\vert^{2}
 \\&\hskip170pt +2^{2k-1} \Biggl\vert{\int \underbrace{ \val{W(\eta)}\val{\eta}^{k}}_{\substack{
 L^{2} \text{ from the induction}\\\text{ hypothesis on $k\ge 1$}
} }
 \underbrace{\val{W(\xi-\eta)}}_{L^{1}}d\eta}\Biggr\vert^{2}
 \\
 &=\val{\Omega_{k,1}(\xi)}^{2}+\val{\Omega_{k,2}(\xi)}^{2}, \quad\text{with $\Omega_{k,1}, \Omega_{k,2}$ in $L^{2}$},
\end{align*}
so that
$$
\nu^{2}\int\val \xi^{2k+2} \val{W(\xi)}^{2}d\xi\le \norm{\Omega_{k,1}}_{L^{2}}^{2}+\norm{\Omega_{k,2}}_{L^{2}}^{2}<+\io,
$$
proving \eqref{induct} for $k+1$ and the theorem.
\end{proof}
\begin{proposition}\label{pro.gfd987}
Let us define
\begin{equation}\label{247247}
\mathcal A=\{v\in \mathcal V^{\io, \omega}, \text{such that $\forall \alpha\in \N^{d}$ with $\val\alpha\ge 1$}, \p_{x}^{\alpha}v\in L^{2}\}.
\end{equation}
where $\mathcal V^{\io,\omega}$ is defined in \eqref{amw854}. Then $\mathcal A$ is an algebra, stable
under differentiation and  under the action of singular integrals given by Theorem \ref{thm.singsing}.
\end{proposition}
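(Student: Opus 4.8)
The plan is to verify in turn the three asserted closure properties of $\mathcal A$, all of which are bookkeeping consequences of results already established: the Banach algebra property of $\mathcal V^{s,\omega}$ (Proposition \ref{pro.alg123}), the continuity of singular integrals on each $\mathcal V^{s,\omega}$ (Lemma \ref{lem.nh55}), the inclusion $\mathcal V^{s,\omega}\subset W^{s,\io}$ recorded after the definition (so that \emph{every} derivative of an element of $\mathcal V^{\io,\omega}$ lies in $L^{\io}$), and the elementary inclusion $L^{2}\cdot L^{\io}\subset L^{2}$. The linear structure of $\mathcal A$ is inherited at once, so only stability under products, differentiation, and singular integrals needs an argument.

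First I would handle differentiation and singular integrals, which are immediate. For differentiation: $\mathcal V^{\io,\omega}$ is itself stable under $\p_{j}$ because for $v\in\mathcal V^{s+1,\omega}$,
$$
\int\valjp{\xi}^{s}\bigl\vert\widehat{\p_{j}v}(\xi)\bigr\vert\,d\xi=\int\valjp{\xi}^{s}\val{\xi_{j}}\val{\hat v(\xi)}\,d\xi\le\int\valjp{\xi}^{s+1}\val{\hat v(\xi)}\,d\xi<+\io,
$$
so $\p_{j}v\in\mathcal V^{s,\omega}$ for all $s$, hence $\p_{j}v\in\mathcal V^{\io,\omega}$; and if $v\in\mathcal A$ then for any $\alpha$ with $\val\alpha\ge1$ we have $\p_{x}^{\alpha}(\p_{j}v)=\p_{x}^{\alpha+e_{j}}v\in L^{2}$ since $\val{\alpha+e_{j}}\ge1$, so $\p_{j}v\in\mathcal A$. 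For a singular integral $m(D)$ as in Theorem \ref{thm.singsing}: Lemma \ref{lem.nh55} gives $m(D)\mathcal V^{\io,\omega}\subset\mathcal V^{\io,\omega}$; and if $v\in\mathcal A$ and $\val\alpha\ge1$, then since $m(D)$ commutes with $\p_{x}^{\alpha}$ one has $\p_{x}^{\alpha}\bigl(m(D)v\bigr)=m(D)\bigl(\p_{x}^{\alpha}v\bigr)$, which lies in $L^{2}$ because $\p_{x}^{\alpha}v\in L^{2}$ and $m(D)$, having bounded symbol, is bounded on $L^{2}$ by Plancherel. Hence $m(D)v\in\mathcal A$.

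For the algebra property, let $v_{1},v_{2}\in\mathcal A$. By Proposition \ref{pro.alg123} each $\mathcal V^{s,\omega}$, $s\ge0$, is a Banach algebra, so $v_{1}v_{2}\in\mathcal V^{\io,\omega}$. It remains to show $\p_{x}^{\alpha}(v_{1}v_{2})\in L^{2}$ for $\val\alpha\ge1$. By the Leibniz formula,
$$
\p_{x}^{\alpha}(v_{1}v_{2})=\sum_{\beta\le\alpha}\binom{\alpha}{\beta}\bigl(\p_{x}^{\beta}v_{1}\bigr)\bigl(\p_{x}^{\alpha-\beta}v_{2}\bigr),
$$
and since $\alpha\ne0$, in each summand at least one of $\beta$, $\alpha-\beta$ is nonzero. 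If $\beta\ne0$ then $\p_{x}^{\beta}v_{1}\in L^{2}$ while $\p_{x}^{\alpha-\beta}v_{2}\in L^{\io}$ (because $v_{2}\in\mathcal V^{\io,\omega}\subset W^{\io,\io}$); if $\beta=0$ then $\alpha-\beta=\alpha\ne0$, so $\p_{x}^{\alpha-\beta}v_{2}\in L^{2}$ while $v_{1}\in\mathcal W\subset L^{\io}$. In either case the product lies in $L^{2}\cdot L^{\io}\subset L^{2}$, so $\p_{x}^{\alpha}(v_{1}v_{2})\in L^{2}$ and $v_{1}v_{2}\in\mathcal A$.

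I do not expect any real obstacle here: the proof is purely formal once one has the Banach algebra structure of the $\mathcal V^{s,\omega}$, their invariance under singular integrals, and the observation that membership in $\mathcal V^{\io,\omega}$ already forces all derivatives to be bounded. The only point requiring mild care is the splitting of the Leibniz sum according to whether the derivative has landed on the first factor, so that one always pairs an $L^{2}$ factor with an $L^{\io}$ factor; everything else is routine.
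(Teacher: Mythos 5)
Your proof is correct, and it reaches the conclusion by a route that is Fourier-dual to the paper's rather than identical to it. For the algebra property the paper never invokes the Leibniz rule: it characterizes $\mathcal A$ by the condition $\int\val\xi^{k}\val{\hat v}\,d\xi+\int\val\xi^{2k+2}\val{\hat v}^{2}\,d\xi<+\io$ for all $k$, writes $\widehat{v_{1}v_{2}}=W_{1}\ast W_{2}$, splits the weight via $\val\xi^{k+1}\le 2^{k}\bigl(\val\eta^{k+1}+\val{\xi-\eta}^{k+1}\bigr)$, and concludes with Young's inequality $L^{1}\ast(L^{2}\cap L^{1})\subset L^{2}\cap L^{1}$; this yields the weighted $L^{1}$ and weighted $L^{2}$ bounds in a single convolution estimate. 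You instead work on the physical side: Proposition \ref{pro.alg123} disposes of the $\mathcal V^{\io,\omega}$ part, and the Leibniz expansion paired with H\"older ($L^{2}\cdot L^{\io}\subset L^{2}$, using $\mathcal V^{\io,\omega}\subset W^{\io,\io}$) handles the $L^{2}$ part. The two decompositions correspond exactly under the Fourier transform (distributing the weight $\val\xi^{k+1}$ between $\eta$ and $\xi-\eta$ \emph{is} the Leibniz rule, and $L^{1}\ast L^{2}\subset L^{2}$ is the transform of $L^{\io}\cdot L^{2}\subset L^{2}$), so neither buys extra generality; yours is arguably more transparent and leans more on the previously established Proposition \ref{pro.alg123} and Lemma \ref{lem.nh55}, while the paper's is more self-contained and keeps everything in the single framework \eqref{gqf582} that it reuses immediately afterwards in Lemma \ref{lem.223}. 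Your treatments of differentiation and of singular integrals (commutation with $\p_{x}^{\alpha}$ plus $L^{2}$-boundedness of a bounded multiplier) match the paper's in substance.
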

\begin{proof} We note first that $v\in \mathcal A$ is equivalent to $v\in\mathcal W$ (the Wiener algebra)
and such that 
\begin{equation}\label{gqf582}
\forall k\in \N, \quad\int_{\R^{d}} \val\xi^{k}\val{\hat v(\xi)} d\xi+\int_{\R^{d}}  \val\xi^{2k+2}\val{\hat v(\xi)}^{2} d\xi<+\io.
\end{equation}
The set $\mathcal A$ is obviously a vector space, subspace of $\mathcal W$.
Let $v_{1}, v_{2}\in \mathcal A$. We set $W_{j}=\hat v_{j}, j=1,2,$ and $W=\widehat{v_{1}v_{2}}=W_{1}\ast W_{2}$. We have $W_{1}, W_{2}\in L^{1}$ and thus $W_{1}\ast W_{2}\in L^{1}$; for $k\in \N$,
we have 
\begin{align*}
\val\xi^{k+1} &\val{W(\xi)}\le \int\val\xi^{k+1} \val{W_{1}(\eta)}\val{W_{2}(\xi-\eta)}d\eta
\quad\text{\tiny (we use now the triangle inequality and \eqref{gwp197})}
\\
&\le 2^{k}\int\underbrace{\val\eta^{k+1} \val{W_{1}(\eta)}}_{L^{2}\cap L^{1}}
\underbrace{\val{W_{2}(\xi-\eta)}}_{L^{1}}d\eta+2^{k}\int \underbrace{\val{W_{1}(\eta)}}_{L^{1}}
\underbrace{\val{\xi-\eta}^{k+1}\val{W_{2}(\xi-\eta)}}_{L^{2}\cap L^{1}}d\eta
\\
&=\Omega_{1,k+1}(\xi)+\Omega_{2,k+1}(\xi), \quad \Omega_{1,k+1}, \Omega_{2,k+1}\in L^{1}\ast (L^{2}\cap L^{1})\subset L^{2}\cap L^{1}\subset L^{2},
\end{align*}
 which implies that
 $
 \int_{\R^{d}}  \val\xi^{2k+2}\val{W(\xi)}^{2} d\xi<+\io;
 $
also from the previous argument, we obtain that  for $k\ge 1$,
we  have
\begin{align*}
\val\xi^{k} &\val{W(\xi)}\le \Omega_{1,k}(\xi)+\Omega_{2,k}(\xi),
\quad \Omega_{1,k}, \Omega_{2,k}\in L^{1}\ast L^{1}\subset L^{1},
\end{align*}
and 
for $k=0$, we have $W=W_{1}\ast W_{2}\in L^{1}\ast L^{1}\subset L^{1}$,
so that \eqref{gqf582} is proven for $v=v_{1}v_{2}$.
To get stability of $\mathcal A$
by differentiation, we note that for $v\in \mathcal A$,
we have 
$
\widehat{\p_{j}v}(\xi)=i\xi_{j} \hat v(\xi)
$
so that \eqref{gqf582} is satisfied for $\p_{j}v$ as well.
Note that \eqref{gqf582} is stable by a Fourier multiplier $m(D)$ for a bounded $m$, proving the last statement in the proposition.
\end{proof}
Theorem \ref{thm.54yt} follows from the next result.
\begin{corollary}\label{cor.lkj987}
 Let $v$ be a vector field satisfying the assumptions of Conjecture \ref{coj001}.
 Then $v$ belongs to $\mathcal A$ defined by \eqref{247247}.
The Bernoulli head pressure $Q$ defined by \eqref{modpr0} belongs also to the algebra $\mathcal A$, thanks to the expression \eqref{modpre}.
\end{corollary}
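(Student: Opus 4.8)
The plan is to obtain the Corollary by simply assembling the two main regularity theorems already in hand. First I would unpack the definition \eqref{247247}: membership $v\in\mathcal A$ means $v\in\mathcal V^{\io,\omega}$ together with $\p_x^{\alpha}v\in L^{2}(\R^{3})$ for every multi-index $\alpha$ with $\val\alpha\ge 1$ (equivalently, the two families of bounds in \eqref{gqf582}). Theorem \ref{thm.214trq} gives the first requirement directly: under the assumptions of Conjecture \ref{coj001} one has $v\in\mathcal V^{s,\omega}$ for every $s\ge 0$, hence $v\in\mathcal V^{\io,\omega}=\cap_{s\ge0}\mathcal V^{s,\omega}$ as in \eqref{amw854}. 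For the second requirement I would invoke Theorem \ref{thm.214t++}, which yields $\nabla v\in W^{k,2}(\R^{3})$ for all integers $k\ge0$, i.e. $\nabla v\in W^{\io,2}$; writing any derivative of positive order as $\p_x^{\alpha}v=\p_x^{\beta}(\p_{j}v)$ with $\val\beta=\val\alpha-1$ for a suitable $j$, this is exactly $\p_x^{\alpha}v\in L^{2}$ whenever $\val\alpha\ge1$. Combining the two conclusions gives $v\in\mathcal A$.

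For the Bernoulli head pressure I would start from the identity \eqref{modpre}, namely $Q=R_{0}(v\otimes v)+\tfrac12\val v^{2}$, with $R_{0}$ the vector of zero-order singular integrals $R_{0,j}=\val{D}^{-2}\dive\p_{j}$ from \eqref{ddd001}. By Proposition \ref{pro.gfd987}, $\mathcal A$ is an algebra, stable under differentiation and under the action of the standard singular integrals of Theorem \ref{thm.singsing}. Since $v\in\mathcal A$ by the previous paragraph and $\mathcal A$ is closed under products, every component of $v\otimes v$ and the scalar $\val v^{2}$ lie in $\mathcal A$; applying the singular integrals $R_{0,j}$ keeps us inside $\mathcal A$. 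Hence $Q\in\mathcal A$, and in particular $Q\in\mathcal V^{\io,\omega}$ with all derivatives of positive order in $L^{2}$, which is the content of \eqref{xcv687} strengthened by the $L^{2}$ information on $\nabla Q$ and its derivatives.

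There is essentially no real obstacle here, since all the analytic work has been done in Theorems \ref{thm.214trq} and \ref{thm.214t++} and in Proposition \ref{pro.gfd987}; the proof is a matter of matching the conclusions of those statements to the definition \eqref{247247}. The only points deserving a word of care are: (i) checking that $\mathcal V^{k,\omega}\subset W^{k,\io}$ and that $\nabla v\in W^{\io,2}$ together account for \emph{all} derivatives occurring in \eqref{gqf582} (those of order $0$ coming from $v\in\mathcal W$, those of positive order from $\nabla v$); and (ii) verifying that $R_{0}(v\otimes v)$ is a legitimate application of a standard singular integral to an $\mathcal A$-valued function, which is guaranteed by Theorem \ref{thm.singsing} combined with the stability statement of Proposition \ref{pro.gfd987}. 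Finally, I would note that since $\mathcal A\subset C^{\io}_{(0)}$, this argument recovers and slightly sharpens Galdi's regularity Theorem \ref{thm.known1} and establishes Theorem \ref{thm.54yt}.
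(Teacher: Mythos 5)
Your proof is correct and follows the same route as the paper, which simply cites Theorem \ref{thm.214t++} and Proposition \ref{pro.gfd987} as giving the result immediately; you have merely unpacked that one-line argument, correctly matching the conclusions of Theorems \ref{thm.214trq} and \ref{thm.214t++} to the definition \eqref{247247} and then using the algebra and singular-integral stability of $\mathcal A$ on the expression \eqref{modpre} for $Q$.
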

\begin{proof}
 This is an immediate consequence of Theorem \ref{thm.214t++} and Proposition \ref{pro.gfd987}.
\end{proof}
\begin{nb}{
 The reader may have noticed that the proof of Theorem \ref{thm.214t++}  is non-linear, in the sense that it is using the fact that $v$ satisfies a non-linear equation \eqref{newway}; however the proof of Proposition \ref{pro.gfd987} is purely linear.
 It may be also a good opportunity to mention that \eqref{newway} is \emph{not} a differential equation,
 but a non-local equation involving a singular integral, in fact close to a pseudo-differential equation, but not exactly in this category since  the symbol of the multiplier is singular at the origin,
 so that low frequencies (i.e. small values of $\val{\xi}$) are playing a key r\^{o}le, in particular for the Liouville problem.
 }
\end{nb}
The following lemma will be useful later on, but clearly belongs to this section.
\begin{lemma}\label{lem.223}
Let $\alpha$ be a $L^{\io}_{\textrm{\rm loc}}$ function  defined on $\R^{3}$ such that
\begin{equation}\label{249249}
\exists k_{0}\in \N, \quad
\val{\alpha(\xi)}\le C_{0}\val \xi^{1+k_{0}}, \text{ for almost all $\xi\in \R^{3}$.} 
\end{equation}
$(1)$ Then we have 
$
\alpha(D)\mathcal A\subset L^{2}(\R^{3}),
$
where $\mathcal A$ is defined in \eqref{247247}.
\par\no
$(2)$
Moreover we have 
$
\{w\in \mathcal A,  0\notin \spec{w}\}\subset L^{2}(\R^{3}).
$
\end{lemma}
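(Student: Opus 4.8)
The plan is to reduce both statements to the Fourier-side description of $\mathcal A$ already recorded in \eqref{gqf582}: a tempered distribution $v$ lies in the algebra $\mathcal A$ of \eqref{247247} if and only if $\widehat v\in L^{1}(\R^{3})$ and, for every $k\in \N$,
\[
\int_{\R^{3}}\val\xi^{k}\val{\widehat v(\xi)}\,d\xi+\int_{\R^{3}}\val\xi^{2k+2}\val{\widehat v(\xi)}^{2}\,d\xi<+\io .
\]
Everything then follows by pairing the growth hypothesis \eqref{249249} with the appropriate value of $k$.

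For $(1)$, I would first observe that for $v\in\mathcal A$ the product $\alpha(\xi)\widehat v(\xi)$ is an honest $L^{1}(\R^{3})$ function, since $\val{\alpha(\xi)\widehat v(\xi)}\le C_{0}\val\xi^{1+k_{0}}\val{\widehat v(\xi)}$ and the right-hand side is integrable by \eqref{gqf582} applied with $k=1+k_{0}$; hence $\alpha(D)v:=\mathcal F^{-1}\bigl(\alpha\widehat v\bigr)$ is unambiguously defined (and in fact again lies in $\mathcal W$). The point is then the $L^{2}$ bound: by the Plancherel identity,
\[
\norm{\alpha(D)v}_{L^{2}(\R^{3})}^{2}=\int_{\R^{3}}\val{\alpha(\xi)}^{2}\val{\widehat v(\xi)}^{2}\,d\xi\le C_{0}^{2}\int_{\R^{3}}\val\xi^{2k_{0}+2}\val{\widehat v(\xi)}^{2}\,d\xi ,
\]
which is finite by \eqref{gqf582} applied with $k=k_{0}$. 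This proves $\alpha(D)\mathcal A\subset L^{2}(\R^{3})$.

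For $(2)$, if $w\in\mathcal A$ with $0\notin\spec{w}$ then $\widehat w$ vanishes on some ball $\{\val\xi<\rho_{0}\}$ with $\rho_{0}>0$, and therefore
\[
\norm{w}_{L^{2}(\R^{3})}^{2}=\int_{\{\val\xi\ge\rho_{0}\}}\val{\widehat w(\xi)}^{2}\,d\xi\le\rho_{0}^{-2}\int_{\R^{3}}\val\xi^{2}\val{\widehat w(\xi)}^{2}\,d\xi<+\io
\]
by \eqref{gqf582} with $k=0$ (equivalently, because $\nabla w\in L^{2}$ by the very definition of $\mathcal A$). Alternatively, $(2)$ is literally the special case of $(1)$ obtained with $\alpha=\mathbf 1_{\{\val\xi\ge\rho_{0}\}}$, which satisfies \eqref{249249} with $k_{0}=0$, since then $\alpha(D)w=w$. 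I do not expect any genuine obstacle in this lemma; the only subtlety worth a sentence is that, as $\alpha$ need not be a bounded symbol, $\alpha(D)$ is a priori only available through the formula $\mathcal F^{-1}(\alpha\widehat v)$, and the polynomial growth bound \eqref{249249} is precisely what makes this definition meaningful on $\mathcal A$ while simultaneously yielding the $L^{2}$ estimate.
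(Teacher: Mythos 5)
Your proof is correct and follows essentially the same route as the paper: part $(1)$ is the identical Plancherel computation pairing \eqref{249249} with \eqref{gqf582} at $k=k_{0}$, and part $(2)$ reduces to the finiteness of $\int\val\xi^{2}\val{\widehat w(\xi)}^{2}d\xi$. The only cosmetic difference is in $(2)$, where the paper routes the argument through a smooth cutoff $\beta_{1}=1-\beta_{0}$ and invokes part $(1)$, while you compute directly (or use the indicator of $\{\val\xi\ge\rho_{0}\}$, which is equally admissible since \eqref{249249} only requires $L^{\io}_{\textrm{loc}}$); both are fine.
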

\begin{proof}
 Let $v\in \mathcal A$; then $\hat v\in\mathcal W$ and 
we have \begin{align*}
 \int_{\R^{3}} \val{\alpha(\xi)}^{2}\val{\hat v(\xi)}^{2} d\xi 
\le C_{0}^{2}\int_{\R^{3}}
 \val \xi^{2+2k_{0}}
 \val{\hat v(\xi)}^{2} d\xi<+\io,
\end{align*}
thanks to \eqref{gqf582}, since $v\in \mathcal A$, proving $(1)$ in the lemma.
Now, if $w\in \mathcal A$ is such that $0\notin \spec{w}$,
we find that the $L^{1}$ function $\hat w$ is vanishing on $r_{0}\mathbb B^{3}$ for some $r_{0}>0$.
Let $\beta_{0}$ be a smooth compactly supported function, equal to 1 on $\frac13 r_{0}\mathbb B^{3}$, vanishing outside $\frac23 r_{0}\mathbb B^{3}$ and 
$
\beta_{1}=1-\beta_{0}.
$
Then $\beta_{1}$ is smooth, vanishing on $\frac13 r_{0}\mathbb B^{3}$, equal to 1 outside 
$\frac23 r_{0}\mathbb B^{3}$.
The operator $\beta_{1}(D)=1-\beta_{0}(D)$ is a Fourier multiplier bounded on every $L^{p}$ for $p\in [1,+\io]$ since $\beta_{0}(D)$ is the convolution with the Schwartz function 
$\check{\hat \beta}_{0}$; moreover $\beta_{1}$ satisfies \eqref{249249} with $k_{0}=0$.
From the now proven $(1)$ in the lemma, we get that
$
\beta_{1}(D) w \in L^{2}.
$
We have also $\beta_{1}(D) w =w$ since
$$
\beta_{1}(\xi) \hat w(\xi)=\hat w(\xi) -\beta_{0}(\xi) \hat w(\xi)=\hat w(\xi),
$$
concluding the proof of the lemma.
\end{proof}
\section{\color{magenta}More on the Bernoulli head pressure}\label{sec.3333}
\subsection{First facts}
We have a vector field $v$ on $\R^{3}$ such that $v$ belongs to the algebra $\mathcal A$ defined in \eqref{247247} and such that Theorem \ref{thm.214t++}
is satisfied (thus as well as Corollary \ref{cor.kjha45}).
We recall as well  the decay properties given by Lemma \ref{lem.reg1}. The vector field $v$ satisfies the non-linear stationary system for incompressible fluids which can be written either as 
\eqref{newway} with 
 \begin{equation}\label{newway+}
\nu \rot^{2} v+\mathbb P(\rot v\times v)=0, \quad \dive v=0,
\end{equation}
or as
\begin{align}\label{mod+++0}
\begin{cases}
&\nu \rot^{2}v+(\rot v\times v)+\nabla Q=0, \quad \dive v=0,
\\
&Q=R_{0}(v\otimes v)+\frac12\val v^{2}.
\end{cases}
\end{align}
with the singular integral $R_{0}$ is defined in \eqref{ddd001}. We note that $Q$ belongs to the algebra 
$\mathcal A$  and that, from \eqref{newway+}, we find, 
\begin{equation}\label{}
\nabla Q=\val{D}^{-2}\grad \dive(\rot v\times v)=(\mathbb P-I)(\rot v\times v)=-\widetilde{\mathbb P}(\rot v\times v),
\end{equation}
where the projection $\widetilde{\mathbb P}$ is given by \eqref{ortler}.
We get as well that 
\begin{equation}\label{mmm002}
-\Delta Q=-\dive \nabla Q=\dive(\rot v\times v)\hskip-8pt\underbracket[0pt]{=}_{\text{cf. \eqref{4edsw6}}}\hskip-8pt\proscal3{\rot^{2}v}{v}-\val{\rot v}_{\R^{3}}^{2},
\end{equation}
and using the equation \eqref{mod+++0}, we get 
$$
-\Delta Q=-\nu^{-1}\underbrace{\proscal3{\rot v\times v}{v}}_{=\det(\rot v, v, v)=0}-\nu^{-1}\proscal3{\nabla Q}{v}-\val{\rot v}_{\R^{3}}^{2},
$$
yielding
\begin{equation}\label{315315}
\Delta Q=\nu^{-1}\proscal3{\nabla Q}{v}+\val{\rot v}_{\R^{3}}^{2}.
\end{equation}
Multiplying pointwisely  the first line of \eqref{mod+++0} by $v$, we get 
\begin{equation}\label{rot002}
\nu\proscal3{\rot^{2}v}{v}+\proscal3{v}{\nabla Q}=0,
\end{equation}
a formula which will be useful later on.
\subsection{The Bernoulli head pressure $Q$ is a non-positive function}
\begin{theorem}[Chae's Theorem]\label{thm.chae+}
 Let $v$ be a vector field satisfying the assumptions of Conjecture \ref{coj001}.
 Then either  $v$ is identically $0$,
 or the Bernoulli head pressure $Q$ defined by \eqref{modpre} satisfies
  $Q(\R^{3})=[-M_{0}, 0)$, where $M_{0}>0$.
\end{theorem}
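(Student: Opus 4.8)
The plan is to read off from the elliptic identity \eqref{315315} that $Q$ is a subsolution of a uniformly elliptic operator on $\R^{3}$, to apply the strong maximum principle on the whole space (exploiting the decay at infinity), and to dispose of the remaining degenerate case with the classical Liouville Theorem \ref{thm.001}.

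First I would collect what the regularity theory already gives. Since $v$ satisfies the assumptions of Conjecture \ref{coj001}, Corollary \ref{cor.lkj987} yields $v\in\mathcal A$ and $Q\in\mathcal A$; as $\mathcal A\subset C^{\io}_{(0)}$, both $v$ and $Q$ are $C^{\io}$ and tend to $0$ at infinity together with all their derivatives, so in particular $v$ and $Q$ are bounded. Rewriting \eqref{315315} (obtained in Section \ref{sec.3333} from \eqref{mmm002} and \eqref{mod+++0}) as
$$
\Delta Q-\nu^{-1}\proscal3{\nabla Q}{v}=\val{\rot v}_{\R^{3}}^{2}\ge 0,
$$
we see that $Q$ is a classical subsolution of the operator $L=\Delta-\nu^{-1}v\cdot\nabla$, which is uniformly elliptic, has smooth coefficients and a bounded drift $-\nu^{-1}v$, and has no zeroth-order term.

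Next I would run the maximum principle. Suppose $\sup_{\R^{3}}Q=M>0$. Since $Q(x)\to 0$ as $\val x\to+\io$, there is $R>0$ with $\val{Q(x)}<M/2$ for $\val x\ge R$; hence $\sup_{\{\val x\le R\}}Q=M$, and by continuity this value is attained at some $x_{0}$ with $\val{x_{0}}<R$, i.e. $Q$ attains its global maximum at an interior point of $\R^{3}$. By the Hopf strong maximum principle applied to $LQ\ge 0$, $Q$ is constant, so $Q\equiv M>0$, contradicting $Q\to 0$ at infinity; therefore $Q\le 0$ everywhere. Now the dichotomy. If $Q(x_{0})=0$ for some $x_{0}\in\R^{3}$, then $x_{0}$ is an interior maximum and the strong maximum principle forces $Q\equiv 0$; feeding this into \eqref{315315} gives $\rot v\equiv 0$, and since $\dive v=0$ we get $-\Delta v=\rot^{2}v-\grad\dive v=0$, so each component of $v$ is harmonic; being bounded with limit $0$ at infinity, $v\equiv 0$ by Theorem \ref{thm.001} (equivalently by Remark \ref{rem.000} together with boundedness). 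Otherwise $Q<0$ on all of $\R^{3}$; then $Q$ is continuous on the connected space $\R^{3}$, with $\sup_{\R^{3}}Q=0$ not attained, while $Q\to 0$ at infinity forces $\inf_{\R^{3}}Q=\min_{\{\val x\le R\}}Q$ for $R$ large, which is attained by compactness; calling this value $-M_{0}$ we have $M_{0}>0$ since $Q<0$, so the image $Q(\R^{3})$ is exactly the interval $[-M_{0},0)$, which is the claim. This last alternative corresponds precisely to $\rot v\not\equiv 0$, because $\rot v\equiv 0$ was just shown to force $v\equiv 0$ and hence $Q\equiv 0$.

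The main obstacle is the legitimacy of invoking the strong maximum principle on the unbounded domain $\R^{3}$: the decay $Q\to 0$ at infinity — available only thanks to $Q\in\mathcal A\subset C^{\io}_{(0)}$ — is exactly what turns a hypothetical positive supremum into an honestly attained interior maximum, after which the Hopf principle applies verbatim since the drift $-\nu^{-1}v$ is smooth and bounded and there is no zeroth-order term. Everything else is bookkeeping with the identities \eqref{mmm002} and \eqref{315315} already established in Section \ref{sec.3333}.
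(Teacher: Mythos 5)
Your proof is correct, but it reaches the key inequality $Q\le 0$ by a genuinely different route than the paper. The paper uses Chae's multiplier method: it tests \eqref{mod+++0} against $f(Q)v$ for an increasing $f$ supported in $[\varepsilon_{0},+\io)$, integrates by parts using $\dive v=0$ and Sard's theorem to dispose of the boundary terms, and arrives at the identity $\int f(Q)\val{\rot v}^{2}dx+\int f'(Q)\val{\nabla Q}^{2}dx=0$; this forces $\rot v=0$ on $\{Q>\varepsilon_{0}\}$, and after letting $\varepsilon_{0}\downarrow 0$ through regular values, a topological argument (the connected components of the open set $\{Q>0\}$ are shown to have empty boundary, hence are $\emptyset$ or $\R^{3}$) yields the dichotomy. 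You instead read \eqref{315315} as saying that $Q$ is a smooth subsolution of the drift-only elliptic operator $\Delta-\nu^{-1}v\cdot\nabla$ and apply the strong maximum principle, using the decay of $Q$ at infinity to convert a hypothetical positive supremum into an attained interior maximum. This is legitimate here: Corollary \ref{cor.lkj987} gives $v,Q\in\mathcal A\subset C^{\io}_{(0)}$, so the drift is smooth and bounded and there is no zeroth-order term, and the paper itself invokes exactly this maximum principle in its own final step (ruling out a nonempty zero set of $Q$). Your route is shorter and bypasses Sard's theorem and the component-by-component connectedness argument; one could even make it more elementary by applying the weak maximum principle on $B(0,R)$ and letting $R\to+\io$. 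What the multiplier route buys in exchange is the integral identity itself, which is the engine behind the refinements (7)--(10) listed in the introduction and is more robust when less pointwise regularity is available. Your endgame (an interior zero of $Q$ forces $Q\equiv 0$ by the strong maximum principle, hence $\rot v\equiv 0$ by \eqref{315315} and $v\equiv 0$ by Liouville; otherwise $Q<0$ everywhere, the minimum $-M_{0}<0$ is attained by compactness, and connectedness of $\R^{3}$ gives $Q(\R^{3})=[-M_{0},0)$) coincides with the paper's, except that the paper deduces $v\equiv 0$ from $\rot v\equiv 0$ via the estimate \eqref{estl06} of Lemma \ref{lem.reg001} rather than via harmonicity of $v$; both are valid.
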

\begin{nb}
 This result was proven by D.~Chae in \cite{MR3959933},
 with a new multiplier method:
 instead of mutliplying the equation by any $\chi v$ (say with $\chi$ radial smooth compactly function), he chose to use a very particular function $\chi$, appearing as a carefully chosen  function of $Q$.
 We reproduce below some slight modifications of his arguments.
\end{nb}
\begin{proof}
 Since $Q$ is belonging to the algebra $\mathcal A$ (see Corollary \ref{cor.lkj987}), it is a real-valued continuous function with limit 0 at infinity. 
  As a consequence $Q(\R^{3})$ is included in a compact set $[-M_{0}, M_{0}]$
 for some $M_{0}>0$. 
 \par
 \emph{We want first  to prove that $Q$ takes its values in $(-\io, 0]$.}
 Let us choose $ 0<\varepsilon_{0}<\varepsilon_{1}<M_{0}$, and 
 an increasing function $\theta_{0}\in C^{\io}(\R, [0,1])$ 
 such that $\supp \theta_{0}=[0,+\io)$, $\theta\bigl((0,1)\bigr)=(0,1)$, $\theta_{0}([1,+\io))=\{1\}$.
 Let us define $f:\R\longrightarrow \R$, 
 \begin{equation}\label{}
f(s)=\theta_{0}\Bigl(\frac{s-\varepsilon_{0}}{\varepsilon_{1}-\varepsilon_{0}}\Bigr),\quad
\text{\small a $C^{\io}$  increasing function with support $[\varepsilon_{0},+\io)$},
\end{equation}
 and 
 \begin{equation}\label{}
F(s)=\int_{\varepsilon_{0}}^{s} f(t) dt,\quad
\text{\small a $C^{\io}$ function with support $[\varepsilon_{0},+\io)$ and  $F'=f$.} 
\end{equation}
We note that $\R^{3}\ni x\mapsto f(Q(x))$
 is $C^{\io}$ with compact support, since 
 $$
 \supp f(Q)\subset\{x\in \R^{3}, Q(x)\ge \varepsilon_{0}\},\text{\tiny which is compact since
 $\varepsilon_{0}>0$ and  $Q$ tends to 0 at infinity. }
 $$ 
 We may thus calculate\footnote{Here we use the notation $\poscal{a}{\phi}=\int_{\R^{3}}
 \proscal3{a(x)}{\phi(x)}
 dx$ for $a\in C^{\io}(\R^{3}, \R^{3})$, $\phi\in C^{\io}_{c}(\R^{3}, \R^{3})$.}
\begin{align*}
0&=\poscal{\nu \rot^{2}v+(\rot v\times v)+\nabla Q}{f(Q)v}
\\
&=\nu\poscal{\rot v}{f(Q) \rot v}+\nu\poscal{\rot v}{f'(Q)  \nabla Q\times v}
\\
&\hskip25pt+\int \underbracket[0.2pt]{\det(\rot v, v, v)}_{=0}f(Q) dx+\poscal{\nabla\bigl(F(Q)\bigr)}{v}.
\end{align*}
If $\varepsilon_{0}$ is a regular value of $Q$, the Lebesgue measure of the set $\{x\in \R^{3}, Q(x)=\varepsilon_{0}\}$ is zero, so that 
\begin{multline*}
\poscal{\nabla\bigl(F(Q)\bigr)}{v}=\int_{\{Q(x)\ge \varepsilon_{0}\}}\dive\bigl(
F(Q) v \bigr)dx=\int_{\{Q(x)> \varepsilon_{0}\}}\dive\bigl(
F(Q) v \bigr)dx
\\
=\int_{\{Q(x)=\varepsilon_{0}\}} \underbrace{F(Q(x))}_{=F(\varepsilon_{0})=0}\proscal3{v}{\vec{n}}d\sigma
=0,
\end{multline*} 
where $\vec{n}$ is the unit exterior normal vector to the boundary of the open set 
$\{ Q(x)>\varepsilon_{0}\}$
and $d\sigma$ is the Euclidean measure on that boundary,
yielding
\begin{equation}\label{wdq671}
0=\int f(Q(x))\val{\rot v(x)}^{2}dx+\poscal{\rot v}{\nabla\bigl\{f(Q)\bigr\} \times v}.
\end{equation}
We notice that
\begin{multline}\label{54nhga}
\poscal{\rot v}{\nabla\bigl\{f(Q)\bigr\} \times v}=
-\int\det\bigl(\rot v, v, \nabla\bigl\{f(Q)\bigr\} \bigr)dx 
\\
=\poscal{-\bigl(\rot v\times v\bigr)}{\nabla\bigl\{f(Q)\bigr\} }
=\poscal{\nu \rot^{2}v+\nabla Q}{\nabla\bigl\{f(Q)\bigr\} }
\\
=\nu\poscal{\rot v}{\underbracket[0.2pt]{\rot\bigl[ \nabla\bigl\{f(Q)\bigr\} \bigr]}_{\substack{
=0,  \text{ since}\\ \text{$\curl \grad=0$}}
}}+\int f'(Q)\val{\nabla Q}^{2} dx.
\end{multline}
Eventually, if $\varepsilon_{0}$ is a regular value of $Q$, from \eqref{wdq671}, \eqref{54nhga},
we get
\begin{equation}\label{}
\int f(Q(x))\val{\rot v(x)}^{2}dx+\int f'(Q(x))\val{(\nabla Q)(x)}^{2} dx=0.
\end{equation}
 Since $f'(Q(x))\ge 0$ (the functions $f$ and $\theta_{0}$ are increasing)
 we get,
 \begin{equation}\label{}
\int f(Q(x))\val{\rot v(x)}^{2}dx=0,
\end{equation}
yielding, since $f(s)>0$ for $s> \varepsilon_{0}$,
\begin{equation}\label{}\forall \varepsilon_{0}>0,\text{\small regular value of $Q$}, \quad
\{x, Q(x)>\varepsilon_{0}\}\subset\{x, (\rot v)(x)=0\}.
\end{equation}
Thanks to Sard's theorem, the one-dimensional Lebesgue measure
 of the set
\begin{equation}\label{sard0+}
 Q\Bigl[\bigl(\nabla Q\bigr)^{-1}(\{0_{\R^{3}}\})\Bigr]
\end{equation}
 is zero.
As a consequence,
we can find arbitrarily small positive $\varepsilon_{0}$ which is a regular value of $Q$, so that we get eventually 
\begin{equation}\label{327327}
\Omega:=\{x, Q(x)>0\}\subset\{x, (\rot v)(x)=0\}.
\end{equation} 
We note that, on the open set $\Omega$,  we have $\rot v=0$ and thus $\rot^{2}v=0$ ($\Omega$ is open)
 and this implies 
from the equation \eqref{mod+++0} that $\nabla Q=0$ on $\Omega$.
Let us consider the open set $\Omega=\{x, Q(x)>0\}$ and its connected components $(\Omega_{j})_{j\in \N}$
so that
$$
\Omega=\cup_{j\in \N}\Omega_{j}, \quad \text{$\Omega_{j}$ open and connected, two by two disjoint.}
$$
We have $\nabla Q=0$ on $\Omega_{j}$ and thus there is a constant $q_{j}>0$ such that $Q=q_{j}$ on $\Omega_{j}$.
Assuming $\p \Omega_{j}\not=\emptyset$,
let $x\in \p \Omega_{j}=\overline{\Omega}_{j}\backslash \Omega_{j}$: then we have by continuity of $Q$ that $Q(x)=q_{j}>0$; since  $x$ is not in $\Omega_{j}$ it  belongs to some $\Omega_{k}$ with $\Omega_{k}\cap \Omega_{j}=\emptyset$. Since $x=\lim_{l} x_{l}$, $x_{l}\in \Omega_{j}$, for $l$ large enough $x_{l}$ must belong to the open set $\Omega_{k}$, but this is not possible since $\Omega_{k}\cap \Omega_{j}=\emptyset$.
This proves that $\p \Omega_{j}=\emptyset$.
As a result, the open sets $\Omega_{j}$ are also closed and thus for all $j\in \N$,
$$
\Omega_{j}\in \{\emptyset, \R^{3}\}.
$$
If all the $\Omega_{j}$ are empty, so it is the case for $\Omega$,
entailing that 
\begin{equation}\label{mws555}
Q(\R^{3})\subset(-\io,0].
\end{equation}
 If one $\Omega_{j}$ is not empty, we get that 
$\Omega=\R^{3}$, proving from \eqref{327327} that $\rot v\equiv 0$,
and 
Lemma \ref{lem.reg001} gives that $v\equiv 0$.
Moreover if $Q$ vanishes identically, we get from \eqref{315315} that $\val{\rot v}^{2}=\Delta Q=0$,
so that $\rot v\equiv 0$,
and 
we get as above that $v\equiv 0$.
Thus we may assume that the function $Q$ is smooth non-positive, with a negative minimum
$-M_{0}$, with $M_{0}>0$.
Now since $Q$ has limit 0 at infinity, the minimum $-M_{0}$ is attained at some point $x_{0}\in \R^{3}$,
so that 
$$[-M_{0},0)
\underbracket[0pt]{\subset}_{\substack
{
\text{connexity}\\ \text{of $\R^{3} $}
}}
Q(\R^{3})\subset[-M_{0},0].
$$
We claim now that $Q(\R^{3})=[-M_{0},0).$
Indeed, 
Formula \eqref{315315} implies that 
 \begin{equation}\label{}
 -\Delta Q+\nu^{-1} v\cdot \nabla Q\le 0, \quad \text{where $v,Q$ are smooth functions.}
\end{equation}
We already know that $Q$ is non-positive on $\R^{3}$
and we consider the closed set $Q^{-1}(\{0\})$. If it is empty we obtain the result of our Claim and  the last sought result in 
Theorem \ref{thm.chae+};
assuming that it is not empty, we take $x_{0}$ such that $Q(x_{0})=0$. For any $R>0$, on the open connected bounded set $B(x_{0}, R)$,
the maximum principle implies that the maximum of $Q$ is attained at the boundary,
which implies that $Q$ is identically 0,
a case in which we have seen that $v$ must be identically vanishing.
This completes the proof of Theorem  \ref{thm.chae+}.
\end{proof}
The following lemma will be useful later on.
\begin{lemma}\label{lem.2154tt}
Let $Q$ be the Bernoulli head pressure defined in  \eqref{mod+++0}.
Then we have the pointwise equalities, 
 \begin{align}\label{328328}
\Delta Q=\val{\rot v}_{\R^{3}}^{2}-\proscal3{\rot^{2}v}{v}=
\nu^{-1}\proscal3{\nabla Q}{v}+\val{\rot v}_{\R^{3}}^{2}.
\end{align}
Moreover, if $Q(x_{1})$ is a local maximum of $Q$ in a neighborhood of $x_{1}\in \R^{3}$,
we have 
$$
\nabla Q(x_{1})=(\rot v)(x_{1})=(\rot^{2} v)(x_{1})=0.
$$
\end{lemma}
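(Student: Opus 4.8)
The plan is to establish the two pointwise identities first, then read off the consequence at a local maximum. For the identities, I would start from the two forms of the Navier-Stokes system already recorded in this section, namely \eqref{mmm002} and \eqref{315315}. Formula \eqref{mmm002} gives $-\Delta Q = \dive(\rot v\times v) = \proscal3{\rot^{2}v}{v}-\val{\rot v}_{\R^{3}}^{2}$, which is exactly the first equality once multiplied by $-1$. Formula \eqref{315315} gives $\Delta Q=\nu^{-1}\proscal3{\nabla Q}{v}+\val{\rot v}_{\R^{3}}^{2}$, which is the second equality. So the two displayed identities in \eqref{328328} are literally \eqref{mmm002} and \eqref{315315} rewritten; the only thing to check is that every term makes sense pointwise, and this follows from Corollary \ref{cor.lkj987}, since $v$ and $Q$ both belong to the algebra $\mathcal A$, hence are $C^{\io}$ with all derivatives tending to $0$ at infinity, so $\rot v$, $\rot^{2}v$, $\nabla Q$, $\Delta Q$ are all continuous functions and the pointwise products are unambiguous.

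For the statement about a local maximum, suppose $Q(x_{1})$ is a local maximum of $Q$ near $x_{1}$. Then by elementary calculus $\nabla Q(x_{1})=0$ and the Hessian of $Q$ at $x_{1}$ is negative semi-definite, so in particular $\Delta Q(x_{1})\le 0$. Plugging $\nabla Q(x_{1})=0$ into the second identity of \eqref{328328} yields
\begin{equation*}
\Delta Q(x_{1})=\nu^{-1}\proscal3{\nabla Q(x_{1})}{v(x_{1})}+\val{(\rot v)(x_{1})}_{\R^{3}}^{2}=\val{(\rot v)(x_{1})}_{\R^{3}}^{2}\ge 0.
\end{equation*}
Combined with $\Delta Q(x_{1})\le 0$ this forces $\Delta Q(x_{1})=0$ and hence $(\rot v)(x_{1})=0$. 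Finally, from the first identity of \eqref{328328} at $x_{1}$, namely $\Delta Q(x_{1})=\val{(\rot v)(x_{1})}^{2}-\proscal3{(\rot^{2}v)(x_{1})}{v(x_{1})}$, and using $\Delta Q(x_{1})=0$ and $(\rot v)(x_{1})=0$, we get $\proscal3{(\rot^{2}v)(x_{1})}{v(x_{1})}=0$. To upgrade this scalar vanishing to $(\rot^{2}v)(x_{1})=0$, I would use the equation \eqref{rot002} combined with the equation \eqref{mod+++0}: multiplying the first line of \eqref{mod+++0} at $x_{1}$ by any test direction and using $(\rot v)(x_{1})\times v(x_{1})=0$ (since $\rot v$ vanishes there) and $\nabla Q(x_{1})=0$, we obtain $\nu\,(\rot^{2}v)(x_{1})=-(\rot v\times v)(x_{1})-\nabla Q(x_{1})=0$, so $(\rot^{2}v)(x_{1})=0$.

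The main (and really only) obstacle is bookkeeping: one must be careful that all the operations — evaluating distributions pointwise, using $\curl\grad=0$, the consequences of a local maximum for the full Hessian — are legitimate, and here they are thanks to the $C^{\io}$ regularity from Corollary \ref{cor.lkj987} and Theorem \ref{thm.214t++}. The cleanest route to $(\rot^{2}v)(x_{1})=0$ is to go through the \emph{vector} equation \eqref{mod+++0} rather than trying to deduce it from the scalar identity alone, since the scalar identity by itself only gives orthogonality of $\rot^{2}v$ to $v$ at that point. No global argument or Liouville-type machinery is needed; this lemma is purely local.
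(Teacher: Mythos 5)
Your proposal is correct and follows essentially the same route as the paper: both parts rest on reading \eqref{328328} off from \eqref{mmm002} and \eqref{315315}, and then combining $\nabla Q(x_{1})=0$, $\Delta Q(x_{1})\le 0$ at a local maximum with the equation to force $(\rot v)(x_{1})=0$ and then $(\rot^{2}v)(x_{1})=0$ from the vector equation \eqref{mod+++0}. The only (cosmetic) difference is that you extract $(\rot v)(x_{1})=0$ directly from the second identity, whereas the paper first derives $\proscal3{\rot^{2}v}{v}(x_{1})=0$ via the triple-product cancellation and the inequality $\val{\rot v}^{2}\le \proscal3{\rot^{2}v}{v}$ — but that cancellation is already built into \eqref{315315}, so the arguments coincide in substance.
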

\begin{proof}
Formulas \eqref{328328} follow from \eqref{mmm002}, \eqref{315315}.
 We note also that,
 since $Q(x_{1})$ is a local maximum of $Q$, we have   $$
\nabla Q(x_{1})=0,\ 
Q''(x_{1})\le 0, \ \trace{Q''(x_{1})}\le 0,
 $$
 where $Q''$ stands for the Hessian matrix of $Q$.
 We note that, at $x_{1}$, we have 
 $$
0\le  -\trace{Q''(x)}=-\Delta Q=\dive (\rot v\times v)=\rot^{2}v\cdot v-\val{\rot v}^{2},
 $$
 so that  we have  at $x_{1}$,
 \begin{align}
 &\val{\rot v}^{2}\le \rot^{2}v\cdot v,\label{qq002}\\
 & \nu \rot^{2}v+\bigl(\rot v\times v\bigr)=0,\label{qq003}
\end{align}
entailing  at $x_{1}$
 $$
 \nu \rot^{2}v\cdot v+\underbrace{\bigl(\rot v\times v\bigr)\cdot v}_{=0}=0,
 $$
 which implies 
 $
 (\rot^{2}v\cdot v)(x_{1})=0,
 $
which gives from \eqref{qq002}, $(\rot v)(x_{1})=0$,
yielding from \eqref{qq003},  $(\rot^{2} v)(x_{1})=0$,
 proving the lemma.
\end{proof}
\subsection{More properties of the Bernoulli head pressure}
\begin{lemma}\label{lem.gg01}
 Let $v$ be a vector field on $\R^{3}$ satisfying 
\eqref{SNSI}, \eqref{hyp001}, \eqref{hyp002} and the Bernoulli head pressure $Q$ be defined by \eqref{modpr0}. 
Then for all $\tau>0$, 
the set
\begin{equation}\label{511}
Q_{\tau}=Q^{-1}\bigl((-\io,-\tau]\bigr)=\{x\in \R^{3}, Q(x)\le -\tau\},
\end{equation}
is compact. Moreover for $0<\tau_{2}\le \tau_{1}$, we have $Q_{\tau_{1}}\subset Q_{\tau_{2}}$, as well as 
\begin{align}\label{uyd268}
\cup_{\tau>0}Q_{\tau}=\R^{3}.
\end{align}
In particular, if $(\tau_{j})_{j\ge 1}$ is a decreasing sequence of positive numbers with limit 0,
we get that the sequence of sets $(Q_{\tau_{j}})_{j\ge 1}$ is increasing with union $\R^{3}$.
\end{lemma}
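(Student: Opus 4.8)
The plan is to derive everything from two ingredients already established: the regularity of $Q$ and Chae's Theorem \ref{thm.chae+}. First I would record that, by Corollary \ref{cor.lkj987}, $Q$ lies in the algebra $\mathcal A$, hence in $C^{\infty}_{(0)}(\R^{3})$; in particular $Q$ is continuous on $\R^{3}$ and $\lim_{\val x\to+\io}Q(x)=0$. Then I would invoke Theorem \ref{thm.chae+}: either $v\equiv 0$, in which case the triviality of $v$ — the goal of the statements that will use this lemma — already holds, so that case may be set aside; or $Q(\R^{3})=[-M_{0},0)$ with $M_{0}>0$, and in this (henceforth assumed) alternative one has the crucial pointwise strict inequality $Q(x)<0$ for every $x\in\R^{3}$.

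Granting this, the compactness of $Q_{\tau}$ for a fixed $\tau>0$ is immediate: $Q_{\tau}=Q^{-1}\bigl((-\io,-\tau]\bigr)$ is closed because $Q$ is continuous and $(-\io,-\tau]$ is closed, and it is bounded because $Q(x)\to 0$ at infinity forces $Q(x)>-\tau$ for $\val x$ large, so $Q_{\tau}$ is contained in a sufficiently large closed ball; a closed bounded subset of $\R^{3}$ is compact. The inclusion $Q_{\tau_{1}}\subset Q_{\tau_{2}}$ for $0<\tau_{2}\le\tau_{1}$ is simply that $-\tau_{1}\le-\tau_{2}$, so $Q(x)\le-\tau_{1}$ implies $Q(x)\le-\tau_{2}$. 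For the exhaustion \eqref{uyd268} I would use the strict negativity of $Q$: given $x\in\R^{3}$, the number $\tau=-Q(x)$ is positive and satisfies $Q(x)\le-\tau$, hence $x\in Q_{\tau}$, and therefore $\bigcup_{\tau>0}Q_{\tau}=\R^{3}$. Finally, for a decreasing sequence $\tau_{j}\downarrow 0$ of positive numbers, the monotonicity already proved gives $Q_{\tau_{j}}\subset Q_{\tau_{j+1}}$ for all $j$, i.e. the sets increase; and for any $x$ the inequality $\tau_{j}\le-Q(x)$ holds once $j$ is large enough, so $x\in Q_{\tau_{j}}$ and the union is again $\R^{3}$.

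The only non-elementary ingredient — and hence the step I would flag as the real obstacle — is the strict inequality $Q<0$ on all of $\R^{3}$, which is precisely the content of Chae's Theorem \ref{thm.chae+} (together with dispatching the case $v\equiv 0$): without it the zero set of $Q$ would not be covered by any $Q_{\tau}$ and \eqref{uyd268} would fail. Everything else in the lemma is soft point-set topology for continuous functions vanishing at infinity.
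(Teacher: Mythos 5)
Your proof is correct and follows essentially the same route as the paper's: closedness of $Q_{\tau}$ from the continuity of $Q$, boundedness from the decay of $Q$ at infinity, monotonicity as a triviality, and the exhaustion \eqref{uyd268} from the strict negativity $Q<0$ supplied by Theorem \ref{thm.chae+}. You are in fact slightly more careful than the paper in explicitly setting aside the degenerate case $v\equiv 0$ (where $Q\equiv 0$ and the union $\cup_{\tau>0}Q_{\tau}$ would be empty), a case the paper's one-line appeal to Theorem \ref{thm.chae+} leaves implicit.
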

\begin{proof} The continuity of $Q$ ensures that $Q^{-1}\bigl((-\io,-\tau]\bigr)$ is closed. Moreover, it is also a bounded set since $\lim_{R\rightarrow+\io}\norm{Q}_{L^{\io}(\{\val x\ge R\})}=0$: this implies that for $\tau>0$, there exists $R_{\tau}\ge 0$ such that if $\val x> R_{\tau}$, we get 
$
Q(x)\in (-\tau, \tau)
$
and thus
$$
Q^{-1}\bigl((-\tau, \tau)^{c}\bigr)\subset \bar B(0, R_{\tau}),
$$
proving \eqref{511}. 
The equality \eqref{uyd268} follows from Theorem \ref{thm.chae+}
and the remaining part of the lemma is trivial.
The proof of the lemma is complete.
\end{proof}
\begin{claim}\label{cla.gg01}
Let $g$ be a continuous function from the real line into itself with $0\notin\supp g$. Then the continuous function
$\R^{3}\ni x\mapsto g\bigl(Q(x)\bigr)\in \R$ is compactly supported.
 \end{claim}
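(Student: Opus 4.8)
The plan is to exploit the decay of $Q$ at infinity together with the fact that $g$ vanishes on a neighborhood of the origin. First I would record that, since $\supp g$ is a closed subset of $\R$ not containing $0$, there exists $\delta>0$ such that $g(s)=0$ for all $s\in(-\delta,\delta)$; this is immediate from the definition of the support of a continuous function.

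Next I would use the regularity obtained earlier: by Corollary \ref{cor.lkj987} (or equivalently Corollary \ref{cor.kjha45}) the Bernoulli head pressure $Q$ belongs to the algebra $\mathcal A\subset C^{0}_{(0)}$, hence $Q$ is continuous on $\R^{3}$ with $\lim_{\val x\to+\io}Q(x)=0$. Alternatively, one may invoke directly Lemma \ref{lem.gg01}: the set $Q_{\delta}=\{x\in\R^{3},\ Q(x)\le -\delta\}$ is compact, and by Chae's Theorem \ref{thm.chae+} we have $Q\le 0$, so $\{x,\ \val{Q(x)}\ge\delta\}=Q_{\delta}$ is compact. In either case there exists $R>0$ such that $\val{Q(x)}<\delta$ whenever $\val x>R$.

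Finally I would conclude: for $\val x>R$ we have $Q(x)\in(-\delta,\delta)$, hence $g(Q(x))=0$ by the first step. Therefore
$$
\supp\bigl(x\mapsto g(Q(x))\bigr)\subset\overline{B}(0,R),
$$
which is compact; and continuity of $x\mapsto g(Q(x))$ is clear as a composition of the continuous functions $Q$ and $g$. There is essentially no obstacle here — the only point requiring care is citing the correct decay statement for $Q$, for which we may rely on Theorem \ref{thm.chae+} and Lemma \ref{lem.gg01} already established above.
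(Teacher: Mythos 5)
Your proof is correct and follows essentially the same route as the paper: pick $\delta>0$ with $(-\delta,\delta)\subset(\supp g)^{c}$, then use the decay of $Q$ at infinity (equivalently, the compactness of $\{x,\ \val{Q(x)}\ge\delta\}=Q_{\delta}$ from Lemma \ref{lem.gg01} together with $Q\le 0$) to conclude that $g\circ Q$ vanishes outside a compact set. No gaps.
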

\begin{proof}[Proof of the claim]
We know that 
 $0$ belongs to the open set $(\supp g)^{c}$ and thus, for some $\tau_{0}>0$,
we have $(-\tau_{0}, \tau_{0})\subset(\supp g)^{c}$.
As a consequence, using the notations of Lemma \ref{lem.gg01}, we get 
$$
\supp (g\circ Q)
\subset{\{x\in\R^{3}, \val{Q(x)}\ge \tau_{0}\}}
=\{x\in\R^{3}, Q(x)\le -\tau_{0}\}=Q_{\tau_{0}},
$$
which is a compact set according to Lemma \ref{lem.gg01},
proving the claim.
\end{proof}
\subsection{Assumptions on the Laplacean of the Bernoulli Head Pressure}
\begin{claim}\label{cla.41po}
 Let $\chi$ be a smooth compactly supported function defined on $\R^{n}$, equal to 1 in a neighborhood of the origin and let us define for $\lambda>0$,
 $\chi_{\lambda}(x)=\chi(x/\lambda)$.
 Then we have
 $$
 \lim_{\lambda\rightarrow+\io}\int \chi_{\lambda}(x) (\Delta Q)(x) dx=0.
 $$
\end{claim}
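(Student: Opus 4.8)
The plan is to integrate by parts \emph{exactly once}. Since $v$, and hence $Q$ by \eqref{modpre} and Corollary \ref{cor.lkj987}, belongs to the algebra $\mathcal A\subset\moo(\R^{3})$, the function $\Delta Q$ is smooth, while $\chi_{\lambda}$ is smooth and compactly supported; so I would write
\[
\int_{\R^{3}}\chi_{\lambda}(x)(\Delta Q)(x)\,dx=-\int_{\R^{3}}(\nabla\chi_{\lambda})(x)\cdot(\nabla Q)(x)\,dx,
\]
with no boundary term, because $\chi_{\lambda}$ is compactly supported.

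Next I would estimate the right-hand side by H\"older's inequality with the conjugate pair of exponents $3$ and $3/2$:
\[
\Bigl\vert\int_{\R^{3}}(\nabla\chi_{\lambda})\cdot(\nabla Q)\,dx\Bigr\vert\le\norm{\nabla\chi_{\lambda}}_{L^{3}(\R^{3})}\,\norm{\nabla Q}_{L^{3/2}(\supp\nabla\chi_{\lambda})}.
\]
As already noted around \eqref{fun002}, the $L^{3}(\R^{3})$-norm of the gradient of $\chi_{\lambda}$ is scale-invariant, hence equal to the $\lambda$-independent constant $\norm{\nabla\chi}_{L^{3}(\R^{3})}$. For the second factor, since $\chi$ equals $1$ on a neighbourhood of the origin, $\nabla\chi$ vanishes on $\{\val x<r\}$ for some $r>0$, so that $\supp\nabla\chi_{\lambda}\subset\{x\in\R^{3},\ \val x\ge r\lambda\}$; and since $\nabla Q\in L^{3/2}(\R^{3})$ by \eqref{ber002}, the tail norm $\norm{\nabla Q}_{L^{3/2}(\{\val x\ge r\lambda\})}$ tends to $0$ as $\lambda\to+\io$. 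Combining the three displays gives $\lim_{\lambda\to+\io}\int\chi_{\lambda}(x)(\Delta Q)(x)\,dx=0$, which is the announced result.

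The one point that really has to be respected is to integrate by parts \emph{once} and not twice: a second integration by parts would produce $\Delta\chi_{\lambda}=\lambda^{-2}(\Delta\chi)(\cdot/\lambda)$, which after the change of variables $y=x/\lambda$ leaves an unfavourable factor $\lambda$ in front of $\int(\Delta\chi)(y)Q(\lambda y)\,dy$, and the plain decay of $Q$ at infinity furnished by Corollary \ref{cor.kjha45} is not, a priori, strong enough to absorb it. Using $\nabla Q$ rather than $Q$ is exactly what makes the estimate critical: in dimension $3$ the exponent $3$ is the invariant one for $\norm{\nabla\chi_{\lambda}}_{L^{3}}$, and $3/2$ is precisely its conjugate, for which $\nabla Q$ is globally integrable by \eqref{ber002}. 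I do not anticipate any genuine difficulty beyond this bookkeeping.
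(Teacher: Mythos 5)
Your argument is correct: the integration by parts is legitimate ($Q$ is smooth since it lies in $\mathcal A$, and $\chi_{\lambda}$ is smooth with compact support), the identity $\norm{\nabla\chi_{\lambda}}_{L^{3}}=\norm{\nabla\chi}_{L^{3}}$ holds in dimension $3$, and $\nabla Q\in L^{3/2}(\R^{3})$ is indeed available from \eqref{ber002}, so the tail $\norm{\nabla Q}_{L^{3/2}(\{\val x\ge r\lambda\})}$ does go to $0$. The mechanism is the same as in the paper --- one derivative is unloaded onto $\chi_{\lambda}$, followed by a critical H\"older estimate in which the $\nabla\chi_{\lambda}$ factor has a scale-invariant norm and a support escaping to infinity --- but the routes differ in which representation of the flux is used. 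The paper starts from $\Delta Q=\val{\rot v}^{2}-\rot^{2}v\cdot v$, integrates the curl by parts so that the $\int\chi_{\lambda}\val{\rot v}^{2}$ terms cancel, and is left with $\poscal{\rot v}{v\times\nabla\chi_{\lambda}}$, estimated by the $L^{2}\cdot L^{6}\cdot L^{3}$ H\"older inequality; this uses only the raw information $\rot v\in L^{2}$, $v\in L^{6}$ and the algebraic identity, with no singular integral. Your version pairs $\nabla\chi_{\lambda}\in L^{3}$ against $\nabla Q\in L^{3/2}$, which is slightly slicker but leans on \eqref{ber002}, i.e.\ on the $L^{3/2}$-boundedness of the singular integral $\widetilde{\mathbb P}$ applied to $\rot v\times v$. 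Both estimates sit exactly at the scaling, and your closing remark about why a second integration by parts would fail is accurate.
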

\begin{proof}
 We have 
 $
 \Delta Q=\val{\rot v}^{2}-\rot^{2}v\cdot v
 $
 and thus
\begin{align*}
\int \chi_{\lambda} (\Delta Q) dx&=\int \chi_{\lambda} \val{\rot v}^{2} dx-\poscal{\rot^{2}v}{\chi_{\lambda}v}
\\
&=\int \chi_{\lambda} \val{\rot v}^{2} dx-\poscal{\rot v}{\chi_{\lambda}\rot v}
-\poscal{\rot v}{\nabla\chi_{\lambda}\times v}
\\&=\poscal{\underbrace{\rot v}_{L^{2}}}{\underbrace{v}_{L^{6}}\times \underbrace{\nabla\chi_{\lambda}}_{L^{3}}}=\alpha(\lambda), 
\end{align*}
with\footnote{For $p\ge 3$ we have for $\lambda\ge 1$, 
$$
\int_{\R^{3}} \val{(\nabla \chi_{\lambda})(x)}^{p} dx=\int_{\R^{3}} \val{(\nabla \chi)(x/\lambda)}^{p} \lambda^{-p}dx
=
\int_{\R^{3}} \val{(\nabla \chi)(y)}^{p} \lambda^{3-p}dy\le \norm{\nabla \chi}_{L^{p}}^{p}.
$$} 
$
\val{\alpha(\lambda)}\le \norm{\rot v}_{L^{6}(\supp \nabla \chi_{\lambda})}\norm{v}_{L^{2}(\supp \nabla \chi_{\lambda})}\norm{\nabla \chi}_{L^{3}},
$
so that 
$\lim_{\lambda\rightarrow+\io}\alpha(\lambda)=0$,
proving the claim.
\end{proof}
\begin{claim}\label{cla.kj55}
 Assuming that $-\varepsilon_{0}$ is a regular value of $Q$, we get that 
 \begin{equation}\label{cxz596}
\int_{\{Q(x)<-\varepsilon_{0}\}} \val{\rot v}^{2}dx=\int_{\{Q(x)<-\varepsilon_{0}\}}\Delta Qdx,
\end{equation}
\end{claim}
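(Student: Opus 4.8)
The plan is to integrate a pointwise identity for $\Delta Q$ over the bounded open set $\Omega:=\{x\in\R^{3},\ Q(x)<-\varepsilon_{0}\}$ and to annihilate the resulting interior term by two applications of the divergence theorem; the hypothesis that $-\varepsilon_{0}$ is a regular value of $Q$ is exactly what makes the boundary integrals meaningful.

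First I would fix the geometry. By Corollary \ref{cor.lkj987} (see also Corollary \ref{cor.kjha45}) both $v$ and $Q$ are $C^{\infty}$ on $\R^{3}$, and by Lemma \ref{lem.gg01} the set $Q_{\varepsilon_{0}}=\{Q\le-\varepsilon_{0}\}$ is compact, so $\Omega\subset Q_{\varepsilon_{0}}$ is a bounded open set (if $\Omega=\emptyset$ there is nothing to prove). Since $-\varepsilon_{0}$ is a regular value of $Q$, the level set $\{Q=-\varepsilon_{0}\}$ is a compact $C^{\infty}$ hypersurface of $\R^{3}$; in particular it has $3$-dimensional Lebesgue measure zero and $\partial\Omega=\{Q=-\varepsilon_{0}\}$, so $Q\,v$ and $v$ are of class $C^{1}$ on $\overline{\Omega}$ and the divergence theorem is available on $\Omega$.

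Next I would integrate the identity \eqref{315315}, $\Delta Q=\nu^{-1}\proscal3{\nabla Q}{v}+\val{\rot v}^{2}$, over $\Omega$, obtaining
$$
\int_{\Omega}\Delta Q\,dx=\nu^{-1}\int_{\Omega}v\cdot\nabla Q\,dx+\int_{\Omega}\val{\rot v}^{2}\,dx .
$$
The heart of the matter is that the middle integral vanishes. Since $\dive v=0$ one has $v\cdot\nabla Q=\dive(Q\,v)$ pointwise, so, denoting by $\vec n$ the outward unit normal and by $d\sigma$ the surface measure on $\partial\Omega$,
$$
\int_{\Omega}v\cdot\nabla Q\,dx=\int_{\partial\Omega}Q\,(v\cdot\vec n)\,d\sigma=-\varepsilon_{0}\int_{\partial\Omega}v\cdot\vec n\,d\sigma=-\varepsilon_{0}\int_{\Omega}\dive v\,dx=0 ,
$$
where we used that $Q\equiv-\varepsilon_{0}$ on $\partial\Omega$ and then the divergence theorem once more, applied to $v$ itself. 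Hence $\int_{\Omega}\Delta Q\,dx=\int_{\Omega}\val{\rot v}^{2}\,dx$, and since $\{Q=-\varepsilon_{0}\}$ is Lebesgue-null this is exactly \eqref{cxz596}.

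As a sanity check one may instead start from the first equality in \eqref{328328}, $\Delta Q=\val{\rot v}^{2}-\proscal3{\rot^{2}v}{v}$, and use \eqref{rot002}, $\nu\proscal3{\rot^{2}v}{v}=-\proscal3{v}{\nabla Q}$, which again reduces the claim to the vanishing of $\int_{\Omega}v\cdot\nabla Q\,dx$. The only real point requiring care — and it is a mild one — is the legitimacy of the divergence theorem on $\Omega$, which rests on the regular-value hypothesis (making $\partial\Omega$ a smooth compact hypersurface) together with Lemma \ref{lem.gg01} and the smoothness of $v,Q$; alternatively one could smooth $\indic{\{Q<-\varepsilon_{0}\}}$ by functions $g(Q)$ with $g$ a nondecreasing smooth approximation of $\indic{(-\infty,-\varepsilon_{0})}$ and pass to the limit, but the regular-value formulation makes the surface terms exact rather than merely asymptotic.
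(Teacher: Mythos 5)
Your proof is correct and follows essentially the same route as the paper: integrate the identity $\Delta Q=\nu^{-1}v\cdot\nabla Q+\val{\rot v}^{2}$ over $\{Q<-\varepsilon_{0}\}$ and kill the middle term via the divergence theorem, using $\dive v=0$ and the constancy of $Q$ on the boundary (the paper merely compresses your two applications of the divergence theorem into one by writing $v\cdot\nabla Q=\dive\bigl((Q+\varepsilon_{0})v\bigr)$ so that the single boundary integrand vanishes identically).
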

\begin{proof}
 Indeed, 
 we have 
\begin{multline}\label{tre874}
\int_{\{Q(x)<-\varepsilon_{0}\}} \proscal3{v}{\nabla Q} dx=
\int_{\{Q(x)<-\varepsilon_{0}\}}\dive\bigl((Q+\varepsilon_{0})v\bigr)dx
\\
=
\int_{\{Q(x)=-\varepsilon_{0}\}}(Q+\varepsilon_{0})\proscal3{v}{\vec{\texttt{\hskip 2pt n}}} d\sigma=0,
\end{multline}
where $\vec{n}$ is the exterior unit normal to the boundary of the regular open set $$\{x, Q(x)< -\varepsilon_{0}\},$$ and $d\sigma$ is the Euclidean measure on that boundary.
Using \eqref{315315}, this proves
 the claim.
\end{proof}
\begin{proposition}\label{pro.417aez}
 Let $Q$ be the Bernoulli head pressure defined in  \eqref{mod+++0}. Then if $\Delta Q$ belongs to $L^{1}(\R^{3})$, we have $v=0$.
\end{proposition}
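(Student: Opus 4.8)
The plan is to squeeze $\norm{\rot v}_{L^{2}}$ down to zero and then invoke Lemma \ref{lem.reg001}. First I would dispose of the trivial case: if $v\equiv 0$ there is nothing to prove, so assume $v\not\equiv 0$. Then Theorem \ref{thm.chae+} guarantees that $Q$ is \emph{strictly} negative on all of $\R^{3}$, i.e. $\{x\in\R^{3},\ Q(x)<0\}=\R^{3}$; this strict negativity is precisely what will make the limiting arguments below valid, so the case split via Chae's Theorem is done at the very start.

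Next I would exploit Claim \ref{cla.41po}: for a cutoff $\chi$ equal to $1$ near the origin and $\chi_{\lambda}(x)=\chi(x/\lambda)$ one has $\int\chi_{\lambda}(x)(\Delta Q)(x)\,dx\to 0$ as $\lambda\to+\io$. Since by hypothesis $\Delta Q\in L^{1}(\R^{3})$, with $\chi_{\lambda}\to 1$ pointwise and $\val{\chi_{\lambda}\,\Delta Q}\le\val{\Delta Q}$, dominated convergence gives $\int\chi_{\lambda}\,\Delta Q\to\int_{\R^{3}}\Delta Q$; combining the two statements yields $\int_{\R^{3}}(\Delta Q)(x)\,dx=0$.

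Then I would run Claim \ref{cla.kj55} along a sequence of regular values. By Sard's theorem pick $\varepsilon_{k}>0$ regular values of $Q$ with $\varepsilon_{k}\downarrow 0$, and set $E_{k}=\{x\in\R^{3},\ Q(x)<-\varepsilon_{k}\}$. Each $E_{k}$ is open and bounded, since it is contained in the compact set $Q_{\varepsilon_{k}}$ of Lemma \ref{lem.gg01}; the sets $E_{k}$ are increasing, and $\cup_{k}E_{k}=\{Q<0\}=\R^{3}$, here using the strict negativity of $Q$. Claim \ref{cla.kj55} gives, for each $k$,
$$
\int_{E_{k}}\val{\rot v}^{2}\,dx=\int_{E_{k}}\Delta Q\,dx .
$$
Letting $k\to+\io$: on the left, monotone convergence gives $\int_{E_{k}}\val{\rot v}^{2}\,dx\to\norm{\rot v}_{L^{2}}^{2}$; on the right, dominated convergence (again via $\Delta Q\in L^{1}$) gives $\int_{E_{k}}\Delta Q\,dx\to\int_{\R^{3}}\Delta Q\,dx=0$. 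Hence $\norm{\rot v}_{L^{2}}=0$, so $\rot v\equiv 0$, and the estimate \eqref{estl06} of Lemma \ref{lem.reg001} forces $v\equiv 0$, contradicting our assumption; thus $v\equiv 0$ in all cases.

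I do not expect a genuine obstacle here: the statement reduces to assembling Theorem \ref{thm.chae+}, Claims \ref{cla.41po}--\ref{cla.kj55}, Lemmas \ref{lem.gg01} and \ref{lem.reg001}, Sard's theorem, and the standard convergence theorems. The only point demanding a little care is the bookkeeping of the two limits — ensuring $E_{k}\uparrow\R^{3}$ (which truly needs $Q<0$ everywhere, hence the initial case split) and justifying the right-hand side limit by dominated convergence, which is exactly where the hypothesis $\Delta Q\in L^{1}(\R^{3})$ enters.
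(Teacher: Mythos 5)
Your proposal is correct and follows essentially the same route as the paper: the paper likewise combines Claim \ref{cla.kj55} along a sequence of regular values $-\varepsilon_{k}$ furnished by Sard's theorem, the convergence theorems (using $\rot v\in L^{2}$ and $\Delta Q\in L^{1}$), Theorem \ref{thm.chae+} to identify $\{Q<0\}$ with $\R^{3}$, and Claim \ref{cla.41po} to conclude $\int_{\R^{3}}\Delta Q\,dx=0$, hence $\rot v=0$ and $v=0$. The only cosmetic difference is that you dispose of the case $v\equiv 0$ at the outset, whereas the paper invokes Chae's theorem mid-proof, and you conclude via \eqref{estl06} rather than via $\nabla v=0$; both are valid.
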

\begin{proof}
We have  from Lemma \ref{lem.2154tt} and \eqref{rot002},
\begin{equation}\label{3215+}
\val{\rot v}^{2}-\Delta Q=\rot^{2}v\cdot v=-\nu^{-1}v\cdot\nabla Q,
\end{equation}
and thus we find that $\rot^{2}v\cdot v$ belongs to $L^{1}(\R^{3})$.
We find as well that for $\varepsilon_{0}>0$, the set $\{x, Q(x)\le  -\varepsilon_{0}\}$ is compact since $Q$ goes to 0 at infinity. As a consequence of Claim \ref{cla.kj55}, we obtain that 
\begin{equation}\label{cxz59+}
\int_{\{Q(x)<-\varepsilon_{0}\}} \val{\rot v}^{2}dx=\int_{\{Q(x)<-\varepsilon_{0}\}}\Delta Q\ dx,
\end{equation}
and using again that
from Sard's Theorem, we can find arbitrarily small positive $\varepsilon_{0}$ which is a regular value of $-Q$, so that the Lebesgue Dominated Convergence Theorem gives that
$$
\int_{\{Q(x)<0\}} \val{\rot v}^{2}dx=\int_{\{Q(x)<0\}}\Delta Q\, dx,
$$
and from Theorem \ref{thm.chae+}
we get
\begin{equation}\label{cxw628}
\int_{\R^{3}} \val{\rot v}^{2}dx=\int_{\R^{3}}\Delta Q\ dx.
\end{equation}
 Now Claim \ref{cla.41po} and $\Delta Q\in L^{1}$ imply that the right-hand-side of 
 \eqref{cxw628} is zero,
 entailing that $\rot v=0$ as well as $v$ (we have $\dive v=0, \rot v\in L^{2}$ so we may apply Lemma \ref{dssee5} to get $\nabla v=0$ so that $v$ is constant and in $\mathcal L_{(0)}$, thus $0$).
\end{proof}
We always have the identity
 \begin{equation}\label{cxz5++}
\int_{\{Q(x)<-\varepsilon_{0}\}} \val{\rot v}^{2}dx=\int_{\{Q(x)<-\varepsilon_{0}\}}\Delta Qdx,
\end{equation}
provided $-\varepsilon_{0}$ is a regular value of $Q$. This means also that 
 \begin{equation}\label{+xz5++}
\int_{\{Q(x)<-\varepsilon_{0}\}} \val{\rot v}^{2}dx+\int_{\{Q(x)<-\varepsilon_{0}\}}\bigl(\Delta Q\bigr)_{-}dx=\int_{\{Q(x)<-\varepsilon_{0}\}}\bigl(\Delta Q\bigr)_{+}dx
\end{equation}
Applying the Beppo\hskip1pt--Levi Theorem\footnote
{We note that the functions $(\Delta Q)_{\pm}$ are non-negative and that for a sequence $(\varepsilon_{k})_{k\ge 0}$ of positive numbers decreasing to 0, such that $-\varepsilon_{k}$ is a regular value of $Q$ (such sequences are existing, thanks to Sard's Theorem), we have 
$$
\{x, Q(x)<-\varepsilon_{k}\}\subset\{x, Q(x)<-\varepsilon_{k+1}\}\ \text{and} \ \cup_{k\ge 0}
\{x, Q(x)<-\varepsilon_{k}\}\underbracket[0pt]{=}_{\eqref{uyd268}}\{x, Q(x)<0\}=\R^{3}.
$$
}
, we get 
 \begin{equation}\label{cxz+++}
\int_{\{Q(x)<0\}} \val{\rot v}^{2}dx+\int_{\{Q(x)<0\}}\bigl(\Delta Q\bigr)_{-}dx=\int_{\{Q(x)<0\}}\bigl(\Delta Q\bigr)_{+}dx,
\end{equation}
which gives, thanks to Theorem \ref{thm.chae+},
\begin{equation}\label{cx++++}
\int_{\R^{3}} \val{\rot v}^{2}dx+\int_{\R^{3}}\bigl(\Delta Q\bigr)_{-}dx=\int_{\R^{3}}\bigl(\Delta Q\bigr)_{+}dx.
\end{equation}
The following result proves Theorem \ref{thm.54ek}.
\begin{theorem}\label{thm.5468} Let $Q$ be the Bernoulli head pressure defined in  \eqref{mod+++0}. If 
$\bigl(\Delta Q\bigr)_{+}$ \emph{or} $\bigl(\Delta Q\bigr)_{-}$
belongs to $L^{1}(\R^{3})$, then $v=0$.
\end{theorem}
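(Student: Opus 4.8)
The plan is to reduce both alternatives to Proposition \ref{pro.417aez}: I will show that under either hypothesis one actually has the full integrability $\Delta Q\in L^{1}(\R^{3})$, after which the triviality of $v$ is immediate. The two ingredients I intend to use are the unconditional identity \eqref{cx++++},
\[
\int_{\R^{3}}\val{\rot v}^{2}dx+\int_{\R^{3}}\bigl(\Delta Q\bigr)_{-}dx=\int_{\R^{3}}\bigl(\Delta Q\bigr)_{+}dx,
\]
understood with both sides being integrals of non-negative functions (possibly $+\io$), together with the localization limit of Claim \ref{cla.41po}, $\lim_{\lambda\to+\io}\int\chi_{\lambda}(x)(\Delta Q)(x)dx=0$, where $\chi_{\lambda}(x)=\chi(x/\lambda)$ for a fixed $\chi\in\mooc(\R^{3},[0,1])$ equal to $1$ near the origin. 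Note that $\Delta Q=\val{\rot v}^{2}-\proscal3{\rot^{2}v}{v}$ is already in $L^{1}_{\textrm{loc}}$ since $\rot v,\rot^{2}v\in L^{2}$ and $v\in L^{6}$, so all these integrals make sense.

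The first case, $(\Delta Q)_{+}\in L^{1}(\R^{3})$, is handled directly by \eqref{cx++++}: its right-hand side is then finite, hence so is the left-hand side, so in particular $(\Delta Q)_{-}\in L^{1}(\R^{3})$; therefore $\Delta Q=(\Delta Q)_{+}-(\Delta Q)_{-}\in L^{1}(\R^{3})$ and Proposition \ref{pro.417aez} gives $v\equiv 0$.

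The case $(\Delta Q)_{-}\in L^{1}(\R^{3})$ is the one where \eqref{cx++++} alone is useless (its right-hand side could be $+\io$), and this is the main point. Here I would split $\int\chi_{\lambda}(\Delta Q)=\int\chi_{\lambda}(\Delta Q)_{+}-\int\chi_{\lambda}(\Delta Q)_{-}$ and let $\lambda\to+\io$: the left-hand side tends to $0$ by Claim \ref{cla.41po}, while $\int\chi_{\lambda}(\Delta Q)_{-}\to\int(\Delta Q)_{-}<+\io$ by dominated convergence (using $0\le\chi_{\lambda}(\Delta Q)_{-}\le(\Delta Q)_{-}\in L^{1}$ and $\chi_{\lambda}\to 1$ pointwise). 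Hence $\int\chi_{\lambda}(\Delta Q)_{+}$ has the finite limit $\int(\Delta Q)_{-}$; since $(\Delta Q)_{+}\ge 0$ and $\chi_{\lambda}\to 1$ pointwise, Fatou's lemma then yields $\int(\Delta Q)_{+}\le\int(\Delta Q)_{-}<+\io$. So again $\Delta Q\in L^{1}(\R^{3})$, and Proposition \ref{pro.417aez} concludes that $v\equiv 0$.

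The only slightly delicate step is this last argument in the second case; it requires no new identity, merely the combination of the vanishing of the localized integral (Claim \ref{cla.41po}), the one-sided integrability hypothesis, and the interplay of Fatou and dominated convergence. Everything else is bookkeeping and an appeal to results already proved.
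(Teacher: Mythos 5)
Your proof is correct, and your first case coincides with the paper's argument. For the second case, however, your premise that \eqref{cx++++} is ``useless'' when $(\Delta Q)_{-}\in L^{1}$ is mistaken: the left-hand side of \eqref{cx++++} is $\int_{\R^{3}}\val{\rot v}^{2}dx+\int_{\R^{3}}(\Delta Q)_{-}dx$, and the first term is finite by the standing hypothesis \eqref{hyp002}, so finiteness of $\int(\Delta Q)_{-}dx$ immediately forces the right-hand side $\int(\Delta Q)_{+}dx$ to be finite as well. This is exactly how the paper treats both alternatives, symmetrically and in a few lines. Your substitute argument for the second case --- splitting $\int\chi_{\lambda}\Delta Q$, invoking Claim \ref{cla.41po}, dominated convergence for the negative part and Fatou for the positive part --- is nevertheless valid and self-contained; it in effect re-derives a one-sided version of \eqref{cx++++}, namely $\int(\Delta Q)_{+}dx\le\int(\Delta Q)_{-}dx$, without the level-set and Sard machinery behind \eqref{cxz5++}--\eqref{cx++++}. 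Amusingly, comparing that inequality with the equality \eqref{cx++++} yields $\int_{\R^{3}}\val{\rot v}^{2}dx\le 0$ at once, so your route could conclude $\rot v\equiv 0$ (hence $v\equiv 0$, by \eqref{dssee5} and Lemma \ref{cla.548uyt}) directly, without passing through $\Delta Q\in L^{1}$ and Proposition \ref{pro.417aez}.
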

\begin{proof}
Indeed if $\bigl(\Delta Q\bigr)_{+}$ belongs to $L^{1}(\R)$, then Equality \eqref{cx++++} implies that 
 $\bigl(\Delta Q\bigr)_{-}$ also belongs to $L^{1}(\R)$. Similarly, if 
 $\bigl(\Delta Q\bigr)_{-}$ belongs to $L^{1}(\R)$, then Equality \eqref{cx++++} implies that 
 $\bigl(\Delta Q\bigr)_{+}$ also belongs to $L^{1}(\R)$. In both cases, we get $\Delta Q\in L^{1}$ and we may apply Proposition \ref{pro.417aez} to conclude.
 Note that when the hypothesis of the theorem does not hold,
 i.e. when 
 $$
 \int_{\R^{3}}\bigl(\Delta Q\bigr)_{-}dx=\int_{\R^{3}}\bigl(\Delta Q\bigr)_{+}=+\io,
 $$
 Equality \eqref{cx++++} still holds true, but does not imply $\int_{\R^{3}} \val{\rot v}^{2}dx=0$.
\end{proof}
\begin{remark}\rm
 In particular, assuming $C^{2}v\in L^{6/5}$, as  in D.~Chae's \cite{MR3162482},
 implies $\Delta Q\in L^{1}$, thanks to \eqref{3215+}.
\end{remark}
\begin{remark}\rm
 From \eqref{328328}, we have 
$
\nu \Delta Q=
\proscal3{\nabla Q}{v}+\nu \val{\rot v}_{\R^{3}}^{2}.
$
In particular if 
\begin{equation}\label{}
\proscal3{\nabla Q}{v}=\dive(Qv)\ge 0,
\end{equation}
we find that $(\Delta Q)_{-}= 0$, so that Theorem \ref{thm.5468} entails that $v=0$. 
Similarly, if we have 
\begin{equation}\label{}
\dive(Qv)+\nu \val{\rot v}_{\R^{3}}^{2}\ge 0,
\end{equation}
we get that $v=0$. 
\end{remark}
\begin{corollary}
 Let $Q$ be the Bernoulli head pressure defined in  \eqref{mod+++0}. If $Q$ is subharmonic,
 i.e. $\Delta Q\ge 0$,  then $v=0$.
\end{corollary}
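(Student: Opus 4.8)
The plan is to deduce this immediately from Theorem~\ref{thm.5468}. If $Q$ is subharmonic, i.e. $\Delta Q\ge 0$ everywhere on $\R^{3}$ (which makes sense pointwise since $Q\in\mathcal A\subset C^{\io}_{(0)}$ by Corollary~\ref{cor.lkj987}), then the negative part $(\Delta Q)_{-}$ is identically $0$, hence trivially belongs to $L^{1}(\R^{3})$. Theorem~\ref{thm.5468} then applies and yields $v\equiv 0$.

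Alternatively, one can argue directly from the global balance identity \eqref{cx++++}: since $\Delta Q\ge 0$ we have $(\Delta Q)_{-}=0$ and $(\Delta Q)_{+}=\Delta Q$, so that
$$
\int_{\R^{3}}\val{\rot v}^{2}dx=\int_{\R^{3}}\Delta Q\,dx.
$$
By Claim~\ref{cla.41po}, $\int\chi_{\lambda}(x)(\Delta Q)(x)\,dx\to 0$ as $\lambda\to+\io$; since $0\le\chi_{\lambda}(\Delta Q)\to\Delta Q$ pointwise with $\Delta Q\ge 0$, Fatou's lemma gives $\int_{\R^{3}}\Delta Q\,dx\le\liminf_{\lambda\to+\io}\int\chi_{\lambda}(\Delta Q)\,dx=0$, hence $\Delta Q\equiv 0$. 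Therefore $\int_{\R^{3}}\val{\rot v}^{2}dx=0$, so $\rot v\equiv 0$; as $v$ is divergence-free, lies in $\mathcal L_{(0)}$, and has $\rot v\in L^{2}$, Lemma~\ref{lem.reg001} forces $\nabla v=0$, so $v$ is a constant vector field in $\mathcal L_{(0)}$, i.e. $v\equiv 0$.

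There is essentially no obstacle here: the corollary is a direct specialization of Theorem~\ref{thm.5468}, the entire content having already been packaged into the identity \eqref{cx++++}, which itself rests on Sard's theorem (to select regular values $-\varepsilon_{0}$ of $Q$) and the Beppo--Levi theorem applied to the nonnegative functions $(\Delta Q)_{\pm}$ along the increasing exhaustion $\{Q<-\varepsilon_{k}\}\uparrow\{Q<0\}=\R^{3}$ (using Theorem~\ref{thm.chae+}). The only point meriting a word of care is that subharmonicity is to be read pointwise for the smooth function $Q$, so that the statement $(\Delta Q)_{-}\equiv 0$ is unambiguous.
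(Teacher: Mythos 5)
Your first paragraph is exactly the paper's own proof: $\Delta Q\ge 0$ gives $(\Delta Q)_{-}\equiv 0\in L^{1}$, so Theorem~\ref{thm.5468} applies. The alternative argument via \eqref{cx++++} and Fatou is a correct but unnecessary elaboration of what that theorem already packages.
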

\begin{proof}
 The assumption implies $(\Delta Q)_{-}=0$, so that the hypothesis of Theorem \ref{thm.5468} is fulfilled.
\end{proof}
\begin{corollary}
 Let $Q$ be the Bernoulli head pressure defined in  \eqref{mod+++0}. If $Q$ is superharmonic,
 i.e. $\Delta Q\le 0$,  then $v=0$.
\end{corollary}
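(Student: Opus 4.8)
The plan is to read this off directly from Theorem \ref{thm.5468}. If $\Delta Q\le 0$ pointwise on $\R^3$, then the positive part $(\Delta Q)_+$ is identically zero, hence trivially $(\Delta Q)_+\in L^1(\R^3)$. The hypothesis of Theorem \ref{thm.5468} (that \emph{one} of $(\Delta Q)_+$, $(\Delta Q)_-$ belongs to $L^1$) is therefore satisfied, and that theorem gives $v\equiv 0$. So the whole argument is one line; the substance has already been carried out.

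If one prefers a self-contained derivation, I would instead start from the identity \eqref{cx++++},
$$
\int_{\R^3}\val{\rot v}^2\,dx+\int_{\R^3}(\Delta Q)_-\,dx=\int_{\R^3}(\Delta Q)_+\,dx,
$$
which was established under the sole assumptions of Conjecture \ref{coj001}, with no extra integrability requirement. Superharmonicity forces the right-hand side to vanish, so both non-negative terms on the left must vanish; in particular $\rot v=0$ in $L^2(\R^3)$. Combined with $\dive v=0$, Lemma \ref{lem.reg001} (via the identity \eqref{dssee5}) yields $\nabla v=0$, so $v$ is a constant vector field; since $v$ tends to $0$ at infinity (it lies in $\mathcal L_{(0)}$, cf. Lemma \ref{lem.reg1}), this constant must be $0$.

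There is no genuine obstacle here: all the work is already contained in Theorem \ref{thm.5468} and in the derivation of \eqref{cx++++}, and the present statement is the mirror image of the subharmonic corollary proved just above (there one uses $(\Delta Q)_-\equiv 0$, here $(\Delta Q)_+\equiv 0$). The one point worth a word is that the hypothesis is meant as ``$\Delta Q\le 0$ \emph{everywhere}''; if instead $\Delta Q\le 0$ only outside a set of finite Lebesgue measure, one would route through Corollary \ref{cor.54em} rather than through Theorem \ref{thm.5468} directly, but the mechanism is identical.
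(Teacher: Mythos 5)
Your first paragraph is exactly the paper's proof: $\Delta Q\le 0$ gives $(\Delta Q)_{+}\equiv 0\in L^{1}$, so Theorem \ref{thm.5468} applies. The alternative route via \eqref{cx++++} is also correct but is just the content of that theorem unwound, so there is nothing to add.
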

\begin{proof}
 The assumption implies $(\Delta Q)_{+}=0$, so that the hypothesis of Theorem \ref{thm.5468} is fulfilled.
\end{proof}
\section{\color{magenta}Proof of the other results}\label{sec.new4}
\subsection{Improvements of Galdi's result for $v\in L^{9/2}$}
We know that  $v$ and $Q$ are  $\moo$ functions in the algebra $\mathcal A$ defined in \eqref{247247}, which go to zero at infinity as well as all their derivatives. We keep in mind the decay results of Lemma \ref{lem.reg1} and Formula \eqref{modpre},
ensuring that $$Q\in  L^{3}\cap\mathcal W\subset L^{3}\cap L^{\io}.$$
We want to state and prove an improved  version of a theorem due to G.P.~Galdi,
formulated in (1) page \pageref{111kno},
involving only the low frequencies of the vector field $v$ satisfying \eqref{mod+++0}, \eqref{newway+}.
\par Let us start with a somewhat generic computation and a review of Galdi's proof: in the sequel $\chi$ stands for a smooth compactly supported function. We calculate\footnote{Here for $a,b$ smooth vector fields with $b$ compactly supported,  we define
$$
\poscal{a}{b}=\int_{\R^{3}}\proscal3{a(x)}{b(x)}dx.
$$}
\begin{align}
0&=\poscal{\nu \rot^{2}v+(\rot v\times v)+\nabla Q}{\chi v}\notag\\
&=\nu\int \chi\val{\rot v}^{2}dx+\nu\poscal{\rot v}{\nabla \chi\times v}-\poscal{Qv}{\nabla \chi}.
\label{411411}
\end{align}
We note that, replacing $\chi$ by $\chi_{\lambda}$ defined by
\begin{equation}\label{}
\chi_{\lambda}(x)=\chi(x/\lambda), \quad \lambda>0,\quad \text{$\chi\in \mooc(\R^{3})$, equal to 1 near the origin, }
\end{equation}
we get that 
$$
(\nabla \chi_{\lambda})(x)=(\nabla \chi)(x\lambda^{-1}) \lambda^{-1}, \quad
\nabla \chi_{\lambda}\text{ bounded in $L^{p}$ for $p\ge 3, \lambda\ge 1$.}
$$
We obtain that 
\begin{multline*}
\val{\poscal{\rot v}{\nabla \chi_{\lambda}\times v}}\le \norm{\rot v}_{L^{2}}\norm{\nabla \chi_{\lambda}}_{L^{3}}\norm{ v}_{L^{6}(\supp \nabla \chi_{\lambda})}
\\=
\norm{\rot v}_{L^{2}}\norm{\nabla \chi}_{L^{3}}\underbrace{\norm{ v}_{L^{6}(\supp \nabla \chi_{\lambda})}}_{\substack{\text{with limit $0$}\\\text{when $\lambda\longrightarrow +\io$}\\
\text{since $v\in L^{6}$}}},
\end{multline*}
and in particular
\begin{equation}\label{413413}
\lim_{\lambda\rightarrow+\io}\poscal{\rot v}{\nabla \chi_{\lambda}\times v}=0, \quad \text{and also }
\lim_{\lambda\rightarrow+\io}\int \chi_{\lambda}\val{\rot v}^{2}dx=\norm{\rot v}_{L^{2}}^{2}.
\end{equation}
We check now the last term in \eqref{411411}.
We have with a singular integral $S_{0}$,
\begin{multline*}
\val{\poscal{Qv}{\nabla \chi_{\lambda}}}\le \norm{\nabla \chi_{\lambda}}_{L^{3}}\norm{Qv}_{L^{3/2}(\supp \nabla \chi_{\lambda})}
\\\le \norm{\nabla \chi}_{L^{3}}\norm{S_{0}(v\otimes v)}_{L^{9/4}}
{\bf\norm{v}_{L^{9/2}(\supp \nabla \chi_{\lambda})}}
\\\le 
\trinorm{S_{0}}_{9/4}\norm{\nabla \chi}_{L^{3}}\norm{v}_{L^{9/2}}^{2}
{\bf\norm{v}_{L^{9/2}(\supp \nabla \chi_{\lambda})}},
\end{multline*}
yielding, \emph{provided $v$ belongs to $L^{9/2}$,}
\begin{equation}\label{414414}
\lim_{\lambda\rightarrow+\io}\poscal{Qv}{\nabla \chi_{\lambda}}=0.
\end{equation}
The {properties} \eqref{411411}, \eqref{413413}, \eqref{414414} imply $\rot v=0$ and thus from \eqref{dssee5}
in Lemma \ref{lem.reg001} we find that $\nabla v=0$;
as a consequence $v$ is constant and since $v\in \mathcal L_{(0)}$ we get that $v=0$.
\begin{nb}
 We  have typed some terms above in boldface to emphasize the fact that a localization argument
 was at play, and we want to keep that in mind for the proof to come.
\end{nb}
We claim now that a simple modification of the above argument allows to microlocalize the assumption and to limit the requirement on $v$ to the low frequencies.
We note first that \eqref{413413} holds true under the sole assumption that $v$ belongs to $L^{6}$,
which we already know.
Our improvement will be located in the treatment of the term $\poscal{Qv}{\nabla \chi_{\rho}}$. We have
\begin{align}\text{with }
v=v_{[0]}+v_{[1]},& \quad \supp\widehat{v_{[0]}}\subset 2\mathbb B^{3}, \quad
\mathring{\mathbb B}^{3}
\cap\supp\widehat{v_{[1]}}=\emptyset,
\label{fdr443}\\
\poscal{Qv}{\nabla \chi_{\rho}}&=\poscal{S_{0}(v\otimes v)}{v\nabla \chi_{\rho}}
\notag\\
& =\poscal{S_{0}(v\otimes v)}{v_{[0]}\nabla \chi_{\rho}}+\poscal{S_{0}(v\otimes v)}{v_{[1]}\nabla \chi_{\rho}}.
\label{fdr445}
\end{align}
We have also $v_{[0]}=\alpha_{0}(D) v, \supp \alpha_{0}\subset 2\mathbb B^{3}$ and $v_{[0]}\in L^{6}$,
\begin{equation}\label{dqa578}
v_{[1]}=\alpha_{1}(D) v, \mathring{\mathbb B}^{3}\cap \supp \alpha_{1}=\emptyset,
\text{ so that, from \eqref{aer982}, }v_{[1]}\in L^{2},
\end{equation}
as well as 
\begin{align}\label{7ml5iu}
S_{0}(v\otimes v)&=S_{0}\bigl[(v_{[0]}+v_{[1]})(v_{[0]}+v_{[1]})\bigr]
\\&=S_{0}\bigl[v_{[0]}\otimes v_{[0]}\bigr]+S_{0}\bigl[v_{[0]}\otimes v_{[1]}\bigr]
+S_{0}\bigl[v_{[1]}\otimes v_{[0]}\bigr]
+S_{0}\bigl[v_{[1]}\otimes v_{[1]}\bigr].\notag
\end{align}
We note that
$$
\norm{\overbracket[0.2pt]{S_{0}(\underbrace{v}_{L^{\io}}\otimes \underbrace{v}_{L^{6}})}^{L^{6}}
\underbrace{v_{[1]}}_{L^{2}}\underbrace {\nabla \chi_{\rho}}_{L^{3}}}_{L^{1}}
\le \norm{\nabla \chi}_{L^{3}}\norm{v_{[1]}}_{L^{2}(\supp \nabla \chi_{\rho})}\norm{v}_{L^{6}}\norm{v}_{L^{\io}},
$$
which proves that 
\begin{equation}\label{}
\lim_{\rho\rightarrow+\io}\poscal{S_{0}(v\otimes v)}{v_{[1]}\nabla \chi_{\rho}}=0.
\end{equation}
We are left with $\poscal{S_{0}(v\otimes v)}{v_{[0]}\nabla \chi_{\rho}}$ and we have
\begin{multline*}
v_{[0]}S_{0}(v\otimes v)=
v_{[0]}S_{0}\bigl[v_{[0]}\otimes v_{[0]}\bigr]
\\
+\underbracket[0.2pt]
{\underbrace{v_{[0]}}_{L^{6}}     \overbracket[0.2pt]{S_{0}\bigl[   \underbrace{v_{[0]}}_{\mathcal W\subset L^{\io}}  \otimes \underbrace{v_{[1]}}_{L^{2}}  \bigr]}^{L^{2}}
}_{L^{3/2}}
+\underbracket[0.2pt]{v_{[0]}S_{0}\bigl[v_{[1]}\otimes v_{[0]}\bigr]}_{\substack{\text{also in $L^{3/2}$,}
\\\text{similar to the previous term}}}
\\
+\underbracket[0.2pt]{\underbrace{v_{[0]}}_{L^{6}}   S_{0}\bigl[   \underbrace{v_{[1]}}_{\mathcal W\subset L^{\io}}  \otimes \underbrace{v_{[1]} }_{L^{2}} \bigr]}_{L^{3/2}}.
\end{multline*}
As a consequence, we have 
\begin{multline}\label{tf77jj}
\poscal{S_{0}(v\otimes v)}{v_{[0]}\nabla \chi_{\rho}}=
\poscal{v_{[0]}S_{0}\bigl[v_{[0]}\otimes v_{[0]}\bigr]}{\nabla \chi_{\rho}}
\\
+\poscal{  \underbrace{v_{[0]}S_{0}\bigl[v_{[0]}\otimes v_{[1]}\bigr] }_{\in L^{3/2}}    }{\nabla \chi_{\rho}}
+\poscal{  \underbrace{ v_{[0]}S_{0}\bigl[v_{[1]}\otimes v_{[0]}\bigr] }_{\in L^{3/2}}    }{\nabla \chi_{\rho}}
+\poscal{  \underbrace{ v_{[0]}S_{0}\bigl[v_{[1]}\otimes v_{[1]}\bigr] }_{\in L^{3/2}}    }{\nabla \chi_{\rho}},
\end{multline}
and thus,
using the notation $\trinorm{S_{0}}_{p}$ for the operator-norm of the singular integral $S_{0}$ on $L^{p}$,
\begin{align*}
\val{\poscal{S_{0}(v\otimes v)}{v_{[0]}\nabla \chi_{\rho}}}&\le 
\val{\poscal{v_{[0]}S_{0}\bigl[v_{[0]}\otimes v_{[0]}\bigr]}{\nabla \chi_{\rho}}}
\\
&\hskip25pt + 2\norm{v_{[0]}}_{     L^{6}(\supp\nabla\chi_{\rho})      }  
\trinorm{S_{0}  }_{2}      
\norm{v_{[0]}\otimes v_{[1]}}_{L^{2}}           
\norm{\nabla \chi}_{L^{3}}
\\
&\hskip25pt + \norm{v_{[0]}}_{     L^{6}(\supp\nabla\chi_{\rho})      }                     \trinorm{S_{0} }_{2}
\norm{v_{[1]}\otimes v_{[1]}}_{L^{2}}    
\norm{\nabla \chi}_{L^{3}}  
\\
&\hskip35pt \le 
\val{\poscal{v_{[0]}S_{0}\bigl[v_{[0]}\otimes v_{[0]}\bigr]}{\nabla \chi_{\rho}}}
\\
&\hskip55pt + 2\norm{v_{[0]}}_{     L^{6}(\supp\nabla\chi_{\rho})      }  
\trinorm{S_{0}  }_{2}      
\norm{v_{[0]}}_{\mathcal W} \norm{v_{[1]}}_{L^{2}}           
\norm{\nabla \chi}_{L^{3}}
\\
&\hskip55pt + \norm{v_{[0]}}_{     L^{6}(\supp\nabla\chi_{\rho})      }                     \trinorm{S_{0} }_{2}
\norm{v_{[1]}}_{\mathcal W} \norm{v_{[1]}}_{L^{2}}    
\norm{\nabla \chi}_{L^{3}}  
\end{align*}
We are left with the term
$v_{[0]}S_{0}\bigl[\underbrace{v_{[0]}\otimes v_{[0]}}_{\in L^{9/4}}\bigr]$. There we use the assumption $v_{[0]}$ in $L^{9/2}$
and we get 
\begin{equation}\label{hf541j}
\val{\poscal{v_{[0]}\underbrace{S_{0}\bigl[v_{[0]}\otimes v_{[0]}\bigr]}_{\in L^{9/4}}}{\nabla \chi_{\rho}}}
\le \underbracket[0.1pt]{\norm{\nabla \chi_{\rho}}_{L^{3}}}_{=
\norm{\nabla \chi}_{L^{3}}
}\norm{S_{0}\bigl[v_{[0]}\otimes v_{[0]}\bigr]}_{L^{9/4}}
\norm{v_{[0]}}_{L^{9/2}(\supp \nabla \chi_{\rho})},
\end{equation}
and since $v_{[0]}$ belongs to $L^{9/2}$, we obtain from \eqref{hf541j} that 
$$
\lim_{\rho\rightarrow+\io}\poscal{S_{0}(v\otimes v)}{v_{[0]}\nabla \chi_{\rho}}=0,
$$
so that, using \eqref{413413}, \eqref{411411},
we obtain the following result,
proving Theorem \ref{thm.galnew++}.
\begin{theorem}\label{thm.galnew}
Let us assume that the  assumptions of Conjecture \ref{coj001} are fulfilled. 
Moreover we assume that there exists $\alpha_{0}\in \mooc(\R^{3}_{\xi})$ whose support contains a neighborhood of $0$ such that
$$
\alpha_{0}(D) v\in L^{9/2}(\R^{3}).
$$
Then $v\equiv 0$.
\end{theorem}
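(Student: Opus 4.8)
The plan is to run Galdi's multiplier argument on the formulation \eqref{mod+++0}, but to localise the single term which in Galdi's original proof forced $v\in L^{9/2}$, so that only the low-frequency part of $v$ has to lie in $L^{9/2}$ while the high-frequency part is already square integrable. Concretely, after replacing $\alpha_{0}$ by a cut-off equal to $1$ on a ball about the origin, I would set $v_{[0]}=\alpha_{0}(D)v$ and $v_{[1]}=v-v_{[0]}$, so that $\widehat{v_{[1]}}$ vanishes near $0$; then $v_{[1]}\in L^{2}(\R^{3})$ by \eqref{aer982}, whereas $v_{[0]}\in\mathcal{W}\cap L^{6}(\R^{3})$ by Lemma \ref{lem.reg1} and $v_{[0]}\in L^{9/2}(\R^{3})$ by hypothesis.

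Next I would pair \eqref{mod+++0} with $\chi_{\lambda}v$, where $\chi\in\mooc(\R^{3})$ is equal to $1$ near $0$ and $\chi_{\lambda}(x)=\chi(x/\lambda)$ for $\lambda>0$. Using $\det(\rot v,v,v)=0$ pointwise and integrating by parts with $\dive v=0$, this gives the identity \eqref{411411},
\[
0=\nu\int\chi_{\lambda}\val{\rot v}^{2}\,dx+\nu\poscal{\rot v}{\nabla\chi_{\lambda}\times v}-\poscal{Qv}{\nabla\chi_{\lambda}}.
\]
By \eqref{413413}, as $\lambda\to+\io$ the first term tends to $\nu\norm{\rot v}_{L^{2}}^{2}$ and the middle one to $0$ (because $\rot v\in L^{2}$, $v\in L^{6}$, and $\nabla\chi_{\lambda}$ is bounded in $L^{3}$ and supported in $\{\lambda\le\val x\le2\lambda\}$), so everything reduces to proving $\poscal{Qv}{\nabla\chi_{\lambda}}\to0$.

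For that term I would split both the outer factor $v=v_{[0]}+v_{[1]}$ and the argument $v\otimes v$ of the singular integral $S_{0}$ with $Q=S_{0}(v\otimes v)$, following \eqref{fdr445}--\eqref{hf541j}. Every piece of $Qv\cdot\nabla\chi_{\lambda}$ that contains at least one factor $v_{[1]}$ is, by $\mathcal{W}\subset L^{\io}$, $v_{[0]}\in L^{6}$, $v_{[1]}\in L^{2}$ and the $L^{p}$-boundedness of $S_{0}$, a fixed function in $L^{3/2}(\R^{3})$, whose pairing with $\nabla\chi_{\lambda}\in L^{3}$ supported in the annulus $\{\lambda\le\val x\le2\lambda\}$ tends to $0$. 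The only remaining term is $\poscal{v_{[0]}\,S_{0}(v_{[0]}\otimes v_{[0]})}{\nabla\chi_{\lambda}}$, and here the hypothesis enters: $v_{[0]}\in L^{9/2}$ yields $v_{[0]}\otimes v_{[0]}\in L^{9/4}$, hence $S_{0}(v_{[0]}\otimes v_{[0]})\in L^{9/4}$, and
\[
\Val{\poscal{v_{[0]}\,S_{0}(v_{[0]}\otimes v_{[0]})}{\nabla\chi_{\lambda}}}\le\norm{\nabla\chi}_{L^{3}}\,\norm{S_{0}(v_{[0]}\otimes v_{[0]})}_{L^{9/4}}\,\norm{v_{[0]}}_{L^{9/2}(\{\lambda\le\val x\le2\lambda\})}\longrightarrow 0.
\]
Putting this into \eqref{411411} gives $\nu\norm{\rot v}_{L^{2}}^{2}=0$, so $\rot v\equiv0$; then $\dive v=0$ and \eqref{dssee5} in Lemma \ref{lem.reg001} force $\nabla v\equiv0$, so $v$ is a constant, and a constant lying in $\mathcal{L}_{(0)}$ must vanish, i.e.\ $v\equiv0$.

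The main obstacle is the bookkeeping in this last step: one has to check that among the four bilinear pieces of $v\otimes v$ and the two pieces of the outer factor $v$, exactly the purely-low-frequency term $v_{[0]}\,S_{0}(v_{[0]}\otimes v_{[0]})$ genuinely requires integrability beyond $L^{6}$, every mixed or purely-high-frequency piece being controlled by the regularity already available ($v_{[1]}\in L^{2}$, $v_{[0]},v\in\mathcal{W}\cap L^{6}$, $\rot v\in L^{2}$) together with Calder\'on--Zygmund bounds. A minor auxiliary point is to replace the given $\alpha_{0}$, whose support only contains a neighbourhood of $0$, by a bona fide low/high frequency splitting, which one settles by composing with a suitable Schwartz multiplier.
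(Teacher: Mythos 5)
Your proposal is correct and follows essentially the same route as the paper: the identity \eqref{411411} with the test field $\chi_{\lambda}v$, the low/high frequency splitting $v=v_{[0]}+v_{[1]}$ with $v_{[1]}\in L^{2}$ from \eqref{aer982}, the observation that every piece of $\poscal{Qv}{\nabla\chi_{\lambda}}$ containing a factor $v_{[1]}$ is a fixed $L^{3/2}$ function localised against $\nabla\chi_{\lambda}$, and the use of $v_{[0]}\in L^{9/2}$ only on the purely low-frequency term $v_{[0]}S_{0}(v_{[0]}\otimes v_{[0]})$. This matches the argument given in Section \ref{sec.new4} step for step.
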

\begin{nb} 
 This result means that the assumption $v\in L^{9/2}$ is too strong and that it is enough to assume that $v_{[0]}$, the projection of $v$ onto the space of vectors with spectrum in a given  neighborhood of 0 belongs to $L^{9/2}$.
 We have to pay attention to the locality to keep the localization of terms in $\supp \nabla \chi_{\rho}$ and also to treat carefully the non-linear terms. A key point for doing the latter is that the part of $v$ with large spectrum belongs to $L^{2}$. Typically, that improvement of regularity established above allows us to improve 
a  theorem due to G.P.~Galdi,
formulated in $(1)$, page  \pageref{111kno}.
\end{nb}
\subsection{ An improvement of Chae's result for $\rot^{2}v\in L^{6/5}$}\label{sec.kja5}
The discussion on the improvement of Chae's result (here we want to assume only $\alpha_{0}(D)\rot^2 v\in L^{6/5}$) is more technical.
Since $$-\Delta Q=\rot^{2}v\cdot v-\val{\rot v}^{2},$$ to prove that $\Delta Q$ belongs to $L^{1}$,
 it is enough to check that 
$\rot^{2}v\cdot v\in L^{1}$. 
With $\alpha_{0}\in\moo(\R^{3}, [0,1])$ such  that $\alpha_{0}(\xi)=1$ for $\val \xi\le 1$, $\supp \alpha_{0}\subset 2\mathbb B^{3}$,
$\beta_{1}=1-\alpha_{0},$
$\supp \beta_{1}\subset\{\val \xi\ge 1\}, \beta_{1}(\xi)=1$ for $\val \xi\ge 2$,
we have 
\begin{equation}\label{adsf87}
v=v_{[0]}+v_{[1]},\quad  v_{[0]}=\alpha_{0}(D) v,\quad  v_{[1]}=\beta_{1}(D) v, 
\end{equation}
and
\begin{equation}\label{fff001}
\rot^{2}v\cdot v=\overbracket[0.2pt]{\underbrace{\rot^{2}v}_{L^{2}}\cdot \underbrace{v_{[1]}}_{L^{2}}}^{L^{1}}+\rot^{2}v\cdot v_{[0]}.
\end{equation}
It is enough to check
\begin{equation}\label{fff002}
\rot^{2}v\cdot v_{[0]}=\rot^{2}v_{[1]}\cdot v_{[0]}+
\rot^{2}v_{[0]}\cdot v_{[0]}.
\end{equation}
We are left with, $\mu>2$ to be chosen later,  
\begin{equation}\label{}
\rot^{2}v_{[1]}\cdot v_{[0]}=\underbrace{\rot^{2}v_{[1]}\cdot 
\overbracket[0.1pt]{\beta_{1}(\mu D)\alpha_{0}(D)v}^{ \substack{   \text{spectrum in }\\\val \xi\ge 1/\mu    }}
}_{L^{2}\cdot L^{2}\subset L^{1}}+
\overbracket[0.2pt]{\underbrace{\rot^{2}v_{[1]}}_{\val \xi\ge 1}\cdot \underbrace{
\alpha_{0}(\mu D)v_{[0]}}_{\val \xi\le 2/\mu}}^{  \substack{   \text{spectrum in }\\\val \xi\ge1-\frac2\mu    } }.
\end{equation}
The quantity $\beta_{1}\bigl(\frac{2\mu \xi}{\mu-2}\bigr)$ is equal to 1 on $ \frac{2\mu }{\mu-2}\val \xi\ge 2$,
i.e. on $\val \xi\ge 1-\frac2\mu$ and thus we have 
$$
\rot^{2}v_{[1]}\cdot \alpha_{0}(\mu D)v_{[0]}=\underbrace{\beta_{1}\Bigl(\frac{2\mu}{\mu-2}D\Bigr)}_{
\substack{\text{supported in}\\\val \xi\ge\frac{\mu-2}{2\mu} }
}
\bigl(\rot^{2}v_{[1]}\cdot \alpha_{0}(\mu D)v_{[0]}\bigr).
$$
Defining
\begin{equation}\label{fdc129}
w_{1}=\rot^{2}v_{[1]},
\end{equation}
we have, assuming $\mu\ge 6$,  that\footnote{Indeed
$\beta_{1}(\frac{2\mu \xi}{\mu-2})$ is vanishing  on $\val \xi\le\frac{\mu-2}{2\mu}$,
whereas $\alpha_{0}(\mu \xi)$ is vanishing   in $\val \xi\ge \frac2\mu$ and we have 
$$
 \frac2\mu\le \frac{\mu-2}{2\mu}\text{\quad for $\mu\ge 6.$}
$$
}
$\beta_{1}(\frac{2\mu}{\mu-2}D)\alpha_{0}(\mu D)=0$, 
so that 
\begin{multline*}
\rot^{2}v_{[1]}\cdot \alpha_{0}(\mu D)v_{[0]}=\beta_{1}\bigl(\frac{2\mu}{\mu-2}D\bigr)\bigl(w_{1}(x)\alpha_{0}(\mu D) v_{[0]})
\\=\Bigl[\beta_{1}\bigl(\frac{2\mu}{\mu-2}D\bigr),w_{1}\Bigr]\alpha_{0}(\mu D) v_{[0]}.
\end{multline*}
 We have, choosing $\mu=6$,
 \begin{align}
(2\pi)^{\frac32}\Bigl( \bigl[&\beta_{1}(3D),w_{1}\bigr] u\Bigr)(x)
\notag\\
&=\int {3^{-3}\hat{\beta}}_{1}(\frac13(y-x))w_{1}(y) u(y) dy
-w_{1}(x) \int 3^{-3} {\hat{\beta}}_{1}(\frac13(y-x))u(y) dy
\notag \\
 &=3^{-3}
 \int_{0}^{1}\int {\hat{\beta}}_{1}(\frac13(y-x))u(y)(\nabla w_{1})\bigl(x+\theta(y-x)\bigr)\cdot (y-x) dy d\theta
  \notag\\
 \label{fda258}&=3^{-2}
 \int_{0}^{1}\int u(y)(\nabla w_{1})\bigl(x+\theta(y-x)\bigr)\cdot 
\widehat{D \beta_{1}}(\frac13(y-x))dy d\theta,
\end{align}
so that 
\begin{multline}\label{fff003}
\bigl(\rot^{2}v_{[1]}\cdot \alpha_{0}(6D) v_{[0]}\bigr)(x)\\=(2\pi)^{-\frac32}3^{-2}
 \int_{0}^{1}\int \bigl(\alpha_{0}(6D)v_{[0]}\bigr)(y)(\nabla w_{1})\bigl(x+\theta(y-x)\bigr)\cdot 
\widehat{D \beta_{1}}(\frac13(y-x))dy d\theta.
\end{multline}
We define
\begin{multline}\label{dsq897}
\kappa_{\theta}(x)=
\int \bigl(\alpha_{0}(6D)v_{[0]}\bigr)(y)(\nabla w_{1})\bigl(x+\theta(y-x)\bigr)\cdot 
\gamma(y-x)dy d\theta,
\\\text{with\quad} \gamma(z)=3^{-2}\widehat{D\beta_{1}}(z/3),
\end{multline}
and since $D\beta_{1}$ is smooth compactly supported, we  get 
$\gamma\in \mathscr S(\R^{3})$.
On the other hand, we have the well-known formula, with $\omega=\rot v$,
$$
\nu \rot^{2}\omega+(\underbracket[0.2pt]{v}_{L^{6}}\cdot \underbracket[0.2pt]{\nabla) \omega}_{\substack{L^{3/2}\\
\text{as }\rot^{2}v
}}-(\underbracket[0.2pt]{\omega}_{L^{2}}\cdot \underbracket[0.2pt]{\nabla) v}_{L^{2}}=0,
$$
since, checking the term $\nabla \omega$,  we have,
noting that 
$\partial_{j}\val{D}^{-2}\rot$ is a singular integral and $\rot^{2}v$ belongs to $L^{3/2}$, thanks to Lemma \ref{lem.reg1},
\begin{equation*}
\partial_{j}\omega
=\partial_{j}{{\mathbf C} v}
=\partial_{j}{\vert{D}\vert}^{-2}{\mathbf C}^{2}{\mathbf C}v
=\partial_{j}{\vert{D}\vert}^{-2}{\mathbf C}{\mathbf C}^{2}v,
\quad\text{thus}\quad\nabla\omega\in L^{3/2}.
\end{equation*}
Moreover,  $\omega=\rot v$ and $\nabla v$  are both in $L^{2}$, thanks to Lemma \ref{lem.reg001}.
Since $\frac23+\frac16=\frac56$, we get thus that 
$$
\rot^{2}\omega\in \omega\cdot \nabla v+L^{6/5}.
$$
Nonetheless we have $\omega$ and $\nabla v$ in $L^{2}$, thus $\omega\cdot \nabla v\in L^{1}$,
but $\omega$ and $\nabla v$ are both in the Wiener algebra $\mathcal W$ (cf. Proposition \ref{pro.gfd987}), so that the product
$\omega\cdot \nabla v$ belongs as well to $\mathcal W$, which is included in $L^{\io}$.
As a consequence, we have $\omega\cdot \nabla v\in L^{1}\cap L^{\io}$, which is included in $L^{6/5}$, yielding that
$$
\rot ^{2}\omega\in L^{6/5}, \
\text{i.e. } \rot^{3}v\in L^{6/5},\
\text{which implies }
\rot^{3}v_{[1]}=\rot^{3}\beta_{1}(D) v=\beta_{1}(D)\rot^{3} v\in L^{6/5}.
$$
Since $v_{[1]}$ belongs to $L^{2}$ with a null divergence, we get with $w_{1}$ given in \eqref{fdc129},
$$
\p_{j}w_{1}=\p_{j}\rot^{2} v_{[1]}=\p_{j}\rot^{4} \val{D}^{-2}v_{[1]}=\underbrace{\p_{j}\rot \val D^{-2}}_{\text{sing. int.}}\rot^{3}v_{[1]}
\quad\text{ which is thus in  $L^{6/5}$,}
$$
and this implies that 
\begin{equation}\label{429429}
\nabla w_{1}\in L^{6/5}.
\end{equation}
As a consequence, Formula \eqref{dsq897} reads
\begin{multline}\label{cvd547}
\kappa_{\theta}(x)=
\int a(y)b\bigl(x+\theta(y-x)\bigr)\cdot 
\gamma(y-x)dy,
\\ \text{with  } \gamma\in \mathscr S(\R^{3}), b\in L^{6/5}, a\in
L^{6}\cap L^{\io}. 
\end{multline}
\begin{claim}\label{cla.424242}
 $\kappa_{\theta}$ belongs to $L^{1}$ with a norm bounded above uniformly in $\theta$.
\end{claim}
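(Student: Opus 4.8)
The plan is to prove Claim \ref{cla.424242} by a routine application of Tonelli's theorem together with Hölder's inequality, the only point requiring a little care being the choice of change of variables so that no Jacobian degenerates as $\theta\to 0$. Recall from \eqref{cvd547} that $\gamma\in\mathscr S(\R^3)$, $b=\nabla w_{1}\in L^{6/5}(\R^3)$ (by \eqref{429429}) and $a=\alpha_{0}(6D)v_{[0]}\in L^{6}\cap L^{\io}$; below $\val{\,\cdot\,}$ denotes the Euclidean (resp.\ Hilbert--Schmidt) norm and $c_{0}$ a dimensional constant controlling the tensor contraction in \eqref{cvd547}.

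First I would estimate $\kappa_{\theta}$ pointwise by the triangle inequality and Cauchy--Schwarz in the contraction of $b$ against $\gamma$:
\[
\val{\kappa_{\theta}(x)}\le c_{0}\int_{\R^{3}}\val{a(y)}\,\bigl\vert b\bigl(x+\theta(y-x)\bigr)\bigr\vert\,\val{\gamma(y-x)}\,dy .
\]
Integrating in $x$ and applying Tonelli (all integrands are now nonnegative), $\norm{\kappa_{\theta}}_{L^{1}(\R^{3})}$ is bounded by
\[
c_{0}\int_{\R^{3}}\val{a(y)}\Bigl(\int_{\R^{3}}\bigl\vert b\bigl(x+\theta(y-x)\bigr)\bigr\vert\,\val{\gamma(y-x)}\,dx\Bigr)dy .
\]
In the inner integral I would substitute $z=y-x$, so that $dx=dz$ and, crucially, $x+\theta(y-x)=y-(1-\theta)z$: differencing against the Schwartz factor $\gamma$ rather than against the $\theta$-dependent argument of $b$ keeps the Jacobian equal to $1$ for \emph{every} $\theta\in[0,1]$. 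The inner integral thus becomes $\int_{\R^{3}}\bigl\vert b\bigl(y-(1-\theta)z\bigr)\bigr\vert\,\val{\gamma(z)}\,dz$.

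Next I would use Tonelli once more to exchange the $y$- and $z$-integrations, obtaining
\[
\norm{\kappa_{\theta}}_{L^{1}(\R^{3})}\le c_{0}\int_{\R^{3}}\val{\gamma(z)}\Bigl(\int_{\R^{3}}\val{a(y)}\,\bigl\vert b\bigl(y-(1-\theta)z\bigr)\bigr\vert\,dy\Bigr)dz ,
\]
and bound the inner $y$-integral by Hölder's inequality with the conjugate exponents $6$ and $6/5$ (note $\tfrac16+\tfrac56=1$), using the translation invariance of the $L^{6/5}$ norm:
\[
\int_{\R^{3}}\val{a(y)}\,\bigl\vert b\bigl(y-(1-\theta)z\bigr)\bigr\vert\,dy\le \norm{a}_{L^{6}(\R^{3})}\,\norm{b}_{L^{6/5}(\R^{3})} .
\]
This bound is independent of $z$ and of $\theta$, whence
\[
\norm{\kappa_{\theta}}_{L^{1}(\R^{3})}\le c_{0}\,\norm{\gamma}_{L^{1}(\R^{3})}\,\norm{a}_{L^{6}(\R^{3})}\,\norm{b}_{L^{6/5}(\R^{3})} ,
\]
which is finite, since $\gamma\in\mathscr S(\R^{3})\subset L^{1}(\R^{3})$, and uniform in $\theta\in[0,1]$, as claimed. (Note only $a\in L^{6}$ is used here, not the full information $a\in L^{6}\cap L^{\io}$.)

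I do not expect a genuine obstacle here: the argument is a two‑fold Tonelli plus one Hölder estimate. The single thing to get right is exactly the change of variables — the naive substitution $z=x+\theta(y-x)$ would produce a Jacobian $\theta^{-3}$, not uniformly integrable as $\theta\to 0$, whereas $z=y-x$ sidesteps this completely; the rest is legitimate precisely because all integrands are nonnegative after taking absolute values.
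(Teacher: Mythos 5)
Your proof is correct and is essentially the paper's own argument: after the unit-Jacobian substitution $z=y-x$, both reduce the double integral to a Hölder estimate in the translated variable with exponents $6$ and $6/5$, followed by integration against $\val{\gamma}\in L^{1}$, yielding the same uniform bound $\norm{a}_{L^{6}}\norm{b}_{L^{6/5}}\norm{\gamma}_{L^{1}}$. The only difference is cosmetic (you keep $y$ as the Hölder variable where the paper uses $x+t$), and your observation that only $a\in L^{6}$ is needed matches the paper.
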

\begin{proof}[Proof of the claim] 
 With $b\in L^{6/5}$, we check
 \begin{multline*}
\iint \val{a(x+t)}\val{ b\bigl(x+\theta t\bigr)} dx
\val{\gamma(t)}dt 
\\
\le \int \norm{a}_{L^{6}}\norm{b}_{L^{6/5}}
\val{\gamma(t)}dt = \norm{a}_{L^{6}}\norm{b}_{L^{6/5}}\norm{\gamma}_{L^{1}},
\end{multline*}
concluding the proof of the claim.
\end{proof}
As a consequence we get that $\rot^{2}v_{[1]}\cdot \alpha_{0}(6D)v_{[0]}$ belongs to $L^{1}$,
proving that $\Delta Q$ belongs to $L^{1}$.
Formulas \eqref{fff001}, \eqref{fff002}, \eqref{fff003} and Claim \ref{cla.424242} imply that 
\begin{equation}\label{426426}
\rot^{2}v\cdot v\in \rot^{2}v_{[0]}\cdot v_{[0]}+L^{1}=
\bigl(\rot^{2}\alpha_{0}(D)v\bigr)\cdot \bigl(\alpha_{0}(D)v\bigr)+L^{1}.
\end{equation}
The following result proves Theorem \ref{thm.chaenew++}.
\begin{theorem}\label{thm.chaenew}
Let us assume that the  assumptions of Conjecture \ref{coj001} are fulfilled. 
Moreover we assume that there exists $\alpha_{0}\in \mooc(\R^{3}_{\xi})$ whose support contains a neighborhood of $0$ such that
$$
 \alpha_{0}(D)\rot^{2}v\in L^{6/5}(\R^{3}).
$$
Then $v\equiv 0$.
\end{theorem}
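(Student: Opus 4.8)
The plan is to deduce from the hypothesis that $\Delta Q\in L^{1}(\R^{3})$, after which Proposition~\ref{pro.417aez} immediately gives $v\equiv 0$. By \eqref{328328} we have $\Delta Q=\val{\rot v}_{\R^{3}}^{2}-\proscal3{\rot^{2}v}{v}$, and since $\rot v\in L^{2}(\R^{3},\R^{3})$ already forces $\val{\rot v}^{2}\in L^{1}(\R^{3})$, it is enough to prove
$$
\proscal3{\rot^{2}v}{v}\in L^{1}(\R^{3}).
$$
First I would set up the decomposition \eqref{adsf87}: one may assume (localizing further in frequency only composes $\alpha_{0}(D)$ with a $\mooc$ Fourier multiplier, whose kernel is a Schwartz function and which is harmless on every $L^{p}$) that $\alpha_{0}$ is exactly the cutoff of \eqref{adsf87}, and write $v=v_{[0]}+v_{[1]}$ with $v_{[0]}=\alpha_{0}(D)v$, $v_{[1]}=\beta_{1}(D)v=(1-\alpha_{0})(D)v$. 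By \eqref{aer982} one has $v_{[1]}\in L^{2}(\R^{3},\R^{3})$, and by Lemma~\ref{lem.reg1} together with the fact that $\alpha_{0}(D)$ is bounded on every $L^{p}$, both $v$ and $v_{[0]}$ belong to $L^{6}(\R^{3},\R^{3})$.

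The core of the argument, namely reducing $\proscal3{\rot^{2}v}{v}$ modulo $L^{1}$ to its purely low-frequency part, has already been carried out in the discussion preceding this statement, and I would simply invoke it: splitting $\proscal3{\rot^{2}v}{v}=\proscal3{\rot^{2}v}{v_{[1]}}+\proscal3{\rot^{2}v}{v_{[0]}}$ with the first term in $L^{2}\cdot L^{2}\subset L^{1}$ (cf.\ \eqref{fff001}), then $\proscal3{\rot^{2}v}{v_{[0]}}=\proscal3{\rot^{2}v_{[1]}}{v_{[0]}}+\proscal3{\rot^{2}v_{[0]}}{v_{[0]}}$ (cf.\ \eqref{fff002}), and controlling the mixed term $\proscal3{\rot^{2}v_{[1]}}{v_{[0]}}$ via the spectral-localization commutator identity \eqref{fff003}, the gain $\nabla(\rot^{2}v_{[1]})\in L^{6/5}$ extracted from the vorticity equation (see \eqref{429429}, where $\rot^{3}v\in L^{6/5}$ is used), and Claim~\ref{cla.424242}; this yields exactly \eqref{426426},
$$
\proscal3{\rot^{2}v}{v}\in\bigl(\rot^{2}\alpha_{0}(D)v\bigr)\cdot\bigl(\alpha_{0}(D)v\bigr)+L^{1}(\R^{3}).
$$

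It then remains to treat the genuinely low-frequency term $\proscal3{\rot^{2}v_{[0]}}{v_{[0]}}=\bigl(\rot^{2}\alpha_{0}(D)v\bigr)\cdot\bigl(\alpha_{0}(D)v\bigr)$, and this is precisely where the hypothesis is spent. Since Fourier multipliers commute, $\rot^{2}v_{[0]}=\rot^{2}\alpha_{0}(D)v=\alpha_{0}(D)\rot^{2}v$, which belongs to $L^{6/5}(\R^{3},\R^{3})$ by assumption, while $v_{[0]}\in L^{6}(\R^{3},\R^{3})$ as noted above. Since $\frac{5}{6}+\frac{1}{6}=1$, H\"older's inequality gives $\proscal3{\rot^{2}v_{[0]}}{v_{[0]}}\in L^{1}(\R^{3})$. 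Combined with \eqref{426426} this shows $\proscal3{\rot^{2}v}{v}\in L^{1}$, hence $\Delta Q\in L^{1}(\R^{3})$, and Proposition~\ref{pro.417aez} forces $v\equiv 0$.

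The genuinely hard part is not the last step — which is a one-line H\"older estimate once the hypothesis is available — but the reduction \eqref{426426}, i.e.\ showing that every piece involving the high-frequency part $v_{[1]}$ falls into $L^{1}$. The delicate point there is that one cannot simply bound $\proscal3{\rot^{2}v_{[1]}}{v_{[0]}}$ by an $L^{2}\cdot L^{2}$ product near the Fourier origin; instead one exploits that $\rot^{2}v_{[1]}$ and $\alpha_{0}(\mu D)v_{[0]}$ have disjoint spectra for $\mu$ large, turning the product into a commutator whose kernel decays like a Schwartz function, and then uses the extra regularity $\nabla(\rot^{2}v_{[1]})\in L^{6/5}$ coming from the vorticity equation to close the $L^{1}$ estimate of Claim~\ref{cla.424242}.
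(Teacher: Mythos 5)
Your proposal is correct and follows essentially the same route as the paper: reduce to $\Delta Q\in L^{1}$ via \eqref{426426} (whose proof you rightly import from the preceding discussion, including the commutator/spectral-separation treatment of $\rot^{2}v_{[1]}\cdot v_{[0]}$ and Claim \ref{cla.424242}), then observe that $\rot^{2}v_{[0]}=\alpha_{0}(D)\rot^{2}v\in L^{6/5}$ pairs with $v_{[0]}\in L^{6}$ by H\"older, and conclude with Proposition \ref{pro.417aez}. Your parenthetical remark reducing a general $\alpha_{0}$ to the standard cutoff of \eqref{adsf87} is a small point the paper leaves implicit, but it does not change the argument.
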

\begin{proof}
Using \eqref{426426}, we get that 
$$
\rot^{2}v\cdot v\in \rot^{2}v_{[0]}\cdot v_{[0]}+L^{1}\subset L^{\frac65}\cdot L^{6}+L^{1}\subset L^{1},
$$
and since \eqref{3215+} implies 
$$
-\Delta Q=\rot^{2}v\cdot v-\val{\rot v}^{2},
$$
the fact that $\rot v\in L^{2}$ implies $\Delta Q\in L^{1}$, which yields $v\equiv 0$,
thanks to Proposition 
\ref{pro.417aez}.
\end{proof}
\begin{remark}\rm
 We may note that we have proven a slightly better result than the one formulated in the above theorem. Indeed, according to \eqref{426426}, it is enough to prove that $\rot^{2}v_{[0]}\cdot v_{[0]}$ belongs to $L^{1}$, which is a consequence of $\rot^{2}v_{[0]}$ belongs to $L^{6/5}$ since we know that $v, v_{[0]}$ belong to $L^{6}$.
 On the other hand, we have the following classical formula
 $$
 \rot^{2}v_{[0]}\cdot v_{[0]}=\frac12\val{D}^{2}\bigl(\val{v_{[0]}}^{2}\bigr)+\val{\nabla v_{[0]}}^{2},
 $$
 so that $\rot^{2}v_{[0]}\cdot v_{[0]}$ belongs to $L^{1}$ iff 
 $\val{D}^{2}\bigl(\val{v_{[0]}}^{2}\bigr)$ belongs to $L^{1}$,
 since $\nabla v_{[0]}=\alpha_{0}(D)\nabla v$
 belongs to 
 $L^{2}$ (we have $\nabla v\in L^{2})$.
\end{remark}
\subsection{ Proof of Theorem \ref{thm.54ez}}\label{sec.rsq547}
The proof of  Theorem \ref{thm.54ez} will follow the lines of Section \ref{sec.kja5},
tackling Theorem \ref{thm.chaenew++}. 
With $\alpha_{0}\in\moo(\R^{3}, [0,1])$ such  that $\alpha_{0}(\xi)=1$ for $\val \xi\le 1$, $\supp \alpha_{0}\subset 2\mathbb B^{3}$,
$\beta_{1}=1-\alpha_{0},$
$\supp \beta_{1}\subset\{\val \xi\ge 1\}, \beta_{1}(\xi)=1$ for $\val \xi\ge 2$,
we have 
\begin{equation}
v=v_{[0]}+v_{[1]},\quad  v_{[0]}=\alpha_{0}(D) v,\quad  v_{[1]}=\beta_{1}(D) v, 
\end{equation}
Our assumption is that 
\begin{equation}\label{fds546}
\alpha_{0}(D) \nabla Q\in L^{6/5}, \quad \alpha_{0}\in \mooc(\R^{3}), \alpha_{0}(0)\not=0.
\end{equation}
Let us recall a couple of formulas on the Laplacean of $Q$: we have the pointwise equalities, 
\begin{align}\label{rds452}
-\Delta Q=\rot^{2}v\cdot v-\val{\rot v}^{2}=-\nu^{-1}v\cdot \nabla Q-\val{\rot v}^{2}.
\end{align}
and we set
\begin{equation}\label{}
v_{[0]}=\alpha_{0}(D) v, \quad v_{[1]}=\beta_{1}(D) v, \quad\text{so that $v=v_{[0]}+v_{[1]}$.   }
\end{equation}
\emph{We want to verify that $\Delta Q$ belongs to $L^{1}(\R^{3})$} and since $\rot v$ belongs to $L^{2}$, the second equality in \eqref{rds452} shows that it is enough to check that $v\cdot \nabla Q$ belongs to $L^{1}$.
We have 
$$
v\cdot \nabla Q=v_{[0]}\cdot \nabla Q+\underbracket[0.1pt]{v_{[1]}}_{\in L^{2}}\cdot 
\underbracket[0.1pt]{\nabla Q}_{\in L^{2}},
$$
so that it is enough to check that $v_{[0]}\cdot \nabla Q$ belongs to $L^{1}$.
We have 
$$
\underbracket[0.1pt]{v_{[0]}}_{\substack{\in L^{6}\\
\text{since $v\in L^{6}$}}}\cdot 
\underbracket[0.1pt]{\alpha_{0}(D)(\nabla Q)}_{\substack{\in L^{6/5}\\
\text{\eqref{fds546}}}}+v_{[0]}\cdot \beta_{1}(D)(\nabla Q),
$$
so that it is enough to check that $v_{[0]}\cdot \beta_{1}(D)(\nabla Q)$ belongs to $L^{1}$.
We have for $\mu=6$,  
\begin{equation}\label{434oiu}
\beta_{1}(D)(\nabla Q)\cdot v_{[0]}=\beta_{1}(D)(\nabla Q)\cdot \beta_{1}(\mu D)v_{[0]}
+\beta_{1}(D)(\nabla Q)\cdot \alpha_{0}(\mu D)v_{[0]}.
\end{equation}
We note that the spectrum of $\beta_{1}(D)(\nabla Q)$ is included in $\{\val \xi\ge 1\}$
and that the spectrum of $\beta_{1}(\mu D)v_{[0]}$  is included in $\{\val \xi\ge 1/\mu\}$. Since $Q$ and  $v_{[0]}$ are both in the algebra $\mathcal A$,
this implies that $\beta_{1}(D)(\nabla Q)$ and $\beta_{1}(\mu D)v_{[0]}$ are both in $L^{2}$,
so that the first product in the right-hand-side of \eqref{434oiu} belongs to $L^{1}$: we need only to check that
\begin{equation}\label{swq111}
\beta_{1}(D)(\nabla Q)\cdot \alpha_{0}(\mu D)v_{[0]}\in L^{1}.
\end{equation}
The spectrum of $\alpha_{0}(\mu D)v_{[0]}$ is included in $\{\val \xi\le \frac2\mu=\frac13\}$, so that 
the spectrum  of $\beta_{1}(D)(\nabla Q)\cdot \alpha_{0}(\mu D)v_{[0]}$ is included in 
$$
\{\xi, \val{\xi}\ge 1-\frac2\mu =\frac23\}.
$$
Since the function $\xi\mapsto \beta_{1}(3\xi)$ is equal to 1 on $\{\val \xi\ge 2/3\}$, we obtain that
$$
\beta_{1}(D)(\nabla Q)\cdot \alpha_{0}(\mu D)v_{[0]}=
\beta(3D)\Bigl\{\beta_{1}(D)(\nabla Q)\cdot \alpha_{0}(\mu D)v_{[0]}\Bigr\},
$$
and setting 
\begin{equation}\label{874ggt}
w_{2}=\beta_{1}(D)(\nabla Q),
\end{equation}
we find that
\begin{equation}\label{}
\beta_{1}(D)(\nabla Q)\cdot \alpha_{0}(\mu D)v_{[0]}=
\beta(3D)\Bigl\{w_{2}\cdot \alpha_{0}(\mu D)v_{[0]}\Bigr\}.
\end{equation}
We note also that 
$\beta(3\xi) \alpha_{0}(\mu \xi)=0$ since for $\val \xi\le 2/\mu=1/3$,
we have $\val{3\xi}\le 1$ and thus $\beta(3\xi)=0$; as a consequence, we get
\begin{equation}\label{}
\beta_{1}(D)(\nabla Q)\cdot \alpha_{0}(\mu D)v_{[0]}=
\bigl[\beta(3D),w_{2}\bigr]\bigl(\alpha_{0}(6 D)v_{[0]}\bigr).
\end{equation}
Using now Formula \eqref{fda258}, we obtain 
\begin{multline}\label{tap586}
\bigl(\beta_{1}(D)(\nabla Q)\cdot \alpha_{0}(\mu D)v_{[0]}\bigr)(x)
\\=
3^{-2}
 \int_{0}^{1}\int u(y)(\nabla w_{2})\bigl(x+\theta(y-x)\bigr)\cdot 
\widehat{D \beta_{1}}(\frac13(y-x))dy d\theta(2\pi)^{\frac32},
\\
\text{with } u= \alpha_{0}(\mu D)v_{[0]},
\end{multline}
and since $D\beta_{1}$ is smooth compactly supported, we get  that
$\widehat{D \beta_{1}}$ belongs to the Schwartz class.
We define
\begin{multline}\label{dsq898}
\kappa_{\theta}(x)=
\int \bigl(\alpha_{0}(6D)v_{[0]}\bigr)(y)(\nabla w_{2})\bigl(x+\theta(y-x)\bigr)\cdot 
\gamma(y-x)dy d\theta,
\\\text{with\quad} \gamma(z)=3^{-2}\widehat{D\beta_{1}}(z/3), \gamma\in \mathscr S(\R^{3}).
\end{multline}
\begin{claim}\label{cla.987g}
 We have $\nabla w_{2}\in L^{6/5}$, where $w_{2}$ is defined in \eqref{874ggt}.
\end{claim}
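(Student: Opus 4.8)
The plan is to write $\nabla w_{2}$ as a fixed Fourier multiplier applied to $-\Delta Q$ and then cash in the non-linear bound $\rot^{3}v\in L^{6/5}(\R^{3})$ obtained in Section~\ref{sec.kja5}. Since $w_{2}=\beta_{1}(D)\nabla Q$ by \eqref{874ggt}, one has $\p_{\ell}(w_{2})_{k}=\beta_{1}(D)\p_{\ell}\p_{k}Q=\beta_{1}(D)\p_{\ell}\p_{k}\val{D}^{-2}(-\Delta Q)$, and the pointwise identity $-\Delta Q=\rot^{2}v\cdot v-\val{\rot v}^{2}$ from \eqref{328328} turns this into
$$\p_{\ell}(w_{2})_{k}=\beta_{1}(D)\,\p_{\ell}\p_{k}\val{D}^{-2}\bigl(\rot^{2}v\cdot v-\val{\rot v}^{2}\bigr).$$
The contribution of $\val{\rot v}^{2}$ is immediate: $\rot v\in L^{2}\cap\mathcal W$ (by \eqref{hyp002} and $v\in\mathcal A$), hence $\val{\rot v}^{2}\in L^{1}\cap L^{\io}\subset L^{6/5}$, and $\beta_{1}(D)\p_{\ell}\p_{k}\val{D}^{-2}$ is a standard singular integral, bounded on $L^{6/5}$, so this piece lies in $L^{6/5}$.

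The work is in the term $\beta_{1}(D)\p_{\ell}\p_{k}\val{D}^{-2}(\rot^{2}v\cdot v)$, and here the key claim is that $\rot^{2}v\cdot v$ belongs to the homogeneous Sobolev space $\dot{W}^{1,6/5}(\R^{3})$. First, $\rot^{2}v$ is divergence-free with $\rot(\rot^{2}v)=\rot^{3}v\in L^{6/5}$, so the manipulation used for $w_{1}$ in \eqref{429429}, namely $\nabla\rot^{2}v=\nabla\rot^{4}\val{D}^{-2}v=\bigl(\nabla\rot\val{D}^{-2}\bigr)\rot^{3}v$ with $\nabla\rot\val{D}^{-2}$ an order-zero singular integral, gives $\nabla\rot^{2}v\in L^{6/5}$. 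Then $\p_{\ell}\bigl(\rot^{2}v\cdot v\bigr)=\bigl(\p_{\ell}\rot^{2}v\bigr)\cdot v+\rot^{2}v\cdot\p_{\ell}v$; the first summand is in $L^{6/5}\cdot L^{\io}\subset L^{6/5}$ because $v\in\mathcal W\subset L^{\io}$, and the second is in $L^{2}\cdot L^{2}\subset L^{1}$ and also in $\mathcal W\subset L^{\io}$ (both $\rot^{2}v$ and $\nabla v$ lie in $L^{2}\cap\mathcal W$, by Lemma~\ref{lem.reg001} and $v\in\mathcal A$), hence in $L^{1}\cap L^{\io}\subset L^{6/5}$; so $\nabla\bigl(\rot^{2}v\cdot v\bigr)\in L^{6/5}$.

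To conclude I would use that $\beta_{1}$ vanishes near the origin in order to move one derivative out of the multiplier: $\beta_{1}(D)\p_{\ell}\p_{k}\val{D}^{-2}=\bigl[\beta_{1}(D)\p_{\ell}\val{D}^{-2}\bigr]\p_{k}$, and the symbol $\beta_{1}(\xi)\,\xi_{\ell}\val{\xi}^{-2}$ is bounded, smooth away from $0$, and decays like $\val{\xi}^{-1}$ at infinity, so $\beta_{1}(D)\p_{\ell}\val{D}^{-2}$ is bounded on $L^{6/5}(\R^{3})$ (Mikhlin--H\"ormander); applied to $\p_{k}\bigl(\rot^{2}v\cdot v\bigr)\in L^{6/5}$ it shows $\beta_{1}(D)\p_{\ell}\p_{k}\val{D}^{-2}(\rot^{2}v\cdot v)\in L^{6/5}$. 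Putting the two contributions together gives $\nabla w_{2}\in L^{6/5}(\R^{3})$. The only genuinely delicate point is that $\rot^{2}v\cdot v\in\dot{W}^{1,6/5}$: a crude H\"older estimate only yields $\rot^{2}v\cdot v\in L^{3/2}$ (since $v\in L^{6}$, $\rot^{2}v\in L^{2}$), which is not enough, and the gain of one degree of integrability comes precisely from the vorticity-equation bound $\rot^{3}v\in L^{6/5}$ together with the smoothing of $\beta_{1}(D)\p_{\ell}\val{D}^{-2}$ away from zero frequency.
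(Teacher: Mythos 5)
Your argument is correct, but it takes a longer road than necessary, and the reason you give for doing so is based on a miscount. You assert that ``a crude H\"older estimate only yields $\rot^{2}v\cdot v\in L^{3/2}$, since $v\in L^{6}$, $\rot^{2}v\in L^{2}$'' --- but Lemma \ref{lem.reg1}, \eqref{reg004}, already gives the stronger $\rot^{2}v\in L^{3/2}\cap L^{3}$, and then H\"older with $\frac23+\frac16=\frac56$ puts $\rot^{2}v\cdot v$ directly in $L^{6/5}$; combined with $\val{\rot v}^{2}\in L^{1}\cap L^{\io}\subset L^{6/5}$ this makes $\Delta Q\in L^{6/5}$, and the whole claim follows in one line by applying the order-zero singular integral $-\p_{j}\beta_{1}(D)\nabla\val{D}^{-2}$ to $\Delta Q$. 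That is exactly the paper's proof. Your detour --- keeping one derivative on $\rot^{2}v\cdot v$, proving $\nabla(\rot^{2}v\cdot v)\in L^{6/5}$ via the vorticity-equation bound $\rot^{3}v\in L^{6/5}$ from Section \ref{sec.kja5}, and compensating with the order $-1$ multiplier $\beta_{1}(D)\p_{\ell}\val{D}^{-2}$ --- is sound (each step checks out: $\nabla\rot^{2}v=(\nabla\rot\val{D}^{-2})\rot^{3}v\in L^{6/5}$, the Leibniz terms land in $L^{6/5}\cdot L^{\io}$ and $L^{1}\cap L^{\io}$ respectively, and the truncated symbol $\beta_{1}(\xi)\xi_{\ell}\val{\xi}^{-2}$ satisfies the hypotheses of Theorem \ref{thm.singsing}), but it imports the heavier non-linear input $\rot^{3}v\in L^{6/5}$ where only the basic regularity package of Lemma \ref{lem.reg1} is needed. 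The moral is to use the full strength of \eqref{reg004} before reaching for the commutator-level machinery.
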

\begin{proof}[Proof of the claim]
 Indeed, we have 
 \begin{equation}\label{}
 \p_{j}w_{2}=\underbracket[0.1pt]{-\p_{j}\beta_{1}(D)\nabla\val{D}^{-2}}_{\text{singular integral } \mathcal R_{0}}\Delta Q
 =\mathcal R_{0}\bigl(\val{\rot v}^{2}-\rot^{2}v\cdot v\bigr).
\end{equation}
 Since $\rot^{2}v\in L^{3/2}$ (cf. Lemma \ref{lem.reg1}) and $v\in L^{6}$, we have $\rot^{2}v\cdot v\in L^{6/5}$.
 Moreover we have $\val{\rot v}^{2}\in L^{1}$ and also $\rot v\in \mathcal W$ since $v$ belongs to $\mathcal A$, so that $\rot v\in \mathcal W\subset L^{\io}$ and thus
 $\val{\rot v}^{2}\in L^{1}\cap L^{\io}\subset L^{6/5}$; we have thus 
 $\val{\rot v}^{2}-\rot^{2}v\cdot v$ in $L^{6/5}$ and since $\mathcal R_{0}$ is a standard singular integral, thus a continuous endomorphism of $L^{6/5}$, this proves the claim.
\end{proof}
Eventually,  from \eqref{tap586}, \eqref{dsq898} and Claim \ref{cla.987g}
we find with $a\in L^{6}, b\in L^{6/5}$, $\gamma\in \mathscr S(\R^{3})$,
that 
\begin{align*}
\norm{\beta_{1}(D)(&\nabla Q)\cdot \alpha_{0}(\mu D)v_{[0]}}_{L^{1}}
\\
&\le \int_{0}^{1}\iint \val{a(y)} \val{b(x+\theta(y-x))}\val{\gamma(y-x)} dy dx d\theta
\\
&\hskip18pt = \int_{0}^{1}\iint \val{a(x+z)} \val{b(x+\theta z)}\val{\gamma(z)} dz dx d\theta
\\
&\hskip36pt\le\norm{a}_{L^{6}} \norm{b}_{L^{6/5}}\norm{\gamma}_{L^{1}}<+\io,
\end{align*}
proving \eqref{swq111} and thus $\Delta Q\in L^{1}$. We may then apply the already proven Corollary \ref{cor.54gf} and conclude that $v$ is identically vanishing.
The proof of Theorem \ref{thm.54ez} is complete.
\subsection{Final comments}
We could summarize our results as follows: writing the Stationary Navier-Stokes System for Incompressible Fluids as 
\begin{equation}\label{lkj558}
\begin{cases}
 \nu \rot^{2}v+\rot v\times v+\nabla Q=0,
\\
 \dive v=0,
 \end{cases}
\end{equation}
where $\rot$ stands for the $\curl$ operator and $Q=p+\frac12\val{v}^{2}$ for 
the Bernoulli head pressure,
using the pointwise cancellation identity 
$$
\proscald{(\rot v)(x)\times v(x)}{v(x)}=\det\bigl((\rot v)(x), v(x), v(x)\bigr)=0,
$$
assuming that $\rot v$ belongs to $L^{2}$, we prove that  
$v$ belongs to the Wiener algebra $\mathcal W$ as well as all its derivatives.
Moreover, we prove as well that 
$\nabla v$ as well as all its derivatives belong to $L^{2}$.
This enhanced regularity results allow us to multiply the (first) equation in \eqref{lkj558}
by $f(Q) v$, where $f$ is a Lipschitz-continuous function
supported in $(-\io, 0)$, taking advantage of the known result that $Q$ takes negative values (or is identically vanishing).
From 
the identity
$$
\poscal{\nu \rot^{2}v+\rot v\times v+\nabla Q}{f(Q) v}=0,
$$
we can extract enough information to obtain that $v=0$,
provided that $v$ goes to 0 at infinity {\bf and} that we have some other piece of information 
on the decay of $v$ at infinity.
We prove in fact the following lemma.
\begin{lemma} Let $v$ be a vector field satisfying the assumptions of Conjecture \ref{coj001} and let $Q$ be the Bernoulli head pressure. We have
with $v=v_{[0]}+v_{[1]},$  
$$
 \spec v_{[0]}=\supp \mathcal F(v_{[0]})=\text{compact neighbd of $0$}, \quad
 0\notin \spec v_{[1]}=\supp \mathcal F(v_{[1]}),
$$
\begin{equation}\label{}
Qv\in v_{[0]} S_{0}\bigl[ v_{[0]}\otimes v_{[0]}\bigr]+v_{[0]} L^{2}+v_{[1]}L^{6}, \quad v_{[1]}
\in L^{2}, \quad\text{$S_{0}$ singular integral,}
\end{equation}
which entails that
$
Qv-v_{[0]} S_{0}\bigl[ v_{[0]}\otimes v_{[0]}\bigr]\in L^{3/2}.
$
\end{lemma}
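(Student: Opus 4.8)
The plan is to split $v=v_{[0]}+v_{[1]}$ by setting $v_{[0]}=\alpha_0(D)v$ and $v_{[1]}=\beta_1(D)v$, where $\alpha_0\in\mooc(\R^3)$ equals $1$ in a neighbourhood of the origin, $\supp\alpha_0$ is compact, and $\beta_1=1-\alpha_0$; then $\spec v_{[0]}=\supp\mathcal F(v_{[0]})$ is a compact neighbourhood of $0$ and $0\notin\spec v_{[1]}=\supp\mathcal F(v_{[1]})$. First I would collect the ingredients. Since $v$ satisfies the assumptions of Conjecture \ref{coj001}, Corollary \ref{cor.lkj987} gives $v\in\mathcal A$, hence $v\in L^6\cap\mathcal W$; Lemma \ref{lem.223}$(2)$ applied to $v_{[1]}$ (which lies in $\mathcal A$ and satisfies $0\notin\spec v_{[1]}$), or equivalently \eqref{aer982}, gives $v_{[1]}\in L^2$. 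As the Fourier multipliers $\alpha_0(D)$ and $\beta_1(D)$ are continuous on every $L^p$ and on $\mathcal W$, both $v_{[0]}$ and $v_{[1]}$ belong to $L^6\cap\mathcal W$. From \eqref{modpre} I would write $Q=S_0(v\otimes v)$, where $S_0$ is the operator $w\mapsto R_0(w)+\tfrac12\operatorname{tr}w$ (with $R_0$ as in \eqref{ddd001}), a standard singular integral, continuous on $L^p$ for $1<p<\io$; since $v\otimes v\in L^3$, this gives $Q\in L^3\cap\mathcal W\subset L^6$.

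Next I would expand bilinearly, $v\otimes v=v_{[0]}\otimes v_{[0]}+v_{[0]}\otimes v_{[1]}+v_{[1]}\otimes v_{[0]}+v_{[1]}\otimes v_{[1]}$, so that $Q=S_0(v_{[0]}\otimes v_{[0]})+r$ with $r:=S_0(v_{[0]}\otimes v_{[1]})+S_0(v_{[1]}\otimes v_{[0]})+S_0(v_{[1]}\otimes v_{[1]})$. Each of the three tensors defining $r$ is the product of one factor in $\mathcal W\subset L^\io$ with the factor $v_{[1]}\in L^2$, hence lies in $L^2$; since $S_0$ is bounded on $L^2$, we get $r\in L^2$. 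Therefore $Qv=Qv_{[0]}+Qv_{[1]}=v_{[0]}\,S_0(v_{[0]}\otimes v_{[0]})+v_{[0]}\,r+v_{[1]}\,Q$ with $r\in L^2$ and $Q\in L^6$, which is precisely the asserted membership $Qv\in v_{[0]}S_0[v_{[0]}\otimes v_{[0]}]+v_{[0]}L^2+v_{[1]}L^6$. For the entailed statement, $v_{[0]}\in L^6$ and $v_{[1]}\in L^2$, so by Hölder's inequality with $\tfrac16+\tfrac12=\tfrac23$ both $v_{[0]}r$ and $v_{[1]}Q$ lie in $L^{3/2}$, whence $Qv-v_{[0]}S_0[v_{[0]}\otimes v_{[0]}]\in L^{3/2}$.

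There is no genuine obstacle here beyond careful bookkeeping of which piece lands in which Lebesgue space; the point worth stressing is where the hypotheses of Conjecture \ref{coj001} actually enter, namely through $v\in\mathcal W\subset L^\io$ (Theorem \ref{thm.hgf564}) and the $L^2$-control \eqref{aer982} of the high-frequency part $v_{[1]}$, which together push every cross term into $L^2$. The only term that resists being improved below $L^{3/2}$ is $v_{[0]}S_0(v_{[0]}\otimes v_{[0]})$, which involves exclusively the low frequencies of $v$; this is exactly the term on which an additional integrability hypothesis (on $v_{[0]}$, or on $\Delta Q$, or on $\nabla Q$) will be imposed in the Liouville theorems of Section \ref{sec.new4}.
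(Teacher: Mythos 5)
Your proof is correct and follows essentially the same route as the paper: the lemma is the paper's summary of the computation in the proof of Theorem \ref{thm.galnew}, where $Q=S_{0}(v\otimes v)$ is expanded bilinearly in $v_{[0]}+v_{[1]}$, the cross and high-high terms land in $L^{2}$ because one factor is in $\mathcal W\subset L^{\io}$ and the other ($v_{[1]}$) is in $L^{2}$ by \eqref{aer982}, and H\"older with $\frac16+\frac12=\frac23$ finishes the $L^{3/2}$ claim. Nothing is missing.
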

As a consequence, to obtain that $Qv$ belongs to $L^{3/2}$, it is enough to assume 
that 
$$ v_{[0]}\in L^{p}\quad\text{with }
\frac3 p=\frac23, \quad \text{i.e. } p=\frac92.
$$
On the other hand, we had another lemma.
\begin{lemma} Let $v$ be a vector field satisfying the assumptions of Conjecture \ref{coj001} and let $Q$ be the Bernoulli head pressure. We have
with $v=v_{[0]}+v_{[1]},$  
$$
 \spec v_{[0]}=\supp \mathcal F(v_{[0]})=\text{compact neighbd of $0$}, \quad
 0\notin \spec v_{[1]}=\supp \mathcal F(v_{[1]}),
$$
\begin{equation}\label{}
\rot^{2}v\cdot v\in v_{[1]}\cdot L^{2}+ v_{[0]}\cdot \rot^{2}v_{[1]}
+ v_{[0]}\cdot \rot^{2}v_{[0]}, \quad v_{[1]}\in L^{2},\quad v_{[0]}\in L^{6},
\end{equation}
which entails that
$
\rot^{2}v\cdot v\in  v_{[0]}\cdot \rot^{2}v_{[0]}+v_{[0]}\cdot \rot^{2}v_{[1]}+L^{1}.
$
A more technical result  based on a further spectral localization and a commutation argument
 allows us to prove  that $v_{[0]}\cdot \rot^{2}v_{[1]}$ belongs to $L^{1}$
 (the details were given on page \pageref{cvd547}).
\end{lemma}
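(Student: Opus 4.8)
The plan is to deduce both membership statements from an elementary frequency splitting, feeding in the regularity facts already proved, and then merely to recall that the one genuinely delicate point — that $v_{[0]}\cdot\rot^{2}v_{[1]}\in L^{1}$ — was settled in Section \ref{sec.kja5}.

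First I would fix the decomposition as in \eqref{adsf87}: with $\alpha_{0}\in\mooc(\R^{3})$ equal to $1$ near the origin and supported in $2\mathbb B^{3}$, and $\beta_{1}=1-\alpha_{0}$, set $v_{[0]}=\alpha_{0}(D)v$ and $v_{[1]}=\beta_{1}(D)v$, so that $v=v_{[0]}+v_{[1]}$, $\spec v_{[0]}=\supp\alpha_{0}$ is a compact neighbourhood of $0$, and $\spec v_{[1]}\subset\{\val\xi\ge1\}$. The ingredients are: $\rot^{2}v\in L^{2}(\R^{3})$ by \eqref{azo295} (equivalently \eqref{reg004}); $v\in L^{6}(\R^{3})$ by Lemma \ref{lem.reg1}, and since $\alpha_{0}(D)$ is convolution with a Schwartz function it is bounded on $L^{6}$, whence $v_{[0]}\in L^{6}(\R^{3})$; and $v_{[1]}\in L^{2}(\R^{3})$ — either because $v\in\mathcal A$ (Corollary \ref{cor.lkj987}) with $0\notin\spec v_{[1]}$, so part $(2)$ of Lemma \ref{lem.223} applies, or directly from \eqref{aer982}. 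All the pointwise products that follow are thereby meaningful, since moreover $v_{[0]},\rot^{2}v_{[0]}$ are smooth.

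Then I would simply expand, using $\rot^{2}v=\rot^{2}v_{[0]}+\rot^{2}v_{[1]}$,
\[
\rot^{2}v\cdot v=\rot^{2}v\cdot v_{[1]}+\rot^{2}v_{[0]}\cdot v_{[0]}+\rot^{2}v_{[1]}\cdot v_{[0]},
\]
which is precisely \eqref{fff001}--\eqref{fff002}. Since $\rot^{2}v\in L^{2}$, the first term lies in $v_{[1]}\cdot L^{2}$, and this already gives the first inclusion
\[
\rot^{2}v\cdot v\in v_{[1]}\cdot L^{2}+v_{[0]}\cdot\rot^{2}v_{[1]}+v_{[0]}\cdot\rot^{2}v_{[0]}.
\]
As $v_{[1]}\in L^{2}$, Hölder's inequality yields $v_{[1]}\cdot L^{2}\subset L^{1}$, hence the second inclusion $\rot^{2}v\cdot v\in v_{[0]}\cdot\rot^{2}v_{[0]}+v_{[0]}\cdot\rot^{2}v_{[1]}+L^{1}$.

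Finally I would recall, rather than reprove, the assertion that $v_{[0]}\cdot\rot^{2}v_{[1]}\in L^{1}$ with no extra hypothesis: this is the part that is not formal, and it is the real obstacle hidden inside the lemma. It was established in Section \ref{sec.kja5} by localising once more in frequency, writing $v_{[0]}\cdot\rot^{2}v_{[1]}=\beta_{1}(3D)\{w_{1}\,\alpha_{0}(6D)v_{[0]}\}=[\beta_{1}(3D),w_{1}]\,\alpha_{0}(6D)v_{[0]}$ with $w_{1}=\rot^{2}v_{[1]}$ (see \eqref{fdc129}), and estimating the commutator through the kernel formula \eqref{fda258}, using $\nabla w_{1}\in L^{6/5}$ (which follows from $\rot^{3}v\in L^{6/5}$) together with $\alpha_{0}(6D)v_{[0]}\in L^{6}\cap L^{\io}$; this is the computation culminating in \eqref{cvd547} and Claim \ref{cla.424242}. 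Thus the lemma is the convenient repackaging of that commutator estimate with the trivial splitting above, and the only serious step is the commutator bound of Section \ref{sec.kja5}.
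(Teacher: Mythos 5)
Your proposal is correct and follows essentially the same route as the paper: the splitting $\rot^{2}v\cdot v=\rot^{2}v\cdot v_{[1]}+\rot^{2}v_{[0]}\cdot v_{[0]}+\rot^{2}v_{[1]}\cdot v_{[0]}$ is exactly the combination of \eqref{fff001}--\eqref{fff002}, the memberships $\rot^{2}v\in L^{2}$, $v_{[0]}\in L^{6}$, $v_{[1]}\in L^{2}$ are justified by the same references (\eqref{azo295}, Lemma \ref{lem.reg1}, \eqref{aer982}/Lemma \ref{lem.223}), and the one non-formal ingredient, $v_{[0]}\cdot\rot^{2}v_{[1]}\in L^{1}$, is correctly identified as the commutator estimate of Section \ref{sec.kja5} culminating in Claim \ref{cla.424242}. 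Nothing is missing.
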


As a consequence, to obtain that $\rot^{2}v\cdot v$ belongs to $L^{1}$, it is enough to assume that $v_{[0]}\cdot \rot^{2}v_{[0]}$ belongs to $L^{1}$ and for instance to assume that $\rot^{2}v$ belongs to $L^{6/5}$.
The previous summary suggest to formulate another conjecture,
apparently much easier than Conjecture \ref{coj001}, anyhow \emph{implied} by 
Conjecture \ref{coj001}.
\begin{conjecture}\label{coj002}
 Let $v$ be a vector field satisfying the assumptions of Conjecture \ref{coj001}.
 Let us assume moreover that $v$ is the restriction to $\R^{3}$ of an entire vector field
 of exponential type
 on $\C^{3}$. 
 Then we expect that $v$ vanishes identically.
\end{conjecture}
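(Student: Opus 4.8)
The plan is to turn the exponential-type hypothesis into a spectral-support statement via Paley--Wiener, and then to invoke the Liouville theorems already proven in this paper in order to reduce the conjecture to a single quantitative decay estimate at infinity. Since $v$ is tempered (part of \eqref{hyp001}) and is the restriction to $\R^{3}$ of an entire vector field of exponential type on $\C^{3}$, the Paley--Wiener--Schwartz theorem yields that $\widehat v$ is a distribution with compact support, say $\supp\widehat v\subset R\mathbb B^{3}$ for some $R>0$. On the other hand, by Corollary \ref{cor.lkj987} we already know $v\in\mathcal A\subset\mathcal W$, so $\widehat v\in L^{1}(\R^{3})$; hence $\widehat v$ is a compactly supported $L^{1}$ function. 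Because $Q=R_{0}(v\otimes v)+\tfrac12\val v^{2}$ by \eqref{modpre} (with $R_{0}$ the singular integral of \eqref{ddd001}), with $\widehat{v_{j}v_{k}}=\widehat{v_{j}}\ast\widehat{v_{k}}$ supported in $2R\mathbb B^{3}$ and $R_{0}$ a bounded Fourier multiplier, the fields $\rot^{2}v$, $\nabla Q$, $\Delta Q$ and $Q$ are also band-limited, with spectra contained in $2R\mathbb B^{3}$.

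\textbf{Reduction to a decay bound.} Pick $\alpha_{0}\in\mooc(\R^{3})$ equal to $1$ on $2R\mathbb B^{3}$ and whose support is a neighbourhood of $0$; then $\alpha_{0}(D)$ acts as the identity on $v$, on $\rot^{2}v$ and on $\nabla Q$. Consequently each of the following is, by the corresponding theorem of this paper, already \emph{sufficient} to conclude $v\equiv0$: $v\in L^{9/2}$ (Theorem \ref{thm.galnew}); $\rot^{2}v\in L^{6/5}$ (Theorem \ref{thm.chaenew}); $\nabla Q\in L^{6/5}$ (Theorem \ref{thm.54ez}); and $\Delta Q\in L^{1}$ (Proposition \ref{pro.417aez}), which by \eqref{3215+} and $\rot v\in L^{2}$ amounts to $\rot^{2}v\cdot v\in L^{1}$. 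From Lemma \ref{lem.reg1} we know $v\in L^{6}$, $\rot^{2}v,\nabla Q\in L^{3/2}$ and $\rot^{2}v\cdot v\in L^{6/5}$, and the Plancherel--Pólya inequality for band-limited functions upgrades each of these to membership in all \emph{larger} $L^{q}$ spaces --- unfortunately the opposite direction from what is wanted. However, each of the four sufficient conditions follows from a single polynomial decay estimate: if $\val{v(x)}\lesssim\valjp x^{-2/3-\varepsilon}$ for some $\varepsilon>0$ then, $v$ being continuous and in $L^{6}$, one gets $v\in L^{9/2}$ at once; likewise $\val{\rot^{2}v(x)}\lesssim\valjp x^{-5/2-\varepsilon}$ gives $\rot^{2}v\in L^{6/5}$, and $\val{(\rot^{2}v\cdot v)(x)}\lesssim\valjp x^{-3-\varepsilon}$ gives $\rot^{2}v\cdot v\in L^{1}$. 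Thus the whole conjecture reduces to proving \emph{any} such algebraic decay rate for a band-limited solution.

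\textbf{The decay estimate --- the main obstacle.} This is the heart of the matter: band-limitedness supplies unlimited smoothness but, by itself, no decay, so the rate must be extracted from the equation together with the sign information $Q\le0$ provided by Chae's Theorem \ref{thm.chae+} (we may assume $\rot v\not\equiv0$). The natural approach is to observe that $u:=-Q\ge0$ is a nonnegative supersolution of the uniformly elliptic operator $-\Delta+\nu^{-1}v\cdot\nabla$, whose drift $\nu^{-1}v$ is smooth and bounded (indeed $v$ and all its derivatives lie in $\mathcal W\subset L^{\io}$, by Corollary \ref{cor.lkj987}), this being precisely the inequality used at \eqref{315315}; applying the weak Harnack inequality on balls $B(x,2)$ gives $\inf_{B(x,1)}u\gtrsim\norm{u}_{L^{p}(B(x,2))}$ for some small $p>0$, and one then feeds the band-limited structure back in --- via Plancherel--Pólya, which controls the $\ell^{q}$-sum over a unit lattice of the local sup-norms of $Q$ --- to convert the qualitative fact $\norm{Q}_{L^{\io}(\{\val x\ge\lambda\})}\to0$ into a rate $\norm{Q}_{L^{\io}(\{\val x\sim\lambda\})}\lesssim\lambda^{-\delta}$. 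Transferring such a rate from $Q$ back to $v$ (using $Q=R_{0}(v\otimes v)+\tfrac12\val v^{2}$, that the kernel of $R_{0}$ is well behaved away from the diagonal, and $v\in L^{6}$) should then close the reduction above. I expect both this transfer and, more fundamentally, the passage from $Q\to0$ to a quantitative decay rate to be the genuine difficulty; a possible alternative is a Phragmén--Lindelöf argument applied directly to the entire function $Q$ of exponential type, using $Q\le0$ on $\R^{3}$ and the subsolution inequality to constrain its growth in imaginary directions and force $Q\equiv0$, whence $\rot v\equiv0$ and then $v\equiv0$ by Lemma \ref{lem.reg001}. A more modest but equally decisive intermediate goal would be to prove $v\in L^{2}(\R^{3})$, since a band-limited $L^{2}$ field lies in every $L^{q}$ with $q\ge2$, in particular in $L^{9/2}$; but Lemma \ref{lem.212212} and the sharpness of the exponent $-1/2$ noted there suggest the current estimates do not reach $L^{2}$, so a new idea is needed here too.
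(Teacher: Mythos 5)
This statement is not a theorem of the paper: it is Conjecture \ref{coj002}, which the paper explicitly leaves open. The paper's entire treatment of it consists of the same preliminary reductions you make — the Paley--Wiener--Schwartz theorem turns the exponential-type hypothesis into compact support of $\widehat v$; the case $0\notin\supp\widehat v$ is disposed of by Theorem \ref{thm.galnew++} (indeed by Lemma \ref{lem.223}, which puts such a $v$ in $L^{2}$); the case $\supp\widehat v=\{0\}$ makes $v$ a polynomial in $\mathcal L_{(0)}$, hence zero by Lemma \ref{cla.548uyt} — after which the conjecture is merely \emph{reformulated} as: $\widehat v$ supported in a compact neighborhood of $0$ forces $v\equiv0$. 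Your proposal reproduces these reductions correctly and correctly locates the remaining content in a quantitative decay estimate at infinity. But it does not prove that estimate, as you yourself acknowledge, so what you have written is a research plan, not a proof.

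Two concrete points about the mechanisms you sketch. First, the weak Harnack inequality for the nonnegative supersolution $u=-Q$ of $-\Delta+\nu^{-1}v\cdot\nabla$ bounds $\inf_{B(x,1)}u$ \emph{from below} by a local $L^{p}$ average; it is a positivity tool and gives no upper decay rate for $Q$, and the Plancherel--P\'olya inequality only controls $\ell^{q}$ sums of local sup-norms by global $L^{q}$ norms, which is compatible with arbitrarily slow (e.g.\ logarithmic) decay of $\norm{Q}_{L^{\io}(\{\val x\sim\lambda\})}$; no rate follows. Second, the Phragm\'en--Lindel\"of alternative cannot work on the stated inputs alone: the function $F(z)=-\prod_{j=1}^{3}\bigl(\sin z_{j}/z_{j}\bigr)^{2}$ is entire of exponential type on $\C^{3}$, nonpositive on $\R^{3}$, tends to $0$ at infinity and lies in $L^{1}(\R^{3})\cap L^{3}(\R^{3})$ without vanishing, so the constraints ``$Q\le0$ on $\R^{3}$, exponential type, decay at infinity'' do not force $Q\equiv0$; the differential inequality \eqref{315315} holds only on the real points and you give no way to let it constrain $Q$ in imaginary directions. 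The genuine gap is therefore exactly the one the paper leaves open: no mechanism is supplied that converts band-limitedness plus the equation plus $Q\le0$ into any algebraic decay rate (or into $v\in L^{2}$), and until one is found the argument does not close.
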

\begin{nb}
 Thanks to the Paley-Wiener Theorem,
 the above hypothesis means that we assume that the Fourier transform of $v$ is compactly supported.
\end{nb}
A couple of comments are in order:
obviously Conjecture \ref{coj001} implies Conjecture \ref{coj002}, which is thus weaker and it might be interesting to try our hand on the latter statement, a priori easier to tackle. We may note that the hypothesis of 
 Conjecture \ref{coj002} 
 is very strong in the sense that we assume nonetheless that the vector field $v$ is the restriction to $\R^{3}$ of an entire vector field $V$ on $\C^{3}$
 (thus $v$ is analytic on $\R^{3}$ with an entire extension to $\C^{3}$) but moreover that $V$ is an entire function of  exponential type.
 The Paley-Wiener-Schwartz Theorem (see e.g. Theorem 7.3.1 in \cite{MR1996773}) shows that an entire function of exponential type on $\C^{n}$ is the Fourier-Laplace transform
 of a compactly supported distribution. In our case, we already know that if $0$ does not belong to the support of $\hat v$, then, using for instance Theorem \ref{thm.galnew++},
 we obtain that $v$ must vanish identically.
 As a  consequence, we may indeed formulate the above conjecture and assume that $\supp \hat v$ is included in a compact neighborhood of $0$.
 \par
 Note that if the support of $\hat v$ is reduced to $\{0\}$,
  the vector field $v$ is a polynomial
  and since it belongs as well to 
$\mathcal L_{(0)}$ it must vanish identically, thanks to   Lemma \ref{cla.548uyt}.
Note also that, thanks to Lemma \ref{lem.jhg4}, the solution of \eqref{SNSI} given by $v=\nabla \psi$ with $\psi$ harmonic on $\R^{3}$ verifies \eqref{hyp002}. If \eqref{hyp001}
holds true for $v$,
the vector field $\nabla \psi$ is harmonic on $\R^{3}$ and bounded since $v=\nabla \psi$ is bounded,
so that the classical Liouville Theorem (see e.g. Theorem \ref{thm.001}) implies that $v$ is a constant,
which must be 0 to satisfy  \eqref{hyp001}:
$v$ vanishes identically.
We may thus reformulate  Conjecture \ref{coj002} as follows.
\vs\no
{\bf Reformulation of Conjecture \ref{coj002}.}
{\it Let $v$ be a vector field satisfying the assumptions of Conjecture \ref{coj001}.
 Let us assume moreover that $\widehat v$ is supported in a compact neighborhood of $0$ in $\R^{3}$. Then $v$ vanishes identically.
}
\section{\color{magenta}Appendix}
\subsection{ Fourier Transformation, Singular Integrals}\label{sec.fourier}
\subsubsection*{\underline{Fourier Transformation}}
The Fourier transformation is defined on $\mathscr S(\R^{3},\C^{3})$ by
\begin{equation}\label{app1}
(\mathcal F w)(\xi)=\int e^{-ix\cdot \xi} w(x) dx (2\pi)^{-3/2}\quad \text{ (noted also\ }\hat w(\xi)),
\end{equation}
and we have for $w\in \mathscr S(\R^{3},\C^{3})$,
\begin{equation}\label{app2}
 w(x)=\int e^{ix\cdot \xi} \hat w(\xi) d\xi (2\pi)^{-3/2}.
\end{equation}
Both formulas \eqref{app1}, \eqref{app2} can be extended to $L^{2}(\R^{3},\C^{3})$
(and in fact also to
$\mathscr S'(\R^{3},\C^{3})$)\footnote{To define 
$\mathcal F$ on 
 $\mathscr S'(\R^{3},\C^{3})$,
 we set
 \begin{equation}\label{}
 \poscal{\mathcal FT}{\phi}_{\mathscr S'(\R^{3},\C^{3}), \mathscr S(\R^{3},\C^{3})}
 =
 \poscal{T}{\mathcal F\phi}_{\mathscr S'(\R^{3},\C^{3}), \mathscr S(\R^{3},\C^{3})},
\end{equation}
and it is easy to prove  that, on $\mathscr S'(\R^{3},\C^{3})$ as well, we have 
\begin{equation}\label{}
\mathcal C_{0} \mathcal F=\mathcal F\mathcal C_{0}= \mathcal F^{*}, \quad \mathcal C_{0}\mathcal F^{2}=\Id.
\end{equation}
Also the constant vector field $a$ is such that $(\mathcal F a)(\xi)=\delta_{0}(\xi)(2\pi)^{3/2} a$.
}  
and $\mathcal F$ appears as a unitary transformation of $L^{2}(\R^{3},\C^{3})$ such that 
\begin{equation}\label{}
\mathcal F^{-1}=\mathcal C_{0} \mathcal F=\mathcal F\mathcal C_{0}= \mathcal F^{*},
\end{equation}
where $\mathcal C_{0}$ is the self-adjoint and unitary operator defined  by 
\begin{equation}\label{czero}
(\mathcal C_{0}w)(x)=w(-x).
\end{equation}
Moreover, for $w\in \mathscr S(\R^{3}, \C^{3}),$
we shall define the \emph{spectrum} of $w$ as
\begin{equation}\label{spectre}
\spec{w}=\supp \hat w.
\end{equation}
\begin{lemma}\label{lem.fourea}
A vector field $w$ in $\mathscr S'(\R^{3},\C^{3})$
 is real-valued if and only if
 $\re(\hat w)$ is even and $\im(\hat w)$ is odd.
\end{lemma}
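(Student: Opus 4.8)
The plan is to reduce everything to the single identity
\begin{equation}\label{}
\mathcal F(\overline w)=\mathcal C_{0}\,\overline{\mathcal F w},\qquad\text{i.e.}\qquad \widehat{\overline w}(\xi)=\overline{\hat w(-\xi)},
\end{equation}
valid for every $w\in\mathscr S'(\R^{3},\C^{3})$, where $\mathcal C_{0}$ is the operator of \eqref{czero}. First I would verify this on Schwartz fields by a direct computation from \eqref{app1}: conjugating the defining integral turns $e^{-ix\cdot\xi}$ into $\overline{e^{-ix\cdot\xi}}=e^{ix\cdot\xi}$, so $\overline{\hat w(\xi)}=\int e^{ix\cdot\xi}\overline{w(x)}\,dx\,(2\pi)^{-3/2}=\widehat{\overline w}(-\xi)$, which is the asserted identity. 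Then I would extend it to $\mathscr S'(\R^{3},\C^{3})$ by duality, using the definition of $\mathcal F$ on tempered distributions recalled in the footnote to \eqref{app2} and the fact that $\mathcal C_{0}$ and complex conjugation are continuous involutions of $\mathscr S$ and of $\mathscr S'$ which commute with one another.

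Next I would recall the pointwise (distributional) decomposition $\hat w=\re(\hat w)+i\,\im(\hat w)$, where $\re(\hat w)=\frac12(\hat w+\overline{\hat w})$ and $\im(\hat w)=\frac1{2i}(\hat w-\overline{\hat w})$ are \emph{real-valued} tempered distributions, together with the conventions that ``$T$ even'' means $\mathcal C_{0}T=T$ and ``$T$ odd'' means $\mathcal C_{0}T=-T$, and that $\mathcal C_{0}$ maps real-valued distributions to real-valued distributions.

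The main step is then a short equivalence. Since $\mathcal F$ is injective on $\mathscr S'$, the field $w$ is real-valued iff $w=\overline w$ iff $\hat w=\widehat{\overline w}$, and by the identity above the latter reads
\begin{equation}\label{}
\re(\hat w)+i\,\im(\hat w)=\mathcal C_{0}\,\overline{\hat w}=\mathcal C_{0}\bigl(\re(\hat w)-i\,\im(\hat w)\bigr)=\mathcal C_{0}\re(\hat w)-i\,\mathcal C_{0}\im(\hat w).
\end{equation}
Both sides being explicit combinations of real-valued distributions, I can identify real and imaginary parts and obtain the pair of equations $\re(\hat w)=\mathcal C_{0}\re(\hat w)$ and $\im(\hat w)=-\mathcal C_{0}\im(\hat w)$, that is, $\re(\hat w)$ even and $\im(\hat w)$ odd. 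Conversely, these two equations recombine into $\hat w=\mathcal C_{0}\,\overline{\hat w}=\widehat{\overline w}$, hence $w=\overline w$, i.e. $w$ is real-valued.

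There is no genuine obstacle here; the only points requiring a little care are the formal manipulation of real and imaginary parts of tempered distributions and the justification that $\mathcal F$, $\mathcal C_{0}$ and conjugation commute in the required way on $\mathscr S'(\R^{3},\C^{3})$, both of which follow immediately from the definitions collected in this Appendix, the substantive content being the one-line computation on $\mathscr S(\R^{3},\C^{3})$ described above.
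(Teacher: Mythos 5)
Your proposal is correct and follows essentially the same route as the paper: both reduce the statement to the identity $\widehat{\overline w}(\xi)=\overline{\hat w(-\xi)}$ (verified on $\mathscr S$ and transported to $\mathscr S'$ by duality), and then read off the parity of $\re(\hat w)$ and $\im(\hat w)$ from the equivalence of reality of $w$ with $\hat w=\mathcal C_{0}\overline{\hat w}$. The paper phrases this by computing $2i\im(w)=w-\overline w$ directly rather than invoking injectivity of $\mathcal F$, but that is only a cosmetic difference.
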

\begin{proof}
 Let $w$ be in  $\mathscr S(\R^{3},\C^{3})$. We then have 
\begin{align*}
&(2\pi)^{3/2} 2i\im(w(x))=2i\im\Bigl(\int e^{ix\cdot \xi}\hat w(\xi) d\xi\Bigr)
 \\
 &=\int \bigl[e^{ix\cdot \xi}\hat w(\xi)-e^{-ix\cdot \xi}\overline{\hat w(\xi)}\bigr] d\xi
=\int e^{ix\cdot \xi}\bigl[\hat w(\xi)-\overline{\hat w(-\xi)}\bigr] d\xi.
\end{align*}
 As a consequence, $w$ is real-valued is equivalent to the identity
 $
 \hat w(\xi)=\overline{\hat w(-\xi)},
 $
 that is to the identity
 $$
 \re(\hat w(\xi))+i\im(\hat w(\xi))=\re(\hat w(-\xi))-i\im(\hat w(-\xi)),
 $$
 proving the lemma for $w\in \mathscr S(\R^{3},\C^{3})$.
 Let $w$ be in $\mathscr S'(\R^{3},\C^{3})$; we define $\overline w$ by 
 $$
 \poscal{\overline w}{\phi}_{\mathscr S'(\R^{3},\C^{3}), \mathscr S(\R^{3},\C^{3})}
 = \overline{\poscal{w}{\overline \phi}}_{\mathscr S'(\R^{3},\C^{3}), \mathscr S(\R^{3},\C^{3})},
 $$
and we have 
 $$
2i\im(w)=w-\overline w=\mathcal F^{-1}\bigl[\mathcal F w-\overline{ \mathcal C_{0}\mathcal Fw}\bigr],
 $$
 so that the reality of $w$ is equivalent to $\mathcal F w=\overline{ \mathcal C_{0}\mathcal Fw}$,
 which gives the same result as above and the lemma.
\end{proof}
\begin{nb}
 It is  important to stay with real-valued vector fields
 as solutions of the Navier-Stokes system, but these vector fields may have complex-valued Fourier transforms and the above lemma is providing the way to keep track of the reality of $w(x)$ via some algebraic properties of the real part and imaginary part of $\hat w$.
\end{nb}
\begin{lemma}\label{lem.kj98}
 Let $a,b,$ be vector fields in $\R^{3}$ such that $a\in L^{p}(\R^{3}, \R^{3}), b\in L^{q}(\R^{3}, \R^{3})$ with 
 $p,q\in [1,+\io]$ and 
 $\frac1p+\frac1q\le 1$. Then $a\times b$ belongs to $L^{r}(\R^{3}, \R^{3})$ with 
 $\frac1p+\frac1q=\frac1r$ and we have 
 $$
 \norm{a\times b}_{L^{r}}\le \norm{a}_{L^{p}}\norm{b}_{L^{q}}.
 $$
\end{lemma}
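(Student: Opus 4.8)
The plan is to reduce the statement to the classical Hölder inequality by means of a pointwise bound. The first step is to record the elementary identity of Lagrange,
\[
\val{(a\times b)(x)}^{2}=\val{a(x)}^{2}\val{b(x)}^{2}-\proscal3{a(x)}{b(x)}^{2}\le \val{a(x)}^{2}\val{b(x)}^{2},
\]
valid for a.e.\ $x\in\R^{3}$, which shows at once that $a\times b$ is measurable and that it suffices to estimate the scalar function $x\mapsto \val{a(x)}\,\val{b(x)}$ in $L^{r}(\R^{3})$.

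Assume first $p,q\in[1,+\io)$. The hypothesis $\frac1p+\frac1q=\frac1r$ then reads $\frac rp+\frac rq=1$, so that $p/r$ and $q/r$ are conjugate exponents and we may pair $\val a^{r}\in L^{p/r}$ with $\val b^{r}\in L^{q/r}$ by Hölder's inequality:
\[
\int_{\R^{3}}\val{a(x)}^{r}\val{b(x)}^{r}\,dx\le\Bigl(\int_{\R^{3}}\val{a}^{p}\Bigr)^{r/p}\Bigl(\int_{\R^{3}}\val{b}^{q}\Bigr)^{r/q}=\norm{a}_{L^{p}}^{r}\,\norm{b}_{L^{q}}^{r}.
\]
Taking $r$-th roots and combining with the pointwise bound from the first step yields $\norm{a\times b}_{L^{r}}\le\norm{a}_{L^{p}}\norm{b}_{L^{q}}$.

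It remains to dispose of the endpoint cases so that one never manipulates an infinite exponent. If exactly one of $p,q$ is $+\io$, say $q=+\io$, then $r=p$ and $\val{a(x)}\val{b(x)}\le\norm{b}_{L^{\io}}\val{a(x)}$ a.e., giving $\norm{a\times b}_{L^{p}}\le\norm{a}_{L^{p}}\norm{b}_{L^{\io}}$ directly; if $p=q=+\io$ then $r=+\io$ and the pointwise bound alone gives $\norm{a\times b}_{L^{\io}}\le\norm{a}_{L^{\io}}\norm{b}_{L^{\io}}$. I do not foresee any genuine obstacle: the only points that require a line of care are the reduction to a scalar estimate via Lagrange's identity and the bookkeeping of the exponents $p/r$, $q/r$, together with the separate treatment of the $L^{\io}$ endpoints.
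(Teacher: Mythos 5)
Your proof is correct, but it takes a genuinely different route from the paper. You reduce everything to the pointwise Lagrange identity $\val{a\times b}^{2}=\val a^{2}\val b^{2}-\proscal3{a}{b}^{2}\le \val a^{2}\val b^{2}$ and then apply the scalar H\"older inequality with exponents $p/r$ and $q/r$ (treating the infinite-exponent endpoints separately). The paper instead computes $\norm{a\times b}_{L^{r}}$ by duality against $L^{r'}$, rewrites the pairing $\proscal3{a\times b}{c}$ as the determinant $\det(a,b,c)$, and invokes Hadamard's inequality $\val{C_{1}\wedge\dots\wedge C_{N}}\le\prod_{j}\norm{C_{j}}_{\R^{N}}$ before applying the three-factor H\"older inequality. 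Both arguments deliver the constant $1$; yours is the more elementary, since it avoids the duality characterization of the $L^{r}$ norm (and hence any case discussion about which exponents that characterization covers), while the paper's determinant formulation is stylistically aligned with the cancellation identities $\det(\rot v,v,v)=0$ used throughout and extends verbatim to $N$-fold exterior products in $\R^{N}$. Either way the statement is established; your endpoint bookkeeping for $p$ or $q$ equal to $+\io$ is also handled correctly.
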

\begin{proof}
 This lemma is an obvious consequence of H\"older's inequality, except for the constant 1 in the right-hand-side of the inequality.
 Recalling Hadamard's inequality for $N$ vectors $C_{1}, \dots, C_{N}$ in $\R^{N}$ equipped with its canonical Euclidean norm, we have 
 \begin{equation}\label{had001}
\val{C_{1}\wedge\dots\wedge C_{N}}\le \prod_{1\le j\le N}\norm{C_{j}}_{\R^{N}}.
\end{equation}
We have thus
\begin{multline*}
\norm{a\times b}_{L^{r}(\R^{3})}=\sup_{\substack{   c\in L^{r'}(\R^{3})  \\ \norm{c}_{ L^{r'}} =1 }}
\Val{\int_{\R^{3}}\proscal3{a(x)\times b(x)}{c(x)} dx}
\\
=\sup_{\substack{   c\in L^{r'}(\R^{3})  \\ \norm{c}_{ L^{r'}} =1 }}
\Val{\int_{\R^{3}}\det\bigl({a(x), b(x), c(x)}\bigr) dx}
\\
\le 
\sup_{\substack{   c\in L^{r'}(\R^{3})  \\ \norm{c}_{ L^{r'}} =1 }}
{\int_{\R^{3}}\norm{a(x)}_{\R^{3}} \norm{b(x)}_{\R^{3}}\norm{c(x)}_{\R^{3}}dx}
\le \norm{a}_{L^{p}}\norm{b}_{L^{q}},
\end{multline*}
 concluding the proof.
 \end{proof}
\begin{lemma}
Let $a,b,$ be vector fields in $\R^{3}$ such that
 $a\in L^{p}(\R^{3}, \R^{3}), b\in L^{q}(\R^{3}, \R^{3})$ with 
 $p,q\in [1,+\io]$ and 
 $\frac1{p'}+\frac1{q'}\le 1.$
 We define
\begin{equation}\label{}
(a\star b)(x)=\int_{\R^{3}}a(x-y)\times b(y)dy(2\pi)^{-\frac 32},
\end{equation}
and we have $a\star b\in L^{r}$ with $\frac1{p'}+\frac1{q'}=\frac1{r'}$
as well as
 $$
 \norm{a\star b}_{L^{r}}\le \norm{a}_{L^{p}}\norm{b}_{L^{q}}(2\pi)^{-3/2}.
 $$
\end{lemma}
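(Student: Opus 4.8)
The plan is to recognize this statement as the Young convolution inequality, transported to the cross‑product convolution, and to reduce it to the scalar case by a pointwise domination. First I would record the elementary bound $\norm{u\times w}_{\R^{3}}\le \norm{u}_{\R^{3}}\norm{w}_{\R^{3}}$ for $u,w\in\R^{3}$, a special case of Hadamard's inequality \eqref{had001} (or simply $\norm{u\times w}_{\R^{3}}^{2}=\norm{u}_{\R^{3}}^{2}\norm{w}_{\R^{3}}^{2}-\proscal3{u}{w}^{2}\le \norm{u}_{\R^{3}}^{2}\norm{w}_{\R^{3}}^{2}$). Inserting this under the integral sign in the definition of $a\star b$, and setting $\alpha(x)=\norm{a(x)}_{\R^{3}}\in L^{p}(\R^{3})$, $\beta(x)=\norm{b(x)}_{\R^{3}}\in L^{q}(\R^{3})$, one obtains
\[
\norm{(a\star b)(x)}_{\R^{3}}\le (2\pi)^{-3/2}\int_{\R^{3}}\alpha(x-y)\,\beta(y)\,dy=(2\pi)^{-3/2}\,(\alpha\ast\beta)(x),
\]
where $\ast$ is the ordinary scalar convolution, $\norm{\alpha}_{L^{p}}=\norm{a}_{L^{p}}$ and $\norm{\beta}_{L^{q}}=\norm{b}_{L^{q}}$.

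Next I would carry out the exponent bookkeeping: writing $\frac1{p'}=1-\frac1p$, etc., the relation $\frac1{p'}+\frac1{q'}=\frac1{r'}$ is exactly the classical Young relation $1+\frac1r=\frac1p+\frac1q$, while the hypothesis $\frac1{p'}+\frac1{q'}\le 1$ is precisely $\frac1{r'}\le 1$, i.e. $1\le r\le\io$ ($r\ge 1$ being automatic since $p,q\ge 1$). It then suffices to invoke the scalar Young inequality $\norm{\alpha\ast\beta}_{L^{r}}\le\norm{\alpha}_{L^{p}}\norm{\beta}_{L^{q}}$, which together with the display above yields $\norm{a\star b}_{L^{r}}\le (2\pi)^{-3/2}\norm{a}_{L^{p}}\norm{b}_{L^{q}}$, the announced estimate; only the non‑sharp constant $1$ is claimed, so no recourse to the sharp Beckner constant is needed. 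There is no genuine obstacle here — the only points requiring attention are this bookkeeping, the endpoint cases where $p$ or $q$ equals $1$ or $\io$ (covered uniformly by scalar Young, or checked directly), and the remark that the factor $(2\pi)^{-3/2}$ is exactly the normalization in the definition of $\star$, with no further loss.

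For a self‑contained argument in the spirit of Lemma \ref{lem.kj98}, I would instead argue by duality. Since $r\in[1,\io]$, $\norm{a\star b}_{L^{r}}$ is the supremum of $\Val{\int_{\R^{3}}\proscal3{(a\star b)(x)}{c(x)}\,dx}$ over $c\in L^{r'}(\R^{3})$ with $\norm{c}_{L^{r'}}=1$; by Fubini this pairing equals $(2\pi)^{-3/2}\iint\det\bigl(a(x-y),b(y),c(x)\bigr)\,dy\,dx$, whose integrand is bounded by $\norm{a(x-y)}_{\R^{3}}\norm{b(y)}_{\R^{3}}\norm{c(x)}_{\R^{3}}$ via Hadamard's inequality \eqref{had001}. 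One then applies the three‑function estimate
\[
\iint_{\R^{3}\times\R^{3}}F(x-y)\,G(y)\,H(x)\,dy\,dx\ \le\ \norm{F}_{L^{\alpha}}\norm{G}_{L^{\beta}}\norm{H}_{L^{\gamma}},\qquad \tfrac1\alpha+\tfrac1\beta+\tfrac1\gamma=2,
\]
with $(\alpha,\beta,\gamma)=(p,q,r')$ (one checks $\frac1p+\frac1q+\frac1{r'}=2$ from the exponent relation). That estimate is itself proved by writing each of $F(x-y)$, $G(y)$, $H(x)$ as a product of three suitable powers and applying Hölder's inequality with three exponents — which is the standard route to Young's inequality with constant one — and this completes the proof.
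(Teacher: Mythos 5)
Your proof is correct and takes essentially the same route as the paper, whose entire argument is ``an obvious consequence of Young's inequality\dots using again \eqref{had001}'': your pointwise domination $\val{a(x-y)\times b(y)}\le \val{a(x-y)}\,\val{b(y)}$ followed by the scalar Young inequality is exactly what the paper leaves implicit, and your exponent bookkeeping ($\frac1{p'}+\frac1{q'}=\frac1{r'}$ being the usual relation $1+\frac1r=\frac1p+\frac1q$) is right. The duality variant you add is a harmless elaboration in the spirit of the paper's proof of Lemma \ref{lem.kj98} and is not needed.
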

\begin{proof}
 This lemma is an obvious consequence of Young's inequality and we obtain readily the result by using again \eqref{had001}.
\end{proof}
\begin{remark}\rm Let $a,b$ be satisfying the assumptions of Lemma \ref{lem.kj98} with $p=q=2$.
 We have $\hat a\in L^{2}, \hat b\in L^{2}$ and $(\hat a\oast\hat b)\in \mathcal W\subset C^{0}_{(0)}\subset L^{\io}$ so that \begin{gather}\label{}
\mathcal F\bigl( a\times b\bigr)(\xi)=(\hat a\star\hat b)(\xi).\end{gather}
\end{remark}
\subsubsection*{\underline{Functions with limit $0$ at infinity}}
\begin{definition}\label{def.417ooo}
 Let $u$ be in $L^{1}_{\text{\rm loc}}(\R^{d})$. We shall say that $u$ goes to 0 at infinity if 
 \begin{equation}\label{fds555+}
\forall t>0, \quad \Val{\{x\in \R^{d}, \val{u(x)}>t\}}<+\io,
\end{equation}
where $\val{A}$ stands for the Lebesgue measure of $A$
on $\R^{d}$.
\end{definition}
\begin{lemma}\label{lem.jhgf}
 The set 
 \begin{equation}\label{l0inft}
\mathcal L_{(0)}(\R^{d})=\{u\in L^{1}_{\text{\rm loc}}(\R^{d}), \text{such that }
\eqref{fds555+} \text{ holds true}\},
\end{equation}
 is a vector space.
Moreover, for any $p\in [1,\io)$,
\begin{align}\label{}
\mathcal L^{[p]}_{(0)}(\R^{d})&=\bigl\{u\in L^{1}_{\text{\rm loc}}(\R^{d}), \text{$\exists R_{0}\ge 0$ so that } u\in L^{p}(\{\val x\ge  R_{0}\})\bigr\},
\label{lp0000}
\\
\mathcal L^{[\io]}_{(0)}(\R^{d})&=\bigl\{u\in L^{1}_{\text{\rm loc}}(\R^{d}), \text{$\exists R_{0}\ge 0$ so that  } u\in L^{\io}(\{\val x\ge  R_{0}\})
\label{linfi0}\\
&\text{\hskip155pt and } \lim_{R\rightarrow +\io}
\norm{u}_{L^{\io}(\{\val x\ge R\})}=0\bigr\},
\notag\end{align}
are both subspaces of $\mathcal L_{(0)}(\R^{d})$.
\end{lemma}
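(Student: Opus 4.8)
The plan is to verify the vector-space axioms by hand, using only that superlevel sets behave subadditively under addition, together with Markov's inequality for the $L^{p}$ family and the decay hypothesis for the $L^{\io}$ family.

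First I would show that $\mathcal L_{(0)}(\R^{d})$ is a vector space. Since it is by definition a subset of the vector space $L^{1}_{\text{\rm loc}}(\R^{d})$, it suffices to check stability under scalar multiplication and under addition. For $\lambda\in\R\setminus\{0\}$ and $u\in\mathcal L_{(0)}(\R^{d})$, the set identity
$$
\{x\in\R^{d},\ \val{\lambda u(x)}>t\}=\{x\in\R^{d},\ \val{u(x)}>t/\val{\lambda}\}
$$
shows that \eqref{fds555+} holds for $\lambda u$, the case $\lambda=0$ being trivial. For $u,v\in\mathcal L_{(0)}(\R^{d})$ and $t>0$, the triangle inequality yields
$$
\{x\in\R^{d},\ \val{u(x)+v(x)}>t\}\subset\{x\in\R^{d},\ \val{u(x)}>t/2\}\cup\{x\in\R^{d},\ \val{v(x)}>t/2\},
$$
whose right-hand side has finite Lebesgue measure by hypothesis; hence $u+v\in\mathcal L_{(0)}(\R^{d})$.

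Next, for $p\in[1,\io)$, I would prove $\mathcal L^{[p]}_{(0)}(\R^{d})\subset\mathcal L_{(0)}(\R^{d})$: given $u$ with $u\in L^{p}(\{\val x\ge R_{0}\})$ and $t>0$, split the superlevel set $\{\val{u(x)}>t\}$ according to $\val x<R_{0}$ or $\val x\ge R_{0}$. The first piece lies in the ball $R_{0}\mathbb B^{d}$, hence has finite measure; the second has measure at most $t^{-p}\int_{\{\val x\ge R_{0}\}}\val{u(x)}^{p}\,dx<+\io$ by Markov's inequality. For the $L^{\io}$ family the inclusion $\mathcal L^{[\io]}_{(0)}(\R^{d})\subset\mathcal L_{(0)}(\R^{d})$ is even more direct: given $t>0$, the hypothesis $\lim_{R\to+\io}\norm{u}_{L^{\io}(\{\val x\ge R\})}=0$ provides an $R$ with $\norm{u}_{L^{\io}(\{\val x\ge R\})}<t$, so $\{\val{u(x)}>t\}$ is, up to a null set, contained in $R\mathbb B^{d}$.

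Finally, to see that $\mathcal L^{[p]}_{(0)}(\R^{d})$ and $\mathcal L^{[\io]}_{(0)}(\R^{d})$ are \emph{sub}spaces, I would note that both are contained in the vector space $\mathcal L_{(0)}(\R^{d})$ just constructed, so only closure under linear combinations remains; this reduces to the fact that, for a fixed radius $R$, $L^{p}(\{\val x\ge R\})$ (resp.\ $L^{\io}(\{\val x\ge R\})$) is a vector space, applied outside $R_{0}=\max(R_{1},R_{2})$ when $u,v$ are respectively $L^{p}$ outside radii $R_{1},R_{2}$; in the $L^{\io}$ case one additionally uses $\norm{u+v}_{L^{\io}(\{\val x\ge R\})}\le\norm{u}_{L^{\io}(\{\val x\ge R\})}+\norm{v}_{L^{\io}(\{\val x\ge R\})}$ to preserve the decay condition. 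There is no real obstacle in this lemma; the only points warranting a line of care are the bookkeeping of the two possibly different radii and the passage, via Markov's inequality, from an integrability statement near infinity to a measure bound on superlevel sets.
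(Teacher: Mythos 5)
Your proof is correct. The verification that $\mathcal L_{(0)}(\R^{d})$ is a vector space and the inclusion $\mathcal L^{[p]}_{(0)}\subset\mathcal L_{(0)}$ via Markov's inequality after splitting the superlevel set at radius $R_{0}$ are essentially identical to the paper's argument. Where you diverge is the case $p=\io$: the paper decomposes $\{\val{u(x)}>t\}$ into annular shells $\{k\le \val x<k+1\}$, introduces the decreasing function $\varepsilon(R)=\norm{u}_{L^{\io}(\{\val x\ge R\})}$, and argues that only finitely many shells can contribute because $\varepsilon(k)<t$ for $k$ large. Your route is more direct and, frankly, cleaner: choose a single $R$ with $\norm{u}_{L^{\io}(\{\val x\ge R\})}<t$, so that $\{\val{u(x)}>t\}$ is contained, up to a null set, in $R\mathbb B^{d}$. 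The two arguments prove the same thing; yours avoids the shell bookkeeping entirely. You also make explicit something the paper leaves implicit, namely that $\mathcal L^{[p]}_{(0)}$ and $\mathcal L^{[\io]}_{(0)}$ are closed under linear combinations (taking the larger of the two radii, and using subadditivity of the $L^{\io}$ norm near infinity to preserve the decay condition), which is the literal content of the word ``subspace'' in the statement.
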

\begin{proof}
Indeed, if $u_{1},u_{2}$ belong to $\mathcal L_{(0)}(\R^{d})$, $\lambda_{1},\lambda_{2}$ are scalars, we have for $t>0$,
\begin{multline*}
\{x, \val{\lambda_{1}u_{1}(x)+\lambda_{2}u_{2}(x)}>t\}
\subset
\{x, \val{\lambda_{1}}\val{u_{1}(x)}>t/2\}\cup
\{x, \val{\lambda_{2}}\val{u_{2}(x)}>t/2\}
\\\text{\tiny (assuming both $\lambda_{j}\not=0$)}
=
\{x, \val{u_{1}(x)}>\frac{t}{2\val{\lambda_{1}}}\}
\cup
\{x, \val{u_{2}(x)}>\frac{t}{2\val{\lambda_{2}}}\},
\end{multline*}
where both sets have finite measure. If $\lambda_{1}=0$ (resp. $\lambda_{2}=0$), then 
$\{\val{\lambda_{1}}\val{u_{1}(x)}>t/2\}=\emptyset$
(resp.$\{\val{\lambda_{2}}\val{u_{2}(x)}>t/2\}=\emptyset$),
and we get the sought vector space property.
Moreover if $u$ belongs to the space \eqref{lp0000},
we have for $t>0$, with $K_{0}=R_{0}\mathbb B^{d}$,
$E=\{x\in \R^{d}, \val{u(x)}>t\}$ that $$
\val{E}=\underbracket[0.2pt]{\val{E\cap K_{0}}}_{\text{finite}}+\val{E\cap K_{0}^{c}},$$
and 
$$
\val{E\cap K_{0}^{c}}\le  \int_{E\cap K_{0}^{c}}\val{u(x)}^{p}t^{-p}dx\le t^{-p}\norm{u}_{L^{p}(E\cap K_{0}^{c})}^{p}<+\io,
$$
proving our claim for \eqref{lp0000}.
 If $u$ belongs to the space \eqref{linfi0},
we  define
$$
\varepsilon(R)=\norm{u}_{L^{\io}(\{\val x\ge R\})},
$$
for which we know that $\lim_{R\rightarrow+\io}\varepsilon(R)=0$ and $\varepsilon$ is decreasing.
 We have for $t>0$ given,
 \begin{multline*}
 \{\val{u(x)}>t\}\subset
(R_{0}+1)\mathbb B^{d}
 \cup_{k> [R_{0}]=k_{0}} \bigl\{\val{u(x)}>t, k\le \val x< k+1\bigr\}
 \\
 \subset (R_{0}+1)\mathbb B^{d}\cup_{k> k_{0}} \Bigl[\bigl\{t<\val{u(x)}\le \varepsilon(k), k\le \val x<k+1\bigr\}\cup N_{k}\Bigr],
\end{multline*}
where the $N_{k}$ have measure 0.
We note that there is an integer $k_{t}$ such that
$\varepsilon(k_{t})<t$ since $t>0$ and 
$\lim_{k\rightarrow+\io}\varepsilon(k)=0.$
Since $\varepsilon$ is decreasing, we find
$$
\Val{\{\val{u(x)}>t\}\backslash (R_{0}+1)\mathbb B^{d}}
\le  
\sum_{k_{0}\le k<k_{t}}\Val {\{t<\val{u(x)}\le \varepsilon(k), k\le \val x<k+1\}}<+\io,
$$
proving the sought property for \eqref{linfi0}.
\end{proof}
\begin{definition}\label{def.145gfd}
Let $p\in [1,+\io]$. We define the space 
$L^{p}_{(0)}$ as the set of $v\in L^{1}_{\text{loc}}(\R^{3},\R^{3})$ such that 
there exists $R_{0}\ge 0$ so that 
$v\in (L^{p}\cap L^{\io})(\{\val x\ge R_{0}\},\R^{3})$
and 
$$
\lim_{R\rightarrow+\io}\norm{v}_{L^{\io}(\{\val x\ge R \}, \R^{3})}=0.
$$
 \end{definition}
 Note that \eqref{hyp001plu} is obviously satisfied  for the functions in $L^{p}_{(0)}$ so that we have
 $$
 \cup_{1\le p\le +\io}L^{p}_{(0)}\subset \cup_{1\le p\le +\io}\mathcal L_{(0)}^{{[p]}}\subset \mathcal L_{(0)}.
 $$
 The following proposition is obvious.
\begin{proposition}Let $p_{1}, p_{2}\in [1,+\io]$ such that $p_{1}\le p_{2}$.
 Then we have 
 $
 L^{p_{1}}_{(0)}\subset L^{p_{2}}_{(0)}.
 $
 Moreover, if $v\in L^{p}_{(0)}$ for some $p\in [1,+\io]$, Property \eqref{hyp001plu} holds true, which implies that \eqref{hyp001} is satisfied, thanks to \eqref{541hgx}.
 \end{proposition}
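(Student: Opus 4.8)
The plan is to reduce both assertions directly to Definition \ref{def.145gfd} and to the already-established chain of implications \eqref{541hgx}; no new idea is needed. For the inclusion $L^{p_1}_{(0)}\subset L^{p_2}_{(0)}$ with $p_1\le p_2$, I would start from $v\in L^{p_1}_{(0)}$, pick the radius $R_0\ge 0$ furnished by Definition \ref{def.145gfd}, and set $E=\{x\in\R^3,\ \val x\ge R_0\}$, so that $v\in L^{p_1}(E)\cap L^{\io}(E)$ and $\lim_{R\to+\io}\norm{v}_{L^{\io}(\{\val x\ge R\})}=0$. If $p_2=+\io$ there is nothing to prove, since $L^{p_2}\cap L^{\io}=L^{\io}$. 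For $p_2<+\io$, the elementary interpolation estimate
\[
\norm{v}_{L^{p_2}(E)}\le \norm{v}_{L^{p_1}(E)}^{p_1/p_2}\,\norm{v}_{L^{\io}(E)}^{1-p_1/p_2}
\]
(obtained from $\int_E\val v^{p_2}\le \norm{v}_{L^{\io}(E)}^{p_2-p_1}\int_E\val v^{p_1}$) shows that $v\in L^{p_2}(E)\cap L^{\io}(E)$. Since the decay condition at infinity only concerns the $L^{\io}$ norm of $v$ on $\{\val x\ge R\}$, which is untouched, the vector field $v$ satisfies Definition \ref{def.145gfd} with exponent $p_2$ and the same radius $R_0$; hence $v\in L^{p_2}_{(0)}$.

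For the \emph{moreover} part, I would simply unwind Definition \ref{def.145gfd}: if $v\in L^{p}_{(0)}$ for some $p\in[1,+\io]$, then $v\in L^1_{\text{loc}}(\R^3,\R^3)$, there is $R_0\ge 0$ with $v\in L^{\io}(\{\val x\ge R_0\})$, and $\lim_{R\to+\io}\norm{v}_{L^{\io}(\{\val x\ge R\})}=0$. These are precisely the three clauses defining Property \eqref{hyp001plu}, so \eqref{hyp001plu} holds for $v$. The implication $\eqref{hyp001plu}\Rightarrow\eqref{hyp001}$ is part of the already recorded chain \eqref{541hgx}, which completes the argument.

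There is essentially no obstacle: the sole nontrivial ingredient is the routine embedding $L^{p_1}\cap L^{\io}\hookrightarrow L^{p_2}$ on $E$, and the temperedness requirement $v\in\mathscr S'(\R^3,\R^3)$ implicit in \eqref{hyp001} is automatic here, since $v$ is the sum of the $L^1$ function $v\mathbf 1_{\{\val x\le R_0\}}$ and the $L^{\io}$ function $v\mathbf 1_{\{\val x\ge R_0\}}$, both of which are tempered distributions — and in any case this point is already subsumed in \eqref{541hgx}.
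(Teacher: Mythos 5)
Your proof is correct and follows exactly the routine argument the paper has in mind (the paper labels this proposition as obvious and gives no proof): the interpolation $\norm{v}_{L^{p_2}(E)}\le \norm{v}_{L^{\io}(E)}^{1-p_1/p_2}\norm{v}_{L^{p_1}(E)}^{p_1/p_2}$ on the exterior region handles the inclusion, and the \emph{moreover} part is a direct unwinding of Definition \ref{def.145gfd} combined with the recorded chain \eqref{541hgx}. Your remark that temperedness is automatic (splitting $v$ into an $L^{1}$ piece on the ball and an $L^{\io}$ piece outside) is a worthwhile explicit check of a point the paper leaves implicit.
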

\begin{lemma}\label{cla.548uyt}
 Let $P$ be a polynomial in $\mathcal L^{(0)}(\R^{d})$. Then $P$ is the zero polynomial.
\end{lemma}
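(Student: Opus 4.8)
The plan is to argue by contradiction, supposing that $P$ is a nonzero polynomial lying in $\mathcal L^{(0)}(\R^{d})$, i.e. satisfying the weak decay condition \eqref{fds555+}. The key point is that a nonzero polynomial is bounded below, in absolute value, on a set of infinite Lebesgue measure, which directly contradicts \eqref{fds555+}.

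First I would write $P$ as a nonzero polynomial of degree $m\ge 0$. If $m=0$ then $P$ is a nonzero constant $c$, and $\{x\in\R^{d}, \val{P(x)}>\val c/2\}=\R^{d}$ has infinite measure, contradicting \eqref{fds555+} with $t=\val c/2$. So we may assume $m\ge 1$. Now pick a point $x_{0}\in\R^{d}$ and a direction $\theta\in S^{d-1}$ such that the one-variable polynomial $t\mapsto P(x_{0}+t\theta)$ is not identically zero — such a choice exists because $P\not\equiv 0$. Along this line the restriction is a nonzero polynomial in one real variable, hence tends to $+\io$ in modulus as $t\to+\io$; in particular there exist $t_{0}>0$ and $\delta_{0}>0$ with $\val{P(x_{0}+t\theta)}\ge \delta_{0}$ for all $t\ge t_{0}$.

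Next I would upgrade this one-dimensional lower bound to a lower bound on a set of infinite measure, using continuity of $P$. For each $t\ge t_{0}$ the point $y_{t}=x_{0}+t\theta$ satisfies $\val{P(y_{t})}\ge\delta_{0}$, so by continuity there is a radius $r_{t}>0$ with $\val{P(y)}\ge \delta_{0}/2$ on the ball $B(y_{t},r_{t})$. The union $\bigcup_{t\ge t_{0}}B(y_{t},r_{t})$ contains the whole ray $\{x_{0}+t\theta: t\ge t_{0}\}$, which is unbounded, hence this union has infinite Lebesgue measure (for instance it contains, for every integer $k\ge t_{0}$, a ball of positive radius around $x_{0}+k\theta$, and these infinitely many disjoint-enough balls already force infinite measure; more simply, an open set containing an unbounded connected set is unbounded, and one checks it has infinite measure). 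Therefore $\{x\in\R^{d}, \val{P(x)}>\delta_{0}/4\}$ has infinite Lebesgue measure, contradicting \eqref{fds555+} with $t=\delta_{0}/4$. Hence no such nonzero $P$ exists and $P$ is the zero polynomial.

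The only mildly delicate point is the measure-theoretic step: making precise that the modulus of a nonzero polynomial exceeds a fixed positive constant on a set of infinite measure. I would handle this cleanly by the observation above — restrict to a generic line, get a genuine one-variable nonzero polynomial, use that its modulus diverges along the line, and then thicken the ray slightly using continuity. An equally valid and perhaps shorter alternative is to invoke that $\val{P}^{-1}([0,\delta_{0}/2])$ is a closed set whose complement contains a translated ray, and a closed proper algebraic-type hypersurface neighbourhood cannot have full measure on that ray's tube; but the elementary thickening argument avoids any appeal to structure theory and is the route I would take in the write-up.
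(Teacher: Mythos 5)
Your overall strategy is the paper's: assume $P\not\equiv 0$ and exhibit a threshold $t>0$ for which $\{x,\ \val{P(x)}>t\}$ has infinite Lebesgue measure, contradicting \eqref{fds555+}. But the step you yourself flag as ``the only mildly delicate point'' is exactly where the argument breaks. Thickening the ray by qualitative continuity gives balls $B(y_{t},r_{t})$ with no lower bound on $r_{t}$, and neither of your justifications for infinite measure is valid: infinitely many disjoint balls of positive radius can have finite total measure (take $r_{k}=2^{-k}$, so $\sum_{k}c_{d}r_{k}^{d}<\io$), and an open set containing an unbounded connected set can perfectly well have finite measure --- in $\R^{2}$ the open set $\{(x_{1},x_{2}):\val{x_{2}}<e^{-x_{1}^{2}}\}$ contains the whole $x_{1}$-axis yet has measure $2\sqrt{\pi}$. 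Even a quantitative version of your tube does not obviously suffice: since $\val{\nabla P}$ can grow like $\val{x}^{m-1}$, the radius on which continuity guarantees $\val{P}\ge\delta_{0}/2$ may decay like $t^{-(m-1)}$, and $\int t^{-(m-1)(d-1)}\,dt$ converges as soon as $(m-1)(d-1)>1$. So the set you actually control may have finite measure, and the contradiction is not reached as written.

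The paper closes this gap by not working along a single line at all. Write $P=P_{m}+P_{m-1}$ with $P_{m}$ the nonzero top-degree homogeneous part, pick $\omega_{0}\in\mathbb S^{d-1}$ with $P_{m}(\omega_{0})\neq 0$, and let $V_{0}$ be the neighbourhood of $\omega_{0}$ in the sphere on which $\val{P_{m}}\ge\frac12\val{P_{m}(\omega_{0})}$. Homogeneity then gives $\val{P(x)}\ge\frac12\val{P_{m}(\omega_{0})}\val{x}^{m}-\sigma_{0}(1+\val{x})^{m-1}>\gamma_{0}$ on the truncated cone $\Gamma_{0}=\{\val{x}\ge R_{0},\ x/\val{x}\in V_{0}\}$, whose measure $\int_{R_{0}}^{\io}r^{d-1}\val{V_{0}}_{d-1}\,dr$ is infinite because the cross-section does not shrink. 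If you want to keep a line-based presentation, you must replace the single ray by this cone of directions (equivalently, exploit the homogeneous leading term to get a lower bound that is uniform over an open set of directions); the purely local thickening of one ray cannot be repaired.
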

\begin{proof}
{\sl Reductio as absurdum:} let us assume that $P$ is not the zero polynomial,
i.e. for some $m\in \N$,
$$
P(x)=\underbrace{\sum_{\substack{\alpha\in \N^{d}\\ \val \alpha=m}} c_{\alpha}x^{\alpha}}_{P_{m}(x)}+
\underbrace{\sum_{\substack{\alpha\in \N^{d}\\ \val \alpha\le m-1}} c_{\alpha}x^{\alpha}}_{P_{m-1}(x)},
\quad \text{$P_{m}$ is not the zero polynomial}.
$$
 Since $P_{m}$ is not the zero polynomial, there exists $\omega_{0}\in \mathbb S^{d-1}$ such that
 $P_{m}(\omega_{0})\not=0$, and thus 
 $$
 V_{0}=\{\omega\in \mathbb S^{d-1}, \val{P_{m}(\omega)}\ge \frac12\val{P_{m}(\omega_{0})}\}
 \text{ is a neighborhood of $\omega_{0}$ in $\mathbb S^{d-1}$.}
 $$
 Thus for $x\in \R^{d}, x\not=0, \frac x{\val x}\in V_{0}$, we have 
 \begin{equation*}
 \val{P(x)}\ge \frac12\val{P_{m}(\omega_{0})}\val x^{m}-\sigma_{0}\bigl(1+\val x\bigr)^{m-1},
\end{equation*}
 so that there exists $R_{0}>0, \gamma_{0}>0$ such that
 \begin{equation}\label{54kjf9}
 x\in \Gamma_{0}=\bigl\{x\in \R^{d}, \val x\ge R_{0}, \frac x{\val x}\in V_{0}\bigr\}
 \Longrightarrow\val{P(x)}> \gamma_{0}.
\end{equation}
We have 
$$
\val{\Gamma_{0}}_{d}=\int_{R_{0}}^{+\io} r^{d-1}\val{V_{0}}_{d-1} dr=+\io,
$$
violating Property  \eqref{fds555+},
 concluding the proof of the lemma.
 \end{proof}
\begin{claim}\label{app.pro.123}
 Let $u$ be a fonction on $\R^{d}$ such that
 $$
 u=P+u_{1}+u_{2}, \quad\text{where $P$ is a polynomial, $u_{j}\in L^{p_{j}}$, $1\le p_{j}<+\io$.}
 $$
 Let us assume that $u$ belongs to $\mathcal L_{(0)}(\R^{d})$.
Then $P$ is the zero polynomial.
\end{claim}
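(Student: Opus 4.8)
The plan is to reduce everything to the two facts already available: that $\mathcal L_{(0)}(\R^{d})$ is a vector space (Lemma \ref{lem.jhgf}) and that a polynomial lying in $\mathcal L_{(0)}(\R^{d})$ must vanish identically (Lemma \ref{cla.548uyt}). So the only real content is to observe that $u_{1}$ and $u_{2}$ themselves belong to $\mathcal L_{(0)}(\R^{d})$.

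First I would note that for each $j$, since $u_{j}\in L^{p_{j}}(\R^{d})$ with $1\le p_{j}<+\io$, in particular $u_{j}$ is in $L^{1}_{\text{loc}}(\R^{d})$ and belongs to the space $\mathcal L^{[p_{j}]}_{(0)}(\R^{d})$ defined in \eqref{lp0000} (take $R_{0}=0$ there). By the inclusion $\mathcal L^{[p_{j}]}_{(0)}(\R^{d})\subset \mathcal L_{(0)}(\R^{d})$ established in Lemma \ref{lem.jhgf}, we get $u_{1},u_{2}\in \mathcal L_{(0)}(\R^{d})$. One could also argue directly: for $t>0$, Chebyshev's inequality gives $\bigl\vert\{\val{u_{j}}>t\}\bigr\vert\le t^{-p_{j}}\norm{u_{j}}_{L^{p_{j}}}^{p_{j}}<+\io$, which is exactly Property \eqref{fds555+}.

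Next, since by hypothesis $u\in \mathcal L_{(0)}(\R^{d})$ and $\mathcal L_{(0)}(\R^{d})$ is a vector space, the combination $P=u-u_{1}-u_{2}$ also lies in $\mathcal L_{(0)}(\R^{d})$. Finally, applying Lemma \ref{cla.548uyt} to the polynomial $P\in \mathcal L_{(0)}(\R^{d})$ yields that $P$ is the zero polynomial, which is the claim.

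There is no genuine obstacle here; the statement is a packaging of Lemmas \ref{lem.jhgf} and \ref{cla.548uyt}. The only point requiring a line of care is the remark that a globally $L^{p}$ function (with $p$ finite) sits inside $\mathcal L_{(0)}$, which is immediate from Chebyshev's inequality as above.
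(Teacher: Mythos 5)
Your proof is correct and follows the same route as the paper: $u_{1},u_{2}\in\mathcal L_{(0)}(\R^{d})$ by Lemma \ref{lem.jhgf} (or directly by Chebyshev), the vector space property forces $P=u-u_{1}-u_{2}\in\mathcal L_{(0)}(\R^{d})$, and Lemma \ref{cla.548uyt} concludes. Nothing to add.
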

\begin{proof}
The functions  $u, u_{1}, u_{2}$ belong to $\mathcal L^{(0)}(\R^{d})$, which is a vector space,
thanks to Lemma \ref{lem.jhgf}, so that the polynomial $P$ belongs as well  to $\mathcal L^{(0)}(\R^{d})$,
and thus is the zero polynomial, thanks to Lemma \ref{cla.548uyt}.
\end{proof}
\begin{claim}\label{app.pro.124}
 Let $\mathfrak h$ be a harmonic fonction on $\R^{d}$ such that
\begin{equation}\label{sfd654}
\mathfrak h=u_{1}+u_{2}, \quad\text{where  $u_{j}\in L^{p_{j}}$, $1\le p_{j}<+\io$.}
\end{equation}
Then $\mathfrak h=0$.
\end{claim}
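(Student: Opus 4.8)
The plan is to reduce the statement to the already established rigidity results on the class $\mathcal L_{(0)}(\R^{d})$ and to the classical fact (Remark \ref{rem.000}) that a harmonic tempered distribution is a polynomial. First I would observe that for every $p_{j}\in[1,+\io)$ we have $L^{p_{j}}(\R^{d})\subset\mathscr S'(\R^{d})$, so that $\mathfrak h=u_{1}+u_{2}$ is a tempered distribution. Since $\mathfrak h$ is harmonic, Remark \ref{rem.000} gives that $\mathfrak h$ coincides with a polynomial $P$ on $\R^{d}$.

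Next I would use the decay side of the hypothesis. By Lemma \ref{lem.jhgf}, each $u_{j}$ belongs to $\mathcal L^{[p_{j}]}_{(0)}(\R^{d})\subset\mathcal L_{(0)}(\R^{d})$, and $\mathcal L_{(0)}(\R^{d})$ is a vector space, so $\mathfrak h=u_{1}+u_{2}\in\mathcal L_{(0)}(\R^{d})$. Combining this with the previous paragraph, the polynomial $P=\mathfrak h$ lies in $\mathcal L_{(0)}(\R^{d})$; Lemma \ref{cla.548uyt} then forces $P$ to be the zero polynomial. Hence $\mathfrak h=0$, as claimed. (Equivalently, one can write $0=-P+u_{1}+u_{2}$ with $0\in\mathcal L_{(0)}(\R^{d})$ and apply Claim \ref{app.pro.123} directly to conclude $P=0$.)

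There is no genuine obstacle here: the argument is a short concatenation of results proved earlier in the Appendix. The only point that deserves a line of care is the very first step, namely that the mere membership $u_{j}\in L^{p_{j}}$ with $p_{j}$ finite already places $\mathfrak h$ in $\mathscr S'(\R^{d})$, so that the Fourier-analytic characterization of harmonic tempered distributions (as in the proof of Theorem \ref{thm.001} and Remark \ref{rem.000}) is legitimately applicable; once $\mathfrak h$ is known to be a polynomial, the conclusion is immediate from the $\mathcal L_{(0)}$ machinery.
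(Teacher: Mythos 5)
Your proposal is correct and follows essentially the same route as the paper's proof: $L^{p_{j}}\subset\mathscr S'$ makes $\mathfrak h$ tempered and hence a polynomial, and Lemma \ref{lem.jhgf} together with Lemma \ref{cla.548uyt} forces that polynomial to vanish. No issues.
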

\begin{proof}
Since it follows from the equality \eqref{sfd654} that  $\mathfrak h$ is a temperate distribution,
 the argument of Theorem 1.1 gives that $\mathfrak h$ is a polynomial.
 That polynomial belongs to $\mathcal L^{(0)}(\R^{d})$, thanks to Lemma \ref{lem.jhgf}, and using Lemma \ref{cla.548uyt}, it entails that $\mathfrak h=0$,
concluding the proof.
\end{proof}
\begin{remark}\label{rem.4578}\rm
Definition \ref{def.417ooo} is quite convenient, in particular since the vector space 
$\mathcal L^{(0)}(\R^{d})$ contains the spaces 
 \eqref{lp0000} and \eqref{linfi0}. However, it is a bit disturbing to see that the space \eqref{l0inft}
 is \emph{not} included in $\mathscr S'(\R^{d})$: Take for instance $d=1$ and $x\in \R$,
 $$
 u(x)=\sum_{k\ge 1}\mathbf 1_{[k,k+k^{-2}]}(x) k^{2}e^{(k+1)^{2}}.
 $$
We find that
$
\{x, \val{u(x)}>0\}=\cup_{k\ge 1}[k,k+k^{-2}],
$
so that 
$$
\val{\{x, \val{u(x)}>0\}}=\sum_{k\ge 1}k^{-2}=\frac{\pi^{2}}6<+\infty,
$$
proving that $u\in \mathcal L_{(0)}(\R)$. However the function $u$ is \emph{not} a tempered distribution on $\R$:
indeed we have with a function $\chi_{0}$ defined by \eqref{fun001} ($d=1$), and $N\ge 1$, 
\begin{multline}\label{54m98d}
\int u(x) \chi_{0}(x/N) e^{-x^{2}} dx=\sum_{1\le k\le 2N}\int_{[k,k+\frac1{k^{2}}]}
k^{2} e^{(k+1)^{2}-x^{2}}\chi_{0}(x/N) dx
\\\ge 
\sum_{1\le k\le N}\int_{[k,k+\frac1{k^{2}}]}
k^{2} e^{(k+1)^{2}-x^{2}}dx\ge N.
\end{multline}
On the other hand we have, with $\phi_{N}(x)= \chi_{0}(x/N) e^{-x^{2}}$ 
($\phi_{N}\in \mathscr S(\R)$),
$\Gamma(x)=e^{-x^{2}}$,
$\alpha, \beta\in \N$,
\begin{align}\label{546uty}
p_{\alpha,\beta}(\phi_{N})
&=\sup_{x\in \R}\Bigl\vert{x^{\alpha}\sum_{\beta_{1}+\beta_{2}=\beta}}
\frac{\beta!}{\beta_{1}!\beta_{2}!}\chi_{0}^{(\beta_{1})}(x/N) N^{-\beta_{1}} \Gamma^{(\beta_{2})}(x)
\Bigr\vert
\\
&\le \sum_{\beta_{1}+\beta_{2}=\beta}\frac{\beta!}{\beta_{1}!\beta_{2}!}
\norm{\chi_{0}^{(\beta_{1})}}_{L^{\io}(\R)} p_{\alpha,\beta_{2}}(\Gamma)
\notag
\\
&\le 2^{\beta}\bigl(\max_{\beta_{1}\le \beta}\norm{\chi_{0}^{(\beta_{1})}}_{L^{\io}(\R)}\bigr)
\bigl(\max_{\beta_{2}\le \beta}p_{\alpha,\beta_{2}}(\Gamma)\bigr)
=c(\beta)\max_{\beta_{2}\le \beta}p_{\alpha,\beta_{2}}(\Gamma).
\notag
\end{align}
If $u$ were a tempered distribution, we could find $C_{0}, m_{0}$
such that  for all $N\ge 1$,
$$
\val{\poscal{u}{\phi_{N}}_{\mathscr D'(\R),\mathscr D(\R)}}\le 
C_{0}\max_{\val \alpha+\val \beta\le m_{0}} p_{\alpha,\beta}(\phi_{N}),
$$
and from \eqref{54m98d},\eqref{546uty}, this would imply 
$$
N\le \val{\poscal{u}{\phi_{N}}_{\mathscr D'(\R),\mathscr D(\R)}}\le 
C_{0}\max_{\val \alpha+\val \beta\le m_{0}} p_{\alpha,\beta}(\phi_{N})
\le C_{0}\max_{\substack{\val \alpha+\val \beta\le m_{0}
\\
\beta_{2}\le \beta}} c(\beta)p_{\alpha,\beta_{2}}(\Gamma),
$$
which is impossible, since the last term in the right-hand-side does not depend on $N$.
\end{remark}
\begin{remark}\rm
 We may note also that there exists $\psi\in \moo (\R,\R)$ such that for all $l\in\N$,
 $\psi^{(l)}\in \mathcal L_{(0)}(\R)$ and for all $l\in \N$ and all $K$ compact subset of $\R$,
 $\psi^{(l)}\notin L^{\io}(K^{c}):$
 Indeed, with $\chi$ defined by \eqref{fun001} with $d=1$,
 we set
 \begin{equation}\label{}
 \psi(x)=\sum_{k\ge 2} \chi\bigl((x-2^{k}) k^{2}\bigr) k,
\end{equation}
noting that the support of $x\mapsto\chi\bigl((x-2^{k}) k^{2}\bigr)$
is
$$
J_{k}=\{x\in \R, \val{x-2^{k}}\le 2k^{-2}\}=[2^{k}-\frac2{k^{2}}, 2^{k}+\frac2{k^{2}}],
$$
so that the $(J_{k})_{k\ge 2}$ are two by two disjoint\footnote{We need to check for $k\ge 2$,
$
2^{k}+\frac2{k^{2}}<2^{k+1}-\frac2{(k+1)^{2}}.
$
This is equivalent to 
$
2(k+1)^{2}+2k^{2}<2^{k}k^{2}(k+1)^{2}.
$
Since we assume that $k\ge 2$, it is enough to verify
$
4k^{2}+4k+2<2^{2}k^{2}(k+1)^{2},
$
which is equivalent to $ 4k+2<4k^{4}+8k^{3}$,
which is true for $k\ge 1$
(since for $k\ge 1$, we have $4k^{4}\ge 4k$ and $8k^{3}> 2$).
}; the function $\psi$ is thus $\moo$ and 
$$
\psi^{(l)}(x)=\sum_{k\ge 2} \chi^{(l)}\bigl((x-2^{k}) k^{2}\bigr) k^{1+2l}.
$$
\begin{claim}
 For any $l\in \N$, we have $\psi^{(l)}\in \mathcal L_{(0)}(\R)$.
\end{claim}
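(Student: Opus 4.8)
The plan is to exploit the two features of the construction that have just been verified: the intervals
\[
J_k=\Bigl[2^{k}-\tfrac{2}{k^{2}},\,2^{k}+\tfrac{2}{k^{2}}\Bigr],\qquad k\ge 2,
\]
are pairwise disjoint — indeed $\sup J_k<\inf J_{k+1}$, as checked in the footnote above — and they have summable lengths, $\val{J_k}=4k^{-2}$. First I would record that $\psi$ is a genuine $\moo(\R,\R)$ function: any bounded subset of $\R$ meets only finitely many of the $J_k$ (since $2^{k}\to+\io$ while $\val{J_k}\to 0$), so the series $\sum_{k\ge2}\chi\bigl((x-2^{k})k^{2}\bigr)k$ is locally finite, and therefore so is the series obtained by differentiating $l$ times. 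This legitimises the formula $\psi^{(l)}(x)=\sum_{k\ge2}\chi^{(l)}\bigl((x-2^{k})k^{2}\bigr)k^{1+2l}$ and shows $\psi^{(l)}\in\moo(\R,\R)\subset L^{1}_{\text{loc}}(\R)$.

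Next, since $x\mapsto\chi^{(l)}\bigl((x-2^{k})k^{2}\bigr)$ is supported in $J_k$, the function $\psi^{(l)}$ vanishes off $\bigcup_{k\ge2}J_k$, and on each $J_k$ one has the crude bound $\val{\psi^{(l)}(x)}\le\norm{\chi^{(l)}}_{L^{\io}(\R)}\,k^{1+2l}$. Consequently, for every $t>0$,
\[
\{x\in\R:\ \val{\psi^{(l)}(x)}>t\}\subset\bigcup_{k\ge 2}J_k,
\]
so that, using the disjointness of the $J_k$,
\[
\Val{\{x\in\R:\ \val{\psi^{(l)}(x)}>t\}}\le\sum_{k\ge2}\val{J_k}=4\sum_{k\ge2}\frac{1}{k^{2}}<+\io,
\]
which is exactly Property \eqref{fds555+}. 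Together with $\psi^{(l)}\in L^{1}_{\text{loc}}(\R)$, this gives $\psi^{(l)}\in\mathcal L_{(0)}(\R)$, as claimed.

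I do not expect any real obstacle: the only point requiring a word of care is the smoothness of $\psi$ and the legitimacy of term-by-term differentiation, and this is immediate from the local finiteness of the defining series, itself a consequence of the already-established separation of the supports $J_k$. Note moreover that the measure estimate never uses the size of the weights $k$ or $k^{1+2l}$ — they enter only through the pointwise $L^{\io}$ bound on each $J_k$ — so the conclusion rests solely on $\sum_{k}\val{J_k}<+\io$, which is why the same one-line argument handles $\psi^{(l)}$ for every $l\in\N$ simultaneously.
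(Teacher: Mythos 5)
Your argument is correct and is essentially the paper's: both proofs reduce the measure estimate to the observation that $\{x:\val{\psi^{(l)}(x)}>t\}$ is contained in $\bigcup_{k\ge 2}J_{k}$, whose measure is $\sum_{k\ge 2}4k^{-2}<+\io$, independently of $t$ and of the weights $k^{1+2l}$. The preliminary remarks on local finiteness of the series and the legitimacy of term-by-term differentiation match what the paper establishes just before stating the claim.
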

Let $l\in \N$ be given and let $\chi_{l}=\val{\chi^{(l)}}$ (which is not identically 0):
the  set 
\begin{equation}\label{fad555}
\chi_{l}([-2,2])
\text{ is a compact interval with positive measure  contained in $\R_{+}$:}
\end{equation}
it is a compact interval of $\R_{+} $containing $0$ since $\chi_{l}(\pm 2)=0$
and if it were reduced to $\{0\}$, $\chi_{l}$ would be 0 on $[-2,2]$ and since $\supp\chi=[-2,2]$, $\chi_{l}$ 
would be zero and $\chi$ would be a polynomial,
which is not possible for a non-zero compactly supported function.
Let $t>0$ be given.
We have 
\begin{align*}
\{x, \val{\psi^{(l)}(x)}>t\}&=\cup_{k\ge 2}
\{x, \val{\chi^{(l)}((x-2^{k})k^{2})}k^{1+2l}>t\}
\\&=\cup_{k\ge 2}
\{x, \val{\chi^{(l)}((x-2^{k})k^{2})}>tk^{-1-2l}\}
\\
&\subset\cup_{k\ge 2}
\{x, \val{\chi^{(l)}((x-2^{k})k^{2})}>0\}\subset
\cup_{k\ge 2}
\{x, (x-2^{k})k^{2}\in [-2,2]\}
\end{align*}
so that 
$$
\Val{\{x, \val{\psi^{(l)}(x)}>t\}}\le \sum_{k\ge 2}k^{-2}\Val{[-2,2]+2^{k}}=
4\bigl(\frac{\pi^{2}}6-1\bigr)<+\io,
$$
proving the claim.
\begin{claim}
  For all $l\in \N$ and all compact subsets $K$ of $\R$,
  we have  $\psi^{(l)}\notin L^{\io}(K^{c})$.
\end{claim}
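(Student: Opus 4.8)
The plan is to produce, for each fixed $l\in\N$, a sequence of points tending to $+\io$ along which $\psi^{(l)}$ is unbounded. First I would record that $\chi^{(l)}$ is not the zero function: otherwise $\chi$ would be a polynomial, which is impossible for a nonzero compactly supported function, exactly as in the argument surrounding \eqref{fad555}. Since $\chi^{(l)}$ is continuous with $\supp\chi^{(l)}\subset[-2,2]$, it attains its supremum norm $M_l:=\norm{\chi^{(l)}}_{L^{\io}(\R)}>0$ at some $y_l\in[-2,2]$.

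Next I would exploit the disjointness of the intervals $J_k=[2^{k}-2k^{-2},2^{k}+2k^{-2}]$, already verified in the footnote above, together with $\supp\bigl(x\mapsto\chi((x-2^{k})k^{2})\bigr)=J_k$. This gives that for $x\in J_k$ all summands of $\psi^{(l)}(x)=\sum_{j\ge 2}\chi^{(l)}\bigl((x-2^{j})j^{2}\bigr)j^{1+2l}$ vanish except the $j=k$ one, so that $\psi^{(l)}(x)=\chi^{(l)}\bigl((x-2^{k})k^{2}\bigr)k^{1+2l}$ on $J_k$. Choosing $x_k=2^{k}+y_l k^{-2}\in J_k$ yields $(x_k-2^{k})k^{2}=y_l$, hence $\val{\psi^{(l)}(x_k)}=M_l\,k^{1+2l}$, which tends to $+\io$ as $k\to+\io$.

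Finally, given a compact $K\subset\R$, I would pick $R>0$ with $K\subset[-R,R]$; since $2^{k}-2k^{-2}\to+\io$, there is $k_0\ge 2$ with $J_k\subset(R,+\io)\subset K^{c}$ for every $k\ge k_0$, so the points $x_k$ with $k\ge k_0$ all lie in $K^{c}$ and satisfy $\val{\psi^{(l)}(x_k)}\to+\io$. This shows $\psi^{(l)}$ is not essentially bounded on $K^{c}$, i.e. $\psi^{(l)}\notin L^{\io}(K^{c})$, which is the desired conclusion. The only point requiring any care is the observation that exactly one term of the series survives on each $J_k$; the rest is elementary bookkeeping, and there is no serious obstacle here.
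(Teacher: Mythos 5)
Your argument is correct and follows essentially the same route as the paper: both pick a point of $[-2,2]$ where $\chi^{(l)}$ does not vanish (you take a maximizer $y_{l}$, the paper any point with positive value of $\val{\chi^{(l)}}$), evaluate $\psi^{(l)}$ at $2^{k}+k^{-2}y_{l}\in J_{k}$, where the disjointness of the $J_{k}$ leaves a single surviving summand of size $k^{1+2l}$ times a positive constant, and let $k\to+\io$. The only small addition the paper makes is to invoke the continuity of $\psi^{(l)}$ so that these large pointwise values hold on neighborhoods of positive measure, which is what the essential supremum in $L^{\io}(K^{c})$ formally requires; your proof should make that remark explicit but is otherwise complete.
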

\begin{proof}
Indeed we have for $k\ge 2$, 
$
\psi(2^{k})=\chi(0)k=k
$
and since $\psi$ is continuous,
$$
\{x\in \R, \psi(x)>k/2\}\text{ is a neighborhood of $2^{k}$},
$$
proving that $\psi$ is unbounded on the complement of any compact set.
Thanks to \eqref{fad555},
for any $l$, the set $\chi_{l}((-2,2))$ contains a positive point $t_{l}$,
so that there exists
$x_{l}\in (-2,2)$ such that $\chi_{l}(x_{l})=t_{l}>0$.
We get then for $k\ge 2$, 
$$
\val{\psi^{(l)}(2^{k}+k^{-2} x_{l})}=k^{1+2l}\chi_{l}(x_{l})=k^{1+2l}t_{l},
$$
and since $\psi^{(l)}$ is continuous,
$$
\{x\in \R, \psi^{(l)}(x)>k^{1+2l}t_{l}/2\}\text{ is a neighborhood of $2^{k}+k^{-2} x_{l}$},
$$
proving that $\psi^{(l)}$ is unbounded on the complement of any compact set,
since the sequence 
$(2^{k}+k^{-2}x_{l})_{k\ge 2}$ goes to $+\infty$ when $k$ tends to $+\infty$.
\end{proof}
\end{remark}
\begin{claim}\label{cla.54po} There exist bounded functions $f$  in $\mathcal L_{(0)}(\R^{d})\cap C^{\infty}(\R^{d})$ such that 
\begin{equation}\label{dam298}
\limsup_{R\rightarrow+\io}\norm{f}_{L^{\io}(\{\val x\ge R\})}>0.
\end{equation}
\end{claim}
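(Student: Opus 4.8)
The plan is to exhibit such an $f$ explicitly, as a superposition of pairwise disjoint smooth bumps of fixed height $1$ placed at points escaping to infinity, with widths shrinking fast enough that the union of their supports has finite Lebesgue measure. This is the ``height–bounded'' analogue of the function $\psi$ constructed in the preceding remark, where the bump heights $k$ were allowed to grow.

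Concretely, I would take the cutoff $\chi$ from \eqref{fun001} (so $\chi\in \mooc(\R^{d},[0,1])$, equal to $1$ on $\mathbb B^{d}$ and supported in $2\mathbb B^{d}$), set $a_{k}=(2^{k},0,\dots,0)\in\R^{d}$ for $k\ge 2$, and define
\[
f(x)=\sum_{k\ge 2}\chi\bigl((x-a_{k})k^{2}\bigr).
\]
The first step is to record that the supports $B_{k}=\{x\in\R^{d}:\ \val{x-a_{k}}\le 2k^{-2}\}$ are pairwise disjoint: since $\val{a_{k+1}-a_{k}}=2^{k}$ while $2k^{-2}+2(k+1)^{-2}<2^{k}$ for $k\ge 2$, the balls $B_{k}$ do not meet. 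Hence near any point at most one term of the series is nonzero, so the sum is locally finite, $f\in C^{\io}(\R^{d})$, and $0\le f\le 1$ because $0\le\chi\le1$.

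Next I would check that $f\in\mathcal L_{(0)}(\R^{d})$ in the sense of Definition \ref{def.417ooo}: for any $t>0$,
\[
\{x:\ \val{f(x)}>t\}\subset\bigcup_{k\ge 2}B_{k},
\]
and $\val{B_{k}}_{d}=\val{\mathbb B^{d}}_{d}\,(2k^{-2})^{d}$, so
\[
\Bigl\vert\bigcup_{k\ge2}B_{k}\Bigr\vert_{d}\le 2^{d}\,\val{\mathbb B^{d}}_{d}\sum_{k\ge2}k^{-2d}<+\io
\]
since $2d\ge2>1$; this is exactly \eqref{fds555+}. Finally, for the failure of uniform decay at infinity: $f(a_{k})=\chi(0)=1$ for every $k\ge 2$ (the other bumps vanish at $a_{k}$), and $\val{a_{k}}=2^{k}\to+\io$, so for every $R\ge0$ one can pick $k$ with $\val{a_{k}}\ge R$, giving $\norm{f}_{L^{\io}(\{\val x\ge R\})}\ge f(a_{k})=1$. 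Therefore $\limsup_{R\to+\io}\norm{f}_{L^{\io}(\{\val x\ge R\})}=1>0$, which is \eqref{dam298}.

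I do not expect any real obstacle here: the only points needing a line of verification are the crude disjointness inequality $2k^{-2}+2(k+1)^{-2}<2^{k}$ and the convergence of $\sum_{k}k^{-2d}$, valid for all $d\ge1$. Any sufficiently sparse choice works just as well (for instance $a_{k}=k e_{1}$ with bump radii $k^{-1-\varepsilon}$, $\varepsilon>0$); the essential mechanism is that arbitrarily thin bumps of fixed height contribute finite total measure yet prevent $L^{\io}$-decay at infinity.
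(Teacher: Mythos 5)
Your construction is correct and is essentially the same as the paper's: disjoint smooth bumps of height $1$ centered at $2^{k}$ with radii of order $k^{-2}$, so that the union of supports has finite measure while the sup norm near infinity stays equal to $1$. The only difference is that you carry it out for general $d$ (placing the bumps at $a_{k}=2^{k}e_{1}$ and summing $k^{-2d}$), whereas the paper writes the argument only for $d=1$; your version is, if anything, slightly more complete.
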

\begin{proof}[Proof of the claim]
 Indeed, with $\chi$ defined by \eqref{fun001} with $d=1$,
 we set
 \begin{equation}\label{}
f(x)=\sum_{k\ge 2} \chi\bigl((x-2^{k}) k^{2}\bigr).
\end{equation}
We note  that the support of $x\mapsto\chi\bigl((x-2^{k}) k^{2}\bigr)$
is
$$
J_{k}=\{x\in \R, \val{x-2^{k}}\le 2k^{-2}\}=[2^{k}-\frac2{k^{2}}, 2^{k}+\frac2{k^{2}}],
$$
so that the $(J_{k})_{k\ge 2}$ are two by two disjoint and  $f$ is a smooth function, and  we have also that 
$\norm{f}_{L^{\io}(\R)}=1$. Moreover, we have 
$$
\{x\in \R, \val{f(x)}>0\}=\cup_{k\ge2} J_{k}\Longrightarrow
\val{\{x\in \R, \val{f(x)}>0\}}=\sum_{k\ge 2}4k^{-2}<+\io,
$$
proving that $f\in \mathcal L_{(0)}(\R)$. On the other hand, since $f=1$ on $$\cup_{k\ge 2}[2^{k}-\frac1{k^{2}}, 2^{k}+\frac1{k^{2}}],
$$
we have for any $R\ge 0$, 
$
\norm{f}_{L^{\io}(\{\val x\ge R\})}=1,
$
proving \eqref{dam298}.
\end{proof}
\begin{claim}
 There exists $f$ belonging to $\mathcal L_{(0)}(\R^{d})\cap\moo(\R^{d})$
 such that
 $$
 \val{\{x, \val{f(x)}>0\}}=+\io
 $$
\end{claim}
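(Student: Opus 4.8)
The plan is to build $f$ by hand as an infinite superposition of a single fixed bump, translated along a ray out to infinity, with amplitudes shrinking to $0$ but widths kept \emph{constant}. Concretely, taking $\chi$ to be the cutoff function of \eqref{fun001} (so $\chi\in\mooc(\R^{d},[0,1])$, supported in $2\mathbb B^{d}$ and equal to $1$ on $\mathbb B^{d}$) and $\vec e_{1}$ the first vector of the canonical basis of $\R^{d}$, I would set
$$
f(x)=\sum_{k\ge 2}\frac1k\,\chi\bigl(x-10k\,\vec e_{1}\bigr).
$$
The first step is to see that this is well defined and smooth: the supports $2\mathbb B^{d}+10k\,\vec e_{1}$ are pairwise disjoint, since the distance $10$ between consecutive centres exceeds the sum $4$ of the two radii. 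Hence on any bounded subset of $\R^{d}$ only finitely many terms are nonzero, so $f\in\moo(\R^{d})$, and indeed $\norm{f}_{L^{\io}}=\tfrac12$.

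Next I would check $f\in\mathcal L_{(0)}(\R^{d})$ by verifying \eqref{fds555+}. Fix $t>0$. Because the supports are disjoint, every $x$ lies in the support of at most one term, and if $x\in 2\mathbb B^{d}+10k\,\vec e_{1}$ then $\val{f(x)}=\frac1k\,\chi(x-10k\,\vec e_{1})\le\frac1k$. Therefore $\{x,\val{f(x)}>t\}$ is contained in $\bigcup_{2\le k<1/t}\bigl(2\mathbb B^{d}+10k\,\vec e_{1}\bigr)$, a finite union of balls of radius $2$, which has finite Lebesgue measure (at most $\lceil 1/t\rceil\,\val{2\mathbb B^{d}}$).

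Finally I would show that the support of $f$ has infinite measure. Since $\chi\equiv 1$ on $\mathbb B^{d}$, we have $f\equiv\frac1k$ on each ball $\mathbb B^{d}+10k\,\vec e_{1}$, so
$$
\{x,\val{f(x)}>0\}\supset\bigcup_{k\ge 2}\bigl(\mathbb B^{d}+10k\,\vec e_{1}\bigr),
$$
and this set, a disjoint union of countably many translates of the unit ball, has measure $\sum_{k\ge 2}\val{\mathbb B^{d}}=+\io$. There is no genuine obstacle here; the one point needing attention is to keep the bump \emph{widths} constant while only the amplitudes decay, so that disjointness of supports, smoothness of $f$, membership in $\mathcal L_{(0)}$, and the infinitude of $\val{\supp f}$ all hold at once — had the widths also shrunk, the support would have had finite measure and the construction would fail.
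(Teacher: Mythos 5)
Your construction is correct and follows essentially the same strategy as the paper's: a locally finite sum of disjointly supported bumps with amplitudes $1/k$ tending to $0$, so that $\{x,\ \val{f(x)}>t\}$ meets only the finitely many bumps with $1/k>t$ while $\supp f$ has infinite measure. The only difference is that the paper shrinks the bump widths to order $1/k$ (so the infinite measure of the support rests on the divergence of the harmonic series), whereas you keep the widths constant, which makes that last point trivial.
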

\begin{proof}[Proof of the claim]
Indeed, with $\chi$ defined by \eqref{fun001} with $d=1$,
 we set
 \begin{equation}\label{}
f(x)=\sum_{k\ge 2} \chi\bigl((x-2^{k}) k\bigr) k^{-1}.
\end{equation}
We note  that the support of $x\mapsto\chi\bigl((x-2^{k}) k\bigr)$
is
$$
J_{k,1}=\{x\in \R, \val{x-2^{k}}\le 2k^{-1}\}=\bigl[2^{k}-\frac2{k}, 2^{k}+\frac2{k}\bigr].
$$
The $(J_{k,1})_{k\ge 2}$ are two by two disjoint if for all ${k\ge 2}$
$$
2^{k}+\frac2{k}<2^{k+1}-\frac2{(k+1)}.
$$
That condition is equivalent  to 
$
\frac2{k}+\frac2{(k+1)}< 2^{k},
$
which is equivalent to
$
\bigl(\frac{k+1}{k}\bigr)+1<2^{k-1}(k+1)
$
that is to 
$$
\underbracket[0.1pt]{(1+\frac1k)+1}_{\text{decreasing of $k\ge 2$}}<\underbracket[0.1pt]{2^{k-1}(k+1)}_{\text{increasing of $k\ge 2$}}.
$$ 
It is then enough to check that
$
(1+\frac12)+1<2\times 3=6,
$
which is true.
As a consequence, the function $f$ is $\moo$
and we have 
$$
\{x, f(x)>0\}=\cup_{k\ge 2}\text{interior}(J_{k,1}), \quad\text{so that\ }
\val{\{x, f(x)>0\}}=\sum_{k\ge 2}\frac4k=+\io.
$$
We have also for $s>0$,
$$
\{x, f(x)>s\}\subset\cup_{\substack{k\ge 2\\k^{-1}\ge s}} J_{k,1},\text{ implying, }
\val{\{x, f(x)>s\}}\le \sum_{2\le k\le s^{-1}}\frac4k<+\io,
$$
proving the claim.
\end{proof}
\subsubsection*{\underline{Singular Integrals}}\label{sec.singular}
The basic results  on Fourier multipliers bounded on $L^{p}$ for $p\in (1,+\io)$
are given in Section 6.2.3 of L.~Grafakos' book,
\emph{Classical {F}ourier analysis}
 \cite{MR3243734}, grounded on several works of 
 A. Calder\'on \& A. Zygmund (\cite{MR84633},
\cite{MR87810}),
 L. H\"ormander (Theorem 7.9.5 in \cite{MR1996773}),
 S.G.~Mihlin \cite{MR80799},
 E.M. Stein \cite{MR1232192}.
 It will be enough for us to remind a particular case,
 following from Theorem 6.2.7 in  \cite{MR3243734}.
 \begin{theorem}\label{thm.singsing}
 Let $\R^{3}\backslash\{0\}\ni \xi\mapsto m(\xi)$, be a smooth map, where  $m(\xi)$ is a  $3\times 3$ complex-valued matrix such that
 \begin{equation}\label{}
 \forall \alpha\in \N^{3}, \val \alpha\le1+ [n/2],\quad 
\sup_{\xi\not=0}\val{\xi}^{\val\alpha}\val{\p_{\xi}^{\alpha}m(\xi)}<+\infty.
\end{equation}
Then for all $p\in ]1, +\io[$, the Fourier multiplier $m(D)$ is bounded on $L^{p}(\R^{3}, \R^{3})$.
\end{theorem}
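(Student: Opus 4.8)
The plan is to prove this as a special case of the Mihlin--H\"ormander multiplier theorem, following the classical Calder\'on--Zygmund argument (here $n=3$, so the hypothesis only involves derivatives of $m$ up to order $1+[3/2]=2$). Since the matrix-valued statement reduces to the scalar one --- apply the conclusion to each of the nine entries $m_{jk}$, each of which satisfies the same bounds because any matrix norm dominates the entries --- I may assume $m$ scalar. First I would note the $L^{2}$ bound, which is free: the case $\alpha=0$ of the hypothesis is $\norm{m}_{L^{\io}}<+\io$, so Plancherel's theorem gives $\norm{m(D)f}_{L^{2}}\le\norm{m}_{L^{\io}}\norm{f}_{L^{2}}$.

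Next I would analyse the convolution kernel $K=\mathcal F^{-1}(m)$ (a locally integrable function away from the origin) through a Littlewood--Paley decomposition. Choose $\psi\in\mooc(\R^{3})$ with $0\notin\supp\psi$ and $\sum_{j\in\Z}\psi(2^{-j}\xi)=1$ on $\R^{3}\setminus\{0\}$, and set $m_{j}(\xi)=m(\xi)\psi(2^{-j}\xi)$, $K_{j}=\mathcal F^{-1}(m_{j})\in\mathscr S(\R^{3})$. By the Leibniz rule together with the Mihlin bounds $\val{\p^{\beta}m(\xi)}\lesssim\val\xi^{-\val\beta}$ ($\val\beta\le 2$) and $\val{\p^{\gamma}(\psi(2^{-j}\cdot))}\lesssim 2^{-j\val\gamma}$, one gets $\val{\p^{\alpha}m_{j}(\xi)}\lesssim 2^{-j\val\alpha}$ on $\supp m_{j}\subset\{\val\xi\sim 2^{j}\}$ for $\val\alpha\le 2$. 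Feeding this into Plancherel's identity for the weighted norms of $K_{j}$ --- with the weight $(1+2^{j}\val x)^{4}$, whose expansion only produces powers of $\val x^{2}$ up to the second, matching order-$2$ differentiations of $m_{j}$ --- yields $\int_{\R^{3}}(1+2^{j}\val x)^{4}\val{K_{j}(x)}^{2}\,dx\lesssim 2^{3j}$ and, with the extra factor $2^{2j}$ coming from a gradient, $\int_{\R^{3}}(1+2^{j}\val x)^{4}\val{\nabla K_{j}(x)}^{2}\,dx\lesssim 2^{5j}$. Pairing these with $\bigl(\int_{\val x>r}(1+2^{j}\val x)^{-4}\,dx\bigr)^{1/2}\lesssim 2^{-3j/2}\min\bigl(1,(2^{j}r)^{-1/2}\bigr)$ via Cauchy--Schwarz gives the two uniform kernel estimates
\begin{equation}\label{pp.sing.est}
\norm{\nabla K_{j}}_{L^{1}(\R^{3})}\lesssim 2^{j},\qquad
\int_{\val x>r}\val{K_{j}(x)}\,dx\lesssim\min\bigl(1,(2^{j}r)^{-1/2}\bigr).
\end{equation}

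Then I would verify H\"ormander's regularity condition for $K=\sum_{j}K_{j}$: given $y\ne 0$, pick $J\in\Z$ with $2^{J}\sim\val y^{-1}$ and split $\int_{\val x>2\val y}\val{K(x-y)-K(x)}\,dx$ as a sum over $j$. For $j\le J$ use the mean value theorem and the first bound in \eqref{pp.sing.est}, getting a term $\lesssim\val y\,2^{j}$ and a total $\lesssim\val y\,2^{J}\lesssim 1$; for $j>J$ use the triangle inequality and a translation to bound the term by $2\int_{\val x>\val y}\val{K_{j}(x)}\,dx\lesssim(2^{j}\val y)^{-1/2}$ via the second bound, with total $\lesssim 1$. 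Hence $\sup_{y\ne 0}\int_{\val x>2\val y}\val{K(x-y)-K(x)}\,dx<+\io$, and the Calder\'on--Zygmund theorem (an $L^{2}$-bounded convolution operator with a kernel obeying this condition is of weak type $(1,1)$) combined with Marcinkiewicz interpolation gives $m(D)$ bounded on $L^{p}$ for $1<p\le 2$. Finally, for $2<p<+\io$ I would invoke duality: the $L^{2}$-adjoint of $m(D)$ is the Fourier multiplier with symbol $\overline{m(\xi)}$ (the conjugate transpose $\,\overline{m(\xi)}^{\,t}$ in the matrix case), which satisfies exactly the same hypotheses and so is bounded on $L^{p'}$, whence $m(D)$ is bounded on $L^{p}$. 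The hard part is the kernel analysis: one must squeeze H\"ormander's condition out of only $1+[n/2]=2$ derivatives of $m$, which is precisely why the dyadic decomposition and the exact bookkeeping of the powers of $2^{j}$ in \eqref{pp.sing.est} are unavoidable, a naive pointwise estimate on $K$ and $\nabla K$ being too weak.
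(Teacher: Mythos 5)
The paper does not prove this statement at all: it is quoted as a known particular case of the Mihlin--H\"ormander multiplier theorem, with an explicit pointer to Theorem 6.2.7 of Grafakos' \emph{Classical Fourier analysis} and to the original works of Calder\'on--Zygmund, H\"ormander, Mihlin and Stein. What you have written is essentially a reconstruction of the standard proof of that cited theorem: scalar reduction, $L^{2}$ boundedness from $\alpha=0$ via Plancherel, dyadic decomposition $m_{j}=m\cdot\psi(2^{-j}\cdot)$, weighted Plancherel estimates on $K_{j}$ and $\nabla K_{j}$ using exactly the $1+[3/2]=2$ available derivatives, the two uniform kernel bounds, verification of H\"ormander's condition by splitting at $2^{J}\sim\val y^{-1}$, Calder\'on--Zygmund weak $(1,1)$ plus Marcinkiewicz interpolation for $1<p\le 2$, and duality for $p>2$. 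I checked the exponent bookkeeping: $\int(1+2^{j}\val x)^{4}\val{K_{j}}^{2}\lesssim 2^{3j}$ and $\int(1+2^{j}\val x)^{4}\val{\nabla K_{j}}^{2}\lesssim 2^{5j}$ are right (the weight $(1+2^{j}\val x)^{4}$ is controlled by $1+2^{4j}\val x^{4}$, and $\val x^{4}\val{K_{j}}^{2}$ pairs with $\val{\Delta_{\xi}m_{j}}^{2}$), the tail integral $\int_{\val x>r}(1+2^{j}\val x)^{-4}dx\lesssim 2^{-3j}\min(1,(2^{j}r)^{-1})$ is correct in dimension $3$, and both geometric sums close. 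Two points you pass over that a complete write-up should address: (i) in what sense $\sum_{j}K_{j}$ converges and why its sum coincides, away from the origin, with the distribution kernel of $m(D)$ (the standard fix is to work with the symmetric partial sums $\sum_{\val j\le N}$, for which all your estimates are uniform in $N$, and pass to the limit); (ii) the Calder\'on--Zygmund theorem you invoke requires the a priori $L^{2}$ bound, which you do supply, so this is only a matter of stating the hypotheses in the right order. With those caveats the argument is correct; it buys a self-contained proof where the paper is content with a citation, at the cost of reproducing several pages of classical material.
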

\begin{remark}\rm
 We have $m(D)=\mathcal F^{-1} m(\xi)\mathcal F=\mathcal F^{*} m(\xi)\mathcal F$,
 so that the operator $m(D)$ is bounded self-adjoint in $L^{2}$ iff $m(\xi)$ is bounded and  
 $m(\xi)^{*}=m(\xi),$
 where 
 $
 m(\xi)^{*}=\overline{\tr{\ m(\xi)}}.
 $
\end{remark}
\begin{lemma}\label{lem.54oi}
 Let $m(D)$ be a Fourier multiplier satisfying the assumptions of Theorem \ref{thm.singsing}. Then if the real part of the matrix $m(\xi)$ is  symmetric and even with respect to $\xi$
and  if the imaginary part of the matrix $m(\xi)$ is antisymmetric and odd with respect to $\xi$, the vector
 $m(D) f$ is real-valued if $f$ is real-valued.
\end{lemma}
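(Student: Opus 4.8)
Here is how I would go about it. The whole statement reduces to the Fourier-side criterion for reality established in Lemma \ref{lem.fourea}: a vector field $w\in\mathscr S'(\R^{3},\C^{3})$ is real-valued if and only if $\hat w(\xi)=\overline{\hat w(-\xi)}$. So the first move is to transcribe the hypotheses on the symbol $m$ into a single relation. Splitting $m(\xi)=\re\bigl(m(\xi)\bigr)+i\,\im\bigl(m(\xi)\bigr)$ and using that, by Theorem \ref{thm.singsing}, $m$ is smooth off the origin, the assumption that $\re(m)$ is even in $\xi$ and $\im(m)$ is odd in $\xi$ amounts exactly to the identity
$$
m(-\xi)=\re\bigl(m(-\xi)\bigr)+i\,\im\bigl(m(-\xi)\bigr)=\re\bigl(m(\xi)\bigr)-i\,\im\bigl(m(\xi)\bigr)=\overline{m(\xi)},\qquad \xi\neq0,
$$
equivalently $\overline{m(-\xi)}=m(\xi)$. (It is this parity in $\xi$, rather than the symmetry type of the matrix $m(\xi)$ under transposition, that enters here; the transposition properties are the ones relevant to the self-adjointness noted in the preceding remark.)

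Next I would run the computation for $f\in L^{2}(\R^{3},\R^{3})$ real-valued, where $m(D)f$ is given by $\widehat{m(D)f}(\xi)=m(\xi)\hat f(\xi)$ in $L^{2}$ and Lemma \ref{lem.fourea} furnishes $\hat f(\xi)=\overline{\hat f(-\xi)}$ a.e. Since complex conjugation commutes with the matrix--vector product,
$$
\overline{\widehat{m(D)f}(-\xi)}=\overline{m(-\xi)\,\hat f(-\xi)}=\overline{m(-\xi)}\,\overline{\hat f(-\xi)}=m(\xi)\,\hat f(\xi)=\widehat{m(D)f}(\xi),
$$
so Lemma \ref{lem.fourea} applied to $w=m(D)f$ gives that $m(D)f$ is real-valued. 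To reach a real-valued $f\in L^{p}(\R^{3},\R^{3})$ with $p\in(1,\infty)$, $p\neq2$ — for which $m(D)f$ is defined only through the $L^{p}$-boundedness of Theorem \ref{thm.singsing} — I would pass to the limit: choose real-valued $f_{n}\in L^{p}\cap L^{2}$ with $f_{n}\to f$ in $L^{p}$, note $m(D)f_{n}\to m(D)f$ in $L^{p}$, use the $L^{2}$ case to see each $m(D)f_{n}$ is real-valued, and extract an a.e.\ convergent subsequence to conclude $m(D)f$ is real-valued.

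There is no genuinely hard step: the proof is a one-line identity on the Fourier side. The only thing to be careful about is the bookkeeping of the first paragraph, namely turning the even/odd hypotheses on $\re(m)$ and $\im(m)$ into the clean relation $\overline{m(-\xi)}=m(\xi)$, and checking that Lemma \ref{lem.fourea} — stated for tempered distributions — indeed applies to the square-integrable (resp.\ $L^{p}$) objects under consideration, which it does since $L^{2}\subset\mathscr S'$ and $L^{p}\subset\mathscr S'$.
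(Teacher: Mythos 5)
Your proof is correct and follows essentially the same route as the paper: both arguments come down to the identity $\overline{m(-\xi)}=m(\xi)$ extracted from the parity hypotheses, combined with $\hat f(\xi)=\overline{\hat f(-\xi)}$ for real-valued $f$. The only cosmetic difference is that you invoke the reality criterion of Lemma \ref{lem.fourea} directly on the Fourier side, whereas the paper re-derives it inline by computing $2\re\bigl((m(D)f)(x)\bigr)$; your remark that only the parity in $\xi$ (and not the transposition symmetry of the matrix) enters the reality argument is consistent with how the paper actually uses the hypotheses.
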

\begin{proof}
 Using the formula
\begin{equation}\label{1w58qr}
 \overline{\widehat{f}(\xi)}=\widehat{\overline f}(-\xi),
\end{equation}
we obtain that 
\begin{align*}
2\re\bigl((&m(D) f)(x)\bigr)=
(m(D) f)(x)+\overline{(m(D) f)(x)}
\\
&=\int e^{ix\cdot \xi} m(\xi)\widehat f(\xi)  d\xi (2\pi)^{-d/2}
+\int e^{-ix\cdot \xi} \overline{m(\xi)}\ \widehat{\overline f}(-\xi)  d\xi (2\pi)^{-d/2}
\\
&=\int e^{ix\cdot \xi} m(\xi)\widehat f(\xi)  d\xi (2\pi)^{-d/2}
+\int e^{ix\cdot \xi} \overline{m(-\xi)}\ \widehat{\overline f}(\xi)  d\xi (2\pi)^{-d/2}
\\
\text{\tiny ($f$ real-valued})&=\int e^{ix\cdot \xi} \bigl[m(\xi)+\overline{m(-\xi)}
\bigr]
\widehat f(\xi)  d\xi (2\pi)^{-d/2}.
\end{align*}
Now if $m$ is real-valued and even, we find that 
$
\re\bigl((m(D) f)(x)\bigr)=(m(D) f)(x);
$
if $m$ is purely imaginary and odd, we find the same identity,
since 
$$
\frac12\bigl(m(\xi)+\overline{m(-\xi)}\bigr)=\frac12\bigl(m(\xi)-\overline{m(\xi)}\bigr)= m(\xi).
$$
The proof of the lemma is complete.
\end{proof}
\subsubsection*{\underline{On the Leray projection}}\label{subsec.leray}
We prove in this section  that the range of the Leray projector is 
${L^{2}_{0}}(\R^{3}, \R^{3})$,
defined in Lemma \ref{lem.leray1}.
Let $w\in L^{2}(\R^{3}, \R^{3})$; for a scalar function $\varphi$
in the Schwartz space, we have,
defining $\mathscr S^{*}$ as the space of continuous anti-linear forms on $\mathscr S$\footnote{For $T\in \mathscr S'$, we define 
$
\poscal{T}{\phi}_{\mathscr S^{*}, \mathscr S}=\poscal{T}{\bar\phi}_{\mathscr S^{'}, \mathscr S}.
$
Using the formula
$
\overline{\hat \phi}=\mathcal F^{-1}(\bar \phi),
$
we get in particular
\begin{multline*}
\poscal{\mathcal FT}{\mathcal F \phi}_{\mathscr S^{*}, \mathscr S}
=\poscal{\mathcal FT}{\overline{\mathcal F \phi}}_{\mathscr S^{'}, \mathscr S}
=\poscal{T}{\mathcal F\bigl(\overline{\mathcal F \phi}\bigr)}_{\mathscr S^{'}, \mathscr S}
\\
=\poscal{T}{\mathcal F\bigl(\mathcal F^{-1}\bar \phi\bigr)}_{\mathscr S^{'}, \mathscr S}
=\poscal{T}{\bar \phi}_{\mathscr S^{'}, \mathscr S}
=\poscal{T}{\phi}_{\mathscr S^{*}, \mathscr S},
\end{multline*}
so that it is more convenient to use the complex duality for which we have 
$$
\poscal{\mathcal F^{*}\mathcal FT}{\phi}_{\mathscr S^{*}, \mathscr S}
=
\poscal{\mathcal F^{*}\mathcal FT}{\mathcal F\mathcal F^{-1}\phi}_{\mathscr S^{*}, \mathscr S}
=
\poscal{\mathcal FT}{\mathcal F^{2}\mathcal F^{-1}\phi}_{\mathscr S^{*}, \mathscr S}
=\poscal{\mathcal FT}{\mathcal F\phi}_{\mathscr S^{*}, \mathscr S}
=\poscal{T}{\phi}_{\mathscr S^{*}, \mathscr S},
$$
that is $\mathcal F^{*}\mathcal F=\Id_{\mathscr S^{*}}$ and also
$\mathcal F^{*}=\mathcal F^{-1}$.
}
\begin{align*}
\poscal{\dive\mathbb P w}{\varphi}_{\mathscr S^{*}, \mathscr S}&=-\int \bigl(I_{3}-\val{\xi}^{-2}(\xi\otimes \xi)\bigr)\hat w(\xi)\cdot \overline{ i\xi\hat \varphi(\xi)}d\xi
\\
&=i\int (\hat w(\xi)\cdot \xi)\overline{\hat \varphi(\xi)} d\xi
-i\int \val{\xi}^{-2}
(\hat w(\xi)\cdot \xi) (\xi\cdot \xi)\overline{\hat \varphi(\xi)} d\xi=0,
\end{align*}
proving that $\range \mathbb P\subset L^{2}_{0}(\R^{3}, \R^{3})$.
Conversely, if $w\in L^{2}_{0}(\R^{3}, \R^{3})$,
we have 
$
w=\mathbb P w+\widetilde{\mathbb P} w,
$
where $\widetilde{\mathbb P}$ is defined by \eqref{ortler},
and for a vector field $\Phi$ in the Schwartz space 
\begin{multline*}
\poscal{\widetilde{\mathbb P} w}{\Phi}_{\mathscr S^{*}, \mathscr S}
=\int \val{\xi}^{-2} (\xi\otimes \xi) \hat w(\xi)\cdot \overline{\hat \Phi(\xi)} d\xi
=\int \val{\xi}^{-2} \underbrace{(\xi\cdot\hat w(\xi))}_{=0} (\xi\cdot  \overline{\hat \Phi(\xi)}) d\xi=0,
\end{multline*}
so that $w=\mathbb Pw\in \range \mathbb P$.
\begin{nb}
 Although it is important to consider only \emph{real-valued} vector fields as solutions of the Navier-Stokes equation,
 once you start using the Fourier transformation, the vector field $\hat v$ is no longer real-valued
 (cf. Lemma \ref{lem.fourea}) in general so that it is simpler to use the terminology adapted to  complex vector spaces, in particular for duality and Fourier transformation.
It is better to use the space $\mathscr S^{*}(\R^{3})$ defined as the antidual of 
 $\mathscr S(\R^{3})$ than the more familiar $\mathscr S^{'}(\R^{3})$ to keep the standard formulas related to the Fourier transform such as
 $$
 \mathcal F^{*}\mathcal F=\Id=\mathcal F\mathcal F^{*},
 $$
 and to use the notation
 $
 m(D)= \mathcal F^{*}m(\xi)\mathcal F,
 $
 from which it is clear that real symmetric  even
  (resp. purely imaginary skew-symmetric odd)
  square matrix multipliers are formally self-adjoint.
\end{nb}
\subsection{ Isoperimetric inequality}\label{sec.gani}
\begin{lemma}\label{lem.21ml}
 Let $v$ be a divergence-free vector field such that
  $v\in \mathscr S'(\R^{3}, \R^{3})\cap \mathcal L_{(0)}(\R^{3}, \R^{3})$
and $\curl  v\in L^{2}(\R^{3}, \R^{3})$. Then $v$ belongs to $L^{6}(\R^{3}, \R^{3})$.
 \end{lemma}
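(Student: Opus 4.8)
The plan is to argue entirely on the Fourier side: recover $\widehat v$ explicitly from $\widehat{\curl v}$ away from the origin, apply the Sobolev embedding $\dot H^1(\R^3)\hookrightarrow L^6(\R^3)$, and finally remove a possible polynomial ambiguity using the hypothesis $v\in\mathcal L_{(0)}(\R^3,\R^3)$. First I would set $g=\curl v\in L^2(\R^3,\R^3)$, so that $\widehat g(\xi)=i\xi\times\widehat v(\xi)$ in $\mathscr S'$, while $\dive v=0$ reads $\xi\cdot\widehat v(\xi)=0$. The vector triple product identity then gives, exactly as in \eqref{125lkj},
\[
|\xi|^2\,\widehat v(\xi)=i\xi\times\bigl(i\xi\times\widehat v(\xi)\bigr)=i\xi\times\widehat g(\xi)\qquad\text{in }\mathscr S'(\R^3).
\]
Introduce $h(\xi)=|\xi|^{-2}\,i\xi\times\widehat g(\xi)$, which satisfies $|h(\xi)|\le|\xi|^{-1}|\widehat g(\xi)|$. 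Since $|\xi|^{-1}\in L^{p}(\{|\xi|\le1\})$ for every $p<3$ and $\widehat g\in L^2$, Hölder's inequality shows $h\in L^{r}(\{|\xi|\le1\})$ for some $r>1$, while on $\{|\xi|\ge1\}$ one has $|h|\le|\widehat g|\in L^2$; hence $h\in L^1_{\text{loc}}(\R^3)\cap\bigl(L^1(\R^3)+L^2(\R^3)\bigr)\subset\mathscr S'(\R^3)$. Moreover
\[
\int_{\R^3}|\xi|^2|h(\xi)|^2\,d\xi=\int_{\R^3}|\xi|^{-2}\bigl|\xi\times\widehat g(\xi)\bigr|^2\,d\xi\le\int_{\R^3}|\widehat g(\xi)|^2\,d\xi=\norm{\curl v}_{L^2}^2,
\]
so $\xi\mapsto|\xi|h(\xi)$ belongs to $L^2(\R^3)$, i.e. $\nabla\check h\in L^2(\R^3,\R^9)$.

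Next I would observe that $|\xi|^2\bigl(\widehat v-h\bigr)=0$ in $\mathscr S'(\R^3)$, so $\widehat v-h$ is supported at $\{0\}$ and is therefore a finite linear combination of derivatives of $\delta_0$ (see e.g. \cite{MR1996773}); taking inverse Fourier transforms, $v=\check h+P$ with $P$ a vector-valued polynomial. Since $\nabla\check h\in L^2(\R^3)$, the Gagliardo--Nirenberg--Sobolev inequality (in its sharp form due to G.~Talenti \cite{MR463908}; see also Theorem~8.3 in \cite{MR1817225}) gives that each component of $\check h$ agrees with an $L^6(\R^3)$ function up to an additive constant, whence $v=w+P$ with $w\in L^6(\R^3,\R^3)$ and $P$ a polynomial (the constant having been absorbed into $P$). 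By Lemma~\ref{lem.jhgf}, $L^6(\R^3)\subset\mathcal L_{(0)}(\R^3)$ and $\mathcal L_{(0)}(\R^3)$ is a vector space; since $v\in\mathcal L_{(0)}(\R^3,\R^3)$ by hypothesis, we get $P=v-w\in\mathcal L_{(0)}(\R^3,\R^3)$, and Lemma~\ref{cla.548uyt} forces $P=0$. Therefore $v=w\in L^6(\R^3,\R^3)$, which is the claim (and the argument also yields the bound $\norm{v}_{L^6}\le\sigma_0\norm{\curl v}_{L^2}$).

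The only genuinely delicate points are the distributional ones: justifying $h\in L^1_{\text{loc}}$ across the origin (the borderline Hölder estimate near $\xi=0$, which uses the dimension $d=3$ through $|\xi|^{-1}\in L^p$ for $p<3$) and the passage from $|\xi|^2(\widehat v-h)=0$ to the structure $v=\check h+P$. Once these are in place the rest is a routine combination of the Sobolev inequality with the elementary fact, recorded in Lemma~\ref{cla.548uyt}, that a polynomial which tends to $0$ at infinity in the weak sense of \eqref{l0inft} must vanish identically.
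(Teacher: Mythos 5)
Your proof is correct and reaches the conclusion by essentially the same route as the paper: the identity $|\xi|^{2}\hat v=i\xi\times\widehat{\curl v}$ coming from $\dive v=0$, the embedding $\dot H^{1}(\R^{3})\hookrightarrow L^{6}(\R^{3})$ (equivalently, Hardy--Littlewood--Sobolev for $\val{D}^{-1}$), and the elimination of the polynomial ambiguity through Lemma \ref{cla.548uyt}. The one genuine difference is organizational: the printed proof of Lemma \ref{lem.21ml} leans on \eqref{drs128}, which Lemma \ref{lem.reg001} obtains by a three-piece frequency cutoff $\chi(\xi/\varepsilon)\hat v+\tilde\chi(\xi/\varepsilon)\chi(\xi)\hat v+\tilde\chi(\xi)\hat v$ together with a weak-compactness argument in $L^{q_{1}}$ as $\varepsilon\to 0$, whereas you define the candidate $h(\xi)=\val{\xi}^{-2}i\xi\times\widehat{\curl v}(\xi)$ outright, check $h\in L^{1}_{\text{loc}}$ by the H\"older estimate near $\xi=0$ (where the restriction $p\in(2,3)$ is what makes $r>1$), and read off $\supp(\hat v-h)\subset\{0\}$ directly from $\val{\xi}^{2}(\hat v-h)=0$. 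Your version is self-contained and avoids the weak limit; the paper's version has the side benefit of producing the quantitative $L^{q_{1}}+L^{q_{2}}$ information on $\hat v$ recorded in \eqref{sdf987}, which is reused elsewhere. Both arguments finish identically: $w\in L^{6}\subset\mathcal L_{(0)}$, so the polynomial lies in the vector space $\mathcal L_{(0)}$ and must vanish, and the estimate $\norm{v}_{L^{6}}\le\sigma_{0}\norm{\curl v}_{L^{2}}$ follows.
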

 \begin{proof}
 We have seen already in \eqref{drs128} that 
 \begin{equation*}
\forall q_{1}\in (1,\frac65), \forall q_{2}\in (\frac65, 2], \quad
\hat v\in L^{q_{1}}+L^{q_{2}},\quad
v\in L^{q'_{1}}+L^{q'_{2}},
\end{equation*}
so that 
\begin{equation}\label{gcv159}
v=\rot^{2}\val{D}^{-2}v=\val{D}^{-1}\underbracket[0.1pt]{\rot\val{D}^{-1}}_{\substack{
\text{singular}\\\text{integral}
}}\underbrace{\rot v}_{\in L^{2}},
\end{equation}
and using the Hardy-Littlewood-Sobolev Inequality,
we get that 
$$
\val{D}^{-1}: L^{2}(\R^{3})\longrightarrow L^{6}(\R^{3}),
$$
since $\val{D}^{-1}$ is the convolution with $\val x^{1-3}=\val x^{-\frac{3}{3/2}}$ so that
$$
\norm{\val x^{-2}\ast  u}_{L^{6}(\R^{3})}\le \sigma\norm{u}_{L^{2}(\R^{3})}, \quad \text{ since }
\frac12+\frac23=1+\frac16.
$$
As a consequence of \eqref{gcv159}, since  $\rot\val{D}^{-1} \rot v\in L^{2}$, we get 
$v\in L^{6}$ and 
$$
\norm{v}_{L^{6}}=\norm{\val{D}^{-1}\rot\val{D}^{-1}\rot v}_{L^{6}}
\le \sigma\norm{\rot\val{D}^{-1}\rot v}_{L^{2}}\le \sigma\norm{\rot v}_{L^{2}},
$$
and the result of the lemma.
\end{proof}
The following theorem is classical.
\begin{theorem}[Gagliardo-Nirenberg Isoperimetric Inequality]\label{thm.gani01}
Let $f$ be a scalar function in $L^{1}(\R^{d})$, where $d$ is an integer larger than $1$. 
If   $\nabla f$  belongs to $L^{1}(\R^{d})$, then $f$ belongs to $L^{\frac{d}{d-1}}(\R^{d})$ 
and we have 
\begin{equation}\label{}
d\val{\mathbb B^{d}}^{\frac 1d} \norm{f}_{L^{\frac{d}{d-1}}(\R^{d})}\le \norm{\nabla f}_{L^{1}(\R^{d})}.
\end{equation}
We note that 
$
\val{\mathbb B^{d}}=\frac{\pi^{\frac d2}}{\Gamma(\frac d2+1)}.
$
 \end{theorem}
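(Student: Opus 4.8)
The plan is to derive this sharp inequality from the \emph{classical isoperimetric inequality} in $\R^{d}$, using the coarea formula and the layer-cake representation, in the spirit of Theorem~8.3 of the book \cite{MR1817225} by E.~Lieb and M.~Loss. First I would reduce the statement to the case of a nonnegative $f\in\mooc(\R^{d})$: since $\bigl\vert\nabla\val f\bigr\vert\le\val{\nabla f}$ almost everywhere it is enough to treat $\val f$ in place of $f$, and a standard mollification-and-truncation argument together with Fatou's lemma then reduces a general $f$ with $f,\nabla f\in L^{1}$ to the smooth compactly supported case (the approximants $f_{n}$ converge to $\nabla f$ in $L^{1}$ and, along a subsequence, to $f$ almost everywhere, so the inequality for the $f_{n}$ passes to the limit and in particular shows $f\in L^{\frac{d}{d-1}}$). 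For such an $f\ge0$ in $\mooc(\R^{d})$, write $E_{t}=\{x\in\R^{d},\ f(x)>t\}$ for $t>0$: this is a bounded open set with $\val{E_{t}}<+\io$ and $t\mapsto\val{E_{t}}$ decreasing, and by Sard's theorem, for almost every $t>0$ the number $t$ is a regular value of $f$, so that $\p E_{t}=\{x\in\R^{d},\ f(x)=t\}$ is a smooth compact hypersurface.

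The proof then combines three facts. On the gradient side, the coarea formula gives
\begin{equation*}
\norm{\nabla f}_{L^{1}(\R^{d})}=\int_{0}^{+\io}\mathcal H^{d-1}\bigl(\{x\in\R^{d},\ f(x)=t\}\bigr)\,dt=\int_{0}^{+\io}\mathrm{Per}(E_{t})\,dt,
\end{equation*}
where $\mathrm{Per}(E_{t})=\mathcal H^{d-1}(\p E_{t})$. Next, for every regular value $t$ the classical isoperimetric inequality in $\R^{d}$ reads
\begin{equation*}
\mathrm{Per}(E_{t})\ \ge\ d\,\val{\mathbb B^{d}}^{1/d}\,\val{E_{t}}^{\frac{d-1}{d}},
\end{equation*}
with equality only for balls. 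Finally, on the function side, the layer-cake identity $f(x)=\int_{0}^{+\io}\indic{E_{t}}(x)\,dt$ together with Minkowski's integral inequality in $L^{\frac{d}{d-1}}(\R^{d})$ (valid since $\frac{d}{d-1}\ge1$) gives
\begin{equation*}
\norm{f}_{L^{\frac{d}{d-1}}(\R^{d})}\ \le\ \int_{0}^{+\io}\norm{\indic{E_{t}}}_{L^{\frac{d}{d-1}}(\R^{d})}\,dt=\int_{0}^{+\io}\val{E_{t}}^{\frac{d-1}{d}}\,dt.
\end{equation*}
Putting the three displays together yields $d\,\val{\mathbb B^{d}}^{1/d}\,\norm{f}_{L^{\frac{d}{d-1}}}\le\int_{0}^{+\io}\mathrm{Per}(E_{t})\,dt=\norm{\nabla f}_{L^{1}}$, which is exactly the asserted inequality; the value $\val{\mathbb B^{d}}=\pi^{d/2}/\Gamma(\tfrac d2+1)$ is the usual volume of the unit Euclidean ball.

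The step I expect to be the real content — as opposed to routine verification — is having at hand the classical isoperimetric inequality \emph{with its sharp constant}: this is what produces the precise factor $d\,\val{\mathbb B^{d}}^{1/d}$, and it is this factor that is saturated, in the limit, by characteristic functions of balls, which is why the inequality cannot be improved. The remaining points (the coarea formula and the identity $\mathrm{Per}(E_{t})=\mathcal H^{d-1}(\p E_{t})$ for almost every $t$) are standard for $f\in\mooc$. For completeness I would note that if one only wanted \emph{some} finite constant, there is a self-contained route avoiding the isoperimetric inequality: from $\val{f(x)}\le\int_{\R}\val{\p_{j}f}\,dx_{j}$ for each $j=1,\dots,d$, raising to the power $\tfrac1{d-1}$, multiplying over $j$, and integrating the variables one after another with the Loomis--Whitney (generalized Hölder) inequality, one obtains $\norm{f}_{L^{\frac{d}{d-1}}}\le\prod_{j=1}^{d}\norm{\p_{j}f}_{L^{1}}^{1/d}\le\norm{\nabla f}_{L^{1}}$; but this loses the optimal constant, so for the statement as written it is the coarea/isoperimetric argument above that has to be carried out.
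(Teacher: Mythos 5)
Your proof is correct. The paper itself offers no argument for this statement — it is labelled ``classical'' and delegated to the references (Talenti \cite{MR463908} and Theorem 8.3 of Lieb--Loss \cite{MR1817225}, see Footnote \ref{foot.lieb}) — and the route you take (reduction to nonnegative $f\in\mooc$, coarea formula, the sharp isoperimetric inequality $\mathrm{Per}(E_{t})\ge d\val{\mathbb B^{d}}^{1/d}\val{E_{t}}^{\frac{d-1}{d}}$ for the level sets, and the layer-cake representation with Minkowski's integral inequality) is exactly the standard proof behind that citation, including the identification of the sharp constant $d\val{\mathbb B^{d}}^{1/d}$ saturated by characteristic functions of balls. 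The approximation step is also handled correctly: density of $\mooc$ in $W^{1,1}(\R^{d})$ plus Fatou along an a.e.\ convergent subsequence both transfers the inequality and yields $f\in L^{\frac{d}{d-1}}$.
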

 A standard corollary of the above theorem is the following result.
 \begin{corollary}\label{cor.350}
 Let $d$ be an integer $\ge 2$ and let $1\le p<d$ be a real number. We define the $d$-dimensional conjugate exponent $p^{*}$ of $p$ as
 \begin{equation}\label{conexp}
 p^{*}=\frac{pd}{d-p},\quad\text{i.e.\quad}\frac{1}{p^{*}}=\frac{1}p-\frac 1d.
\end{equation}
Then, there exists a positive constant $c(p,d)$ such that
 for all $f\in L^{\frac{p^{*}(d-1)}{d}}(\R^{d})$ such that $\nabla f\in L^{p}(\R^{d})$, we have 
 $f\in L^{p^{*}}(\R^{d})$ and 
\begin{equation}\label{}
c_{p,d} \norm{f}_{L^{ p^{*}}(\R^{d})}\le \norm{\nabla f}_{L^{p}(\R^{d})}.
\end{equation}
 \end{corollary}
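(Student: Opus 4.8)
The plan is to deduce this corollary from the $L^{1}$ isoperimetric inequality of Theorem~\ref{thm.gani01} by the classical \emph{power trick}: rather than applying that inequality to $f$, one applies it to $\val f^{\gamma}$ for a suitable exponent $\gamma>1$ and controls $\nabla(\val f^{\gamma})$ by H\"older's inequality. The natural choice is
\[
\gamma=\frac{p(d-1)}{d-p}=\frac{p^{*}(d-1)}{d},
\]
and I would begin by recording the elementary facts that $\gamma\ge p\ge1$, that $p^{*}-\gamma=\frac{p}{d-p}>0$, and that the hypothesis $f\in L^{p^{*}(d-1)/d}(\R^{d})$ says precisely $\val f^{\gamma}\in L^{1}(\R^{d})$. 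When $p=1$ one has $\gamma=1$ and the statement is nothing but Theorem~\ref{thm.gani01} (with $c_{1,d}=d\val{\mathbb B^{d}}^{1/d}$), so from here on I would assume $p>1$, hence $\gamma>1$ and $p'=p/(p-1)<\infty$.

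The second preliminary step is to note the two exponent identities that make the scheme close: with $q=\frac{d}{d-1}$ one checks directly that $\gamma q=p^{*}$ and $(\gamma-1)p'=p^{*}$. Granting these, for any measurable $h$ with $\val h^{\gamma}\in L^{1}$ and $\nabla(\val h^{\gamma})\in L^{1}$ one applies Theorem~\ref{thm.gani01} to $g=\val h^{\gamma}$ and uses $\val{\nabla g}\le\gamma\val h^{\gamma-1}\val{\nabla h}$ together with H\"older to obtain
\begin{multline*}
d\val{\mathbb B^{d}}^{1/d}\,\norm{h}_{L^{p^{*}}}^{\gamma}
=d\val{\mathbb B^{d}}^{1/d}\,\norm{g}_{L^{q}}
\le\norm{\nabla g}_{L^{1}}
\\
\le\gamma\,\norm{\val h^{\gamma-1}}_{L^{p'}}\,\norm{\nabla h}_{L^{p}}
=\gamma\,\norm{h}_{L^{p^{*}}}^{\gamma-1}\,\norm{\nabla h}_{L^{p}}.
\end{multline*}
If in addition $\norm{h}_{L^{p^{*}}}$ is finite, this gives $\norm h_{L^{p^{*}}}\le\frac{\gamma}{d\val{\mathbb B^{d}}^{1/d}}\norm{\nabla h}_{L^{p}}$ (with nothing to prove if $\norm h_{L^{p^{*}}}=0$).

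The main difficulty is that one may not feed $h=f$ into this computation directly: the finiteness of $\norm f_{L^{p^{*}}}$ is exactly what is being claimed, and relatedly $\nabla(\val f^{\gamma})$ need not a priori lie in $L^{1}$. I would circumvent this by truncation. For $N>0$ let $f_{N}=\max(-N,\min(N,f))$. Since $f\in L^{\gamma}\subset L^{1}_{\mathrm{loc}}$ and $\nabla f\in L^{p}\subset L^{1}_{\mathrm{loc}}$, one has $f\in W^{1,1}_{\mathrm{loc}}$, and the chain rule for Sobolev functions gives $\nabla f_{N}=\indic{\{\val f<N\}}\nabla f\in L^{p}$ and $\nabla(\val{f_{N}}^{\gamma})=\gamma\val{f_{N}}^{\gamma-2}f_{N}\,\nabla f_{N}$ a.e.\ (using that $\nabla f=0$ a.e.\ on $\{\val f=N\}$). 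Moreover $\norm{f_{N}}_{L^{p^{*}}}<\infty$: on $\{\val f\le N\}$ one has $\val{f_{N}}^{p^{*}}=\val f^{p^{*}}\le N^{p^{*}-\gamma}\val f^{\gamma}\in L^{1}$ (here $p^{*}>\gamma$), while $\{\val f>N\}$ has finite Lebesgue measure (Chebyshev, $f\in L^{\gamma}$) on which $\val{f_{N}}^{p^{*}}\le N^{p^{*}}$; the same two observations give $\val{f_{N}}^{\gamma}\le\val f^{\gamma}\in L^{1}$ and, via H\"older, $\nabla(\val{f_{N}}^{\gamma})\in L^{1}$. Hence the displayed estimate applies with $h=f_{N}$ and yields
\[
\norm{f_{N}}_{L^{p^{*}}}\le\frac{\gamma}{d\val{\mathbb B^{d}}^{1/d}}\,\norm{\nabla f}_{L^{p}},
\]
a bound uniform in $N$. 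Since $\val{f_{N}}\uparrow\val f$ pointwise, the Monotone Convergence Theorem gives $\norm{f_{N}}_{L^{p^{*}}}\uparrow\norm f_{L^{p^{*}}}$, so $f\in L^{p^{*}}(\R^{d})$ and
\[
c_{p,d}\,\norm f_{L^{p^{*}}}\le\norm{\nabla f}_{L^{p}},\qquad
c_{p,d}=\frac{d\val{\mathbb B^{d}}^{1/d}}{\gamma}=\frac{(d-p)\,d\,\val{\mathbb B^{d}}^{1/d}}{p(d-1)}.
\]
The only non-routine points are thus the truncation together with the accompanying Sobolev chain rule (ensuring integrability of $\nabla(\val{f_{N}}^{\gamma})$ and its pointwise form); everything else is bookkeeping with the identities $\gamma q=(\gamma-1)p'=p^{*}$.
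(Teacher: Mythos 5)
Your proof is correct. The paper gives no argument for this corollary, labelling it ``standard'' and pointing to the references in Footnote \ref{foot.lieb}; your derivation is precisely the classical power trick those references use (apply Theorem \ref{thm.gani01} to $\val{f}^{\gamma}$ with $\gamma=p(d-1)/(d-p)$ and close via H\"older with the identities $\gamma\frac{d}{d-1}=(\gamma-1)p'=p^{*}$), and the exponent bookkeeping checks out. The truncation step is the one genuine point of care, and you handle it properly: it is exactly the hypothesis $f\in L^{p^{*}(d-1)/d}=L^{\gamma}$ that makes $\val{f_{N}}^{\gamma}\in L^{1}$, $f_{N}\in L^{p^{*}}$ and $\nabla(\val{f_{N}}^{\gamma})\in L^{1}$ available without any density argument, so the a priori estimate may legitimately be fed $h=f_{N}$ and passed to the limit by monotone convergence.
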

 \no Footnote \ref{foot.lieb} on page \pageref{foot.lieb} provides a reference for the expression of the $c_{p,d}$.
 \begin{remark}\rm
 If $\dis d=3,p=\frac32$, we get $p^{*}=3$ and from the above corollary,
\begin{align}\label{}
 &f\in L^{2}(\R^{3}), \nabla f \in L^{3/2}(\R^{3})\Longrightarrow f\in L^{3}(\R^{3}),
 \quad \text{as well as }
 \\
 &
 c_{\frac32,3} \norm{f}_{L^{3}(\R^{3})}\le \norm{\nabla f}_{L^{3/2}(\R^{3})}.
\end{align}
\end{remark}
Following the book \cite{MR2768550} of H.~Bahouri, J.-Y.~Chemin, R.~Danchin,
we recall a few definitions and results (see Section 1.3 of \cite{MR2768550}).
\begin{definition}
 Let $d$ be a positive integer and let  $s$ be a real number. We define the \emph{Homogeneous Sobolev Space} $\dot H^{s}(\R^{d})$ as
 \begin{equation}\label{}
\{u\in \mathscr S'(\R^{d}), \hat u\in L^{1}_{\text{\rm loc}}(\R^{d}), 
\norm{u}_{\dot H^{s}(\R^{d})}^{2}:=
\int_{\R^{d}}\val{\xi}^{2s}\val{\hat u(\xi)}^{2} d\xi<+\io.
\}
\end{equation}
\end{definition}
\begin{nb}
 Note that if a constant function $u$ belongs to some $\dot H^{s}(\R^{d})$, then the condition
 $\hat u \in L^{1}_{\text{\rm loc}}(\R^{d})$ implies $u=0$.
\end{nb}
\begin{proposition} Let $d$ be a positive integer and let  $s$ be a real number.
\begin{enumerate} 
\item If $s<d/2$, $\dot H^{s}(\R^{d})$ is a Hilbert space.
\item If $s<d/2$, the space $\mathscr S_{0}(\R^{d})=\{\phi \in \mathscr S(\R^{d}), 0\notin \supp \hat \phi\}$ is dense in $\dot H^{s}(\R^{d})$.
\item If $\val s<d/2$, the dual space of $\dot H^{s}(\R^{d})$ is $\dot H^{-s}(\R^{d})$.
\end{enumerate} 
\end{proposition}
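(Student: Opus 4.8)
The plan is to reduce all three assertions to properties of the single linear map $T_{\sigma}\colon u\mapsto\val\xi^{\sigma}\widehat u$, which by the very definition of $\dot H^{\sigma}(\R^{d})$ is an isometry from $\dot H^{\sigma}(\R^{d})$ into $L^{2}(\R^{d})$; equipping $\dot H^{\sigma}$ with the inner product $\poscal{u}{v}_{\dot H^{\sigma}}=\int_{\R^{d}}\val\xi^{2\sigma}\widehat u(\xi)\overline{\widehat v(\xi)}\,d\xi$ (absolutely convergent, being the $L^{2}$-pairing of $\val\xi^{\sigma}\widehat u$ and $\val\xi^{\sigma}\widehat v$) makes it a pre-Hilbert space, so the three claims become statements about completeness, about the range of $T_{s}$, and about Riesz duality. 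The one analytic ingredient, and the source of the dimensional restrictions, is the elementary remark that for $g\in L^{2}(\R^{d})$ and $\sigma<d/2$ one has, for every $R>0$,
$$
\int_{\val\xi\le R}\val\xi^{-\sigma}\val{g(\xi)}\,d\xi\le\norm{g}_{L^{2}(\R^{d})}\Bigl(\int_{\val\xi\le R}\val\xi^{-2\sigma}\,d\xi\Bigr)^{1/2}<+\io,
$$
the last integral converging exactly because $2\sigma<d$; hence $\val\xi^{-\sigma}g\in L^{1}_{\text{loc}}(\R^{d})$, and since it is moreover in $L^{2}$ away from $0$ (when $\sigma\ge0$) or of polynomial growth there (when $\sigma<0$), it defines a tempered distribution.

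For (1) I would prove completeness directly: given a Cauchy sequence $(u_{n})$ in $\dot H^{s}$, the sequence $\val\xi^{s}\widehat u_{n}$ is Cauchy in $L^{2}$, hence converges to some $g\in L^{2}$; the displayed estimate with $\sigma=s<d/2$ shows that $\widehat u:=\val\xi^{-s}g\in L^{1}_{\text{loc}}$ defines a tempered distribution $u$ with $\val\xi^{s}\widehat u=g$, so $u\in\dot H^{s}$ and $\norm{u-u_{n}}_{\dot H^{s}}=\norm{g-\val\xi^{s}\widehat u_{n}}_{L^{2}}\to0$. This yields the Hilbert-space structure, and as a by-product the range $T_{s}(\dot H^{s})$ is a closed subspace of $L^{2}$.

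For (2) I would first check $\mathscr S_{0}\subset\dot H^{s}$ (for $\phi\in\mathscr S_{0}$, $\widehat\phi$ is Schwartz and vanishes near $0$, so $\val\xi^{s}\widehat\phi\in L^{2}$), and then observe that for $\psi\in\mooc(\R^{d}\backslash\{0\})$ the function $\val\xi^{-s}\psi$ lies again in $\mooc(\R^{d}\backslash\{0\})$ (on the compact $\supp\psi$, which avoids $0$, the factor $\val\xi^{-s}$ is smooth), so $\phi:=\mathcal F^{-1}(\val\xi^{-s}\psi)\in\mathscr S_{0}$ and $T_{s}\phi=\psi$. Thus $T_{s}(\mathscr S_{0})\supset\mooc(\R^{d}\backslash\{0\})$, which is dense in $L^{2}(\R^{d})$; since $T_{s}(\dot H^{s})$ is closed and contains this dense set, $T_{s}$ is onto, and pulling an $L^{2}$-approximation of $T_{s}u$ by elements of $\mooc(\R^{d}\backslash\{0\})$ back through $T_{s}^{-1}$ gives the required approximation of $u$ by elements of $\mathscr S_{0}$.

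For (3), the pairing $\poscal{u}{w}=\int_{\R^{d}}\widehat u(\xi)\overline{\widehat w(\xi)}\,d\xi$ for $u\in\dot H^{s}$, $w\in\dot H^{-s}$ is absolutely convergent (its integrand is the product of $\val\xi^{s}\widehat u$ and $\overline{\val\xi^{-s}\widehat w}$, both in $L^{2}$) and bounded by $\norm{u}_{\dot H^{s}}\norm{w}_{\dot H^{-s}}$, which embeds $\dot H^{-s}$ isometrically into $(\dot H^{s})'$; conversely, since $T_{s}$ is an isometric isomorphism onto $L^{2}$ by (1)--(2), any $\ell\in(\dot H^{s})'$ is represented through the Riesz theorem by some $g\in L^{2}$, $\ell(u)=\int(\val\xi^{s}\widehat u)\overline g\,d\xi$, and setting $\widehat w:=\val\xi^{s}g$ — where the displayed estimate with $\sigma=-s$ forces $s>-d/2$ — produces $w\in\dot H^{-s}$ with $\val\xi^{-s}\widehat w=g$ and $\ell=\poscal{\cdot}{w}$, so $(\dot H^{s})'=\dot H^{-s}$ isometrically under the condition $\val s<d/2$. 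The only obstacle, such as it is, is purely the bookkeeping at the origin: everything rests on the single dichotomy $\val\xi^{-\sigma}L^{2}(\R^{d})\subset L^{1}_{\text{loc}}(\R^{d})\iff\sigma<d/2$, which is why (1)--(2) require $s<d/2$, (3) requires $\val s<d/2$, and why the a priori requirement ``$\widehat u\in L^{1}_{\text{loc}}$'' in the definition is automatic in this range but genuinely necessary (e.g.\ to exclude nonzero constant functions) when $s\ge d/2$.
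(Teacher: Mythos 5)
Your proposal is correct. Note, however, that the paper does not actually prove this proposition: it is stated as a recalled result, quoted from Section 1.3 of the book of Bahouri--Chemin--Danchin \cite{MR2768550}, so there is no in-paper argument to compare against. Your proof is essentially the standard one from that reference: everything is channelled through the isometry $u\mapsto \val\xi^{s}\widehat u$ into $L^{2}(\R^{d})$, and the dimensional restrictions enter only through the Cauchy--Schwarz observation that $\val\xi^{-\sigma}L^{2}(\R^{d})\subset L^{1}_{\text{loc}}(\R^{d})$ precisely when $\sigma<d/2$, which is what allows the $L^{2}$-limit of $\val\xi^{s}\widehat{u_{n}}$ to be divided by $\val\xi^{s}$ and still yield a tempered distribution with locally integrable Fourier transform. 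Two points deserve to be made explicit in a full write-up, and both are at least implicitly present in yours: (i) positive-definiteness of the inner product rests on the requirement $\widehat u\in L^{1}_{\text{loc}}$ in the definition (this is exactly what the paper's N.B.\ about constant functions is signalling); (ii) in part (3) it is the surjectivity of $T_{s}$ onto $L^{2}$, i.e.\ the conclusion of parts (1)--(2), that lets you transport the Riesz representation from $L^{2}$ back to $\dot H^{s}$, and the extra constraint $s>-d/2$ appears only when you divide by $\val\xi^{-s}$ to land in $\dot H^{-s}$.
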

\begin{nb}
 Note that the condition $0\notin \supp \hat \phi$ is equivalent to the vanishing of $\hat \phi$ in a neighborhood of $0$.
\end{nb}
\begin{theorem}
  Let $d$ be a positive integer and let  $s$ be a real number.
   If $0\le s<d/2$, the space $\dot H^{s}(\R^{d})$ is continuously embedded into 
   $L^{\frac{2d}{d-2s}}(\R^{d}).$
\end{theorem}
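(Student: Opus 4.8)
The plan is to reduce the statement to the Hardy--Littlewood--Sobolev inequality for the Riesz potential, following exactly the pattern used in the proof of Lemma \ref{lem.21ml}. The case $s=0$ is trivial, since $\dot H^{0}(\R^{d})=L^{2}(\R^{d})$ and $\frac{2d}{d-2s}=2$ there, so I would assume $0<s<d/2$ and set $q=\frac{2d}{d-2s}$, which satisfies $2<q<+\io$ precisely because $0<s<d/2$.

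First I would take $u\in\dot H^{s}(\R^{d})$ and put $f=\mathcal F^{-1}\bigl(\val\xi^{s}\hat u(\xi)\bigr)$; by the very definition of the homogeneous Sobolev norm, $f\in L^{2}(\R^{d})$ with $\norm{f}_{L^{2}(\R^{d})}=\norm{u}_{\dot H^{s}(\R^{d})}$. Next I would recall that the Riesz potential $I_{s}$, defined as convolution with a suitable multiple of $\val x^{s-d}$, has Fourier multiplier $\val\xi^{-s}$, and that the Hardy--Littlewood--Sobolev inequality (already invoked for $I_{1}$ in the proof of Lemma \ref{lem.21ml}, and classical, see e.g. \cite{MR1817225, MR2768550}) yields $\norm{I_{s}f}_{L^{q}(\R^{d})}\le C_{d,s}\norm{f}_{L^{2}(\R^{d})}$, exactly because $\frac1q=\frac12-\frac sd$ with all exponents in $(1,+\io)$.

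It then remains to identify $u$ with $I_{s}f$. Comparing Fourier transforms, $\widehat{I_{s}f}(\xi)=\val\xi^{-s}\val\xi^{s}\hat u(\xi)=\hat u(\xi)$ for $\xi\neq0$, so $u-I_{s}f$ is a tempered distribution whose Fourier transform is supported in $\{0\}$, hence a polynomial. But $\hat u\in L^{1}_{\text{loc}}(\R^{d})$ by the hypothesis built into the definition of $\dot H^{s}$, and $\widehat{I_{s}f}=\val\xi^{-s}\hat f$ is also in $L^{1}_{\text{loc}}(\R^{d})$, since $\hat f\in L^{2}$ and $\val\xi^{-s}\mathbf 1_{\{\val\xi\le1\}}\in L^{2}(\R^{d})$ (this is where $2s<d$ enters). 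Therefore $\widehat{u-I_{s}f}$ is a locally integrable function supported at the origin, which forces it to vanish, so $u=I_{s}f$; combining with the previous step gives $\norm{u}_{L^{q}(\R^{d})}=\norm{I_{s}f}_{L^{q}(\R^{d})}\le C_{d,s}\norm{u}_{\dot H^{s}(\R^{d})}$, which is the asserted continuous embedding.

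The only points requiring any care are the identification step --- where the requirement $\hat u\in L^{1}_{\text{loc}}$ in the definition of $\dot H^{s}$ is precisely what excludes a spurious additive polynomial --- and the citation of the Hardy--Littlewood--Sobolev inequality in the form $I_{s}\colon L^{2}(\R^{d})\to L^{2d/(d-2s)}(\R^{d})$. I do not expect a genuine obstacle: the argument is a verbatim generalization of Lemma \ref{lem.21ml} (which is the case $d=3$, $s=1$), and one may alternatively simply quote Section 1.3 of \cite{MR2768550}.
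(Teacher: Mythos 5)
Your proof is correct. The paper does not actually prove this theorem---it is recalled without proof from Section~1.3 of \cite{MR2768550}---but your reduction to the Hardy--Littlewood--Sobolev inequality for the Riesz potential $I_{s}$ is precisely the argument the paper carries out for the special case $d=3$, $s=1$ in Lemma~\ref{lem.21ml}, and you correctly handle the one delicate point, namely the identification $u=I_{s}f$, where the requirement $\hat u\in L^{1}_{\text{loc}}$ built into the definition of $\dot H^{s}$ (together with $\val\xi^{-s}\mathbf 1_{\{\val\xi\le 1\}}\in L^{2}$, i.e.\ $2s<d$) excludes any polynomial discrepancy.
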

\begin{remark}\label{rem.314}\rm
 The above theorem for $d=3, s=1$ implies that $\dot H^{1}(\R^{3})$
  is continuously embedded into 
   $L^{6}(\R^{3}).$
\end{remark}
\begin{proposition}\label{app.pro.dq99}
 Let $X$ be a real vector field 
in $\R^{d}$ ($d\ge 3$) with $\moo$ coefficients in $L^{d}(\R^{d})$
 such that  that $(\dive X)_{+}$ belongs to $L^{\frac d2}(\R^{d})$ such that 
 \begin{equation}\label{hqw159}
 \norm{(\dive X)_{+}}_{L^{\frac d 2}(\R^{d})}<2\nu \sigma(d),
\end{equation}
where $\sigma(d)$ is a dimensional constant defined as  the largest constant such that,
for all $\phi$ smooth compactly supported,
 \begin{equation}\label{fsc698}
 \sigma(d)\norm{\phi}_{L^{\frac{2d}{d-2}}(\R^{d})}^{2}\le \norm{\nabla \phi}_{L^{2}(\R^{d})}^{2}.
\end{equation}
Let $\nu$ be a positive parameter and $f$ be a solution of
\begin{equation}\label{equ+01}
-\nu \Delta f+X f=0
\end{equation}
such that  \eqref{lio002}, \eqref{lio003} hold true. Then $f$ is identically 0.
\end{proposition}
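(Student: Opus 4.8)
The plan is to rerun the multiplier computation of the Introduction that produced \eqref{123456}, this time retaining the divergence term rather than discarding it. Since $X$ has $\moo$ coefficients, elliptic regularity applied to \eqref{equ+01} shows $f$ is smooth, so we may pair the equation with the smooth compactly supported test function $\chi_{\lambda}f$ (with $\chi$ as in \eqref{fun001}, \eqref{fun002}, $\lambda>1$), or equivalently integrate $\dive(\chi_{\lambda}f^{2}X)$, whose integral vanishes. Using that $f$ is real-valued, that $Xf=\poscal{X}{\nabla f}$ and $\frac12\,X\cdot\nabla(f^{2})=fXf=\nu f\Delta f$, and that $\dive X=(\dive X)_{+}-(\dive X)_{-}$, I would obtain for every $\lambda>1$ the identity
\begin{equation*}
\nu\int\chi_{\lambda}\val{\nabla f}^{2}\,dx+\frac12\int\chi_{\lambda}f^{2}(\dive X)_{-}\,dx
=\varepsilon_{1}(\lambda)+\varepsilon_{2}(\lambda)+\frac12\int\chi_{\lambda}f^{2}(\dive X)_{+}\,dx,
\end{equation*}
with $\varepsilon_{1}(\lambda)=-\nu\int f\,\nabla f\cdot\nabla\chi_{\lambda}\,dx$ and $\varepsilon_{2}(\lambda)=\frac12\int f^{2}\,X\cdot\nabla\chi_{\lambda}\,dx$; all integrals are finite for fixed $\lambda$ because $\chi_{\lambda}$ is compactly supported and $f,\dive X$ are smooth.

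Next I would let $\lambda\to+\io$. As recalled after \eqref{lio003}, the hypotheses $\nabla f\in L^{2}(\R^{d})$ and \eqref{lio003} give $f\in L^{q}(\R^{d})$ with $q=\frac{2d}{d-2}$, and the Sobolev inequality \eqref{fsc698}, extended from $\mooc$ to $\dot H^{1}(\R^{d})$ by density, yields $\sigma(d)\norm{f}_{L^{q}}^{2}\le\norm{\nabla f}_{L^{2}}^{2}$. Since $\nabla\chi_{\lambda}$ is supported in $\{\lambda\le\val x\le2\lambda\}$ and bounded in $L^{d}$, the estimates \eqref{hgf541} and \eqref{hgf54+} (which use $\frac12+\frac1d+\frac1q=1$, resp. $\frac2q+\frac2d=1$ together with $X\in L^{d}$) show $\varepsilon_{1}(\lambda)\to0$ and $\varepsilon_{2}(\lambda)\to0$. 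On the left both integrands are non-negative and non-decreasing in $\lambda$; on the right $\chi_{\lambda}f^{2}(\dive X)_{+}\uparrow f^{2}(\dive X)_{+}$, and Hölder with the conjugate exponents $\frac{d}{d-2}$ and $\frac d2$ gives $\int f^{2}(\dive X)_{+}\le\norm{(\dive X)_{+}}_{L^{d/2}}\norm{f}_{L^{q}}^{2}<+\io$. Monotone convergence then produces the finite identity
\begin{equation*}
\nu\norm{\nabla f}_{L^{2}}^{2}+\frac12\int_{\R^{d}}f^{2}(\dive X)_{-}\,dx=\frac12\int_{\R^{d}}f^{2}(\dive X)_{+}\,dx .
\end{equation*}

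Discarding the non-negative term carrying $(\dive X)_{-}$ and applying Hölder and Sobolev once more,
\begin{equation*}
\nu\norm{\nabla f}_{L^{2}}^{2}\le\frac12\norm{(\dive X)_{+}}_{L^{d/2}}\norm{f}_{L^{q}}^{2}\le\frac{\norm{(\dive X)_{+}}_{L^{d/2}}}{2\sigma(d)}\norm{\nabla f}_{L^{2}}^{2},
\end{equation*}
and since \eqref{hqw159} says precisely that $\norm{(\dive X)_{+}}_{L^{d/2}}/(2\sigma(d))<\nu$, this forces $\norm{\nabla f}_{L^{2}}=0$; hence $\nabla f\equiv0$, $f$ is constant, and as $f$ satisfies \eqref{lio003} it lies in $\mathcal L_{(0)}(\R^{d})$, so $f\equiv0$ by Lemma \ref{cla.548uyt}. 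The only delicate step — the main obstacle — is the limit $\lambda\to+\io$: one must split $\dive X$ into $(\dive X)_{\pm}$ before passing to the limit, verify that $\chi_{\lambda}f^{2}(\dive X)_{+}$ converges to an integrable function (this is exactly where $f\in L^{2d/(d-2)}$ and $(\dive X)_{+}\in L^{d/2}$ are used in tandem), and observe that the a priori possibly infinite $(\dive X)_{-}$-integral is harmless because it stays on the coercive side of the identity; the smallness \eqref{hqw159} is then just the room needed to absorb $\frac12\int f^{2}(\dive X)_{+}$ into $\nu\norm{\nabla f}_{L^{2}}^{2}$ (if $\dive X\le0$ the term $(\dive X)_{+}$ vanishes and no smallness is needed, recovering Proposition \ref{pro.dq98}).
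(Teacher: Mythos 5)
Your proposal is correct and follows essentially the same route as the paper's own proof: multiply by $\chi_{\lambda}f$, split $\dive X$ into $(\dive X)_{\pm}$ keeping the negative part on the coercive side, let $\lambda\to+\io$ using the $L^{d}$-boundedness of $\nabla\chi_{\lambda}$, and absorb the $(\dive X)_{+}$ term via H\"older and the Sobolev constant $\sigma(d)$ using the strict inequality \eqref{hqw159}. The only cosmetic differences are that the paper concludes by a direct strict-inequality contradiction on $\norm{f}_{L^{2d/(d-2)}}$ rather than by first deducing $\nabla f\equiv 0$, and that it bounds the right-hand side directly instead of invoking monotone convergence (which, as stated, would require $\chi$ radially non-increasing so that $\chi_{\lambda}$ is pointwise non-decreasing in $\lambda$).
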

\begin{proof} We follow the proof of Proposition \ref{pro.dq98}, incorporating the results of Footnote \ref{foot02}
on page \pageref{foot02}: we find 
 \begin{align*}
0&=\poscal{-\nu\Delta f+Xf}{\chi_{\lambda}f}\notag\\
&=\nu\int \chi_{\lambda}(x)\val{(\nabla f)(x)}^{2}dx
+\nu\poscal{\underbrace{\nabla f}_{L^{2}}}{\underbrace{(\nabla \chi_{\lambda})}_{L^{d}}\underbrace {f}_{L^{q}}}
-\frac12\poscal{\underbrace{X\cdot (\nabla\chi_{\lambda})}_{L^{p}\cdot L^{d}} }
{\underbrace{f^{2}}_{L^{q/2}}}
\\
&\hskip232pt - \frac12\int_{\R^{\mathtt  d}} f(x)^{2}(\dive X)(x)\chi_{\lambda}(x) dx,
\end{align*}
so that with the notations \eqref{hgf541}, \eqref{hgf54+}, we obtain
\begin{multline*}
\nu\int \chi_{\lambda}(x)\val{(\nabla f)(x)}^{2}dx+\nu \varepsilon_{1}(\lambda)-\frac12\varepsilon_{2}(\lambda)
 +\frac12\int_{\R^{\mathtt  d}} f(x)^{2}(\dive X)_{-}(x)\chi_{\lambda}(x) dx,
\\=
  \frac12\int_{\R^{\mathtt  d}} f(x)^{2}(\dive X)_{+}(x)\chi_{\lambda}(x) dx,
\end{multline*}
which implies
\begin{multline*}
\nu\int \chi_{\lambda}(x)\val{(\nabla f)(x)}^{2}dx+\nu\varepsilon_{1}(\lambda)-\frac12\varepsilon_{2}(\lambda)
\le 
  \frac12\int_{\R^{\mathtt  d}} f(x)^{2}(\dive X)_{+}(x)\chi_{\lambda}(x) dx
  \\
  \le \frac12\norm{f}_{L^{\frac{2d}{d-2}}(\R^{\mathtt d})}^{2}
  \norm{(\dive X)_{+}}_{L^{\frac{d}{2}}(\R^{\mathtt d})}.
\end{multline*}
Taking now the limit of the right-hand-side for $\lambda$ tending to $+\io$, we get
either $f=0$ or $\norm{f}_{L^{\frac{2d}{d-2}}(\R^{\mathtt d})}>0$, and 
\begin{multline*}
\nu\sigma(d)\norm{f}_{L^{\frac{2d}{d-2}}(\R^{\mathtt d})}^{2}\le  \nu\int\val{(\nabla f)(x)}^{2}dx
\\\le 
   \frac12\norm{f}_{L^{\frac{2d}{d-2}}(\R^{\mathtt d})}^{2}
  \norm{(\dive X)_{+}}_{L^{\frac{d}{2}}(\R^{\mathtt d})}<\nu \sigma(d)\norm{f}_{L^{\frac{2d}{d-2}}(\R^{\mathtt d})}^{2},
\end{multline*}
which is impossible, entailing $f=0$, concluding the proof.
\end{proof}
\begin{remark}\rm 
 Assume that our vector field $X=\sum_{1\le j\le d}a_{j}(x)\frac{\p}{\p x_{j}}$, with $[a_{j}]= \mathtt L$.
 We find then that with $\nu>0$ such that $[\nu]=\mathtt{L^{2}}$
 $$
 [\nu \Delta f]=\mathtt{ L^{2}L^{-2}}[f], \ [X f]=\mathtt{LL^{-1}}[f], \ [\dive X]=\mathtt{L^{-1}L},\
 [\norm{\dive X}_{L^{\frac d2}}^{\frac d2}]=\mathtt{(1)^{\frac d2}L^{d}}=\mathtt {L^{d}}, 
 $$
 so it is legitimate to compare the numbers with units $\mathtt{L^{2}}=[\norm{(\dive X)_{+}}_{L^{\frac d 2}(\R^{d})}]$
and $\nu \sigma(d)$ since $\sigma(d)$ has no dimension. Indeed, we may note that the inequality \eqref{fsc698} is units-consistent since
$$
\bigl[\norm{f}_{L^{\frac{2d}{d-2}}(\R^{d})}^{2}\bigr]=([f]^{\frac{2d}{d-2}}\mathtt{ L^{d})^{\frac{d-2}d}}=[f]^{2}\mathtt {L^{d-2}}
,\quad
 \bigl[\norm{\nabla f}_{L^{2}(\R^{d})}^{2}\bigr]=(\mathtt{ L^{-1}}[f])^{2}\mathtt {L^{d}}
 =[f]^{2}\mathtt {L^{d-2}}.
$$ 
Moreover, we have a \emph{scaling property} for that simple linear equation: if $f$ solves
\eqref{equ+01}, then  for $\lambda>0$, $f_{\lambda}$ defined by $f_{\lambda}(x)=\lambda^{\mu}f(\lambda x) $ solves the equation
$$
-\nu \Delta f_{\lambda}+X_{\lambda}f_{\lambda}=0, \quad X_{\lambda}(x)=\lambda X(\lambda x).
$$
Indeed, we have 
\begin{multline*}
-\nu (\Delta f_{\lambda})(x)+X_{\lambda}(x) \cdot (df_{\lambda})(x)=
-\nu
\lambda^{\mu+2}(\Delta f)(\lambda x)+\lambda X(\lambda x)\cdot \lambda^{\mu+1}(df)(\lambda x)
\\
\text{\footnotesize (with $y=\lambda x$)\quad}=\lambda^{\mu+2}\bigl[-\nu (\Delta f)(y)+X(y)\cdot (df)(y)\bigr]=0.
\end{multline*}
We note also that $\norm{X}_{L^{d}}=\norm{X_{\lambda}}_{L^{d}}$, since we have 
$$
\norm{X_{\lambda}}_{L^{p}}=\Bigl(\int \val{X(\lambda x)}^{p}\lambda^{p} dx\Bigr)^{\frac 1p}=
\Bigl(\int \val{X(y)}^{p}\lambda^{p-d} dy\Bigr)^{\frac 1p}=\lambda^{1-\frac dp}\norm{X}_{L^{p}}.
$$
As a consequence, it is natural to have a condition such as $X\in L^{d}(\R^{d})$,
which we call a condition \emph{at the scaling}: 
indeed, we have 
$$
\norm{X_{\lambda}}_{L^{\dd}(\R^{\dd})}=\Bigl(\int_{\R^{\dd}} \norm{\lambda X(\lambda x)}^{\dd}_{\R^{\dd}} dx\Bigr)^{1/\dd}
=\norm{X}_{L^{\dd}(\R^{\dd})}.
$$
We may note as well that 
\begin{multline*}
\norm{(\dive X_{\lambda})_{+}}_{L^{\frac d 2}(\R^{d})}^{\frac d2}=\int \val{(\dive X_{\lambda})_{+}(x)}^{\frac d2} dx=
\int \val{\lambda^{2}(\dive X)_{+}(\lambda x)}^{\frac d2} dx
\\
=\lambda^{d}\int \val{(\dive X)_{+}(y)}^{\frac d2} dy \lambda^{-d}=\norm{(\dive X)_{+}}_{L^{\frac d 2}(\R^{d})}^{\frac d2},
\end{multline*}
so that Condition \eqref{hqw159} is unchanged,
enhancing its relevance.
\end{remark}
\subsection{ Some entire solutions of the Navier-Stokes system}\label{sec.dg4+}
\begin{lemma}
 Let $\psi$ be a real-valued harmonic function on $\R^{3}$ and let $v=\nabla \psi$. Then with $Q=0$, the vector field $v$ is a solution  of the stationary Navier-Stokes system for incompressible fluids i.e. we have 
 \begin{align}
&\nu \rot^{2}v+\rot v\times v+\nabla Q=0, \quad \dive v=0,\quad \text{or equivalently,}
\label{tas569}\\
&-\nu \Delta v+\p_{j}(v_{j}v)+\nabla p=0, \quad p=-\frac{\val v^{2}}2, \quad \dive v=0.
\label{tps569}\end{align}
Moreover we have $\curl v=0$.
\end{lemma}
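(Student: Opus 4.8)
The plan is to verify each assertion directly by elementary differential calculus; no deep input is needed, since $\psi$ harmonic is automatically $C^{\infty}$ (by elliptic regularity for $\Delta$), so $v=\nabla\psi$ is smooth and every product and derivative below is classical.

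First I would dispose of the two easy claims. The incompressibility condition is immediate: $\dive v=\dive\nabla\psi=\Delta\psi=0$. Likewise $\curl v=\curl\nabla\psi=0$ because the curl of a gradient vanishes, which is the last assertion of the lemma. Consequently $\rot^{2}v=\rot(\rot v)=0$; equivalently, by \eqref{curlca} together with $\dive v=0$, one has $\rot^{2}v=-\Delta v=-\Delta\nabla\psi=-\nabla(\Delta\psi)=0$.

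Next, for \eqref{tas569}: taking $Q=0$ we have $\nabla Q=0$, and $\rot v\times v=0\times v=0$, so $\nu\rot^{2}v+\rot v\times v+\nabla Q=0$, which together with $\dive v=0$ is exactly \eqref{tas569}. For \eqref{tps569} I would compute $\sum_{1\le j\le 3}\p_{j}(v_{j}v)$: using $\dive v=0$ and the Leibniz rule,
\[
\sum_{1\le j\le 3}\p_{j}(v_{j}v)=(\dive v)v+(v\cdot\nabla)v=(v\cdot\nabla)v,
\]
and since $v=\nabla\psi$ and the Hessian of $\psi$ is symmetric,
\[
(v\cdot\nabla)v=\sum_{1\le j\le 3}(\p_{j}\psi)\nabla(\p_{j}\psi)=\frac12\nabla\Bigl(\sum_{1\le j\le 3}(\p_{j}\psi)^{2}\Bigr)=\frac12\nabla(\val v^{2}).
\]
Hence with $p=-\frac12\val v^{2}$ we get $\sum_{1\le j\le 3}\p_{j}(v_{j}v)+\nabla p=0$, while $-\nu\Delta v=-\nu\nabla(\Delta\psi)=0$; summing yields the first equation of \eqref{tps569}. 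The equivalence of the two formulations is then nothing but the identity $\rot v\times v=(v\cdot\nabla)v-\frac12\nabla(\val v^{2})$ of Lemma \ref{lem.newppp}, combined with $Q=p+\frac12\val v^{2}=0$ in accordance with \eqref{modpr0}.

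The hard part is essentially nonexistent: this is a routine check. The only step deserving a word of care is the identity $(v\cdot\nabla)v=\frac12\nabla(\val v^{2})$, which is special to gradient fields and rests on the symmetry $\p_{j}\p_{k}\psi=\p_{k}\p_{j}\psi$; once that is in hand everything else is immediate. I would also emphasize that the lemma makes no claim about the behaviour of $v$ at infinity — $\psi$ is an arbitrary harmonic function — so there is no need to invoke \eqref{hyp001}, \eqref{hyp002} or Lemma \ref{lem.654oof}; the whole verification is purely local.
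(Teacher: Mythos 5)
Your verification is correct and follows essentially the same route as the paper, which proves \eqref{tps569} by the identical Hessian-symmetry computation in Remark \ref{rem.54kj} and then observes, exactly as you do, that the curl formulation \eqref{tas569} is immediate from $\rot v=\curl\nabla\psi=0$ and $\Delta\nabla\psi=\nabla\Delta\psi=0$. Nothing is missing.
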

\begin{proof}
 This is proven in Remark \ref{rem.54kj}. Note that $\curl \nabla \psi=0$ implies the last property
 which implies as well that $\curl v\in L^{2}$. The proof is shorter if we use the equation \eqref{tas569}: indeed, since $\rot v=0$, choosing $Q=0$, we get 
 $$
 \nu \rot^{2}v+\rot v\times v+\nabla Q=-\nu \Delta \nabla \psi=-\nu \nabla \Delta \psi=0, \quad 
 \dive v=\Delta\psi=0,
 $$
 proving the lemma.
 \end{proof}
 \begin{remark}\rm
 Of course, there exist many  $\psi$ harmonic such that $v=\nabla \psi$ is not the zero function;
 taking $\psi$ as an harmonic polynomial with degree $\ge 2$ provides some examples. Also in dimension 3 , we may consider the harmonic function
 $$
\psi(x_{1}, x_{2}, x_{3})= e^{x_{1}}\sin x_{2}, \quad v=e^{x_{1}}\sin x_{2}\frac{\partial}{\p x_{1}}
+e^{x_{1}}\cos x_{2}\frac{\partial}{\p x_{2}},\quad p=-\frac{e^{2x_{1}}}2,
 $$
 so that $v$ solves the stationary Navier-Stokes system. The latter example is not a temperate distribution
 and does not belong either to $\mathcal L_{(0)}$. 
\end{remark}
\begin{lemma}\label{lem.jhg4}
  Let $\psi$ be a real-valued harmonic function on $\R^{3}$ and let $v=\nabla \psi$. If we assume that $v$ belongs to $\mathscr S'\cap \mathcal L_{(0)}$ (cf. \eqref{l0inft}), this implies that $v=0$.
\end{lemma}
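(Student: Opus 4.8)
The plan is to reduce the statement to the two Liouville-type facts already established in the excerpt: Remark \ref{rem.000}, which says a harmonic tempered distribution is a polynomial, and Lemma \ref{cla.548uyt}, which says a polynomial lying in $\mathcal L_{(0)}$ must vanish. The only thing to check is that $v=\nabla\psi$ actually satisfies the hypotheses of those two results componentwise.

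First I would observe that since $\psi$ is harmonic on $\R^3$, each partial derivative $\p_j\psi$ is again harmonic, because $\Delta(\p_j\psi)=\p_j(\Delta\psi)=0$; moreover $\p_j\psi$ is a smooth function, hence a Schwartz distribution. By hypothesis $v=\nabla\psi\in\mathscr S'(\R^3,\R^3)$, so each component $\p_j\psi$ is a tempered distribution. Thus each $\p_j\psi$ is a harmonic tempered distribution, and Remark \ref{rem.000} gives that $\p_j\psi$ is a polynomial. Consequently $v$ is a polynomial vector field on $\R^3$.

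Next I would use that $v\in\mathcal L_{(0)}(\R^3,\R^3)$, i.e. each component $\p_j\psi$ belongs to $\mathcal L_{(0)}(\R^3)$ (this space is a vector space and the componentwise membership follows directly from Definition \ref{def.417ooo}). Since $\p_j\psi$ is simultaneously a polynomial and an element of $\mathcal L_{(0)}(\R^3)$, Lemma \ref{cla.548uyt} forces $\p_j\psi\equiv 0$ for every $j$. Hence $v=\nabla\psi=0$, which is the claimed conclusion.

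There is essentially no obstacle here: the argument is a short assembly of results proved earlier, and the only mildly delicate point is the bookkeeping that "tempered" and "goes to zero at infinity in the sense of Definition \ref{def.417ooo}" are inherited by each scalar component of the vector field, so that the scalar statements of Remark \ref{rem.000} and Lemma \ref{cla.548uyt} apply. One may also phrase the last step via Claim \ref{app.pro.123} (with the polynomial part being all of $v$), but invoking Lemma \ref{cla.548uyt} directly is cleanest.
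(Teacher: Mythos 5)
Your argument is correct and is essentially the paper's own proof: both note that $v=\nabla\psi$ is a harmonic tempered distribution, hence (by the Fourier-support argument of Remark \ref{rem.000}) a polynomial, and then invoke Lemma \ref{cla.548uyt} together with $v\in\mathcal L_{(0)}$ to conclude $v=0$. Your componentwise bookkeeping is a harmless elaboration of the same route.
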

\begin{proof}
 Indeed $v$ is an harmonic vector field belonging to $\mathscr S'$,
 thus its Fourier transform is supported in $\{0\}$, which implies that $v$ is a polynomial. Since $v$ belongs as well to $\mathcal L_{(0)}$, Lemma \ref{cla.548uyt} implies that $v=0$.
 \end{proof}
\subsection{ On some Identities and Inequalities}\label{sec.dge5}
\subsubsection*{\underline{Divergence of a vector product}}
\begin{lemma}\label{lem.kj44}
 Let $a\in L^{p}(\R^{3}, \R^{3})$, $b\in L^{q}(\R^{3}, \R^{3})$ such that
 $\curl a\in L^{r}(\R^{3}, \R^{3})$, \quad $\curl b\in L^{s}(\R^{3}, \R^{3})$,
 with $p, r\in (1,+\io)$ and 
 \begin{equation}\label{po55cx}
 \frac1p+\frac1q=\frac1{t_{1}}
\le 1, \quad \frac1s+\frac1p=\frac{1}{t_{2}}\le 1,
\quad \frac1r+\frac1q=\frac1{t_{3}}\le 1.
\end{equation}
Then $a\times b\in L^{t_{1}}(\R^{3}, \R^{3}))$, $(\curl a)\cdot b \in L^{t_{3}}(\R^{3}, \R^{3}))$,
$(\curl b)\cdot a\in L^{t_{2}}(\R^{3}, \R^{3})$
and we have 
\begin{equation}\label{aqw741}
\dive(a\times b)=(\curl a)\cdot b-(\curl b)\cdot a.
\end{equation}
\end{lemma}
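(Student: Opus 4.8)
The plan is to treat the two assertions separately: the membership statements are pure H\"older bookkeeping, and the identity \eqref{aqw741} is the classical pointwise product rule for smooth fields, which we transfer to the rough setting by mollification. First I would record that, by H\"older's inequality (equivalently Lemma \ref{lem.kj98} for the vector product), $a\in L^{p}$ and $b\in L^{q}$ with $\frac1p+\frac1q=\frac1{t_{1}}\le 1$ give $a\times b\in L^{t_{1}}(\R^{3},\R^{3})$; likewise $\curl a\in L^{r}$, $b\in L^{q}$ with $\frac1r+\frac1q=\frac1{t_{3}}\le 1$ give $(\curl a)\cdot b\in L^{t_{3}}(\R^{3})$, and $\curl b\in L^{s}$, $a\in L^{p}$ with $\frac1s+\frac1p=\frac1{t_{2}}\le 1$ give $(\curl b)\cdot a\in L^{t_{2}}(\R^{3})$. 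In particular, since $p,q,r,s\ge 1$ all three products lie in $L^{1}_{\mathrm{loc}}(\R^{3})$, so both sides of \eqref{aqw741} are well-defined tempered distributions (indeed the left-hand side is a distribution of order $\le 1$ applied to an $L^{t_{1}}$ field).

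Next I would prove the identity. Let $\rho\in\mooc(\R^{3},\R_{+})$ with $\int\rho=1$, set $\rho_{\varepsilon}(x)=\varepsilon^{-3}\rho(x/\varepsilon)$, and put $a_{\varepsilon}=\rho_{\varepsilon}\ast a$, $b_{\varepsilon}=\rho_{\varepsilon}\ast b$, which are $\moo$ and satisfy $\curl a_{\varepsilon}=\rho_{\varepsilon}\ast\curl a$, $\curl b_{\varepsilon}=\rho_{\varepsilon}\ast\curl b$. For smooth vector fields the coordinate computation $\p_{i}\bigl(\varepsilon_{ijk}a_{\varepsilon,j}b_{\varepsilon,k}\bigr)=\varepsilon_{ijk}(\p_{i}a_{\varepsilon,j})b_{\varepsilon,k}+\varepsilon_{ijk}a_{\varepsilon,j}(\p_{i}b_{\varepsilon,k})$, together with $\varepsilon_{ijk}\p_{i}a_{\varepsilon,j}=(\curl a_{\varepsilon})_{k}$ and $\varepsilon_{ijk}a_{\varepsilon,j}\p_{i}b_{\varepsilon,k}=-(\curl b_{\varepsilon})\cdot a_{\varepsilon}$ (relabelling indices with the antisymmetry of the Levi-Civita symbol), yields the pointwise identity $\dive(a_{\varepsilon}\times b_{\varepsilon})=(\curl a_{\varepsilon})\cdot b_{\varepsilon}-(\curl b_{\varepsilon})\cdot a_{\varepsilon}$. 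I would then test against an arbitrary $\varphi\in\mathscr S(\R^{3},\R)$, writing $\poscal{\dive(a_{\varepsilon}\times b_{\varepsilon})}{\varphi}=-\int(a_{\varepsilon}\times b_{\varepsilon})\cdot\nabla\varphi\,dx$, and pass to the limit $\varepsilon\to 0$.

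The passage to the limit is where the H\"older indices re-enter: $a_{\varepsilon}\to a$, $b_{\varepsilon}\to b$, $\curl a_{\varepsilon}\to\curl a$, $\curl b_{\varepsilon}\to\curl b$ in $L^{1}_{\mathrm{loc}}(\R^{3})$ (and in the respective global Lebesgue spaces when the exponent is finite), so by H\"older the bilinear products $a_{\varepsilon}\times b_{\varepsilon}$, $(\curl a_{\varepsilon})\cdot b_{\varepsilon}$, $(\curl b_{\varepsilon})\cdot a_{\varepsilon}$ converge in $L^{1}_{\mathrm{loc}}(\R^{3})$ to $a\times b$, $(\curl a)\cdot b$, $(\curl b)\cdot a$ respectively; since $\varphi$ and $\nabla\varphi$ are tested only on a fixed compact set, this suffices to obtain $\poscal{\dive(a\times b)}{\varphi}=\poscal{(\curl a)\cdot b-(\curl b)\cdot a}{\varphi}$ for all $\varphi$, i.e. \eqref{aqw741}. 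The only point needing a word of care is the endpoint case where $q$ or $s$ equals $+\io$, in which $\rho_{\varepsilon}\ast b$ does not converge in $L^{\io}$; but $L^{\io}\subset L^{m}_{\mathrm{loc}}$ for every finite $m$ and mollification converges there, so the local $L^{1}$ convergence of all the products — which is all the argument uses — still holds, and I expect no genuine obstacle beyond this routine verification.
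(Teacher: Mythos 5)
Your proof is correct, and it takes a genuinely different route from the paper's. You mollify \emph{both} $a$ and $b$, invoke the pointwise Levi-Civita identity for smooth fields, and pass to the limit of the three bilinear products in $L^{1}_{\text{loc}}$; the paper instead works by duality throughout, pairing each side of \eqref{aqw741} with $\phi\in\mooc(\R^{3})$ and reducing everything to the intermediate integration-by-parts identity $\poscal{\curl a}{\phi b}=\poscal{a}{\curl(\phi b)}$, which it establishes by mollifying only $a$ and exploiting the Leibniz rule $\curl(\phi b)=\phi\curl b+\nabla\phi\times b$, valid for a distributional $b$. Your symmetric double mollification is the more elementary of the two and avoids the paper's bookkeeping of matching each duality bracket with the correct pair of conjugate exponents; the price is having to verify $L^{1}_{\text{loc}}$ convergence of three products rather than one, and that verification is exactly what the H\"older conditions \eqref{po55cx} provide (your handling of the endpoints $q=+\io$ or $s=+\io$ by a uniform $L^{\io}$ bound plus convergence in every finite $L^{m}_{\text{loc}}$ is the right fix, since $p'<+\io$). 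One small correction: test against $\varphi\in\mooc(\R^{3})$ rather than $\varphi\in\mathscr S(\R^{3})$ --- your own limit argument uses that $\varphi$ and $\nabla\varphi$ are supported in a fixed compact set, which fails for Schwartz functions; since both sides of \eqref{aqw741} are respectively a first derivative of an $L^{t_{1}}$ field and a locally integrable function, testing on $\mooc$ determines them as distributions, so nothing is lost.
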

\begin{nb}
 Note that, from Conditions \eqref{po55cx}, all the products involved in Formula \eqref{aqw741} make sense.
\end{nb}
\begin{proof}
Let $\phi\in \mooc(\R^{3}, \R)$. On the one hand,
we have, 
$$
\poscal{\dive(a\times b)}{\phi}_{\mathscr D', \mathscr D}=-\int_{\R^{3}}
\proscal3{a\times b}{\nabla \phi}dx=
\int_{\R^{3}} \det\bigl(b,a, \nabla \phi\bigr)            dx,
$$
and on the other hand
\begin{align*}
\poscal{(\curl a)&\cdot b}{\phi}_{\mathscr D', \mathscr D}
-\poscal{a\cdot(\curl b) }{\phi}_{\mathscr D', \mathscr D}
\\
&=\int_{\R^{3}}\proscal3{\curl a}{\phi b}dx-\int_{\R^{3}}\proscal3{\curl b}{\phi a}dx
\\
\text{\tiny (using \eqref{gcqf58} below)}&=\int_{\R^{3}}\proscal3{a}{\curl(\phi b)}dx-\int_{\R^{3}}\proscal3{\curl b}{\phi a}dx
\\
&=\int_{\R^{3}}\proscal3{a}{\phi \curl b}
+\proscal3{a}{\nabla\phi\times b}
dx-\int_{\R^{3}}\proscal3{\curl b}{\phi a}dx
\\
&=\int_{\R^{3}}\det\bigl(a, \nabla \phi, b\bigr) dx=\int_{\R^{3}}\det\bigl(b,a, \nabla \phi\bigr) dx,
\end{align*} 
yielding the sought result.
Note that we have used along our way that
\begin{equation}\label{gcqf58}
\poscal{\curl a}{\phi b}_{L^{r}, L^{r'}}=\poscal{a}{\curl(\phi b)}_{L^{p}, L^{p'}},
\end{equation}
which makes sense since  $\phi b\in L^{r'}$ using that  $\phi$ is smooth compactly supported
and $b$ in $L^{q}$ with $q\ge r'$ from the inequality
$$
\frac1r
+\frac1{q}\le 1=\frac1r+\frac1{r'}.$$
Also, we have that 
$\curl(\phi b)\in L^{p'}$
since $\phi$ is smooth compactly supported
and 
\begin{equation}\label{knxc11}
\curl(\phi b)=\phi \underbracket[0.1pt]{\curl b}_{L^{s}}+\nabla \phi\times\underbracket[0.1pt]{ b}_{L^{q}},
\end{equation}
with $s\ge p'$ from the inequality
$
\frac1s+\frac1p\le 1=\frac1{p'}+\frac1p,
$
as well as $q\ge p'$ from
$$
\frac1q+\frac1p\le 1=\frac1{p'}+\frac1p.
$$
To prove \eqref{knxc11} is quite straightforward and is true even for $b$ distribution vector field: we just have to check for a distribution $T$ that 
$
\p_{j}(\phi T)=T\p_{j}(\phi)+\phi\p_{j}T,
$
which is obvious and classical.
Let us prove \eqref{gcqf58}.
Using \eqref{knxc11} (now proven), we get that
\begin{equation}\label{dx54jn}
\poscal{a}{\curl(\phi b)}_{L^{p}, L^{p'}}=\poscal{a\phi}{\curl b}_{L^{p}, L^{p'}}+\int_{\R^{3}}\det\bigl(\nabla \phi, b, a\bigr) dx.
\end{equation}
Since $a\in L^{p}$, we can use a standard mollifier $\rho_{\varepsilon}$ and write that 
\begin{multline}\label{tsq459}
\poscal{a\phi}{\curl b}_{L^{p}, L^{p'}}=\lim_{\varepsilon\rightarrow 0}\poscal{(a\ast \rho_{\varepsilon})\phi}{\curl b}_{L^{p}, L^{p'}}
=\lim_{\varepsilon\rightarrow 0}\bigl\{\poscal{\curl b}{\phi(a\ast \rho_{\varepsilon})}_{\mathscr D', \mathscr D}\bigr\}
\\
=\lim_{\varepsilon\rightarrow 0}\bigl\{\poscal{b}{
\underbrace{\nabla \phi\times (a\ast \rho_{\varepsilon})}_{\substack{
\text{converges in $L^{p}$}\\\text{to $\nabla\phi\times a,$}\\\text{compactly supported}
\\\text{ in $\supp\nabla \phi$}
}}}_{\mathscr D', \mathscr D}
+\poscal{b}{\phi (\curl a\ast \rho_{\varepsilon})}_{\mathscr D', \mathscr D}
\bigr\}
\end{multline}
We can thus replace $b$ by $\chi b$ where $\chi$ is compactly supported,
equal to $1$ on a neigborhood of the support of $\phi$.
Here we have 
$$
\frac1p+\frac1q\le 1=\frac1p+\frac1{p'}\Longrightarrow q\ge p',
$$
so that 
\begin{multline}\label{98traq}
\lim_{\varepsilon\rightarrow 0}\poscal{b}{\nabla \phi\times (a\ast \rho_{\varepsilon})}_{\mathscr D',\mathscr D}
\\=\lim_{\varepsilon\rightarrow 0}\poscal{\chi b}{\nabla \phi\times (a\ast \rho_{\varepsilon})}_{\mathscr D',\mathscr D}
=\lim_{\varepsilon\rightarrow 0}\poscal{\chi b}{\nabla \phi\times (a\ast \rho_{\varepsilon})}_{L^{p'},L^{p}}
\\
=\poscal{\chi b}{\nabla \phi\times a}_{L^{p'},L^{p}} 
=\poscal{b}{\nabla \phi\times a}_{L^{p'},L^{p}}  =\int_{\R^{3}}\det(\nabla \phi, a, b)dx.
\end{multline}
We have also 
\begin{equation}\label{gf54bb}
\poscal{b}{\phi (\curl a\ast \rho_{\varepsilon})}_{\mathscr D', \mathscr D}
=
\poscal{\phi b}{\curl a\ast \rho_{\varepsilon}}_{\mathscr D', \mathscr D}=
\poscal{\phi b}{\curl a\ast \rho_{\varepsilon}}_{L^{r'}, L^{r}}
\end{equation}
since 
$\phi b\in L^{r'}$ using that $\phi$ is smooth compactly supported
and $b$ in $L^{q}$ with $q\ge r'$ from the inequality
$$
\frac1r
+\frac1{q}\le 1=\frac1r+\frac1{r'}.$$
As a consequence, we get from \eqref{tsq459}, \eqref{98traq}, \eqref{gf54bb} that 
\begin{equation}\label{cxw159}
\poscal{a\phi}{\curl b}_{L^{p}, L^{p'}}=\int_{\R^{3}}\det(\nabla \phi, a, b)dx
+\poscal{\curl a}{\phi b}_{L^{r}, L^{r'}},
\end{equation}
and 
we find that 
$$
\poscal{a}{\curl(\phi b)}_{L^{p}, L^{p'}}\underbracket[0pt]{=}_{\eqref{dx54jn}}\int_{\R^{3}}\det\bigl(\nabla \phi, b, a\bigr) dx
+\poscal{a\phi}{\curl b}_{L^{p}, L^{p'}}\underbracket[0pt]{=}_{\eqref{cxw159}}
\poscal{\curl a}{\phi b}_{L^{r}, L^{r'}},
$$
which proves \eqref{gcqf58}.
\end{proof}
\begin{remark}\rm
 Let $v$ be a vector field in $L^{6}(\R^{3}, \R^{3})$, such that $\curl v, \curl^{2}v\in L^{2}(\R^{3}, \R^{3})$.
 Taking $a=\curl v, b=v$, 
 $p=2, q=6$ we have $\curl a=\curl^{2}v\in L^{2}$, $r=2$, $\curl b=a$, $s=2$.
 Now the conditions \eqref{po55cx} are satisfied. Lemma \ref{lem.kj44} implies 
 \begin{equation}\label{4edsw6}
\dive (\curl v\times v)=\proscal3{\curl^{2}v}{v}-\norm{\curl v}_{\R^{3}}^{2}.
\end{equation}
\end{remark}
\subsubsection*{\underline{Proof of Remark \ref{rem.kj87}}}
Let $s\in \N$. We claim that 
\begin{equation}\label{ytr147}
\mathcal V^{s,\omega}=\{v\in \mathcal W, \forall \alpha\in \N^{d} \text{ with }\val{\alpha}\le s, D_{x}^{\alpha}v\in \mathcal W\}.
\end{equation}
Since for $\val \alpha\le s$, we have  
$
\int_{\R^{d}}\val{\xi^{\alpha}}\val{\hat v(\xi)}d\xi\le \int_{\R^{d}}\valjp{\xi}^{s}\val{\hat v(\xi)}d\xi,
$
we get that $\mathcal V^{s,\omega}$ is included in the right-hand-side of \eqref{ytr147}. Conversely, since we have
$$
\R^{d}=\cup_{1\le j\le d}\{\xi\in \R^{d}, {\xi_{j}}^{2}\ge \frac{\val \xi^{2}}{d}\},
$$
it is enough to consider for $s\in \N^{*}$,
\begin{multline}\label{874jjh}
\int_{\{\xi,\ \xi_{1}^{2}\ge \frac{\val \xi^{2}}{d}\}}\hskip-8pt \valjp{\xi}^{s}\val{\hat v(\xi)}d\xi
\le \hskip-5pt \int_{\R^{d}}\bigl(1+d\xi_{1}^{2}\bigr)^{s/2}\val{\hat v(\xi)}d\xi
\le 
\int_{\R^{d}}\bigl(1+d^{\frac12}\val{\xi_{1}}\bigr)^{s}\val{\hat v(\xi)}d\xi
\\\underbracket[0pt]{\le}_{\substack{
\text{cf. \eqref{gwp197}}\\\text{below}
}}
\int_{\R^{d}}\bigl(1+d^{\frac s2}\val{\xi_{1}}^{s}\bigr)2^{s-1}\val{\hat v(\xi)}d\xi
= 2^{s-1}\norm{v}_{\mathcal W}
+2^{s-1}d^{\frac s2} \norm{D_{1}^{s}v}_{\mathcal W},
\end{multline}
proving \eqref{ytr147}.
We have left to the reader the proof that 
\begin{equation}\label{gwp197}
\text{For $s\ge 1, a\ge 0$, we have, }
\quad
(1+a)^{s}\le 2^{s-1}(1+a^{s}),
\end{equation}
useful for the third inequality in \eqref{874jjh}.
\subsubsection*{\underline{On Peetre's Inequality}}
\begin{claim}\label{cla.214jhg}
We consider the set $\mathcal T$ of nonnegative real numbers $\tau$ such that for all $\xi_{1},\xi_{2}\in \R^{d}$, we have
\begin{equation}\label{}
\tau+\val{\xi_{1}+\xi_{2}}^{2}\le\bigl(\tau+\val{\xi_{1}}^{2}\bigr)
\bigl(\tau+\val{\xi_{2}}^{2}\bigr).
\end{equation}
Then we have 
$
\mathcal T=\bigl[\frac43, +\io\bigr).
$
\end{claim}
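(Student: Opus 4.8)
The plan is to collapse the two-vector inequality to a one-variable inequality and then to solve that inequality exactly by completing a square. First I would note that the right-hand side $\bigl(\tau+\val{\xi_{1}}^{2}\bigr)\bigl(\tau+\val{\xi_{2}}^{2}\bigr)$ depends only on the lengths $r_{i}=\val{\xi_{i}}$, whereas, for those lengths fixed, the left-hand side is largest exactly when $\xi_{1}$ and $\xi_{2}$ are positively collinear, so that $\val{\xi_{1}+\xi_{2}}=r_{1}+r_{2}$. This configuration is attained in every dimension $d\ge 1$ (take $\xi_{i}=r_{i}e$ for a fixed unit vector $e$), so I would conclude that $\tau\in\mathcal T$ if and only if
$$
\tau+(r_{1}+r_{2})^{2}\le \bigl(\tau+r_{1}^{2}\bigr)\bigl(\tau+r_{2}^{2}\bigr)\qquad\text{for all }r_{1},r_{2}\ge 0 ;
$$
the "only if" is the collinear test, and the "if" follows from the triangle inequality $\val{\xi_{1}+\xi_{2}}\le r_{1}+r_{2}$.

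For necessity ($\mathcal T\subseteq[\tfrac43,+\io)$) I would test $r_{1}=r_{2}=\sqrt{2-\tau}$, which makes sense whenever $\tau<2$, in particular for $\tau<\tfrac43$. Then $\tau+r_{i}^{2}=2$, so the displayed inequality reads $8-3\tau\le 4$, i.e. $\tau\ge\tfrac43$; hence any $\tau<\tfrac43$ fails to lie in $\mathcal T$.

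For sufficiency ($[\tfrac43,+\io)\subseteq\mathcal T$), fix $\tau\ge\tfrac43$ (so in particular $\tau\ge 1$) and set $\Phi(r_{1},r_{2})=\bigl(\tau+r_{1}^{2}\bigr)\bigl(\tau+r_{2}^{2}\bigr)-\tau-(r_{1}+r_{2})^{2}$. Expanding gives
$$
\Phi(r_{1},r_{2})=\tau^{2}-\tau+(\tau-1)(r_{1}^{2}+r_{2}^{2})+(r_{1}r_{2})^{2}-2r_{1}r_{2}.
$$
Using $\tau-1\ge 0$ together with $r_{1}^{2}+r_{2}^{2}\ge 2r_{1}r_{2}$, I would bound $\Phi$ from below by a function of $u=r_{1}r_{2}$ alone:
$$
\Phi(r_{1},r_{2})\ge u^{2}+2(\tau-2)u+\tau^{2}-\tau=(u+\tau-2)^{2}+(3\tau-4)\ge 3\tau-4\ge 0 .
$$
This shows the inequality holds for all $\xi_{1},\xi_{2}$, completing the proof. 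The only place demanding a little care is the first reduction — one has to check that the "worst case" direction is genuinely achieved so that the two-variable and one-variable problems are equivalent; everything after that is an elementary completion of the square, so I do not expect a real obstacle.
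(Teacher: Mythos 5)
Your proof is correct and follows essentially the same route as the paper: reduce to the scalar inequality via the triangle inequality, prove sufficiency by an explicit completion of squares, and prove necessity by evaluating at a well-chosen test point. The only (harmless) differences are organizational: the paper first establishes the endpoint $\tau=\frac43$ via the identity $\bigl(\frac43+a^{2}\bigr)\bigl(\frac43+b^{2}\bigr)-\frac43-(a+b)^{2}=\bigl(ab-\frac23\bigr)^{2}+\frac13(a-b)^{2}$ and then bootstraps to all $\tau\ge\frac43$ by a monotonicity chain, using the fixed test point $\val{\xi_{1}}=\val{\xi_{2}}=\sqrt{2/3}$ for necessity, whereas you treat all $\tau\ge\frac43$ uniformly through the single variable $u=r_{1}r_{2}$ and use the $\tau$-dependent test point $r_{1}=r_{2}=\sqrt{2-\tau}$.
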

\begin{proof}[Proof of the Claim.]
We have for $a,b\ge 0$,
\begin{multline*}
\bigl(\frac43+a^{2}\bigr)\bigl(\frac43+b^{2}\bigr)-\frac43-(a+b)^{2}
=\frac49+a^{2}b^{2}+\frac{a^{2}+b^{2}}3-2ab
\\=
\frac49+a^{2}b^{2}+\frac{a^{2}+b^{2}-2ab}3-\frac43ab
=a^{2}b^{2}-\frac43ab+\frac49+\frac{(a-b)^{2}}3
\\
=\Bigl(ab-\frac23\Bigr)^{2}+\frac{(a-b)^{2}}3\ge 0.
\end{multline*}
This implies that for all $\xi_{1},\xi_{2}\in \R^{d}$,
\begin{equation}\label{rea489}
\frac43+\val{\xi_{1}+\xi_{2}}^{2}\le
\frac43+\bigl(\val{\xi_{1}}+\val{\xi_{2}}\bigr)^{2}\le\bigl(\frac43+\val{\xi_{1}}^{2}\bigr)
\bigl(\frac43+\val{\xi_{2}}^{2}\bigr),
\end{equation}
so that $4/3\in \mathcal T$.
If $\tau\ge 4/3$,
we find for all $\xi_{1},\xi_{2}\in \R^{d}$,
\begin{multline*}\label{}
\tau+\val{\xi_{1}+\xi_{2}}^{2}\le
\tau-\frac43+\frac43+\bigl(\val{\xi_{1}}+\val{\xi_{2}}\bigr)^{2}
\underbracket[0pt]
{\le}_{\substack  {  \text{using} \\\eqref{rea489}    }           }
\bigl(\frac43+\val{\xi_{1}}^{2}\bigr)
\bigl(\frac43+\val{\xi_{2}}^{2}\bigr)+\tau-\frac43
\\
\le\bigl(\frac43+\val{\xi_{1}}^{2}\bigr)
\bigl(\frac43+\val{\xi_{2}}^{2}\bigr)+(\tau-\frac43)(\frac43+\val{\xi_{1}}^{2})
=\bigl(\frac43+\val{\xi_{1}}^{2}\bigr)
\bigl(\tau+\val{\xi_{2}}^{2}\bigr)
\\
\le \bigl(\tau+\val{\xi_{1}}^{2}\bigr)
\bigl(\tau+\val{\xi_{2}}^{2}\bigr),
\end{multline*}
proving that $\mathcal T\supset\bigl[\frac43, +\io\bigr).$
Moreover, for $\tau\in \mathcal T$,
we find, choosing $\xi_{1}=\xi_{2}, \val{\xi_{1}}=\sqrt{\frac23}$, that
$
\tau+\frac83\le \bigl(\tau+\frac23\bigr)^{2},
$ 
i.e.
$$
0\le P(\tau)=\tau^{2}+\frac{\tau}3-\frac{20}9=(\tau-\frac43)(\tau+\frac53).
$$
Since we have $\tau\ge0$, this implies $\tau\ge 4/3$,
completing the proof of the claim.\end{proof}
\begin{nb}
It follows from the previous calculations that the best choice for the definition of $\valjp{\xi}$ in \eqref{233233} would be 
 $
 \valjp{\xi}=\bigl(\frac43+\val\xi^{2}\bigr)^{1/2}.
 $
 However, it does not seem useful to work with the best constant here,
although we \emph{do need} the Banach algebra property
 $
 \norm{v_{1}v_{2}}_{\mathcal V^{s,\omega}}\le  \norm{v_{1}}_{\mathcal V^{s,\omega}}\norm{v_{2}}_{\mathcal V^{s,\omega}}.
 $
\end{nb}
\def\cprime{$'$}

\end{document}